\newcommand{\indicator}[1]{\ensuremath{\mathbf{1}_{\{#1\}}}}
\renewcommand\Re{{\operatorname{Re}}}
\renewcommand\Im{{\operatorname{Im}}}
\newcommand\rank{{\operatorname{rank}}}
\newcommand\R{{\mathbf{R}}}
\newcommand\C{{\mathbf{C}}}
\renewcommand\P{{\mathbf{P}}}
\newcommand\E{{\mathbf{E}}}
\newcommand\tr{{\operatorname{tr}}}
\newcommand\Var{{\operatorname{Var}}}
\newcommand\dist{{\operatorname{dist}}}
\newcommand\Z{{\mathbf{Z}}}
\newcommand\col{{\mathbf{c}}}
\newcommand\row{{\mathbf{r}}}
\newcommand\Br{{\mathbf r}}
\newcommand\Bu{{\mathbf u}}
\newcommand\Bv{{\mathbf v}}
\newcommand\Bw{{\mathbf w}}
\newcommand\Bx{{\mathbf x}}
\newcommand\By{{\mathbf y}}
\newcommand\Bz{{\mathbf z}}
\newcommand\ep{\epsilon}
\theoremstyle{plain}
  \newtheorem{theorem}[subsection]{Theorem}
  \newtheorem{fact}[subsection]{Fact}
  \newtheorem{lemma}[subsection]{Lemma}
  \newtheorem{corollary}[subsection]{Corollary}
  \newtheorem{condition}{Condition}
  \newtheorem{remark}[subsection]{Remark}
  \newtheorem{claim}[subsection]{Claim}
\theoremstyle{definition}
  \newtheorem{definition}[subsection]{Definition}
\begin{document}

\title[The elliptic law]{The elliptic law}

\author{Hoi H. Nguyen}
\address{The Ohio State University, Department of Mathematics, 231 West 18th Avenue, Columbus, OH 43210}
\email{nguyen.1261@math.osu.edu}

\author{Sean O'Rourke}
\address{Department of Mathematics, University of Colorado at Boulder, Boulder, CO 80309  }
\email{sean.d.orourke@colorado.edu}

\thanks{The first author is partly supported by research grant DMS-1200898}

\subjclass[2000]{15A52, 15A63, 11B25}

\begin{abstract}
We show that, under some general assumptions on the entries of a random complex $n \times n$ matrix $X_n$, the empirical spectral distribution of $\frac{1}{\sqrt{n}}X_n$ converges to the uniform law of an ellipsoid as $n$ tends to infinity. This generalizes the well-known circular law in random matrix theory.  
\end{abstract}

\maketitle

\section{Introduction}

Let $X_n$ be a $n \times n$ matrix with complex eigenvalues $\lambda_1, \lambda_2, \dots,\lambda_n$. The empirical spectral measure $\mu_{X_n}$ of $X_n$ is defined as
$$\mu_{X_n}:=\frac{1}{n} \sum_{i=1}^n \delta_{\lambda_i}$$ 
and the corresponding empirical spectral distribution (ESD) $F^{X_n}(x,y)$ is given by
$$ F^{X_n}(x,y) := \frac{1}{n} \#\{1 \leq j \leq n : \Re(\lambda_j) \leq x, \Im(\lambda_j) \leq y \}.$$
Here $\#E$ denotes the cardinality of the set $E$.  In the case when the eigenvalues of $X_n$ are real, we write the ESD $F^{X_n}$ as just a function of $x$,
$$ F^{X_n}(x) := \frac{1}{n} \#\{1 \leq j \leq n : \lambda_j \leq x \}. $$

A fundamental problem in random matrix theory is to determine the limiting distribution of the ESD as the size of the matrix tends to infinity. In certain cases when the entries have special distribution, such as Gaussian, the joint distribution of the eigenvalues can be given explicitly, and so the limiting distribution can be derived directly. However, these explicit formulas are not available for many random matrix ensembles, and so the problem of finding the limiting distribution becomes much more difficult. On the other hand, the well-known {\it universality phenomenon} in random matrix theory predicts that the limiting distribution should not depend on the distribution of the entries. We give two famous examples below.  

In the 1950s, Wigner studied the limiting ESD for a large class of random Hermitian matrices whose entries on or above the diagonal are independent \cite{W}.  In particular, Wigner showed that, under some additional moment and symmetry assumptions on the entries, the ESD of such a matrix converges to the semi-circular law $F_{\text{sc}}$ with density given by 
$$F'_{\text{sc}}(x):= \left\{
     \begin{array}{lr}
       \frac{1}{2\pi}\sqrt{4-x^2} , &-2 \leq x \leq 2 \\
       0 ,& \text{otherwise}
     \end{array}
   \right.  .$$ 

The most general form of the semi-circular law assumes only the first two moments of the entries \cite{B:M}.

\begin{theorem}[Semi-circular law for Wigner matrices]\label{theorem:sc}
Let $\zeta$ be a real random variable, and let $\xi$ be a complex random variable with variance one.  For each $n \geq 1$, assume $X_n$ is a $n \times n$ Hermitian matrix whose entries on or above the diagonal are independent.  Further assume that the diagonal entries are i.i.d. copies of $\zeta$ and those above the diagonal are i.i.d copies of $\xi$.  Then the ESD of the matrix $\frac{1}{\sqrt{n}}X_n$ converges almost surely to the semi-circular law as $n \rightarrow \infty$.
\end{theorem}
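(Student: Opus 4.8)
The plan is to prove this by the method of moments, after a truncation reducing to the case of bounded entries, and then to promote convergence in expectation to almost sure convergence by a higher-moment estimate on the traces of powers. Throughout write $W_n=\frac{1}{\sqrt n}X_n$, and let me indicate the four steps.

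\textbf{Step 1 (truncation and recentering).} Pick $\delta_n\to0$ tending to zero slowly enough (possible since $\xi$ has finite variance) that, replacing each $\xi_{ij}$ by $\xi_{ij}\indicator{|\xi_{ij}|\le\delta_n\sqrt n}$ and each $\zeta_i$ by $\zeta_i\indicator{|\zeta_i|\le\delta_n\sqrt n}$ to form a Hermitian matrix $\hat X_n$, one has $n^2\,\P(|\xi|>\delta_n\sqrt n)=o(n)$; then $\rank(X_n-\hat X_n)=o(n)$ almost surely (for the diagonal one also uses the strong law, comparing $\sum_i\indicator{|\zeta_i|>\delta_n\sqrt n}$ with $\sum_i\indicator{|\zeta_i|>M}$ for a fixed $M$ and then letting $M\to\infty$). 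By the rank inequality $\|F^{A}-F^{B}\|_\infty\le\frac1n\rank(A-B)$ for Hermitian $A,B$, the ESDs of $\frac{1}{\sqrt n}X_n$ and $\frac{1}{\sqrt n}\hat X_n$ then differ by $o(1)$. Next, the truncated diagonal of $\hat X_n$, divided by $\sqrt n$, has operator norm at most $\delta_n\to0$, and the off-diagonal mean matrix of $\hat X_n$, divided by $\sqrt n$, can be written (separating real and imaginary parts) as a sum of a matrix of rank $o(n)$ and a matrix of operator norm $o(1)$; by the rank inequality and Weyl's inequality these contribute $o(1)$ to the ESD. Finally, the truncated off-diagonal variance is some $\sigma_n^2\to1$, so rescaling by $1/\sigma_n$ costs a further $o(1)$ (the matrices involved having $O(1)$ operator norm almost surely). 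Thus it suffices to prove the conclusion for a Hermitian matrix $W_n$ with mean-zero, variance-one, i.i.d.\ off-diagonal entries satisfying $|X_{ij}|\le\delta_n\sqrt n$ and with zero diagonal.

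\textbf{Step 2 (limiting moments).} For such $W_n$, expand
$$\E\,\tfrac1n\tr W_n^{k}=\frac{1}{n^{1+k/2}}\sum_{i_1,\dots,i_k}\E\big[X_{i_1i_2}X_{i_2i_3}\cdots X_{i_ki_1}\big],$$
a sum over closed walks of length $k$ in the complete graph on $\{1,\dots,n\}$. By independence and the mean-zero assumption a walk contributes $0$ unless every edge it uses is traversed at least twice; a connectivity count then bounds its number of distinct edges by $k/2$ and its number of distinct vertices by $k/2+1$. Using $\E|X_{ij}|^{m}\le(\delta_n\sqrt n)^{m-2}$ for $m\ge2$, the total contribution of walks whose support is not a tree with exactly $k/2$ edges each traversed exactly twice is $O(\delta_n^{2})$ or smaller, hence $o(1)$; in particular the odd moments vanish in the limit, since parity of closed walks on trees forbids the optimal configuration. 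For $k=2p$ the surviving walks are exactly those traversing a plane tree with $p$ edges, once in each direction along each edge — equivalently, the non-crossing pair partitions of $\{1,\dots,2p\}$, counted by the Catalan number $C_p=\frac{1}{p+1}\binom{2p}{p}$ — and each contributes $\prod_e\E|X_{ij}|^{2}=1$. Hence $\E\,\frac1n\tr W_n^{k}\to m_k$, where $m_{2p}=C_p$ and $m_{2p+1}=0$; these are precisely the moments of $F_{\text{sc}}$.

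\textbf{Step 3 (almost sure convergence).} A parallel count of quadruples of closed walks, again controlled by $\E|X_{ij}|^{m}\le(\delta_n\sqrt n)^{m-2}$, gives $\E\big[\big(\tfrac1n\tr W_n^{k}-\E\tfrac1n\tr W_n^{k}\big)^{4}\big]=O(n^{-2})$; the key combinatorial input is again a parity/connectivity constraint ruling out the configurations that would otherwise dominate (a closed walk traverses each edge of a tree an even number of times, so two closed walks sharing an edge cannot have their combined support be a tree with every edge traversed exactly twice). Since $\sum_n n^{-2}<\infty$, the Borel--Cantelli lemma yields $\frac1n\tr W_n^{k}\to m_k$ almost surely for each fixed $k$. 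As $F_{\text{sc}}$ has compact support and is therefore determined by its moments, almost sure convergence of all moments upgrades to almost sure weak convergence of the ESDs; combined with the reductions of Step 1 this proves the theorem.

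\textbf{Where the difficulty lies.} The essential work will be the combinatorics of Steps 2--3: showing that only the non-crossing pair partitions survive in the limit, and that every other contribution — non-tree supports, edges of multiplicity $\ge3$, insufficiently connected tuples of walks — is genuinely of lower order. This is where the Catalan numbers and the parity of closed walks on trees are used, and where the truncation level $\delta_n\sqrt n$ is needed to absorb the otherwise-divergent moment factors $\E|X_{ij}|^m$. An alternative, in some respects cleaner, route replaces the moment method by the Stieltjes transform $s_n(z)=\frac1n\tr(W_n-z)^{-1}$ for $\Im z>0$: a Schur-complement identity forces any limit of $\E s_n(z)$ to satisfy $s=(-z-s)^{-1}$, the defining equation of the semicircle transform, while the uniform bound $|s_n(z)|\le(\Im z)^{-1}$ makes the row-wise martingale differences of $s_n(z)$ of size $O(1/n)$, so that Azuma's inequality gives $s_n(z)-\E s_n(z)\to0$ almost surely and hence almost sure weak convergence of the ESDs.
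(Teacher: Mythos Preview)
The paper does not prove this theorem; it is stated in the introduction as background and attributed to the literature (Bai's survey \cite{B:M}). Your moment-method argument is the classical route and is correct in outline.

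Two small remarks. In Step~1 the implication from $n^2\P(|\xi|>\delta_n\sqrt n)=o(n)$ to $\rank(X_n-\hat X_n)=o(n)$ almost surely is not automatic: the expectation of the number of truncated entries is $o(n)$, but you still need a concentration step (Bernstein on the count of truncated upper-triangular entries suffices, since the resulting tail is summable). Many textbook treatments sidestep the rank entirely by invoking the Hoffman--Wielandt inequality and controlling $\frac{1}{n^2}\|X_n-\hat X_n\|_2^2\to 0$ almost surely, which follows directly from the strong law and dominated convergence. Second, the theorem as stated does not assume $\E\xi=0$; your recentering in Step~1 handles this, but it is worth noting explicitly that the mean matrix has rank at most two (it is $\mu(J-I)$ up to conjugation, with $J$ the all-ones matrix), so the rank inequality disposes of it immediately.

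The Stieltjes-transform alternative you sketch at the end is, incidentally, much closer in spirit to what the present paper actually does for the elliptic law --- resolvent identities, martingale variance bounds on the trace of the resolvent as in Lemma~\ref{lemma:variance}, and a Lindeberg-type swap --- so that route would mesh better with the surrounding machinery than the moment method, even though the latter is perfectly adequate for Wigner's theorem itself.
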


The ESD for non-Hermitian random matrices with i.i.d. entries was first studied by Mehta \cite{M}.  In particular, in the case where the entries of $X_n$ are i.i.d. complex normal random variables, Mehta showed that the ESD of $\frac{1}{\sqrt{n}}X_n$ converges, as $n \rightarrow \infty$, to the circular law $F_{\text{cir}}$ given by
$$F_{\text{cir}}(x,y):= \frac{1}{\pi} \operatorname{mes}\Big(|z|\le 1: \Re(z)\le x, \Im(z)\le y\Big).$$
In other words, $F_{\text{cir}}$ is the two-dimensional distribution function for the uniform probability measure on the unit disk in the complex plane.  

Mehta used the joint density function of the eigenvalues of $\frac{1}{\sqrt{n}}X_n$ which was derived by Ginibre \cite{Gi}.  The real Gaussian case was studied by Edelman in \cite{Ed-cir}.  For the general (non-Gaussian) case when there is no formula, the problem appears much more difficult. Important results were obtained by Girko \cite{G1,G2}, Bai \cite{B,BS}, and more recently by G\"otze and Tikhomirov \cite{GT}, Pan and Zhou \cite{PZ}, and Tao and Vu \cite{TVcir}. These results confirm the same limiting law under some moment or smoothness assumptions on the distribution of the entries. Recently, Tao and Vu (appendix by Krishnapur) were able to remove all these additional assumptions, establishing the law under the first two moments \cite{TVuniv}.

\begin{theorem}[Circular law for non-Hermitian i.i.d. matrices]\label{theorem:cir}
Let $\xi$ be a complex-valued random variable with mean zero and variance one.  For each $n \geq 1$, assume that the entries of the $n \times n$ matrix $X_n$ are i.i.d. copies of $\xi$. Then the ESD of the matrix $\frac{1}{\sqrt{n}}X_n$ converges almost surely to the circular law as $n \rightarrow \infty$.
\end{theorem}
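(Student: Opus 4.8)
The plan is to follow Girko's Hermitization scheme together with logarithmic potential theory. For a probability measure $\mu$ on $\C$ define its logarithmic potential $U_\mu(z) := \int_{\C} \log|z-w|\,d\mu(w)$; then $\mu$ is recovered from $U_\mu$ in the sense of distributions via $\mu = \frac{1}{2\pi}\Delta U_\mu$, so it suffices to control the pointwise limit of $U_{\mu_n}$, where $\mu_n := \mu_{\frac{1}{\sqrt n}X_n}$. The basic identity is
$$ U_{\mu_n}(z) = \frac1n \log\Bigl|\det\bigl(\tfrac{1}{\sqrt n}X_n - z\I\bigr)\Bigr| = \frac1n \sum_{i=1}^n \log \sigma_i\bigl(\tfrac{1}{\sqrt n}X_n - z\I\bigr) = \int_0^\infty \log x \, d\nu_{n,z}(x), $$
where $\nu_{n,z}$ denotes the empirical distribution of the singular values of $\frac{1}{\sqrt n}X_n - z\I$. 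Thus the non-Hermitian eigenvalue problem is reduced, for each fixed $z$, to the Hermitian problem of understanding the singular values of the shifted matrix $\frac{1}{\sqrt n}X_n - z\I$ (equivalently, the eigenvalues of the $2n\times 2n$ Hermitian matrix with off-diagonal blocks $\frac{1}{\sqrt n}X_n - z\I$ and its adjoint).

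First I would show that, for each fixed $z \in \C$, the measure $\nu_{n,z}$ converges almost surely to a deterministic limit $\nu_z$. This is a statement about Hermitian random matrices and can be obtained by the Stieltjes transform method: the Stieltjes transform of the symmetrized singular value distribution satisfies, in the limit, a Marchenko--Pastur-type self-consistent equation whose solution $\nu_z$ depends only on $\xi$ having mean zero and variance one. Almost sure (rather than in probability) convergence follows from concentration of the Stieltjes transform about its mean --- e.g.\ a martingale-difference/Azuma estimate over the rows of $X_n$ --- combined with Borel--Cantelli. Already at this stage the universality is visible: $\nu_z$ is the same measure that arises in the complex Gaussian (Ginibre) case.

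The main obstacle is that $\log x$ is unbounded as $x \to 0^+$, so weak convergence $\nu_{n,z} \to \nu_z$ does not by itself yield $U_{\mu_n}(z) \to \int_0^\infty \log x\, d\nu_z(x)$; one must rule out an anomalous accumulation of small singular values. Concretely I would establish (i) a polynomial lower bound on the least singular value, $\P\bigl(\sigma_n(\tfrac{1}{\sqrt n}X_n - z\I) \le n^{-B}\bigr) \le n^{-A}$ for suitable constants $A,B$, and (ii) a bound ruling out too many intermediate small singular values, e.g.\ $\sigma_{n-k}(\tfrac{1}{\sqrt n}X_n - z\I) \gtrsim k/n$ with high probability, so that $\frac1n \#\{ i : \sigma_i(\tfrac{1}{\sqrt n}X_n - z\I) \le n^{-\delta}\} \to 0$ almost surely. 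Step (i) is the heart of the matter: for a general distribution $\xi$ with no smoothness assumed, control of the lower tail of the least singular value of a shifted i.i.d.\ matrix requires the inverse Littlewood--Offord theory and additive-combinatorial structure theorems, in the spirit of the work on invertibility of random matrices by Rudelson--Vershynin and Tao--Vu. Together (i) and (ii) give the uniform integrability of $\log x$ against $\nu_{n,z}$ needed to pass to the limit.

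With these pieces assembled, the proof concludes as follows. A truncation-and-recentering argument reduces the general two-moment hypothesis to the case of bounded entries (hence all moments finite); a replacement principle --- asserting, roughly, that if two matrix sequences $A_n,B_n$ have comparable normalized Frobenius norms and the log-determinants of $\tfrac{1}{\sqrt n}A_n - z\I$ and $\tfrac{1}{\sqrt n}B_n - z\I$ become asymptotically equal for almost every $z$, then $\mu_{A_n}-\mu_{B_n}\to 0$ weakly almost surely --- allows one, combined with the convergence and uniform-integrability statements above, to conclude that $U_{\mu_n}(z) \to U_{\mu_\infty}(z)$ almost surely for Lebesgue-almost every $z$. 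Taking the distributional Laplacian (or using a compactness/lower-semicontinuity argument for logarithmic potentials) upgrades this to weak almost sure convergence $\mu_n \to \mu_\infty$, and $\mu_\infty$ is identified with the circular law either by matching with the explicitly solvable Ginibre ensemble or by verifying directly that $\int_0^\infty \log x\, d\nu_z(x)$ is the logarithmic potential of the uniform measure on the unit disk. Removing the boundedness reduction at the very end by one further truncation step finishes the argument.
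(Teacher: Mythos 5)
Your proposal is correct and follows essentially the same Hermitization/logarithmic-potential route that the paper adopts in Section 7 for the more general elliptic law (the circular law theorem itself is invoked from Tao--Vu rather than reproved here): reduction to the singular-value measures $\nu_{n,z}$ via the Bordenave--Chafai Hermitization lemma, Stieltjes-transform convergence together with a martingale concentration/Borel--Cantelli bound, uniform integrability of $\log$ from the polynomial least-singular-value bound (established via inverse Littlewood--Offord machinery) combined with the $\sigma_{n-i}(\frac{1}{\sqrt n}X_n-zI)\gtrsim i/n$ estimate, and a truncation-plus-Lindeberg-replacement finish. The only cosmetic difference is your opposite sign convention for $U_\mu$, which does not affect the argument.
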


The two celebrated results above provide a somewhat complete picture of the limiting law for the ESD of Hermitian and non-Hermitian i.i.d matrices. In the 1980s, Girko initiated a study of the limiting distribution for more general matrices which interpolate between Hermitian and non-Hermitian models. 

\begin{definition}[Condition {\bf C0}] \label{def:C0}
Let $(\xi_1, \xi_2)$ be a random vector in $\mathbb{C}^2$ where both $\xi_1$ and $\xi_2$ have mean zero and unit variance.  Let $\{x_{ij}\}$ be an infinite double array of random variables on $\mathbb{C}$.  For each $n \geq 1$ we define the random $n \times n$ matrix $X_n = (x_{ij})_{1 \leq i,j \leq n}$.  We say that the sequence of random matrices $\{X_n\}_{n \geq 1}$ satisfies condition {\bf C0} with atom variables $(\xi_1, \xi_2)$ if the following conditions hold:
\begin{enumerate}[(i)]
\item (Independence) $\{ x_{ii} : i \geq 1\} \cup \{ (x_{ij}, x_{ji}) : 1 \leq i < j \}$ is a collection of independent random elements,
\item (Common distribution) each pair $(x_{ij}, x_{ji})$, $1 \leq i < j$ is an i.i.d. copy of $(\xi_1, \xi_2)$,
\item (Flexibility of the main diagonal) the diagonal elements, $\{x_{ii} : i \geq 1\}$, are i.i.d. with mean zero and finite variance.
\end{enumerate}
\end{definition}

It is clear that many Hermitian and non-Hermitian i.i.d matrix ensembles belong to the above class. In fact, it also consists of linear combinations of independent Hermitian and non-Hermitian i.i.d. matrices.

Over the past thirty years, Girko has established a number of results for the limiting law of random matrices satisfying condition {\bf C0}. We refer the reader to \cite{G3-elliptic, G4-elliptic, G5-elliptic,G1-elliptic, G2-elliptic} and references therein. To our best understanding, Girko's proofs are incomplete and lack rigor.  The familiar reader may also relate this to Girko's controversial works on the circular law (see the discussions in \cite{B,Ed-cir}).  

When $\{X_n\}_{n \geq 1}$ is a sequence of random matrices that satisfy condition {\bf C0} with jointly Gaussian atom variables $(\xi_1,\xi_2)$, the joint eigenvalue density can be derived explicitely and the limiting ESD can be computed directly; see \cite{J,KS,Le} and references therein.  Recently, Naumov \cite{N} has been able to verify the same limiting law for a much more general class of real random matrices whose entries have finite fourth moment.
 
For $-1<\rho<1$, denote by $\mathcal{E}_\rho$ the ellipsoid
$$\mathcal{E}_\rho:=\left\{z \in \mathbb{C} : \frac{\Re(z)^2}{(1+\rho)^2}+\frac{\Im(z)^2}{(1-\rho)^2} \leq 1 \right \}.$$

\begin{theorem}[Naumov \cite{N}]\label{theorem:Gir}
Let $\{X_n\}_{n \geq 1}$ be a sequence of real random matrices that satisfy condition {\bf C0} with real atom variables $(\xi_1, \xi_2)$ where $\E[\xi_1 \xi_2] = \rho$, $-1 < \rho < 1$. Also, assume that $\max(\E|\xi_1|^{4},\E|\xi_2|^{4}) <\infty$. Then the ESD of the matrix $\frac{1}{\sqrt{n}}X_n$ converges in probability as $n \rightarrow \infty$ to the elliptic law $F_\rho$ with parameter $\rho$ given by

$$F_\rho(x,y) := \frac{1}{\pi (1-\rho^2)} \operatorname{mes}\Big(z\in \mathcal{E}_\rho : \Re(z)\le x, \Im(z)\le y\Big).$$
\end{theorem}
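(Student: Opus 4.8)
The plan is to run Girko's Hermitization scheme in the rigorous form developed for the circular law (see \cite{TVcir,TVuniv}), adapting each step to the correlated ``elliptic'' structure. Write $Y_n(z):=\frac{1}{\sqrt n}X_n-z\I$ and let $\nu_{n,z}$ be the empirical distribution of the squared singular values of $Y_n(z)$, i.e.\ the ESD of $Y_n(z)Y_n(z)^*$. The logarithmic potential of $\mu_{Y_n(0)}=\mu_{\frac{1}{\sqrt n}X_n}$ is
$$ U_n(z)\;=\;-\frac1n\log\bigl|\det Y_n(z)\bigr|\;=\;-\frac12\int_0^\infty\log t\,d\nu_{n,z}(t), $$
and since a probability measure on $\C$ is determined by its logarithmic potential, it suffices to show that for Lebesgue-almost every $z\in\C$ one has $\int_0^\infty\log t\,d\nu_{n,z}(t)\to -2U_\rho(z)$ in probability, where $U_\rho$ is the logarithmic potential of $F_\rho$, together with tightness of $\{\mu_{Y_n(0)}\}$; the latter is immediate from $\int|w|^2\,d\mu_{Y_n(0)}(w)=\frac1n\|X_n\|_F^2=O(1)$, and the standard continuity theorem for logarithmic potentials then gives $\mu_{Y_n(0)}\to F_\rho$ weakly in probability. (The diagonal entries of $X_n$, being $o(\sqrt n)$ in sup-norm in probability, perturb the singular values of $Y_n(z)$ negligibly and may be discarded at the outset.)

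The convergence of the log-integral rests on two inputs. The first is the weak convergence $\nu_{n,z}\to\nu_z$ in probability toward a deterministic $\nu_z=\nu_{z,\rho}$. I would get this by passing to the $2n\times 2n$ Hermitian matrix
$$ \BH_n(z)=\begin{pmatrix} 0 & Y_n(z)\\ Y_n(z)^* & 0 \end{pmatrix}, $$
whose spectrum is the symmetrized singular spectrum of $Y_n(z)$, and deriving a self-consistent equation for the limiting Stieltjes transform $m_z(w)$; the only dependence among the entries of $X_n$ being the pairing $\E[x_{ij}x_{ji}]=\rho$, this yields a closed system depending only on $z$ and $\rho$. The finite fourth moment hypothesis enters here: via a Lindeberg-type resolvent swapping argument one reduces the identification of $\nu_z$, and of $\int\log t\,d\nu_z$, to the Gaussian elliptic ensemble, for which the limiting ESD is classically known to equal $F_\rho$ \cite{J,KS,Le}; the fourth moment controls the swapping errors. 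A direct computation of $\int_0^\infty\log t\,d\nu_z(t)$ then returns $-2U_\rho(z)$, equivalently the density $\frac{1}{\pi(1-\rho^2)}\oindicator{\mathcal E_\rho}$.

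The second input, which I expect to be the main obstacle, is the uniform integrability of $t\mapsto\log t$ against $\nu_{n,z}$, i.e.\ that the small singular values of $Y_n(z)$ are negligible. This splits into (a) a polynomial lower bound $\sigma_{\min}(Y_n(z))\ge n^{-C}$ with probability $1-o(1)$, and (b) the estimate $\#\{i:\sigma_i(Y_n(z))\le\delta\}\le(\eps(\delta)+o(1))n$ with $\eps(\delta)\to0$ as $\delta\to0^+$. Part (b) follows from the weak convergence above together with a Wegner/rigidity bound for $\nu_{n,z}$ near the origin. Part (a) is the delicate point: the Rudelson--Vershynin/Tao--Vu machinery for least singular values (distance-to-random-subspace estimates plus inverse Littlewood--Offord inequalities) is designed for matrices with \emph{independent} rows, whereas here rows $i$ and $j$ share the correlated pair $(x_{ij},x_{ji})$. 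For fixed $|\rho|<1$ this is only a mild, rank-one-per-pair dependence (it degenerates solely as $\rho\to\pm1$, where $X_n$ approaches a symmetric or skew-symmetric matrix), and I would handle it either by conditioning on one entry of each correlated pair so that the remaining randomness is genuinely independent while the conditional laws keep enough anti-concentration on the relevant linear functionals, or, in the Gaussian-motivated case, by writing $\frac{1}{\sqrt n}X_n$ as a non-degenerate combination of an independent symmetric and an independent skew-symmetric random matrix and exploiting the spread of the symmetric part; the shift by $z\I$ only helps, and is unnecessary when $z$ lies outside a neighbourhood of the support. Combining (a), (b) and $\nu_{n,z}\to\nu_z$ gives $\int_0^\infty\log t\,d\nu_{n,z}(t)\to -2U_\rho(z)$ in probability for a.e.\ $z$, which completes the proof.
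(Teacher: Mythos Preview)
The paper does not give its own proof of Theorem~\ref{theorem:Gir}; this result is quoted from Naumov~\cite{N} as prior work, and the paper's contribution is to remove the fourth-moment hypothesis (Theorem~\ref{thm:real}) and extend to complex entries (Theorem~\ref{thm:complex}). That said, your outline is precisely the Hermitization architecture the paper employs in Section~\ref{section:maintheorem:proof} for those stronger results, and it is also the skeleton of Naumov's original argument: reduce to the singular-value law $\nu_{n,z}$ of $Y_n(z)$, identify the limit $\nu_z$ via a Stieltjes-transform self-consistent system and a Lindeberg swap to the Gaussian elliptic ensemble, and establish uniform integrability of $\log$ against $\nu_{n,z}$. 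So at the level of strategy your proposal is correct and aligned with both \cite{N} and the present paper.

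The places where your sketch is thin are exactly where the paper spends its real effort. For your step~(b), a Wegner-type statement that $\nu_z$ has no mass at $0$ is not by itself enough for uniform integrability of $\log$; one needs a quantitative lower bound on the intermediate singular values. The paper obtains this (Lemma~\ref{lemma:small}) by a column-removal trick: deleting column $j$ makes row $j$ genuinely independent of the span of the remaining rows, which restores the Tao--Vu distance-to-subspace argument despite the $(x_{ij},x_{ji})$ coupling. For your step~(a), neither of your two proposed devices (condition on one member of each correlated pair, or decompose into symmetric plus skew-symmetric parts) is what the paper does; instead it builds an inverse Littlewood--Offord theory for \emph{bilinear} forms $\sum_{i,j}a_{ij}x_i x'_j$ with $(x_i,x'_i)$ correlated (Sections~\ref{section:ILO:linear}--\ref{section:step2}) and proves the polynomial bound Theorem~\ref{theorem:singularvalue}. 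Finally, a minor correction: the diagonal entries are not $o(\sqrt n)$ in sup-norm; the right justification for discarding them is a Hilbert--Schmidt perturbation bound on the ESD, as in Lemma~\ref{lemma:truncation}.
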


In conjunction with Theorems \ref{theorem:sc}, \ref{theorem:cir}, and with the universality phenomenon, it is tempting to conjecture that Theorem \ref{theorem:Gir} should hold without any further moment assumption. One of the main goals of this paper is to resolve this conjecture for the real case.  

For any matrix $M$, we define the Hilbert-Schmidt norm $\|M\|_2$ by the formula
\begin{equation} \label{eq:def:hs}
	\|M\|_2 := \sqrt{\tr (M^\ast M)} = \sqrt{\tr (M M^\ast)}. 
\end{equation}

\begin{theorem}[Elliptic law for real random matrices] \label{thm:real}
Let $\{X_n\}_{n \geq 1}$ be a sequence of real random matrices that satisfy condition {\bf C0} with real atom variables $(\xi_1, \xi_2)$ where $\E[\xi_1 \xi_2] = \rho$, $-1 < \rho < 1$.  Assume that $\{F_n\}_{n \geq 1}$ is a sequence of deterministic matrices such that $\rank(F_n)=o(n)$\footnote{We use asymptotic notation under the assumption that $n \to \infty$.  See Section \ref{section:notation} for a complete description of the asymptotic notation used here and throughout the paper.} and $\sup_{n} \frac{1}{n^2} \|F_n\|^2_2 < \infty$.  Then the ESD of ${\frac{1}{\sqrt{n}}(X_n + F_n)}$ converges almost surely to the elliptic law with parameter $\rho$ as $n \rightarrow \infty$.  
\end{theorem}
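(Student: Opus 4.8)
The plan is to run Girko's Hermitization together with the logarithmic-potential/replacement method of Tao and Vu \cite{TVuniv}. Set $Y_n:=\frac1{\sqrt n}(X_n+F_n)$. A routine truncation of the entries of $X_n$ at level $n^{\eps}$ shows $\frac1{n^2}\|X_n\|_2^2$ is bounded almost surely, and since $\sup_n\frac1{n^2}\|F_n\|_2^2<\infty$ by hypothesis, $\frac1{n^2}\|Y_n\|_2^2=O(1)$ almost surely. Hence, by the replacement principle, it suffices to exhibit a comparison ensemble $\widehat Y_n=\frac1{\sqrt n}\widehat X_n$ for which $\mu_{\widehat Y_n}\to F_\rho$ is already known --- the real Gaussian elliptic matrix, or any matrix covered by Theorem~\ref{theorem:Gir} --- and to prove that for Lebesgue-almost every $z\in\C$,
\[
\frac1n\log\bigl|\det(Y_n-z\I)\bigr|-\frac1n\log\bigl|\det(\widehat Y_n-z\I)\bigr|\longrightarrow 0\qquad\text{almost surely.}
\]
Writing $\nu_{n,z}$ for the empirical distribution of the squared singular values of $Y_n-z\I$ --- equivalently the ESD of $(Y_n-z\I)(Y_n-z\I)^\ast$ --- and $\widehat\nu_{n,z}$ for its analogue built from $\widehat Y_n$, and noting $\frac1n\log|\det(Y_n-z\I)|=\frac12\int_0^\infty\log t\,d\nu_{n,z}(t)$, this reduces to two things: (i) $\nu_{n,z}$ and $\widehat\nu_{n,z}$ converge weakly, almost surely, to one and the same deterministic measure $\nu_z$; and (ii) $\log t$ is uniformly integrable against the families $\{\nu_{n,z}\}$ and $\{\widehat\nu_{n,z}\}$ at $t=0$ and at $t=\infty$.

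Step (i) is the soft part. Truncating and recentering the entries at level $n^{\eps}$ changes $Y_n$ by a matrix of negligible operator norm off an event of vanishing probability, and the truncated atom pair retains variances $1+o(1)$ and covariance $\rho+o(1)$; applying a Lindeberg-type interpolation to the Hermitian linearization $\left(\begin{smallmatrix}0 & Y_n-z\I\\ (Y_n-z\I)^\ast & 0\end{smallmatrix}\right)$ and a Stieltjes-transform fixed-point analysis (or, after truncating at a large constant instead, invoking Theorem~\ref{theorem:Gir} and continuity of $F_\rho$ in $\rho$) identifies $\nu_z$ with the measure attached to the Gaussian elliptic model; in particular $\nu_z(\{0\})=0$. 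The perturbation $F_n$ does not affect this limit: by the rank inequality for singular-value distributions, the L\'evy distance between $\nu_{n,z}$ and its $F_n$-free analogue is at most $2\,\rank(F_n)/n=o(1)$. Uniform integrability of $\log t$ at infinity is immediate, since $\int_0^\infty t\,d\nu_{n,z}(t)=\frac1n\|Y_n-z\I\|_2^2=O(1)$ almost surely, whence $\int_M^\infty\log t\,d\nu_{n,z}(t)\le\frac{\log M}{M}\int_0^\infty t\,d\nu_{n,z}(t)\to0$.

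Uniform integrability of $\log t$ at $0$ is where the real work is; it comes down to quantitative control of the smallest singular values of $Y_n-z\I$. What I would establish is: (a) there exist $A,B>0$, depending only on $\rho$ and $\sup_n\frac1{n^2}\|F_n\|_2^2$, with $\Pr\bigl(\sigma_n(Y_n-z\I)\le n^{-B}\bigr)\le n^{-A}$ for each fixed $z$; and (b) the matching control on intermediate singular values, $\sigma_{n-i+1}(Y_n-z\I)\ge c\,i/n$ for all $n^{1-c}\le i\le n$ with probability $1-O(n^{-A})$, extracted from (a) applied to suitable minors (for the genuinely tiny ones) together with the identity $\sum_i\sigma_i^{-2}=\sum_j\dist(c_j,H_j)^{-2}$ and the column-distance bounds coming out of the proof of (a), in the manner of Tao and Vu. Given (a), (b), the weak convergence $\nu_{n,z}\to\nu_z$, and $\nu_z(\{0\})=0$, one splits $\int_0^\infty\log t\,d\nu_{n,z}(t)$ at the $n^{1-c}$-th smallest singular value: the $O(n^{1-c})$ tiniest ones contribute $O(n^{-c}\log n)\to0$ by (a), and the rest converges to $\int\log t\,d\nu_z(t)$ using (b) and $\int_0^{\delta}\log(1/x)\,dx\to0$ as $\delta\to0$. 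Choosing $A>1$ and Borel--Cantelli promote the in-probability bounds to almost-sure ones (for $\widehat Y_n$ these are classical), and the replacement principle then gives $\mu_{Y_n}\to F_\rho$ almost surely.

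The main obstacle is estimate (a), because the rows (equivalently the columns) of $X_n-z\I$ are not independent: $x_{ij}$ and $x_{ji}$ are correlated, so the column $c_j$ of $M:=\frac1{\sqrt n}(X_n+F_n)-z\I$ and the span $H_j$ of the remaining columns are dependent, and the Rudelson--Vershynin / inverse Littlewood--Offord machinery for i.i.d.\ matrices does not apply off the shelf. I would run the usual compressible/incompressible dichotomy --- a net argument for almost-sparse unit vectors, and the reduction of $\sigma_n$ on incompressible vectors to an anti-concentration bound for $\dist(c_j,H_j)$ --- and decouple $c_j$ from $H_j$ by conditioning on all columns other than $c_j$: conditionally, the entries of $c_j$ become independent (the entry $x_{ij}$ carrying the tilted law of $\xi_1$ given the now-frozen value of $x_{ji}$), so $\dist(c_j,H_j)$ is a linear form in independent variables with coefficients measurable with respect to the conditioning, to which a robust small-ball inequality applies. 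Two points need care and form the technical heart: first, the deterministic shift $-z\sqrt n\,\I$ is of the same "circular-law" type already understood, and $F_n$, having rank $o(n)$, perturbs at most $o(n)$ of the column distances and otherwise enters the linear form as a fixed shift --- both harmless for anti-concentration; second, and more seriously, the conditional law of $\xi_1$ given $\xi_2$ may degenerate for some values of $\xi_2$ (in the extreme, $\xi_1$ is a deterministic function of $\xi_2$), so one must discard the few coordinates landing on bad values and, in the fully degenerate regime, exploit that the surviving randomness still comes from non-constant i.i.d.\ variables, keeping the anti-concentration constants uniform throughout the truncation. This is exactly the analysis developed for Hermitian and non-centered random matrices, and adapting it to the elliptic correlation structure is the crux; the remaining ingredients are assembled from the framework above.
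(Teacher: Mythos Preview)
Your overall architecture matches the paper's exactly: Hermitization (Lemma \ref{lemma:hermitization}), Lindeberg replacement on the singular-value law to reach the Gaussian model (Lemma \ref{lemma:replacement}), rank-perturbation removal of $F_n$, and uniform integrability of $\log$ via least and intermediate singular-value bounds (Lemma \ref{lemma:uniform-integrability}, Lemma \ref{lemma:small}, Theorem \ref{theorem:singularvalue}). The soft parts and your step (b) are essentially what the paper does; in particular the paper resolves the row/subspace dependency in (b) by deleting the $j$-th column before measuring the distance, which restores independence and lets Talagrand do the work.

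The genuine gap is in your plan for (a). Conditioning on all columns other than $c_j$ to render the entries of $c_j$ conditionally independent breaks down when $\xi_1$ is a deterministic function of $\xi_2$ --- for instance $\xi_2$ standard Gaussian and $\xi_1 = \rho\,\xi_2 + \sqrt{1-\rho^2}\,(\xi_2^2-1)/\sqrt 2$, which meets every hypothesis of the theorem. In that regime conditioning on $H_j$ fixes every $x_{ji}$, hence every off-diagonal $x_{ij}=g(x_{ji})$, so $c_j$ is deterministic given $H_j$ and there is no randomness left for a small-ball inequality; your appeal to ``surviving randomness from non-constant i.i.d.\ variables'' does not rescue this, since only the single diagonal entry $x_{jj}$ survives. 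The paper's route is to abandon the column-distance reduction for (a) altogether and instead expand $\det(M_n)$ as a \emph{bilinear} form in the first row and first column simultaneously (equation \eqref{eqn:bilinear}), conditioning only on the minor $M_{n-1}$. This keeps the full collection of i.i.d.\ pairs $(x_{1i},x_{i1})$ in play. The anti-concentration input is then an inverse Littlewood--Offord theorem for mixing linear forms $\sum_i(a_ix_i+b_ix_i')$ with correlated pairs $(x_i,x_i')$ (Theorem \ref{theorem:ILOlinear:new}), which needs only $|\rho|<1$ to separate the two directions (Claim \ref{claim:difference}); this is lifted to bilinear forms by decoupling (Theorem \ref{theorem:ILO:quadratic}) and fed into an inverse/counting scheme (Theorems \ref{theorem:step1} and \ref{theorem:step2}). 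That bilinear-form route is the new technical idea of the paper, and it is exactly what handles the degenerate case that defeats your conditioning scheme.
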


In fact, we are able to extend Theorem \ref{thm:real} to the following more general setting. 

\begin{definition}[$(\mu,\rho)$-family] Given parameters $0\le \mu \le 1$ and $-1 < \rho < 1$, we say that the complex random variable pair $(\xi_1,\xi_2)$ belongs to the {\it $(\mu,\rho)$-family} if the following holds.

\begin{enumerate}[(i)]
\item Both $\xi_1$ and $\xi_2$ have mean zero and unit variance;
\item $\E[(\Re(\xi_1))^2] = \E[(\Re(\xi_2))^2] =\mu$ and $\E[(\Im(\xi_1))^2] = \E[(\Im(\xi_2))^2] =1-\mu$;
\item $\E[\Re(\xi_1)\Re(\xi_2)] = \mu \rho$ and $\E[\Im(\xi_1)\Im(\xi_2)] = -(1-\mu)\rho$;  
\item $\E[\Re(\xi_i)\Im(\xi_j)] = 0$ for any $i,j \in \{1,2\}$. 
\end{enumerate}
\end{definition}

\begin{remark}
If $(\xi_1,\xi_2)$ belongs to the $(\mu,\rho)$-family, then the covariance matrix of $\mathbf{\xi} = (\Re(\xi_1), \Im(\xi_1), \Re(\xi_2), \Im(\xi_2))^\mathrm{T}$ is given by 
$$ \E \mathbf{\xi} \mathbf{\xi}^\mathrm{T} = \begin{pmatrix} \mu & 0 & \mu \rho & 0 \\ 0 & 1-\mu & 0 & -(1-\mu) \rho \\ \mu \rho & 0 & \mu & 0 \\ 0 & -(1-\mu)\rho & 0 & 1-\mu \end{pmatrix}. $$
\end{remark}

Notice that if $(\xi_1,\xi_2)$ belongs to the $(\mu,\rho)$-family then $\E|\xi_1|^2=\E|\xi_2|^2=1$ and $\E[ \xi_1\xi_2 ]=\rho$.  More importantly, we do not require the imaginary and real parts of $\xi_1,\xi_2$ to be independent.

\begin{theorem}[Elliptic law for complex random matrices] \label{thm:complex} Let $0\le \mu \le 1$ and $-1<\rho<1$ be given. Let $\{X_n\}_{n \ge 1}$ be a sequence of complex matrices such that $\{X_n\}_{n \geq 1}$ satisfies condition {\bf C0} with atom variables $(\xi_1,\xi_2)$ from the $(\mu,\rho)$-family. Assume furthermore that $\{F_n\}_{n \geq 1}$ is a sequence of deterministic matrices such that $\rank(F_n)=o(n)$ and $\sup_{n} \frac{1}{n^2} \|F_n\|^2_2 < \infty$. Then the ESD of ${\frac{1}{\sqrt{n}}(X_n + F_n)}$ converges almost surely to the elliptic law with parameter $\rho$ as $n \rightarrow \infty$. 
\end{theorem}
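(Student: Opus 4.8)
The plan is to carry out Girko's Hermitization scheme, following the modern proofs of the circular law (Theorem~\ref{theorem:cir}) of G\"otze--Tikhomirov \cite{GT} and Tao--Vu \cite{TVcir,TVuniv}, and to supply the two analytic inputs this scheme requires in the correlated (elliptic) setting. Write $M_n:=\tfrac{1}{\sqrt n}(X_n+F_n)$. By Girko's argument, $\mu_{M_n}\to F_\rho$ almost surely follows once we show: (a) $\tfrac{1}{n^2}\|X_n+F_n\|_2^2=O(1)$ almost surely, and (b) for Lebesgue-almost every $z\in\C$, almost surely
\[
\frac1n\log\bigl|\det(M_n-zI)\bigr|\;\longrightarrow\;\int_{\C}\log|w-z|\,dF_\rho(w).
\]
Claim (a) is immediate: the strong law of large numbers gives $\tfrac{1}{n^2}\sum_{i,j}|x_{ij}|^2\to1$ (the pairs $\{(x_{ij},x_{ji}):i<j\}$ are i.i.d.\ and the diagonal contributes $o(1)$), and $\sup_n\tfrac1{n^2}\|F_n\|_2^2<\infty$ by hypothesis. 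For (b), write $\tfrac1n\log|\det(M_n-zI)|=\tfrac12\int_0^\infty\log x\,d\nu_{n,z}(x)$, where $\nu_{n,z}$ is the empirical distribution of the squared singular values of $M_n-zI$; then (b) reduces to (A) identifying the weak limit $\nu_z$ of $\nu_{n,z}$ and (B) showing that $x\mapsto\log x$ is uniformly integrable along $\{\nu_{n,z}\}_n$, together with the identity $\int_0^\infty\log x\,d\nu_z(x)=2\int_{\C}\log|w-z|\,dF_\rho(w)$.

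For step (A): since $\rank(F_n)=o(n)$, the rank inequality for empirical singular value distributions shows that $F_n$ does not change $\nu_z$, so it suffices to treat $X_n$ alone, and after truncating the atom variables at a large threshold and recentering/rescaling---which perturbs $\nu_z$ by an amount tending to $0$ with the threshold, by Hoffman--Wielandt-type inequalities---we may assume the entries are bounded. The limiting Stieltjes transform of $\nu_{n,z}$ is then obtained in the standard way, from resolvent expansion and concentration of normalized traces of the resolvent, as the solution of a self-consistent system of equations: the natural elliptic analogue of the fixed-point equation behind the circular law, in which the correlation parameter $\rho$ couples the relevant resolvent entries; here the martingale-difference decomposition used for concentration runs over the independent blocks $\{x_{ii}\}\cup\{(x_{ij},x_{ji}):i<j\}$ rather than over rows. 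Since this system depends on the atom pair only through its first two moments (hence only through $\mu,\rho$), the limit $\nu_z$ is the same as for the Gaussian elliptic ensemble with the matching covariance structure; for that ensemble the empirical spectral distribution converges to $F_\rho$ (computable from the Ginibre-type joint eigenvalue density, cf.\ \cite{N,J,KS,Le}), and therefore $\int_0^\infty\log x\,d\nu_z(x)=2\int_{\C}\log|w-z|\,dF_\rho(w)$.

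For step (B), large singular values are harmless: $\int_0^\infty x\,d\nu_{n,z}(x)=\tfrac1{n^2}\|X_n+F_n-\sqrt n\,zI\|_2^2=O(1+|z|^2)$ almost surely by (a), and since $\log x\le x^{1/2}$ for $x\ge1$ we get $\int_K^\infty\log x\,d\nu_{n,z}(x)\le K^{-1/2}\int_0^\infty x\,d\nu_{n,z}(x)\to0$ as $K\to\infty$, uniformly in $n$. The behaviour near $0$ is the crux. One needs, for constants $B,c>0$ and with probability $1-O(n^{-c})$, a polynomial least-singular-value bound $\sigma_{\min}(M_n-zI)\ge n^{-B}$ together with an intermediate-value estimate $\#\{i:\sigma_i(M_n-zI)\le t\}\le\delta(t)\,n$ for $n^{-B}\le t\le1$, with $\delta(t)\to0$ as $t\to0$; both follow from the negative second moment identity $\sum_i\sigma_i(M)^{-2}=\sum_j\dist(C_j,H_j)^{-2}$ (with $C_j$ the $j$-th column of $M_n-zI$ and $H_j$ the span of the remaining columns) once one has a small-ball bound $\Pr\bigl(\dist(C_j,H_j)\le t\bigr)\le t\,n^{O(1)}+n^{-\omega(1)}$. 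Combined with the usual Borel--Cantelli and subsequence arguments, these give the required uniform integrability almost surely, and hence (b).

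The principal obstacle is exactly this small-ball bound for $\dist(C_j,H_j)$: in the elliptic model the columns of $X_n$ are \emph{not} independent, since column $j$ contains the entries $x_{ij}$ paired with the transpose entries $x_{ji}$ sitting in the other columns, so $C_j$ is not independent of $H_j$ and (for degenerately coupled atom pairs) merely conditioning on the other columns can leave $C_j$ with no usable randomness. The way around this is to retreat to a rectangular sub-block: split $[n]=S\sqcup T$ with $|S|=|T|=n/2$, note that the entries $\{x_{ij}:i\in S,\ j\in T\}$ are genuinely mutually independent (no two lie in the same pair) and independent of the blocks $X_n[S,S]$ and $X_n[T,T]$, and run the Rudelson--Vershynin and Tao--Vu inverse Littlewood--Offord machinery on this block after conditioning appropriately; the deterministic shift by $F_n-\sqrt n\,zI$ is harmless throughout since these estimates are insensitive to it. Adapting that machinery---the compressible/incompressible dichotomy and the structure theory for vectors on which row-sums concentrate---to the present block-decomposed, correlated setting is the main technical work. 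Once it is done, the remainder is a routine adaptation of the circular-law proof, and the complex $(\mu,\rho)$-family is handled identically: the parameter $\mu$ affects neither the singular-value estimates nor, in the end, the limiting law $F_\rho$, while the hypothesis $-1<\rho<1$ enters only in keeping the limiting object the genuine two-dimensional ellipse $\mathcal{E}_\rho$.
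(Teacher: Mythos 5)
Your high-level plan---Girko's Hermitization, a Lindeberg-type comparison to the Gaussian elliptic ensemble to identify $\nu_z$, and uniform integrability of $\log$ split into large and small singular values---is exactly the architecture of the paper's proof (Section~\ref{section:maintheorem:proof}, via Lemma~\ref{lemma:hermitization}, Lemma~\ref{lemma:replacement}, Lemma~\ref{lemma:complex}, and Lemma~\ref{lemma:uniform-integrability}). You also correctly identify the crux: a polynomial lower bound on $\sigma_n(M_n-zI)$, which is where essentially all the technical difficulty of the paper lives (Theorem~\ref{theorem:singularvalue} and Sections~\ref{section:singular}--\ref{section:step2}).

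The gap is in how you propose to obtain that bound. You assert that both the least-singular-value bound and the intermediate count follow ``from the negative second moment identity once one has a small-ball bound $\Pr(\dist(C_j,H_j)\le t)\le t\,n^{O(1)}+n^{-\omega(1)}$,'' and propose to get that small-ball bound by splitting $[n]=S\sqcup T$ and running the Rudelson--Vershynin/Tao--Vu machinery on the independent block $X_n[S,T]$. This does not close the gap. First, $\dist(C_j,H_j)$ is an inner product $|\langle C_j,\nu\rangle|$ in which $C_j$ contains $x_{kj}$ and the normal vector $\nu$ depends on $x_{jk}$, so $C_j$ and $\nu$ are coupled through every pair $(x_{kj},x_{jk})$; the block $X_n[S,T]$ gives you mutually independent entries, but it is not the set of random variables entering $\langle C_j,\nu\rangle$, and no reduction from the full column distance to that sub-block is supplied. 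Second, and more fundamentally, any ``condition on one half of each pair and exploit the leftover randomness'' strategy runs into the fact that condition {\bf C0} and the $(\mu,\rho)$-family prescribe only second moments, so the conditional law $\xi_2\mid\xi_1$ can be arbitrarily concentrated; there is no a priori anti-concentration to feed into the standard inverse Littlewood--Offord theorems. The paper sidesteps this by never conditioning within a pair: it expands $\det(M_n)$ in cofactors as a \emph{bilinear form} in the first row and first column, $c_{11}m_{11}+\sum_{i,j\ge2}c_{ij}(M_{n-1})m_{1i}m_{j1}$, and proves an inverse Littlewood--Offord theorem for such bilinear forms with correlated atom pairs (Theorem~\ref{theorem:ILO:quadratic}), built on an inverse theorem for ``mixing'' linear forms $\sum_i(a_ix_i+b_ix_i')$ (Theorem~\ref{theorem:ILOlinear:new}) whose proof uses only the unconditional joint law of $(\xi_1,\xi_2)$ and the quantitative non-degeneracy coming from $|\rho|<1$ (Claim~\ref{claim:difference}). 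That machinery is the real content of Theorem~\ref{theorem:singularvalue} and is not a routine adaptation of the i.i.d.\ case. For the intermediate singular values your negative-second-moment plan is fine and matches Lemma~\ref{lemma:small}, which handles the dependence more simply by deleting the $j$-th column and using Talagrand's inequality (Lemma~\ref{lemma:dist})---no inverse Littlewood--Offord input is needed there.
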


In light of the universality phenomenon, we conjecture that Theorem \ref{thm:complex} continues to hold when $\E|\xi_1|^2=\E|\xi_2|^2=1$ and $\E[ \xi_1\xi_2 ]=\rho$, where $\rho$ is a complex number satisfying $|\rho|<1$. In this optimal setting, the ESD of ${\frac{1}{\sqrt{n}}X_n}$ is conjectured to converge to the elliptic law associated with the rotated ellipsoid $\mathcal{E}_\rho$ given by
$$ \mathcal{E}_\rho := \left\{ z \in \mathbb{C} : \frac{ \left(\Re(z) \cos \frac{\theta}{2} - \Im(z) \sin \frac{\theta}{2}\right)^2}{(1 + |\rho|)^2} + \frac{\left(\Re(z) \sin \frac{\theta}{2} + \Im(z) \cos \frac{\theta}{2}\right)^2}{(1 - |\rho|)^2} \leq 1 \right\}, $$
where $\theta = \operatorname{Arg}(\rho)$.  (This formula for the rotated ellipsoid can be derived by multiplying the matrix by $e^{-i\theta/2}$ so that the resulting atom variables have a real-valued correlation.)

One of the key ingredients in the proof of Theorems \ref{thm:real} and \ref{thm:complex} is a lower bound on the least singular value of $X_n$.  If $M$ is a $n \times n$ matrix, we let 
$$ \sigma_1(M) \geq \sigma_2(M) \geq \cdots \geq \sigma_n(M) \geq 0 $$
denote the singular values of $M$.  In particular, the largest and smallest singular values satisfy
$$ \sigma_1(M) = \sup_{\|x\| = 1} \|Mx\| $$
and
$$ \sigma_n(M) = \inf_{\|x\| = 1} \|M x\|, $$
where $\|v\|$ denotes the Euclidean norm of a vector $v$.   

In particular, we will verify the following polynomial bound for the smallest singular value.  

\begin{theorem}[Bound on the least singular value for pertubed random matrices]\label{theorem:singularvalue}
Assume that $M_n=F_n+X_n$, where the entries of the given complex matrix $F_n$ are bounded by $n^{\alpha}$ in absolute value, and $X_n$ is a random matrix from Theorem \ref{thm:complex} for given $0\le \mu\le 1$ and $-1<\rho<1$. Then for any $B>0$, there exists $A>0$ and $n_0 > 0$ (both depending on $B,\alpha,\mu,\rho$, and the distribution of $(\xi_1,\xi_2)$ and $x_{11}$) such that 
$$\P(\sigma_n(M_n) \le n^{-A})\le n^{-B} $$
for all $n > n_0$.  
\end{theorem}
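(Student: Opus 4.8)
The plan is to follow the now-standard strategy for least-singular-value bounds of random matrices, pioneered by Rudelson--Vershynin and adapted to non-i.i.d.\ (correlated-pair) ensembles as here, together with the "geometric reduction to a single row" argument of Tao--Vu. First I would reduce to controlling $\inf_{\|x\|=1}\|M_n x\|$ by splitting according to whether the minimizing vector $x$ is \emph{compressible} (close to a sparse vector) or \emph{incompressible}. On the compressible part one uses a net argument: a single column of $M_n$ has norm $\lesssim n^{1/2+\alpha}$ with overwhelming probability, so $\|M_n x\|$ for $x$ near a sparse vector can be controlled by a union bound over an $\eps$-net of the sparse sphere, provided one first shows $\|M_n x\|\ge c n^{1/2}$ with probability $1-e^{-cn}$ for each fixed such $x$; this small-ball estimate for a fixed direction follows from the fact that each entry of $M_n x$ is a sum of independent terms with a nondegenerate variance lower bound (here the $(\mu,\rho)$-family structure guarantees the relevant covariance matrix is nonsingular since $|\rho|<1$), so one gets anti-concentration via a Paley--Zygmund / Esseen-type bound. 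The deterministic shift $F_n$, having entries bounded by $n^\alpha$, only affects the mean and is harmless at this scale.

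For the incompressible case I would use the standard identity relating $\sigma_n(M_n)$ to the distances between rows and the spans of the others: if $R_1,\dots,R_n$ are the rows of $M_n$ and $H_i$ is the span of all rows except $R_i$, then $\sigma_n(M_n)^{-1} \le n^{1/2}\max_i \dist(R_i,H_i)^{-1}$ restricted to the event that the minimizer is incompressible. So it suffices to prove a lower bound $\dist(R_i,H_i)\ge n^{-A'}$ with probability $1-n^{-B'}$. Writing $\dist(R_i,H_i)=|\langle R_i, v\rangle|$ for $v$ a unit normal to $H_i$, and noting that $v$ is independent of $R_i$ once we condition on the other rows, this becomes a small-ball probability for $\langle R_i,v\rangle$. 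The key point is that the normal vector $v$ is incompressible (spread out) with high probability --- this is exactly where one needs a separate argument showing that the normal to $n-1$ rows of an elliptic-type random matrix is not concentrated on few coordinates; this typically goes by another compressible/incompressible dichotomy applied to $v$ itself, using that $M_n^{\mathrm T}$ has the same structural form. Once $v$ is known incompressible, the Erd\H{o}s--Littlewood--Offord / Rudelson--Vershynin small-ball bound gives $\sup_t \P(|\langle R_i, v\rangle - t|\le \eta)\le C(\eta + n^{-1/2})$ (after again invoking the nondegeneracy of the atom covariance and handling the $F_n$-shift, which just moves $t$), and iterating or using a crude polynomial version suffices for the polynomial-in-$n$ bound we want; we do \emph{not} need the optimal $O(n^{-1/2})$ rate, only $n^{-A}$.

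The main obstacle, and the place where the correlated-pair structure genuinely enters, is the distance/normal-vector step: unlike the i.i.d.\ circular-law setting, the entries of $M_n$ are not independent --- only the \emph{pairs} $(x_{ij},x_{ji})$ across the diagonal are independent. When I condition on all rows but $R_i$ to produce the normal $v$, the $i$-th row is not independent of those rows, because $x_{ji}$ (an entry of row $j$, $j\neq i$) is correlated with $x_{ij}$ (an entry of $R_i$). The standard fix is to condition instead on the $(n-1)\times(n-1)$ principal minor obtained by deleting \emph{both} row $i$ and column $i$; then the normal vector to the corresponding subspace is a function only of entries independent of the remaining free entries of row $i$ and column $i$, and one recovers the needed conditional independence. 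Organizing this "delete row and column $i$" conditioning correctly --- and checking that the deleted entries of $F_n$ don't interfere, and that the principal minor is still of the elliptic type so the incompressibility of its null vector can be established --- is the technical crux. The rest (norm bounds on columns, the net cardinality estimate, the Esseen bound with the covariance nondegeneracy) is routine given the hypotheses, and I would assemble these pieces exactly as in Tao--Vu's treatment of the circular law, with the elliptic covariance structure replacing the scalar variance throughout.
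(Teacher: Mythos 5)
Your plan is a genuinely different route from the paper's: the paper does not use the Rudelson--Vershynin compressible/incompressible dichotomy at all. Instead it works with the cofactor expansion of $\det(M_n)$ along the first row and first column, reducing the problem to anti-concentration of the \emph{bilinear} form $\sum_{2\le i,j\le n} c_{ij}(M_{n-1})\,m_{1i}m_{j1}$ whose coefficients depend only on the principal minor $M_{n-1}$, and it develops an inverse Littlewood--Offord theorem for such bilinear forms in correlated pairs (Theorems \ref{theorem:ILOlinear:new} and \ref{theorem:ILO:quadratic}, plus a decoupling lemma) followed by a counting argument over structured coefficient vectors (Theorem \ref{theorem:step2}). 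A separate, short argument handles the event that $M_n$ is singular, which your proposal omits entirely and which does require attention when the atom variables are discrete.

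There is also a concrete gap at the crux of your proposal. You correctly identify that conditioning on all other rows fails because $x_{ji}$ (in row $j$) is correlated with $x_{ij}$ (in row $i$), and you propose to condition on the $(n-1)\times(n-1)$ principal minor obtained by deleting both row $i$ and column $i$. But this does \emph{not} recover the conditional independence you claim. The hyperplane $H_i=\operatorname{span}\{R_j: j\neq i\}$ lives in $\mathbb{C}^n$ and each spanning row $R_j$ contains the entry $x_{ji}$; hence the unit normal $v$ to $H_i$ is a function of the column $(x_{ji})_{j\neq i}$, which is precisely the part of $M_n$ correlated with $R_i$ after you condition on the principal minor. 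The quantity $\dist(R_i,H_i)=|\langle R_i,v\rangle|$ is therefore \emph{not} a linear form in independent inputs; once the principal minor is fixed, it is a genuinely bilinear function of the correlated pair variables $\{(x_{ij},x_{ji})\}_{j\neq i}$ (plus a diagonal term), and the anti-concentration statement you invoke (Esseen / Rudelson--Vershynin small-ball for $\langle R_i,v\rangle$ with $v$ fixed and independent) does not apply. This is exactly the obstruction the paper resolves by passing to the bilinear form via the cofactor identity and proving a bilinear inverse Littlewood--Offord theorem with a decoupling step (Lemma \ref{lemma:decoupling}); your proposal names the obstruction but then asserts it away, so the incompressible case of your argument as written does not close.
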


Our polynomial bound here is motivated by \cite[Lemma 4.1]{TVcir} of Tao and Vu, which plays a fundamental rule in their establishment of the circular law (Theorem \ref{theorem:cir}). We also refer the reader to the work \cite{RV} of Rudelson and Vershynin for an almost complete treatment for the least singular values of random non-Hermitian matrices with independent entries.  Similar techniques have also been used by G\"otze and Tikhomirov \cite{GT} to prove a version of Theorem \ref{theorem:cir}.  Recently, a similar study for random real symmetric matrices has been carried out independently by Vershynin in \cite{Ver} and by the first author in \cite{Ng-sym}.

\subsection{Overview and Outline}
Because of its importance, we prove Theorem \ref{theorem:singularvalue} first.  Indeed, in Section \ref{section:singular}, we outline the proof of Theorem \ref{theorem:singularvalue}.  We then complete the proof in Sections \ref{section:ILO:linear}--\ref{section:step2}.  In Section \ref{section:maintheorem:proof}, we use Theorem \ref{theorem:singularvalue} to prove our main results,  Theorems \ref{thm:real} and \ref{thm:complex}.  In particular, Section \ref{section:maintheorem:proof} is independent of Sections \ref{section:singular}--\ref{section:step2} and can be read separately.  The appendix contains a number of auxiliary results.

\subsection{Notation} \label{section:notation}
For a $m \times n$ matrix $M$, we let 
$$ \sigma_1(M) \geq \cdots \geq \sigma_{\min\{m,n\}}(M) \geq 0 $$
denote the singular values of $M$.  We use the notations $\row_i(M)$ and $\col_j(M)$ to denote its $i$-th row vector and its $j$-th column vector respectively; we use the notation $(M)_{ij}$ and $M_{ij}$ to denote its $(i,j)$ entry.  We let $\|M\|_2$ denote the Hilbert-Schmidt norm of $M$ (defined in \eqref{eq:def:hs}) and let $\|M\| := \sigma_1(M)$ denote the spectral norm of $M$.  

We consider $n$ an asymptotic parameter tending to infinity.  We use $Z \ll Y$, $Y \gg Z$, $Y=\Omega(Z)$, or $Z=O(Y)$ to denote the bound $|Z| \leq CY$ for all sufficient large $n$ for some constant $C$.  Notations such as $Z \ll_k Y$, $Z=O_k(Y)$ mean that the hidden constant $C$ depends on another constant $k$.  $Z=o(Y)$ or $Y=\omega(Z)$ means that $Z/Y \to 0$ as $n \to 0$.  We write $Z = \Theta(Y)$ or $Z \asymp Y$ for $Y \ll Z \ll Y$.  

As customary, we use $\eta$ to denote a Bernoulli random variable (thus $\eta$ takes values $\pm 1$ with
probability 1/2). For a given $0\le \mu \le 1$, we use $\eta^{(\mu)}$ to denote a modified-Bernoulli random variable
of parameter $\mu$ (thus $\eta^{(\mu)}$ takes values $\pm 1$ with probability $\mu/2$ and zero with probability
$1-\mu$).

Let A be an event.  Sometimes we will write $\P_{y_1,\ldots, y_k}(A)$ to emphasize that the probability under consideration is taken with respect to the specified random variables $y_1,\ldots,y_k$ (while fixing all other random variables).

We write a.s., a.a., and a.e. for almost surely, Lebesgue almost all, and Lebesgue almost everywhere respectively.

We use $\sqrt{-1}$ to denote the imaginary unit and reserve $i$ as an index.

\section{The least singular value problem}\label{section:singular}


In this section, we begin the proof of Theorem \ref{theorem:singularvalue}.  Broadly speaking, our proof follows the approach of \cite{Ng-sym}.  Nevertheless, because the matrix $X_n$ under consideration is much more complicated than a Hermitian matrix, it is of great necessity to generalize and string a series of previous results \cite{NgV, Ng-QLO, Ng-sym} together. As a result, our ideas will not be fully original but a highly non-trivial generalization of existing ones. The rest of this section is devoted to sketching our approach; complete details of the proofs will be presented subsequently. 

First of all, we will assume $n$ to be sufficiently large. For the sake of simplicity, we will prove our result under the following condition. 

\begin{condition}\label{condition:bound}
With probability one, $|x_{ij}| \le n^{B+1}$ for all $i,j$.
\end{condition}

In fact, because all $x_{ij}$ have bounded variance, we have $\P(|x_{ij}|\ge n^{B+1})= O(n^{-2B-2})$. Thus, we can assume that $|x_{ij}|\le n^{B+1}$ at the cost of an additional negligible term $o(n^{-B})$ in probability.

We next assume that $\sigma_n(M_n)\le n^{-A}$. Thus $M_n\Bx=\By$ for some $\|\Bx\|_2=1$ and $\|\By\|_2 \le n^{-A}$. There are two cases to consider.

\subsection{Case 1.} {\it $M_n$ has full rank}. This is the main case to consider as most of random matrices are non-singular with very high probability. 

Let $C(M_n)=(c_{ij}(M_n))$, $1\le i,j\le n$, be the matrix of the cofactors of $M_n$. By definition, $C(M_n)\By = \det(M_n) \cdot \Bx$, and thus we have $\|C(M_n)\By\|_2 = |\det(M_n)|$. 

By paying a factor of $n$ in probability, without loss of generality we can assume that the first component of $C(M_n)\By$ is greater than $\det(M_n)/n^{1/2}$, 
\begin{equation}\label{eqn:firstrow}
|c_{11}(M_n)y_1+\dots c_{1n}(M_n)y_n|\ge |\det(M_n)|/n^{1/2}.
\end{equation}

Note that $\|\By\|_2\le n^{-A}$, it thus follows
\begin{equation}\label{eqn:intro:1}
\sum_{j=1}^n |c_{1j}(M_n)|^2 \ge n^{2A-1} |\det(M_n)|^2.
\end{equation}

For $j\ge 2$, we write 

$$c_{1j}(M_n)=\sum_{i=2}^n m_{i1}c_{ij}(M_{n-1}),$$
where $M_{n-1}$ is the matrix obtained from $M_n$ by removing its first row and first column, and $c_{ij}(M_{n-1})$ are the corresponding cofactors of $M_{n-1}$, and $m_{ij}$ are the entries of $M_n$.

Hence, by the Cauchy-Schwarz inequality, by Condition \ref{condition:bound}, and by the bounds $f_{ij}\le n^\alpha$ for the entries of $F_n$, we have
\begin{eqnarray}\label{eqn:intro:2}
|c_{1j}(M_n)|^2 &\le \sum_{i=2}^n |m_{i1}|^2 \sum_{i=2}^n |c_{ij}(M_{n-1})|^2 \nonumber\\
&\le n^{2B+2\alpha + 3}  \sum_{i=2}^n |c_{ij}(M_{n-1})|^2.
\end{eqnarray}
Similarly, for $j=1$ we write $c_{11}(M_n)=\sum_{i=2}^n m_{i2}c_{i2}(M_{n-1})$, and thus, 
\begin{equation}\label{eqn:intro:3}
|c_{11}(M_n)|^2\le n^{2B+2\alpha+3} \sum_{i=2}^n |c_{i2}(M_{n-1})|^2.
\end{equation}

It follows from \eqref{eqn:intro:1}, \eqref{eqn:intro:2}, and \eqref{eqn:intro:3} that
$$2 \sum_{2\le i,j \le n} |c_{ij}(M_{n-1})|^2 \ge n^{2A-2B-2\alpha-4}|\det(M_{n})|^2.$$
Hence, for proving Theorem \ref{theorem:singularvalue}, it suffices to justify the following result (after an appropriate modification for $A$).

\begin{theorem}\label{theorem:singularvalue''}
For any $B>0$, there exists $A>0$ such that 
$$\P\big((\sum_{2\le i,j \le n} |c_{ij}(M_{n-1})|^2)^{1/2} \ge n^A|\det(M_n)| \big)\le n^{-B}.$$
\end{theorem}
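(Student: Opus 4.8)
The plan is to reduce the statement to a concentration/anti-concentration estimate relating the cofactor vector of $M_{n-1}$ to the distance from a random row of $M_n$ to the span of the others, and then to invoke a polynomial lower bound for that distance. First, I would relate both sides of the inequality to geometry of the rows. Writing $M_n$ in terms of its rows $R_1,\dots,R_n$, expansion along the first row gives $|\det(M_n)| = \dist(R_1, H_1)\cdot \|C_1\|_2$ up to the standard identity, where $H_1$ is the hyperplane spanned by $R_2,\dots,R_n$ and $C_1$ is the vector of first-row cofactors; more to the point, the quantity $(\sum_{2\le i,j\le n}|c_{ij}(M_{n-1})|^2)^{1/2}$ is the Hilbert--Schmidt norm of the adjugate of $M_{n-1}$, which equals $|\det(M_{n-1})| \cdot \|M_{n-1}^{-1}\|_2$ on the event that $M_{n-1}$ is invertible. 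So the claim becomes, roughly, $|\det(M_{n-1})|\,\|M_{n-1}^{-1}\|_2 \le n^A |\det(M_n)|$ with probability $\ge 1-n^{-B}$. Dividing through (and handling the non-invertible case separately, which has small probability by the same type of argument applied to $M_{n-1}$), it suffices to show $\|M_{n-1}^{-1}\|_2 \le n^A \cdot |\det(M_n)|/|\det(M_{n-1})|$ with high probability, i.e. $\|M_{n-1}^{-1}\|_2 \le n^A \dist(R_1,H_1)$ after using the cofactor/determinant identity for $|\det(M_n)|/|\det(M_{n-1})|$.

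Next, I would bound $\|M_{n-1}^{-1}\|_2$ from above. Since $\|M_{n-1}^{-1}\|_2 \le \sqrt{n}\,\|M_{n-1}^{-1}\| = \sqrt{n}/\sigma_{n-1}(M_{n-1})$, it is enough to show that $\sigma_{n-1}(M_{n-1})$ is not too small --- but that is essentially Theorem \ref{theorem:singularvalue} applied to the $(n-1)\times(n-1)$ matrix $M_{n-1}$, which still satisfies the hypotheses (deleting a row and column of $F_n$ keeps the entries bounded by $n^\alpha$, and the random part still satisfies condition {\bf C0} with the same atom variables). Thus, outside an event of probability $n^{-B'}$, we have $\sigma_{n-1}(M_{n-1})\ge n^{-A_1}$ and hence $\|M_{n-1}^{-1}\|_2 \le n^{A_1 + 1}$. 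On the other side, I would prove a polynomial lower bound $\dist(R_1, H_1) \ge n^{-A_2}$ with probability $\ge 1-n^{-B''}$: conditioning on $R_2,\dots,R_n$ (hence on a fixed normal vector $\mathbf{u}$ to $H_1$ with $\|\mathbf{u}\|_2=1$), $\dist(R_1,H_1)=|\langle R_1 - \text{(shift)}, \mathbf{u}\rangle|$, which is a linear form in the entries of the first row; the first column entries $x_{i1}$ are paired with $x_{1i}$ through $\rho$, but the first row is still a genuinely random vector with unit-variance coordinates and bounded entries, so a Berry--Esseen-type or Paley--Zygmund small-ball bound (as in \cite{TVcir, RV, Ng-sym}) gives $\P(\dist(R_1,H_1)\le t) \ll t n^{O(1)} + n^{-B''}$ for the relevant range of $t$, provided $\mathbf{u}$ is not too compressible --- and genuine incompressibility of $\mathbf{u}$ is inherited, on a high-probability event, from the invertibility/least-singular-value control on the minor $M_{n-1}$. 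Combining, on the intersection of these good events, $\|M_{n-1}^{-1}\|_2 \le n^{A_1+1} \le n^{A_1+1+A_2}\dist(R_1,H_1) = n^A |\det(M_n)|/|\det(M_{n-1})|$, which rearranges to the claimed bound after multiplying by $|\det(M_{n-1})|$; a union bound over the $O(1)$ many bad events (each of probability at most a fixed negative power of $n$, which we arrange to beat $n^{-B}$ by choosing $B',B''$ large) finishes the proof.

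The main obstacle I expect is the anti-concentration step for $\dist(R_1, H_1)$ and, more precisely, controlling the structure of the normal vector $\mathbf{u}$ to $H_1$. The row $R_1$ and the minor $M_{n-1}$ are not independent --- the first-column entries $x_{21},\dots,x_{n1}$ appear both in $R_1$ (no) wait, they appear in the first column, which was deleted from $M_{n-1}$, but they are correlated with $x_{12},\dots,x_{1n}$ which do live in $R_1$ --- so the conditioning has to be done carefully, and one cannot simply treat $R_1$ as independent of $\mathbf{u}$. The resolution is to condition on the second through $n$-th rows of $M_n$ (which determine both $\mathbf{u}$ and the correlated first-column entries), leaving only the ``fresh'' randomness in $R_1$ orthogonal to that conditioning; since the $(\mu,\rho)$-family has a nondegenerate covariance structure ($|\rho|<1$), the conditional law of $R_1$ still has a nontrivial small-ball profile. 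Establishing the incompressibility of $\mathbf{u}$ uniformly enough to feed into the small-ball inequality --- which is where the inverse Littlewood--Offord machinery of \cite{NgV, Ng-QLO, Ng-sym} and the structure theorems referenced in Section \ref{section:singular} come in --- is the genuinely hard and technical heart of the argument, and is presumably what Sections \ref{section:ILO:linear}--\ref{section:step2} are devoted to.
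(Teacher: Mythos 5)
Your overall plan — rewrite $(\sum_{2\le i,j}|c_{ij}(M_{n-1})|^2)^{1/2}$ as $|\det(M_{n-1})|\,\|M_{n-1}^{-1}\|_2$, bound $\|M_{n-1}^{-1}\|_2$ by the least singular value of the minor, and lower bound $|\det(M_n)|/|\det(M_{n-1})|$ via a distance-to-hyperplane/small-ball argument — is conceptually clean, and the inequality $|\det(M_n)|/|\det(M_{n-1})| \ge \dist(R_1,H_1)$ (which you call an ``identity'', but which holds only as an inequality, since $|\det(M_n)| = \dist(R_1,H_1)\,\|(c_{11}(M_n),\dots,c_{1n}(M_n))\|_2 \ge \dist(R_1,H_1)\,|c_{11}(M_n)| = \dist(R_1,H_1)\,|\det(M_{n-1})|$) does go in the direction you need. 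However, the proposal takes a genuinely different route from the paper, and there is a real gap in the anti-concentration step that I do not think can be repaired without abandoning your conditioning scheme.

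The gap is in the sentence ``since the $(\mu,\rho)$-family has a nondegenerate covariance structure ($|\rho|<1$), the conditional law of $R_1$ still has a nontrivial small-ball profile.'' Conditioning on the rows $R_2,\dots,R_n$ fixes not only the normal $\mathbf u$ to $H_1$ but also the first-column entries $x_{21},\dots,x_{n1}$, and what remains random in $R_1$ is the conditional law of $x_{1i}$ given $x_{i1}$. The hypothesis that $(\xi_1,\xi_2)$ lies in the $(\mu,\rho)$-family constrains only the covariance matrix of $(\Re\xi_1,\Im\xi_1,\Re\xi_2,\Im\xi_2)$; it says nothing about the conditional distribution of $\xi_1$ given $\xi_2$. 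That conditional law can collapse to a small set of atoms (take, for instance, $\xi_2=f(\xi_1)$ for a non-monotone $f$ normalized to have mean zero and unit variance with $|\E[\xi_1\xi_2]|<1$: conditioning on $\xi_2$ restricts $\xi_1$ to a finite fiber $f^{-1}(\xi_2)$), and no second-moment bound on $\rho$ rules this out. Thus the small-ball estimate $\P(\dist(R_1,H_1)\le t\mid R_2,\dots,R_n)\ll t\,n^{O(1)}+\text{error}$ simply does not follow from $|\rho|<1$ by any ``Berry--Esseen or Paley--Zygmund'' argument; the conditional linear form need not be anti-concentrated at all. This is precisely why the paper refuses to condition on the first column: in the outline of Section~\ref{section:singular}, only $M_{n-1}$ and $m_{11}$ are conditioned upon, and the quantity $\det(M_n)/c$ is treated as a genuine \emph{bilinear} form in the joint randomness $(x_{1i},x_{i1})_{i\ge 2}$. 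The decoupling lemma (Lemma~\ref{lemma:decoupling}) and the mixing inverse Littlewood--Offord theorem (Theorem~\ref{theorem:ILOlinear:new}, resting on Claim~\ref{claim:difference}) exploit the unconditioned covariance structure of the pair, which is exactly what the $(\mu,\rho)$-family does control, whereas your conditioning step asks for control of an object the hypotheses do not touch.

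Two further, smaller points. First, invoking Theorem~\ref{theorem:singularvalue} for the minor $M_{n-1}$ to bound $\|M_{n-1}^{-1}\|_2$ turns the argument into an induction on dimension (``LSV for $n-1$ implies cofactor bound for $n$ implies LSV for $n$''); the paper's argument is not inductive and proves Theorem~\ref{theorem:singularvalue''} directly via the structure/counting dichotomy of Theorems~\ref{theorem:step1} and~\ref{theorem:step2}, so one would at least need to check that the constants $A,n_0$ stay uniform along the induction. Second, even setting aside the conditioning issue, the proposal explicitly defers the ``genuinely hard and technical heart'' (incompressibility of $\mathbf u$ and the small-ball estimate) to the inverse Littlewood--Offord machinery without carrying it out; since that machinery in this paper is built around the bilinear-form formulation and the unconditioned pair $(\xi_1,\xi_2)$, it is not a drop-in replacement for the conditional linear-form estimate your route requires.
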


To see why the assumption $(\sum_{2\le i,j \le n} |c_{ij}(M_{n-1})|^2)^{1/2} \ge n^A|\det(M_n)|$ is useful, we next express $\det(M_n)$ as a bilinear form of its first row and column, 
$$\det(M_n) = c_{11}(M_n)m_{11} + \sum_{2\le i,j \le n} c_{ij}(M_{n-1})m_{1i}m_{j1}.$$
In other words, with $c:=(\sum_{2\le i,j \le n} |c_{ij}(M_{n-1})|^2)^{1/2}$ (which is nonzero as $M_n$ has full rank) and with $a_{ij}:=c_{ij}(M_{n-1})/c$ we have
\begin{equation}\label{eqn:bilinear}
\frac{1}{c}\det(M_n) =\frac{1}{c}m_{11}c_{11}(M_n)+\sum_{2\le i,j\le n}a_{ij}m_{1i}m_{j1}.
\end{equation}
 
Intuitively, if we condition on $M_{n-1}$ and $m_{11}$, then the right hand side of \eqref{eqn:bilinear}, as a bilinear form of the random variables $x_{1i},x_{i1},2\le i$, is comparable to 1 in absolute value with probability extremely close to one. Thus the assumption $\P(|\det(M_n)|/c\le n^{-A})\ge n^{-B}$ of Theorem \ref{theorem:singularvalue''}, with  appropriately large $A$, must yield a high cancelation of the bilinear form.  

Basing on this intuition, our rough approach will consist of two main steps below.
  
\begin{itemize}
\item  {\it Step 1} (Inverse step). Assume that for appropriately large $A$ we have 
$$\P_{x_{11},\dots,x_{1n},x_{21},\dots, x_{n1}}\left(\left|(c_{11}(M_n)/c)m_{11}+\sum_{2\le i,j\le n}a_{ij}m_{1i}m_{j1}\right|\le n^{-A}|M_{n-1}\right)\ge n^{-B}.$$ 
Then there must be a strong structure among the cofactors $c_{ij}$ of $M_{n-1}$.
\item {\it Step 2} (Counting step). The probability, with respect to $M_{n-1}$, that there is a strong structure among the $c_{ij}$ is negligible.
\end{itemize}

Before stating the steps above in greater detail, we pause to introduce the structure appearing in our analysis.

A set $Q\subset \C$ is a  \emph{generalized arithmetic progression} (GAP) of rank $r$ if it can be expressed as in the form
$$Q= \{g_0+ k_1g_1 + \dots +k_r g_r| k_i\in \Z, K_i \le k_i \le K_i' \hbox{ for all } 1 \leq i \leq r\}$$ for some $\{g_0,\ldots,g_r\},\{K_1,\ldots,K_r\}$ and $\{K'_1,\ldots,K'_r\}$.

It is convenient to think of $Q$ as the image of an integer box $B:= \{(k_1, \dots, k_r) \in \Z^r| K_i \le k_i \le K_i' \} $ under the linear map
$$\Phi: (k_1,\dots, k_r) \mapsto g_0+ k_1g_1 + \dots + k_r g_r. $$
The numbers $g_i$ are the \emph{generators } of $Q$, the numbers $K_i'$ and $K_i$ are the \emph{dimensions} of $Q$. We say that $Q$ is \emph{proper} if this map is one to one, or equivalently if $|Q| =|B|$.  For non-proper GAPs, we of course have $|Q| < |B|$. If $-K_i=K_i'$ for all $i\ge 1$ and $g_0=0$, we say that $Q$ is {\it symmetric}.

We refer the reader to Sections \ref{section:ILO:linear} and \ref{section:ILO:quadratic} for further explanation as to why GAPs are the right object to study here. In the sequel we state our main steps rigorously with the help of GAPs. 

\begin{theorem}[Step 1]\label{theorem:step1}
Let $0<\ep<1$ be a given constant. Assume that $M_{n-1}$ is fixed and
$$\sup_a \P_{x_2,\dots,x_{n},x_2',\dots,x_n'}\left(\left|\sum_{2\le i,j\le n} a_{ij}(x_i+f_i)(x_j'+f_j')-a\right|\le n^{-A}\right)\ge n^{-B}$$
for some sufficiently large integer $A$, where 
\begin{itemize}
\item $a_{ij}=c_{ij}(M_{n-1})/c$, 
\item $f_i=f_{1i},f_i'=f_{i1}$ are the entries of $F_n$, and thus fixed,
\item $(x_i,x_i')$ are i.i.d copies of  $(\xi_1,\xi_2)$ of a given $(\mu,\rho)$-family with $0\le \mu \le 1$ and $-1<\rho <1$.
\end{itemize}
Then there exists a complex vector $\Bu=(u_1,\dots,u_{n-1})$ which satisfies the following properties.
\begin{itemize}
\item (orthogonality) $\|\Bu\|_2\asymp 1$ and  either $|\langle \Bu,\row_i(M_{n-1})\rangle| \le n^{-A/2+O_{B,\ep}(1)}$ for $n-O_{B,\ep}(1)$ rows of $M_{n-1}$ or $|\langle \Bu,\col_i(M_{n-1})\rangle| \le n^{-A/2+O_{B,\ep}(1)}$ for $n-O_{B,\ep}(1)$ columns of $M_{n-1}$;
\item (additive structure) there exists a generalized arithmetic progression $Q$ of rank $O_{B,\ep}(1)$ and size $n^{O_{B,\ep}(1)}$ that contains at least $n-2n^\ep$ components $u_i$;
\item (controlled form) all the components $u_i$, and all the generators of the generalized arithmetic progression are rational complex numbers of the form $\frac{p}{q}+ \sqrt{-1} \frac{p'}{q'} $, where $|p|,|q|,|p'|,|q'| \le n^{A/2+O_{B,\ep}(1)}$.
\end{itemize}
\end{theorem}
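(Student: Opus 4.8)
The plan is to carry out the bilinear analogue of the inverse Littlewood--Offord argument of \cite{Ng-QLO,Ng-sym}: symmetrize and decouple the quadratic form to obtain a purely \emph{linear} small-ball estimate, apply a linear inverse Littlewood--Offord theorem valid for the (possibly continuous, correlated) atoms of the $(\mu,\rho)$-family to extract additive structure, amplify that structure across the conditioned variables by a pigeonhole count, and translate it back into a single vector $\Bu$ using the algebraic identity $C(M_{n-1})\,M_{n-1}^{\mathrm T}=\det(M_{n-1})\,\BI$ for the cofactor matrix. The new features compared with the symmetric case in \cite{Ng-sym} are that the coefficient array $(a_{ij})$ is non-symmetric and that $(\xi_1,\xi_2)$ is genuinely correlated; these are the source of essentially all the difficulty.

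\textbf{Reduction to a linear form.} Using the covariance matrix of $(\Re(\xi_1),\Im(\xi_1),\Re(\xi_2),\Im(\xi_2))$ recorded after the definition of the $(\mu,\rho)$-family, a fixed (that is, $n$-independent) invertible linear change of coordinates on $\C^2$ replaces the atoms by ones whose real and imaginary parts are mutually uncorrelated, at the cost of replacing $(a_{ij}),(f_i),(f_i')$ by comparable data; since the rank, the size, and the rational complexity of a GAP are preserved under such a transformation, it suffices to treat the transformed data. Introducing independent copies $x_i'',x_j'''$ of the atoms and symmetrizing (the shifts $f_i,f_i'$ then cancel), the decoupling inequality for bilinear forms of \cite{Ng-QLO} -- applied conditionally so as to absorb the residual within-pair correlation -- upgrades the hypothesis to
\begin{equation*}
\P_{\Bz,\Bw}\Big(\big|\textstyle\sum_{2\le i,j\le n} a_{ij} z_i w_j\big|\le 2 n^{-A}\Big)\ge n^{-O(B)},
\end{equation*}
where $z_i:=x_i-x_i''$ and $w_j:=x_j'-x_j'''$ are independent symmetric random variables. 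By Fubini there is a set $\CW$ of realizations of $\Bw$, of probability at least $n^{-O(B)}$, such that for every $\Bw\in\CW$ the linear form $\sum_i b_i(\Bw) z_i$, with coefficient vector $\Bb(\Bw):=A\Bw$, concentrates at scale $n^{-A}$ with probability at least $n^{-O(B)}$ over $\Bz$.

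\textbf{The inverse theorem and the vector $\Bu$.} Because the $z_i$ need not be discrete, we apply the form of the linear inverse Littlewood--Offord theorem valid for arbitrary atom distributions (\cite{NgV}, in its continuous formulation): for every $\Bw\in\CW$, after deleting $O(n^\ep)$ coordinates, $\Bb(\Bw)$ lies within $n^{-A/2+O_{B,\ep}(1)}$ of a proper symmetric GAP of rank $O_{B,\ep}(1)$ and size $n^{O_{B,\ep}(1)}$ all of whose data lie on the grid $\tfrac{1}{q}\Z[\sqrt{-1}]$ with $q\le n^{A/2+O_{B,\ep}(1)}$. There are only $n^{O_{B,\ep}(1)}$ such GAPs, so by pigeonhole there is one GAP $Q_0$ and a subset $\CW_0\subseteq\CW$ with $\P(\CW_0)\ge n^{-O(B)}$ for which $A\Bw$ lies $n^{-A/2+O_{B,\ep}(1)}$-close to $Q_0^{\,n-1}$, off $O(n^\ep)$ coordinates, for all $\Bw\in\CW_0$. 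Averaging the exceptional coordinate sets over $\Bw\in\CW_0$ shows that all but $O(n^{2\ep})$ indices $i$ are ``good'', in the sense that $\langle\row_i(A),\Bw\rangle$ lies $n^{-A/2+O_{B,\ep}(1)}$-close to $Q_0$ for $\Bw$ in a subset of $\CW_0$ of probability at least $n^{-O(B)}$; pigeonholing over the $n^{O_{B,\ep}(1)}$ points of $Q_0$ and applying the inverse theorem a second time -- now to the linear form $\langle\row_i(A),\Bw\rangle$ as a function of $\Bw$ -- we obtain that each good row $\row_i(A)$ is itself, off $O(n^\ep)$ coordinates, within $n^{-A/2+O_{B,\ep}(1)}$ of a rational GAP of rank $O_{B,\ep}(1)$ and size $n^{O_{B,\ep}(1)}$ of the controlled form. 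Now $A=c^{-1}C(M_{n-1})$ and $c\neq0$, so $C(M_{n-1})\neq0$ (indeed, when $M_{n-1}$ is nonsingular -- the generic case, the nullity-one case being handled separately -- all $n-1$ rows are nonzero); we pick a good nonzero row, normalize it to unit Euclidean length, and round its entries and the generators of its GAP to $\tfrac{1}{q}\Z[\sqrt{-1}]$ with $q\le n^{A/2+O_{B,\ep}(1)}$ chosen large enough -- using Condition~\ref{condition:bound} to bound the entries of $M_{n-1}$ -- to absorb all errors. This produces $\Bu$ with $\|\Bu\|_2\asymp1$, the claimed additive structure, and entries and generators of the controlled rational form. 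Finally, $\langle\row_k(C(M_{n-1})),\row_i(M_{n-1})\rangle=\det(M_{n-1})\,\delta_{ki}$ forces $\langle\Bu,\row_i(M_{n-1})\rangle=0$ exactly for the $n-2$ indices $i\ne k$, hence $\le n^{-A/2+O_{B,\ep}(1)}$ after rounding, for $n-O_{B,\ep}(1)$ rows of $M_{n-1}$. Conditioning on $\Bz$ rather than $\Bw$ in the previous step runs the identical argument with $M_{n-1}^{\mathrm T}$ in place of $M_{n-1}$ and yields orthogonality to $n-O_{B,\ep}(1)$ columns instead, which accounts for the dichotomy in the statement.

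\textbf{The main difficulty.} The crux is not any individual reduction but keeping all quantities polynomially under control through the two nested uses of the inverse theorem and the pigeonhole steps: the rank of the GAPs, their size, the number of exceptional coordinates, the denominator $q$, and the probability of the surviving set of conditioned variables must be kept at the respective levels $O_{B,\ep}(1)$, $n^{O_{B,\ep}(1)}$, $O(n^\ep)$, $n^{A/2+O_{B,\ep}(1)}$, and $n^{-O(B)}$ simultaneously and uniformly in $\mu,\rho$. The two ingredients absent from \cite{Ng-sym} -- the non-symmetry of $(a_{ij})$, which is what makes the ``rows or columns'' alternative and the second application of the inverse theorem necessary, and the within-pair correlation of $(\xi_1,\xi_2)$, which obstructs a direct decoupling -- are precisely the points that require care; the remainder is bookkeeping.
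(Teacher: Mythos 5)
Your proposal has the right skeleton---symmetrize and decouple, apply an inverse Littlewood--Offord theorem, pigeonhole, then invoke cofactor orthogonality---but it breaks at precisely the two points you yourself identify as the new difficulties, and at a third subtler one.

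The correlation in the $(\mu,\rho)$-family cannot be removed by a fixed linear change of coordinates. Condition~(iv) of the definition already gives $\E[\Re(\xi_i)\Im(\xi_j)]=0$; the obstruction is $\E[\xi_1\xi_2]=\rho\neq 0$, and no affine transformation of $\C^2$ upgrades a merely \emph{uncorrelated} pair to an \emph{independent} one, which is what the inverse theorems of \cite{NgV} require. The same issue resurfaces after your symmetrization: $z_i=x_i-x_i''$ and $w_i=x_i'-x_i'''$ satisfy $\E[z_i w_i]\neq 0$, so conditioning on $\Bw$ and treating $\sum_i b_i(\Bw) z_i$ as a linear form in independent $z_i$'s of fixed law is illegal---the coefficient $b_i(\Bw)=\sum_j a_{ij}w_j$ contains $a_{ii}w_i$, and the conditional law of $z_i$ given $w_i$ depends on $w_i$. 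The paper's decoupling (Lemma~\ref{lemma:decoupling}) uses a bipartite mask $A_U$ exactly to kill these same-index couplings, and then, because the within-pair dependence of $(v_i,w_i)_{i\in U}$ persists, proves a new inverse theorem for \emph{mixing} linear forms (Theorem~\ref{theorem:ILOlinear:new}), whose decisive input is the measure-theoretic separation statement Claim~\ref{claim:difference}---not a whitening of the covariance.

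Normalizing a single good row does not produce the required $\Bu$, and the rows-versus-columns dichotomy is not a choice of which variable to condition on. The GAP approximation for a good row $\row_i(A)$ holds at scale $n^{-A+O(1)}$; dividing by $\|\row_i(A)\|_2$ inflates the error to $n^{-A+O(1)}/\|\row_i(A)\|_2$, which can exceed $n^{-A/2}$ (or even $1$), and nothing ties ``good'' (has additive structure) to ``has norm bounded below.'' The paper is organized around this exact obstruction: Theorem~\ref{theorem:ILO:quadratic} outputs small-ball control for \emph{integer combinations} $\row_i'=k\,\row_i(A)+\sum_j k_{ij}\row_{i_j}(A)$, and Section~\ref{section:step1} introduces the threshold $K=n^{-A/2}$ and splits into the non-degenerate case (some $\|\row_{i_0}'\|_2\ge K$, normalization is safe, cofactor relations give approximate orthogonality to \emph{columns} of $M_{n-1}$) and the degenerate case (all $\|\row_i'\|_2\le K$, one turns to a column $\col_{j_0}(A)$ of norm $\ge n^{-1/2}$, whose coordinates are then forced into a GAP generated by $\{\col_{j_0}(j)/k\}$, yielding approximate orthogonality to \emph{rows} of $M_{n-1}$). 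Finally, your pigeonhole ``over the $n^{O_{B,\ep}(1)}$ such GAPs'' is not a count of $n^{O_{B,\ep}(1)}$: the generators live on a grid with denominator up to $n^{A/2+O(1)}$, so there are $n^{\Theta(A)}$ candidate GAPs, and paying that factor in probability would make the final GAP size $A$-dependent and the conclusion false as stated. The paper pigeonholes only over generating \emph{indices} ($n^{O(1)}$ choices) and integer \emph{coefficient tuples} ($n^{O_{B,\ep}(1)}$ choices), leaving the GAP itself free to vary with the conditioned variables.
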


In the second step of the approach, we  show that the probability for $M_{n-1}$ having the above properties is negligible.
\begin{theorem}[Step 2]\label{theorem:step2}
With respect to $M_{n-1}$, the probability that there exists a vector $\Bu$ as in Theorem \ref{theorem:step1} is $\exp(-\Omega(n))$.
\end{theorem}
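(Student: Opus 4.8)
The goal is to show that the probability over the random matrix $M_{n-1}$ of the existence of a vector $\Bu$ with the orthogonality, additive structure, and controlled-form properties of Theorem \ref{theorem:step1} is $\exp(-\Omega(n))$. The strategy is a union bound over a discrete net of candidate structured vectors. First I would fix a realization of the deterministic matrix $F_{n-1}$ and treat $M_{n-1} = X_{n-1} + F_{n-1}$, conditioning on the diagonal of $X_{n-1}$ and working with the independent pairs $(x_{ij}, x_{ji})$, $2 \le i < j \le n$. The key point is that, for a \emph{fixed} unit vector $\Bu$, each event $|\langle \Bu, \row_i(M_{n-1})\rangle| \le n^{-A/2 + O_{B,\eps}(1)}$ is a small-ball event for a linear form in the entries of row $i$ (or column $i$), and these are \emph{not} independent across $i$ because of the elliptic coupling $(x_{ij}, x_{ji})$; however, one can still extract enough independence — e.g.\ by grouping indices into two halves, or by using that conditioning on one half leaves genuine randomness in the other — so that roughly $n - O_{B,\eps}(1)$ such near-orthogonality constraints force a probability bound of the shape $(Cn^{-c})^{n - O(1)}$ for that single $\Bu$, where the per-row small-ball probability is $O(n^{-c})$ for some $c>0$ as long as $\Bu$ is a genuine unit vector (i.e.\ has at least one coordinate of size $\Omega(n^{-O(1)})$, which is guaranteed by $\|\Bu\|_2 \asymp 1$ together with the controlled-form bounds). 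Actually I would be more careful: the honest per-row small ball bound for a fixed $\Bu$ with a bounded number of ``large'' coordinates might only be $O(1)$ rather than $o(1)$, so instead I would use the standard trick of \emph{decoupling} or of exploiting that either $\Bu$ or a fixed linear functional of it spreads out, to get a uniform constant $\delta < 1$ bound per row, yielding $\delta^{n - O_{B,\eps}(1)} = \exp(-\Omega(n))$ for each fixed structured $\Bu$.

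The second ingredient is counting: I must bound the number of candidate vectors $\Bu$ satisfying the additive-structure and controlled-form conclusions, and show this count is only $\exp(o(n))$, so that the union bound survives. Here the structural constraints are doing the real work. A vector $\Bu$ in the conclusion of Theorem \ref{theorem:step1} has all but $2n^\eps$ of its coordinates lying in a GAP $Q$ of rank $O_{B,\eps}(1)$ and size $n^{O_{B,\eps}(1)}$, with all generators and all coordinates being rationals of the form $\tfrac{p}{q} + \sqrt{-1}\tfrac{p'}{q'}$ with numerators and denominators bounded by $n^{A/2 + O_{B,\eps}(1)}$. So I would count: (a) the number of such GAPs $Q$ — determined by $O_{B,\eps}(1)$ generators each from a set of size $n^{O_{B,\eps}(1)}$, so $n^{O_{B,\eps}(1)}$ choices; (b) the choice of which $\le 2n^\eps$ coordinates are exceptional — $\binom{n-1}{\le 2n^\eps} \le \exp(O(n^\eps \log n)) = \exp(o(n))$; (c) for the exceptional coordinates, each is one of $n^{O(A)}$ rationals, giving $\exp(O(n^\eps \log n)) = \exp(o(n))$; (d) for the non-exceptional coordinates, each lies in $Q$, so at most $|Q|^{n} = n^{O_{B,\eps}(n)}$ — \emph{wait}, that is $\exp(O(n \log n))$, which is too big. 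So the count is \emph{not} $\exp(o(n))$ on the nose, and this is the subtlety.

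The resolution — and the reason the problem is still tractable — is that I should \emph{not} union-bound over all assignments of non-exceptional coordinates into $Q$ independently; instead I use a \emph{net} argument combined with the observation that the $\exp(-\Omega(n))$ bound for a fixed $\Bu$ has room to spare only if the per-row probability is genuinely polynomially small, i.e.\ like $n^{-c}$ with $c$ large (which it is, because $A$ is large and the near-orthogonality tolerance is $n^{-A/2+O(1)}$, so the small-ball probability for a row against a vector with a GAP structure of bounded size is at most $n^{-\Omega(1)}$ \emph{times} ... hmm). Cleaner: I would discretize $Q$ at scale $n^{-K}$ for suitable large $K$, so that the net over ``$\Bu$ with coordinates in $Q$'' has size $(|Q| \cdot n^{K})^{n} = \exp(O(n\log n))$, and then observe that the per-row small-ball probability is at most $n^{-cA + O(1)}$; choosing $A$ large enough relative to $K$ and the rank/size exponents makes $n^{-(cA - O(1))(n - O(1))} \cdot \exp(O(n \log n)) = \exp(-\Omega(n \log n))$, which beats the count. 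In other words, the \textbf{main obstacle} is the tension between the size of the GAP (hence the net) and the strength of the small-ball bound, and it is overcome by taking $A$ sufficiently large — this is exactly why Theorems \ref{theorem:step1} and \ref{theorem:singularvalue''} quantify over ``sufficiently large $A$'': the structural dichotomy of Step 1 gives a GAP whose size exponent $O_{B,\eps}(1)$ does \emph{not} grow with $A$, while the orthogonality tolerance $n^{-A/2}$ \emph{does} improve with $A$, so a large enough $A$ makes the union bound close. I would carry out the argument by: (1) reducing to a fixed $F_{n-1}$ and fixed diagonal; (2) establishing the single-vector bound $\P(\Bu \text{ is near-orthogonal to } n - O_{B,\eps}(1) \text{ rows or columns}) \le \exp(-\Omega(n\log n))$ using the small-ball estimate (e.g.\ a Berry–Ess\'een or Erd\H{o}s–Littlewood–Offord type bound adapted to the elliptic coupling, extracting independence from half the coordinates); (3) building the net of all structured $\Bu$ and bounding its cardinality by $\exp(O(n\log n))$; (4) choosing $A$ large enough that (2) beats (3) after a union bound; and finally removing the conditioning on the diagonal and on $F_{n-1}$. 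The technical heart is step (2) — getting a small-ball bound per row that is polynomially small with an exponent proportional to $A$ — which requires carefully using that a structured $\Bu$ with bounded-rank GAP support still cannot make a fresh random linear form concentrate better than $n^{-\Omega(A)}$ on a window of width $n^{-A/2}$, together with a decoupling to handle the symmetric pair correlation.
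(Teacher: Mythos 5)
Your diagnosis of the central tension --- that the size of the net of structured $\Bu$'s is $n^{O(n)}$ while the per-vector probability needs to beat this --- is exactly right, and is the crux the paper wrestles with. But the resolution you propose, ``take $A$ large so the per-row small-ball bound $n^{-cA+O(1)}$ dominates the count,'' does not work, and this is a genuine gap. For a \emph{fixed} unit vector $\Bu$ (as any point of your net is), the quantity $\sup_a \P(|\langle \Bu,\row_i\rangle - a| \le \beta)$ does \emph{not} decrease like $n^{-\Omega(A)}$ as $\beta = n^{-A/2+O(1)} \to 0$. Once $\beta$ is below the atomic scale of the distribution of the linear form $\sum_j u_j x_{ij}$, the small-ball probability saturates at a value determined by the anti-concentration of the atom variables and the arithmetic structure of $\Bu$, not by $\beta$; for example, if $\Bu = (1/\sqrt{n-1},\ldots,1/\sqrt{n-1})$ and the $x_{ij}$ are Bernoulli, it equals $\Theta(n^{-1/2})$ for all small $\beta$, independently of $A$. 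So the honest per-row bound for an arbitrary structured $\Bu$ in your net is $n^{-O_{B,\ep}(1)}$ at best, where the exponent is bounded independently of $A$, and the product (net size)$\times$(per-vector probability) $= n^{O_{B,\ep}(n)} \cdot n^{-O_{B,\ep}(n)}$ can diverge --- there is no free parameter to tune. Raising $A$ shrinks the window but simultaneously inflates the net you need (to discretize at a scale matching the window), so the two effects cancel rather than compound in your favor.

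The paper's proof closes this gap with a genuinely different, bootstrapping ``divide and conquer'' argument that couples the counting and the probability estimates. Writing $\P_{\beta_0}(\Bu) \le \prod_{i\le n_0} \gamma^{(i)}_{\beta_0}(\Bu)$ (by exposing rows one at a time and truncating so the $(x_{ij},x_{ji})$ correlation is handled by symmetry), it first disposes trivially of all $\Bu$ with $\P_{\beta_0}(\Bu) < n^{-Cn}$ by a crude union bound with the $n^{O_{B,\ep}(n)}$ count. For the surviving $\Bu$, the fact that $\P_{\beta_0}(\Bu)$ is \emph{large} is itself exploitable: the inverse Littlewood--Offord theorem (Theorem \ref{theorem:ILOlinear}) applied at a carefully chosen radius $\beta_{k_0}$ (found by the pigeonhole principle so that the probability does not jump by more than $n^{\ep n}$ over one dyadic step) yields a \emph{new, much finer} approximation $\Bu'$ whose coordinates lie in GAPs $Q_k$ with cardinality $O(\gamma^{-1}/n^{1/2-\ep})$ --- that is, the size of the containing GAP is \emph{inversely proportional} to the per-row small-ball level $\gamma$. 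This makes the count of re-approximated vectors $\prod_k l_k^{-m_k}/n^{(1/2-\ep-o(1))n}$ and the probability $\prod_k l_k^{m_k} \cdot n^{2\ep n}$ cancel essentially term by term, leaving $n^{-(1/2-3\ep-o(1))n}$. In other words, vectors for which the per-row probability is unhelpfully large must be \emph{few} in number precisely because that large probability forces tight additive structure. This mutual-compensation argument is the missing idea in your proposal and cannot be replaced by choosing $A$ large.
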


\subsection{Case 2.} {\it $M_n$ does not have full rank}, which is the case to consider if $\xi_1,\xi_2$ have discrete distribution. We show that for any fixed $B>0$ this event holds with probability less than $n^{-B}$ for large enough $n$ depending on $B$.

First, instead of the entries $x_{ij}$ of $X_n$, consider $x_{ij}':= (1-\ep^2)x_{ij}+ \ep \xi_{ij}$, where $\xi_{ij}$ are independently uniform on the interval $[-1,1]$ and $\ep$ is very small, say $n^{-1000An}$. It is clear that the continuous matrix $M_n'=X_n'+F_n$, where $X_n'$ is formed by the $x_{ij}'$ above, has full rank with probability one. By applying Theorem \ref{theorem:singularvalue} obtained from Case 1 for the matrix $M_n'$, with probability at least $1-n^{-B}$ one has $\sigma_n(M_n')\ge n^{-A}$, and thus
\begin{equation}\label{eqn:largedeterminant}
|\det(M_n')|\ge n^{-An}.
\end{equation}

Next, because $M_n'= M_n -\ep(\ep x_{ij}+\xi_{ij})$ and as $|x_{ij}|\le n^{B+1}$, by the Brunn-Minkowski inequality and Hadamard's bound we have 
$$|\det(M_n')|\le (|\det(M_n)|^{1/n}+O(n^{-500A}))^n,$$
where we use the fact that $A$ is chosen sufficiently large compared to $B$.

Combining with \eqref{eqn:largedeterminant}, we then infer that $|\det(M_n)|\ge n^{-(1+o(1))An}$, and thus $\det(M_n)\neq 0$ with probability at least $1-n^{-B}$, concluding the treatment for this case.

The proof of Theorem \ref{theorem:step1} will be given in Section \ref{section:step1} thanks to useful tools from Sections \ref{section:ILO:linear} and \ref{section:ILO:quadratic}. Theorem \ref{theorem:step2} will be concluded in Section \ref{section:step2}.

\section{Anti-concentration, a warm-up}\label{section:ILO:linear}

Recall that in the inverse step, Theorem \ref{theorem:step1}, we assumed that 
\begin{equation}\label{eqn:step1:discussion}
\sup_a \P_{x_2,\dots,x_{n},x_2',\dots,x_n'}\left(\left|\sum_{2\le i,j\le n} a_{ij}(x_i+f_i)(x_j'+f_j')-a\right|\le n^{-A}\right)\ge n^{-B}.
\end{equation}
This can be considered as a high concentration of the bilinear form $\sum_{2\le i,j\le n} a_{ij}(x_i+f_i)(x_j'+f_j')$ on a small ball of radius $n^{-A}$, where $x_i$ and $x_i'$ are not necessarily jointly independent. The main idea to extract this bit of information is to relate it to a high concentration of an appropriate linear form. This step is postponed until Section \ref{section:ILO:quadratic}. Our goal now is to focus on linear forms.

A classical result of Erd\H{o}s \cite{E} and Littlewood-Offord \cite{LO} in the 1940s asserts that if $a_i$ are complex numbers of magnitude $|a_i|\ge 1$, then the probability that the linear form $\sum_{i=1}^n a_ix_i$ concentrates on a disk of radius one is of order $O(n^{-1/2})$, where $x_i$ are i.i.d. copies of a Bernoulli random variable. Recently, motivated by inverse theorems from additive combinatorics, Tao and Vu studied the underlying reason as to why the concentration probability of $\sum_{i=1}^n a_ix_i$ on a small ball is large. They call this the {\it inverse Littlewood-Offord problem}. A closer look at the definition of generalized arithmetic progressions defined in Section \ref{section:singular} reveals that if $a_i$ are very {\it close} to the elements of a $GAP$ of rank $O(1)$ and size $n^{O(1)}$, then the probability that $\sum_{i=1}^n a_ix_i$ concentrates on some small ball is of order $n^{-O(1)}$, where $x_i$ are i.i.d. copies of a Bernoulli random variable. 

It was shown implicitly by Tao and Vu in \cite{TVbull,TVcir,TVcomp} that these are essentially the only examples that have high concentration probability. An explicit and somewhat optimal version has been proved in a recent paper by the first author and Vu in \cite{NgV}. Before stating this result, we pause to introduce some terminology. 

We say that a real random variable $\xi$ is {\it anti-concentrated} if there exist positive constants $\alpha_1,\alpha_2,\alpha_3$ such that $\P(\alpha_1< |\xi-\xi'|<\alpha_2)\ge \alpha_3,$ where $\xi'$ is an i.i.d. copy of $\xi$. (Note that the requirement of anti-concentration is somewhat weaker than having mean zero and unit variance.) We say that a complex number  $a\in \C$ is {\it $\delta$-close} to a set $Q\subset \C$ if there exists $q\in Q$ such that $|a-q|\le \delta$.

\begin{theorem}[Inverse Littlewood-Offord theorem for linear forms, \cite{NgV}]\label{theorem:ILOlinear} Let $0 <\ep < 1$ and $B>0$. Let 
$ \beta >0$ be an arbitrary real number that may depend on $n$. Suppose that $\sum_{i=1}^n |a_i|^2=1$, and 
$$\sup_a\P_{\Bx} \Big(|\sum_{i=1}^n a_i(x_i+f_i)-a| \le \beta\Big)=\gamma \ge n^{-B},$$ 
where $\Bx=(x_1,\dots,x_n)$, and $x_i$ are i.i.d. copies of a real random variable $\xi$ satisfying the anti-concentration condition. Then, for any number $n'$ between $n^\ep$ and $n$, there exists a proper symmetric GAP $Q=\{\sum_{i=1}^r k_ig_i : k_i\in \Z, |k_i|\le L_i \}$ such that
\begin{enumerate}[(i)]
\item (control of rank and size) $Q$ has small rank, $r=O_{B,\ep}(1)$, and small cardinality
$$|Q| \le \max \left(O_{B,\ep}\left(\frac{\gamma^{-1}}{\sqrt{n'}}\right),1\right);$$
\item (control of the steps) there is a non-zero integer $p=O_{B,\ep}(\sqrt{n'})$ such that all
 steps $g_i$ of $Q$ have the form  $g_i=\beta\frac{p_{i}} {p} $, with $p_{i} \in \Z$ and $p_{i}=O_{B,\ep}(\beta^{-1} \sqrt{n'})$;
\item (good approximation) at least $n-n'$ elements of $a_i$ are $\beta$-close to $Q$.
\end{enumerate}
\end{theorem}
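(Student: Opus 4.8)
The plan is to run the Fourier-analytic method of Halász, combined with geometry of numbers, in the spirit of \cite{TVbull}. First, the deterministic shifts $f_i$ are harmless: in the Esséen step below they contribute only a unit-modulus factor to the characteristic function. The anti-concentration hypothesis enters solely through its effect on $\phi(\theta) := \E\, e^{2\pi \sqrt{-1}\, \theta \xi}$: writing $\widehat\xi := \xi - \xi'$ for a symmetrized copy, $\P(\alpha_1 < |\widehat\xi| < \alpha_2) \ge \alpha_3$ gives $|\phi(\theta)|^2 = \E\cos(2\pi\theta\widehat\xi) \le 1 - c\,\dist(\theta, \Lambda)^2$ on a fixed neighbourhood of the origin, where $\Lambda$ is a lattice depending only on the law of $\xi$ (generically $\Lambda=\{0\}$, i.e.\ simply $|\phi(\theta)|\le e^{-c\theta^2}$ for bounded $\theta$); retaining $\Lambda$ is what later forces the rational shape of the generators in (ii), and to lighten notation I normalise $\Lambda=\Z$ and write $\|\theta\|:=\dist(\theta,\Z)$. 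Now set $S := \sum_{i=1}^n a_i(x_i+f_i) \in \C \cong \R^2$. By the two-dimensional Esséen concentration inequality and independence of the $x_i$,
$$ \gamma \;\ll\; \beta^2 \int_{t\in \R^2,\ |t|\le 1/\beta} \Big| \E\, e^{2\pi\sqrt{-1}\,\langle t, S\rangle} \Big|\, dt \;=\; \beta^2 \int_{|t|\le 1/\beta} \prod_{i=1}^n \big| \phi(\langle t, a_i\rangle) \big|\, dt \;\le\; \beta^2 \int_{|t|\le 1/\beta} \exp\Big(-c \sum_{i=1}^n \|\langle t, a_i\rangle\|^2\Big)\, dt, $$
where $\langle t, a\rangle := t_1 \Re(a) + t_2 \Im(a)$. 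Since each $\|\langle t, a_i\rangle\|^2 \le 1/4$, a dyadic pigeonhole over the level sets of the exponent selects a scale $m = O_B(\log n)$ (larger scales being impossible because a level set cannot exceed the volume of the box) and produces a symmetric set $\CT \subset \{|t|\le 1/\beta\}$ with $|\CT| \gg_B \gamma\,\beta^{-2} n^{-O_B(1)}$ on which $\sum_{i=1}^n \|\langle t, a_i\rangle\|^2 \le s := m+1 \ll n$. A further averaging inside $\CT$ lets one lower $s$ and restrict to a sub-collection of the $a_i$ at the cost of discarding indices — this is where the free parameter $n'$ will be spent. Finally, discretising $t$ on a sufficiently fine net (mesh $\asymp \beta\, n^{-O(1)}$, chosen so adjacent net points have comparable exponents) replaces $\CT$ by a finite symmetric set $\CT'$ of ``near-annihilating frequencies'', comprising a $\gamma\, n^{-O_B(1)}$-fraction of the net points in the box.

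The set $\CT'$ has small doubling: for $t,t' \in \CT'$ one has $\sum_i \|\langle t+t', a_i\rangle\|^2 \le 2\sum_i \|\langle t, a_i\rangle\|^2 + 2\sum_i \|\langle t', a_i\rangle\|^2 \le 4s$, so $\CT' + \CT'$ again consists of near-annihilating frequencies and therefore, comparing cardinalities relative to the net, $|\CT'+\CT'| \ll_B |\CT'|$. A Freiman-type inverse theorem with polynomial bounds — equivalently a successive-minima analysis of this sub-level set, as in \cite{TVbull} — then places $\CT'$ inside a proper symmetric GAP $P$ of rank $O_B(1)$ and size $\ll_B |\CT'|$, with generators of the form $\beta p_i/p$, $p, p_i$ polynomially bounded. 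One then dualises: for each $t$ among a basis of $P$, the inequality $\sum_i \|\langle t, a_i\rangle\|^2 \le s$ forces $\|\langle t, a_i\rangle\| \le \kappa$ for all but $O(s/\kappa^2)$ indices, so intersecting over the $O_B(1)$ basis elements shows that for all but $O_B(s/\kappa^2)$ indices $a_i$ nearly annihilates all of $P$; a standard geometry-of-numbers duality then places each such $a_i$ within $O(\kappa\, n^{O(1)})$ of a ``dual'' proper symmetric GAP $Q \subset \C$ of rank $O_B(1)$, again with generators $\beta q_i/q$. Choosing $\kappa$ and the discard level so that $O_B(s/\kappa^2) \le n'$, and optimising the volume relation $|P|\cdot|Q| \asymp (\text{number of net points in the box})$ against $|P| \gg |\CT'| \gg \gamma \cdot (\text{net points}) \cdot n^{-O_B(1)}$, yields $|Q| \ll_{B,\ep} \gamma^{-1}/\sqrt{n'}$ together with the stated bounds on the generators; the $\sqrt{n'}$ improvement is exactly the gain of the Cauchy--Schwarz step hidden in this optimisation.

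The Fourier ingredients (Esséen's inequality, the bound on $|\phi|$ from anti-concentration) are routine. The substance of the argument lies in the structural step: obtaining \emph{polynomial} — not exponential — dependence on $B$ in the Freiman/successive-minima input, transporting it cleanly through the duality so that the rank stays $O_{B,\ep}(1)$ while the size comes out as the sharp $\gamma^{-1}/\sqrt{n'}$, and — the most delicate part of the bookkeeping — implementing the ``long-range'' trick that converts a discard of $n'$ indices into the $\sqrt{n'}$ saving, simultaneously pinning the generators to the form $g_i = \beta p_i/p$ with $p = O_{B,\ep}(\sqrt{n'})$ and producing a \emph{proper} GAP via the standard lemma embedding any GAP in a proper one of comparable rank and polynomially comparable size.
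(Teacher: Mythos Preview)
Your overall architecture matches the Nguyen--Vu proof that this paper cites (and whose method is reproduced in Appendix~\ref{section:ILOlinear:proof} for the generalisation, Theorem~\ref{theorem:ILOlinear:new}): an Ess\'een/Hal\'asz bound, a level-set/pigeonhole step, discretisation, and additive combinatorics. But the specific route you take through the additive-combinatorics step has a real gap.

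The problematic claim is $|\CT'+\CT'| \ll_B |\CT'|$. Your inclusion $\CT'+\CT' \subset \{t:\sum_i\|\langle t,a_i\rangle\|^2\le 4s\}$ is correct, but nothing bounds the cardinality of the level-$4s$ set in terms of the level-$s$ set: the only a~priori upper bound on the former is the full net in the box, i.e.\ $\gamma^{-1}n^{O_B(1)}|\CT'|$. So the doubling constant you can actually assert is $n^{O_B(1)}$, not $O_B(1)$, and Freiman (or even polynomial Freiman--Ruzsa) then only yields a GAP of rank $O_B(\log n)$, which destroys conclusion~(i). The subsequent ``dualise the frequency GAP to a coefficient GAP'' step is also not standard and would need an explicit volume/annihilator argument to make sense of $|P|\cdot|Q|\asymp(\text{net points})$.

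The paper's proof avoids this entirely by working on the coefficient side from the start rather than on the frequency side. After identifying the large level set $S_m$ and the good indices, one uses the size of $S_m$ together with the Fourier identity to bound directly the \emph{measure} of the $k$-fold sumset $k(V\cup\{0\})+C(0,\tfrac1{512})$ of the good coefficients (see \eqref{eqn:sizeofkV'}--\eqref{eqn:revised'}). This gives polynomial growth $|kX|\ll k^2$ with an \emph{absolute} exponent (the ambient dimension~$2$), so the Tao--Vu John-type theorem (Lemma~\ref{lemma:TV}: $|lX|\le l^A|X|$ implies $lX$ sits in a proper GAP of rank $O_A(1)$) delivers rank $O(1)$ and size $O(\gamma^{-1}k^2)$; dividing out by $k\asymp\sqrt{n'/m}$ via Lemma~\ref{lemma:dividing} then yields the sharp $|Q|\ll\gamma^{-1}/\sqrt{n'}$ and the rational form of the generators. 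The $\sqrt{n'}$ gain comes not from a Cauchy--Schwarz trick on the frequency side but from this choice of $k$ together with the rank-2 case analysis at the end of Appendix~\ref{section:ILOlinear:proof}.
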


Here and later, if not specified, the implied constants are allowed to depend on the distribution of the random variables under consideration. Thus, for instance the implied constants in Theorem \ref{theorem:ILOlinear} also depend on $\alpha_1,\alpha_2$ and $\alpha_3$. The interested reader is also invited to read \cite{RV} for a similar but milder setting of the inverse Littlewood-Offord for linear forms.

To attack Theorem \ref{theorem:step1}, the first step is to study the concentration of a more general linear form $\sum_i a_ix_i+b_ix_i'$, where $(x_i,x_i')$ are i.i.d copies of a pair random complex variables $(\xi_1,\xi_2)$ from a given $(\mu,\rho)$-family. Intuitively, as $\E|\xi_1|^2=\E |\xi_2|^2=1$ and $|\E \xi_1\xi_2|=|\rho|<1$, the random variables $\xi_1$ and $\xi_2$ are not totally dependent on each other. (See for instance Claim \ref{claim:difference} of Appendix \ref{section:ILOlinear:proof} for a more precise statement.) This fact may suggest a way to apply Theorem \ref{theorem:ILOlinear} with respect to $x_2,\dots, x_n$ while holding $x_2',\dots,x_n'$ ``fixed'', and vice versa. One of the main results is to justify this intuition.  

\begin{theorem}[Inverse Littlewood-Offord theorem for mixing linear forms]\label{theorem:ILOlinear:new} Let $0\le \mu \le 1, -1<\rho <1$ and $0<\ep <1, B>0$ be given. Let 
$ \beta >0$ be an arbitrary real number that may depend on $n$. Suppose that $a_i,b_i\in \C$ such that $\sum_{i=1}^n |a_i|^2+\sum_{i=1}^n|b_i|^2=1$ and 
$$\sup_a\P_{\Bx,\Bx'} \left(\left|\sum_{i=1}^n (a_ix_i+b_ix_i')-a\right| \le \beta\right)=\gamma \ge n^{-B},$$ 
where $(x_i,x_i')$ are i.i.d copies of  $(\xi_1,\xi_2)$ from a given $(\mu,\rho)$-family.
Then there exist positive constants $\alpha,c_0,C_0$ depending on $(\xi_1,\xi_2)$ and two pairs of complex numbers $(c_1,c_2)$ and $(c_1',c_2')$ (which may depend on $n$) such that 
\begin{itemize}
\item $|c_1|,|c_2|,|c_1'|,|c_2'|$ are bounded from below and above by $c_0$ and $C_0$ respectively,
\item  $|c_1/c_2-c_1'/c_2'|>\alpha$,
\item  for any number $n'$ between $n^\ep$ and $n$, there exists a proper symmetric GAP $Q=\{\sum_{i=1}^r k_ig_i : k_i\in \Z, |k_i|\le L_i \} \subset \C$ whose parameters satisfy (i) and (ii) of Theorem \ref{theorem:ILOlinear} and for at least $n-n'$ indices $i$, the pairs $c_1a_i+c_2b_i, c_1'a_i + c_2'b_i$ are $\beta$-close to $Q$.
\end{itemize}
\end{theorem}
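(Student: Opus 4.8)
The plan is to reduce the mixing linear form $\sum_{i}(a_ix_i+b_ix_i')$ to an ordinary linear form in a single i.i.d.\ sequence and then to invoke Theorem~\ref{theorem:ILOlinear}. As a first step I would symmetrize: replacing each pair $(x_i,x_i')$ by $(x_i-\widehat x_i,\,x_i'-\widehat x_i')$ for an independent copy $(\widehat x_i,\widehat x_i')$ reduces matters to the case $a=0$ with a \emph{symmetric} atom pair, at the cost of replacing $\gamma$ by $\gamma^2\ge n^{-2B}$ and $\beta$ by $2\beta$. Since the covariance of the pair is merely doubled, the symmetrized and rescaled pair again lies in the $(\mu,\rho)$-family, so the reduced statement has the same shape, and the constants $\alpha,c_0,C_0$ together with the output GAP transfer back with only $O(1)$ adjustments.

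Next I would fix two transverse directions. The hypotheses $|\rho|<1$ and condition (iv) of the $(\mu,\rho)$-family together say that $\xi_1$ and $\xi_2$ are \emph{not totally dependent} on each other, and I would record a quantitative form of this (the statement the paper defers to its appendix and calls Claim~\ref{claim:difference}): there exist a constant $\delta_0>0$, an anti-concentrated complex random variable $\nu$, a positive-probability measurable ``good set'', and two pairs $(c_1,c_2),(c_1',c_2')\in\C^2$ with $|c_1|,|c_2|,|c_1'|,|c_2'|\asymp 1$ and $|c_1/c_2-c_1'/c_2'|\gtrsim 1$, such that, writing $(w,w')$ for the pair of linear combinations of $(\xi_1,\xi_2)$ determined by $a\xi_1+b\xi_2=(c_1a+c_2b)w+(c_1'a+c_2'b)w'$ for all $a,b$, one has $\mathcal L(w\mid w'=t)\ge\delta_0\nu$ for all $t$ in the good set, and symmetrically with $w$ and $w'$ interchanged. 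Put $e_i:=c_1a_i+c_2b_i$ and $e_i':=c_1'a_i+c_2'b_i$; invertibility with controlled norms then gives $\sum_i(a_ix_i+b_ix_i')=\sum_i(e_iw_i+e_i'w_i')$ and $\sum_i|e_i|^2+\sum_i|e_i'|^2\asymp 1$, with $(w_i,w_i')$ i.i.d.\ copies of $(w,w')$.

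The core of the argument is the reduction. Fixing $\mathbf w'=(w_2',\dots,w_n')$ makes the $w_i$ independent across $i$; on the random set $\mathcal I:=\{i:\ w_i'\in\text{good set}\}$ the domination $\mathcal L(w_i\mid w_i')\ge\delta_0\nu$ lets me write $w_i=\varepsilon_iu_i+(1-\varepsilon_i)\widetilde w_i$ with $\varepsilon_i$ i.i.d.\ $\mathrm{Bernoulli}(\delta_0)$, $u_i$ i.i.d.\ of law $\nu$, and everything mutually independent, so that after conditioning additionally on $\mathbf w'$, on $\{\varepsilon_i\}$, and on $\{w_i:\ \varepsilon_i=0\text{ or }i\notin\mathcal I\}$ the only remaining randomness is $\{u_i:\ i\in\mathcal J\}$, where $\mathcal J:=\{i\in\mathcal I:\ \varepsilon_i=1\}$ has $|\mathcal J|\gtrsim\delta_0^2 n$ with overwhelming probability, is independent of $(a_i,b_i)$, and is an i.i.d.\ anti-concentrated sequence. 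A routine Fubini/conditioning argument then produces an event of probability $\gtrsim\gamma^2/n^{O(1)}$ on which the conditional small-ball probability of $\sum_{i\in\mathcal J}e_iu_i$ at scale $\beta$ is still $\ge n^{-O(B)}$, so Theorem~\ref{theorem:ILOlinear} (with atom law $\nu$) applies and yields, for any prescribed parameter, a proper symmetric GAP of rank $O_{B,\ep}(1)$, controlled size, and rational steps of the required form that is $\beta$-close to all but a few of $\{e_i:\ i\in\mathcal J\}$. The mirror argument --- conditioning on $\mathbf w$ and extracting an i.i.d.\ component from the $w_i'$ --- produces a GAP $\beta$-close to most of $\{e_i':\ i\in\mathcal J'\}$.

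Two points then finish the proof. First, Theorem~\ref{theorem:ILOlinear} controls the $e_i$ only over the random sub-collection $\mathcal J$, whereas the conclusion asks for all but $n'$ indices of $\{2,\dots,n\}$; here I would use that $\mathcal J$ is an essentially $\mathrm{Bernoulli}(\delta_0^2)$ random subset independent of $(e_i)$, while the GAPs returnable by Theorem~\ref{theorem:ILOlinear} form a family of size $n^{O(1)}$, so a Chernoff estimate plus a union bound over this family show that with overwhelming probability any GAP $\beta$-close to a $(1-o(1))$-fraction of $\{e_i:i\in\mathcal J\}$ is $\beta$-close to a $(1-o(1))$-fraction of all the $e_i$, and one tunes the parameter of Theorem~\ref{theorem:ILOlinear} so that the exceptional fraction falls below $n'/n$ for every $n'\ge n^\ep$. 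Second, to produce the single GAP $Q$ of the statement I would take $Q$ to be the proper symmetric GAP generated by the union of the generators of the two GAPs (after first disposing of the trivial case in which one of $\sum_i|e_i|^2$, $\sum_i|e_i'|^2$ is $\le\beta^2$, when every $e_i$, respectively $e_i'$, is automatically $\beta$-close to the symmetric $Q$); its rank stays $O_{B,\ep}(1)$, its size and steps stay within the required control, and it is $\beta$-close to both $c_1a_i+c_2b_i$ and $c_1'a_i+c_2'b_i$ for all but $n'$ indices $i$. I expect the reduction --- and within it the auxiliary claim, that for \emph{every} member of the $(\mu,\rho)$-family with $|\rho|<1$ there is a \emph{single}, fixed choice of transverse directions along which a genuine i.i.d.\ anti-concentrated component of the noise survives after freezing the conjugate variable --- to be the main obstacle; this is exactly where $|\rho|<1$ and condition (iv) enter, and where one must contend with controlling only the second-moment data of $(\xi_1,\xi_2)$. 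The coverage upgrade and the merging of GAPs are routine.
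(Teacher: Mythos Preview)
Your approach is genuinely different from the paper's, and the difference matters.

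\textbf{What the paper actually does.} The paper does \emph{not} black-box Theorem~\ref{theorem:ILOlinear} via conditioning. It reruns the entire Fourier/level-set/Freiman machinery of \cite{NgV} with one modification. After the Esseen-type bound and the level-set argument, one reaches an inequality of the form
\[
\E_{(z,z')}\sum_{s\in S}\sum_i\bigl\|\Re\bigl((za_i'+z'b_i')s\bigr)\bigr\|_{\R/\Z}^2\le Cm|S|,
\]
where $(z,z')$ is a rescaling of the symmetrized pair. The paper's Claim~\ref{claim:difference} then asserts only that this symmetrized pair lands, with positive probability, in each of two regions $R_1,R_2\subset\C^2$ whose coordinates are bounded above and below and whose ratios are uniformly separated. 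Picking realizations $(c_1,c_2)\in R_1$, $(c_1',c_2')\in R_2$ and applying Markov gives the same inequality with $(z,z')$ replaced by these deterministic pairs. One then feeds the \emph{combined} length-$2n$ sequence $v_i=4c_1a_i'+4c_2b_i'$, $v_{n+i}=4c_1'a_i'+4c_2'b_i'$ into the remainder of the \cite{NgV} proof (dual sets, discretization, Tao--Vu John-type theorem), producing a single GAP with the stated size $O(\gamma^{-1}/\sqrt{n'})$ containing most of both families.

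\textbf{Your auxiliary claim is not the paper's.} What you call Claim~\ref{claim:difference} is a considerably stronger statement: a fixed linear change of variables $(w,w')$ under which $\mathcal L(w\mid w'=t)\ge\delta_0\nu$ on a good set, and symmetrically. The paper's claim says nothing about conditional laws; it is purely about where the random pair lands, and its proof is a short pigeonhole on the ratio $\omega_1/\omega_2$ using only $|\rho|<1$. Your version must contend with the fact that we control only second-moment data of $(\xi_1,\xi_2)$: for instance, if $\xi_2$ is a deterministic function of $\xi_1$, then after any invertible linear change both $w$ and $w'$ are functions of a single variable, and for all but finitely many ratios $\mathcal L(w\mid w')$ is a point mass. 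Symmetrization breaks deterministic dependence when the function is nonlinear, but you would still have to establish uniform domination by a fixed anti-concentrated $\nu$ depending only on the covariance data, and you have not done so. You correctly flag this as the main obstacle, but it is a genuine gap, not a restatement of the paper's lemma.

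\textbf{A quantitative loss.} Even granting your auxiliary claim, your two conditioning runs produce two separate GAPs, each of size $O(\gamma^{-1}/\sqrt{n'})$; merging them by taking the GAP generated by the union of generators yields size $O(\gamma^{-2}/n')$, which fails condition~(i) of Theorem~\ref{theorem:ILOlinear}. The paper avoids this by treating the $2n$ coefficients as one sequence inside the \cite{NgV} argument, so the structural step outputs a single GAP of the correct size. (For the downstream applications in this paper only the bound $|Q|=n^{O_{B,\ep}(1)}$ is used, so your weaker bound might suffice there, but it does not prove Theorem~\ref{theorem:ILOlinear:new} as stated.)
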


As Theorem \ref{theorem:ILOlinear:new} can be shown by modifying the proof of Theorem \ref{theorem:ILOlinear} from \cite{NgV}, we postpone its proof until Appendix \ref{section:ILOlinear:proof}. We now introduce several useful corollaries. 

Firstly, by choosing $b_i=0$, Theorem \ref{theorem:ILOlinear:new} immediately implies the following version of Theorem \ref{theorem:ILOlinear}.

\begin{corollary}\label{cor:complex}
The conclusion of Theorem \ref{theorem:ILOlinear} also holds if $x_i$ are i.i.d. copies of a complex random variable $\xi$ satisfying $\E|\xi|^2=1$ and $\E\xi=\E [\Im(\xi) \Re (\xi)]=0$.
\end{corollary}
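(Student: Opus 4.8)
\emph{Proof of Corollary \ref{cor:complex} (sketch).} The plan is to realize the single complex atom $\xi$ as the first coordinate of a pair drawn from a suitable $(\mu,0)$-family and then invoke Theorem \ref{theorem:ILOlinear:new} with the decoupled choice $b_i=0$. First I would set $\mu:=\E[(\Re(\xi))^2]$. Since $\E\xi=0$, both $\Re(\xi)$ and $\Im(\xi)$ have mean zero, so $\mu=\Var(\Re(\xi))\ge 0$; and since $|\xi|^2=(\Re(\xi))^2+(\Im(\xi))^2$ and $\E|\xi|^2=1$, we get $\E[(\Im(\xi))^2]=1-\mu\ge 0$, hence $0\le\mu\le 1$. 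Now let $\xi_2$ be an independent copy of $\xi$ and set $\xi_1:=\xi$. Using $\E\xi_1=\E\xi_2=0$, $\E|\xi_1|^2=\E|\xi_2|^2=1$, the hypothesis $\E[\Re(\xi)\Im(\xi)]=0$, and the independence of $\xi_1$ and $\xi_2$, one checks the four conditions defining a $(\mu,\rho)$-family: (i) and (ii) hold by the choice of $\mu$; in (iii) and (iv) every mixed expectation of the form $\E[\Re(\xi_1)\Re(\xi_2)]$, $\E[\Im(\xi_1)\Im(\xi_2)]$ or $\E[\Re(\xi_i)\Im(\xi_j)]$ with $i\ne j$ factors by independence into a product of mean-zero quantities and hence vanishes, while $\E[\Re(\xi_i)\Im(\xi_i)]=\E[\Re(\xi)\Im(\xi)]=0$ by hypothesis. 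Thus $(\xi_1,\xi_2)$ lies in the $(\mu,0)$-family (note $\rho=\E[\xi_1\xi_2]=(\E\xi)^2=0$).

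Next I would match the two hypotheses. The deterministic shifts $f_i$ in Theorem \ref{theorem:ILOlinear} do not affect the small-ball probability, since $\sum_i a_i(x_i+f_i)-a=\sum_i a_ix_i-(a-\sum_i a_if_i)$ and we take the supremum over $a$; hence
$$\sup_a\P_{\Bx}\Big(\Big|\sum_{i=1}^n a_i(x_i+f_i)-a\Big|\le\beta\Big)=\sup_a\P_{\Bx,\Bx'}\Big(\Big|\sum_{i=1}^n a_ix_i-a\Big|\le\beta\Big)=\gamma\ge n^{-B},$$
and $\sum_i|a_i|^2+\sum_i|b_i|^2=1$ once $b_i=0$. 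So the hypotheses of Theorem \ref{theorem:ILOlinear:new} hold for the pair $(\xi_1,\xi_2)$ above with $b_i=0$, and that theorem yields constants $c_0,C_0$ depending only on the law of $\xi$, a complex number $c_1$ with $c_0\le|c_1|\le C_0$, and, for each admissible $n'$, a proper symmetric GAP $Q$ whose rank and cardinality satisfy (i) and (ii) of Theorem \ref{theorem:ILOlinear} and such that $c_1a_i$ is $\beta$-close to $Q$ for at least $n-n'$ indices $i$ (we use only one of the two pairs the theorem provides).

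Finally I would pass back to the $a_i$ by a dilation. Set $Q':=c_1^{-1}Q$. Since $c_1\ne 0$, $Q'$ is again a proper symmetric GAP of the same rank $O_{B,\ep}(1)$ and the same size $|Q'|=|Q|\le\max(O_{B,\ep}(\gamma^{-1}/\sqrt{n'}),1)$, its generators are $c_1^{-1}$ times those of $Q$, and $a_i$ is $(|c_1|^{-1}\beta)$-close to $Q'$ for the same set of $\ge n-n'$ indices. As $|c_1|^{-1}\le c_0^{-1}$ depends only on the law of $\xi$ --- which is the kind of dependence the constants in Theorem \ref{theorem:ILOlinear} are permitted to carry (and if one insists on the radius $\beta$ exactly, one simply runs Theorem \ref{theorem:ILOlinear:new} from the start with $\beta$ replaced by $c_0\beta$, after covering a $\beta$-ball by $O(c_0^{-2})$ balls of radius $c_0\beta$) --- this is the conclusion of Theorem \ref{theorem:ILOlinear}. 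Essentially all of the content sits in the $(\mu,0)$-family verification; the only point needing a little care is this last dilation, which preserves rank, size, properness and symmetry and merely rescales the generators.
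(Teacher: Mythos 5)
Your proof is correct and takes essentially the same route as the paper, which disposes of the corollary in a single sentence ("by choosing $b_i=0$, Theorem \ref{theorem:ILOlinear:new} immediately implies..."). Your contribution is to spell out the details the paper leaves implicit: (a) that the single atom $\xi$ can be paired with an \emph{independent} copy $\xi_2$ to produce a member of the $(\mu,0)$-family with $\mu=\E[(\Re\xi)^2]$ (the verification of conditions (i)--(iv) via independence and $\E[\Re(\xi)\Im(\xi)]=0$ is exactly right, and $\rho=0$ is automatic since $\E\xi=0$); (b) that the shifts $f_i$ are absorbed by the supremum over $a$; and (c) that the GAP $Q$ supplied by Theorem \ref{theorem:ILOlinear:new} tracks $c_1 a_i$ rather than $a_i$, so one must dilate by $c_1^{-1}$, which changes the radius and the form of the steps only by a factor bounded in terms of $c_0$ and $C_0$, i.e.\ in terms of the law of $\xi$ --- exactly the kind of dependence the implied constants in Theorem \ref{theorem:ILOlinear} already absorb. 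Note that in the paper's actual downstream use of Corollary \ref{cor:complex} (Section \ref{section:step1}), only $O(\beta)$-closeness and $O_{B,\ep}(1)$ size bounds are needed, so your covering-argument aside to recover the radius $\beta$ exactly is not even necessary. One further small remark: the dilation $c_1^{-1}Q$ has steps of the form $c_1^{-1}\beta p_i/p$, so the literal "$g_i=\beta p_i/p$" wording of property (ii) is only preserved up to this bounded complex factor; this is also glossed over by the paper's one-line proof, and again causes no harm in the applications because a subsequent rational approximation step is performed anyway (cf.\ Corollary \ref{cor:ILOlinear}).
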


Secondly, if we $\beta$-approximate the components of $c_i,c_i'$ by rational numbers of the form $p/q, |p|,|q|=O(\beta)$, then we obtain the following.

\begin{corollary}\label{cor:ILOlinear}
Assume as in Theorem \ref{theorem:ILOlinear:new}. Then there exist two pairs of complex numbers $(c_1,c_2)$ and $(c_1',c_2')$ for which $|c_i|,|c_i'|$ are bounded from below and above by $c_0$ and $C_0$, $|c_1/c_2 -c_1'/c_2'|>\alpha$, and the components of $c_i,c_i'$ are rational numbers of the form $p/q,|p|,|q|=O(\beta)$ such that for any number $n'$ between $n^\ep$ and $n$, there exists a proper symmetric GAP $Q=\{\sum_{i=1}^r k_ig_i : k_i\in \Z, |k_i|\le L_i \} \subset \C$ whose parameters satisfy (i) and (ii) of Theorem \ref{theorem:ILOlinear} and for at least $n-n'$ indices $i$ the following holds:
\begin{itemize}
\item $a_i$ are $O(\beta)$-close to the GAP $P_1:=\frac{c_2}{c_1c_2'-c_1'c_2} \cdot Q + \frac{c_2'}{c_1c_2'-c_1'c_2}\cdot Q$;
\item $b_i$ are $O(\beta)$-close to the GAP $P_2:=\frac{c_1}{c_1c_2'-c_1'c_2} \cdot Q + \frac{c_1'}{c_1c_2'-c_1'c_2}\cdot Q$;
\item consequently, $a_i$ and $b_i$ are $O(\beta)$-close to the combined GAP $P=\frac{c_2}{c_1c_2'-c_1'c_2} \cdot Q + \frac{c_2'}{c_1c_2'-c_1'c_2}\cdot Q + \frac{c_1}{c_1c_2'-c_1'c_2} \cdot Q + \frac{c_1'}{c_1c_2'-c_1'c_2}\cdot Q$. 
\end{itemize}
\end{corollary}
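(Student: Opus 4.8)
The plan is to read the corollary off from Theorem~\ref{theorem:ILOlinear:new} by two elementary moves: rounding the coefficient pairs to controlled rationals, and then inverting a non-degenerate $2\times2$ linear system by Cramer's rule to pass from the forms $c_1a_i+c_2b_i,\ c_1'a_i+c_2'b_i$ back to $a_i,b_i$ individually.

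First I would apply Theorem~\ref{theorem:ILOlinear:new} to obtain pairs $(c_1,c_2),(c_1',c_2')$ with $c_0\le|c_1|,|c_2|,|c_1'|,|c_2'|\le C_0$, with $|c_1/c_2-c_1'/c_2'|>\alpha$, and a proper symmetric GAP $Q\subset\C$ satisfying (i)--(ii) of Theorem~\ref{theorem:ILOlinear} such that, for at least $n-n'$ indices $i$, both $c_1a_i+c_2b_i$ and $c_1'a_i+c_2'b_i$ are $\beta$-close to $Q$. Next, replace each of $c_1,c_2,c_1',c_2'$ by a complex number whose real and imaginary parts are rationals $p/q$ with controlled numerator and denominator and within $O(\beta)$ of the original. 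Since $\sum_i|a_i|^2+\sum_i|b_i|^2=1$ forces $|a_i|,|b_i|\le1$, this substitution perturbs each form $c_1a_i+c_2b_i$ and $c_1'a_i+c_2'b_i$ by at most $O(\beta)$, so both remain $O(\beta)$-close to $Q$. One then checks, for $n$ large, that the rounded coefficients still satisfy $|c_i|,|c_i'|\asymp1$, that $|c_1/c_2-c_1'/c_2'|$ is changed by only $O(\beta)$ and hence stays $\gg1$, and consequently that the determinant $D:=c_1c_2'-c_1'c_2=c_2c_2'(c_1/c_2-c_1'/c_2')$ has $|D|\asymp1$; after adjusting the implied constants we keep the names $c_i,c_i',\alpha,c_0,C_0$ for the rounded data, which already yields the first three claimed properties.

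Now set $u_i:=c_1a_i+c_2b_i$ and $v_i:=c_1'a_i+c_2'b_i$. By Cramer's rule, $a_i=(c_2'u_i-c_2v_i)/D$ and $b_i=(c_1v_i-c_1'u_i)/D$. For each of the $\ge n-n'$ good indices choose $q,q'\in Q$ with $|u_i-q|,|v_i-q'|=O(\beta)$; since $|c_1/D|,|c_2/D|,|c_1'/D|,|c_2'/D|=O(1)$, a one-line estimate gives $|a_i-(c_2'q-c_2q')/D|=O(\beta)$ and $|b_i-(c_1q'-c_1'q)/D|=O(\beta)$. Because $Q$ is symmetric, $-Q=Q$, so $(c_2'q-c_2q')/D\in\frac{c_2'}{D}Q+\frac{c_2}{D}Q=P_1$ and $(c_1q'-c_1'q)/D\in\frac{c_1}{D}Q+\frac{c_1'}{D}Q=P_2$; dilations of $Q$ and Minkowski sums of GAPs are again GAPs, so $P_1,P_2$ have rank $O_{B,\ep}(1)$ and size $n^{O_{B,\ep}(1)}$. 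This gives the first two bullets. For the third, note $0\in Q$ (as $Q$ is symmetric), hence $P_1\subset P$ and $P_2\subset P$ for the Minkowski sum $P=P_1+P_2$, so both $a_i$ and $b_i$ are $O(\beta)$-close to $P$.

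I do not expect a genuine obstacle here: all of the substance is already packaged in Theorem~\ref{theorem:ILOlinear:new}, and this corollary is essentially bookkeeping. The only point requiring a little care is verifying that the rational rounding of $c_1,c_2,c_1',c_2'$ does not destroy the lower bounds $|c_i|,|c_i'|\gg1$, the separation $|c_1/c_2-c_1'/c_2'|\gg1$, or the non-degeneracy $|D|\gg1$ needed to invert the system — all of which is arranged by taking the approximation error $O(\beta)$ (equivalently $n$) sufficiently small, together with the trivial but essential observation that $|a_i|,|b_i|\le1$ lets the rounding error be absorbed into the $O(\beta)$ tolerance.
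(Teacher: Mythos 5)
Your proposal is correct and follows essentially the same route the paper intends. The paper gives no explicit proof of Corollary~\ref{cor:ILOlinear} beyond the one-sentence remark preceding it (``if we $\beta$-approximate the components of $c_i,c_i'$ by rational numbers\ldots''), and your argument---round the coefficient pairs to controlled rationals, use $|a_i|,|b_i|\le 1$ to absorb the rounding into the $O(\beta)$ tolerance, then invert the $2\times 2$ system by Cramer's rule and use the symmetry of $Q$ to pass from $c_1a_i+c_2b_i,\,c_1'a_i+c_2'b_i$ back to $a_i,b_i$---is exactly the bookkeeping the paper has in mind.

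One small point worth flagging, not a gap in your reasoning but a mismatch with the literal statement: the corollary as printed says the components of $c_i,c_i'$ are rationals $p/q$ with $|p|,|q|=O(\beta)$, which is vacuous for small $\beta$; the intended bound is $|p|,|q|=O(\beta^{-1})$ (matching the $O(n^{A/2+O(1)})$ control exhibited in Section~\ref{section:step1}), and that is how you have read it. Also, your closing caveat that the perturbation be small enough to preserve $|c_i|,|c_i'|\asymp 1$, the separation $|c_1/c_2-c_1'/c_2'|>\alpha$, and $|D|\gg 1$ is handled cleanly by observing that when $\beta$ exceeds a fixed small constant the conclusion is trivial (take $Q=\{0\}$ and any fixed admissible rationals, since $|a_i|,|b_i|\le 1$), so one may assume $\beta$ is small without loss.
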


Notice that the rank of $P$ is $O_{B,\ep}(1)$ and the size of $P$ is $n^{O_{B}(1)}$. Roughly speaking, the fact that the parameters involved in $P$ are rational numbers will enable us to control the number of such GAPs easily. We will exploit this pleasant fact in Sections \ref{section:ILO:quadratic} and \ref{section:step2}.

We remark that the assumption $-1<\rho <1$ is necessary because Theorem \ref{theorem:ILOlinear:new} is not valid for the boundary case $|\rho|=1$. For instance, if $\xi$ is a symmetric random variable and if $x_i'=-x_i$ (in which case $\rho=-1$), then the assumption $\P_{\Bx,\Bx'} (|\sum_{i=1}^n a_ix_i+b_ix_i'-a| \le \beta) \ge n^{-B}$  is equivalent to $\P_{\Bx} (|\sum_{i=1}^n (a_i-b_i)x_i-a| \le \beta) \ge n^{-B}$. From here, only information for the $a_i-b_i$ can be deduced but not for the individual $a_i$ and $b_i$ separately. 

Finally, the conclusion of Theorem \ref{theorem:ILOlinear:new} is somewhat optimal. Indeed, assume that there exist $(c_1,c_1'), (c_2,c_2')$  with $c_1^2+c_1'^2=c_2^2 +c_2'^2 =1$ and $c_1c_2'\neq c_1'c_2$ such that $\xi_1=c_1\psi_1 + c_1'\psi_2 $ and  $\xi_2=c_2\psi_1 + c_2'\psi_2 $, where $\psi_2$ is an independent copy of $\psi_1$. Then the assumption $\P_{\Bx,\Bx'} (|\sum_{i=1}^n (a_ix_i+b_ix_i')-a| \le \beta) \ge n^{-B}$ becomes $\P_{\psi_{ij}} (|\sum_{i=1}^n (c_1a_i+c_2b_i)\psi_{1i}+(c_1'a_i+c_2'b_i)\psi_{2i})-a| \le \beta )\ge n^{-B}$. So, as $\psi_{ij}$ are independent, structural information for $c_1 a_i+c_2b_i$ and $c_1'a_i+c_2'b_i$ can be deduced using Theorem \ref{theorem:ILOlinear}, in the same way as we concluded using Theorem \ref{theorem:ILOlinear:new}.

\section{Anti-concentration of bilinear forms}\label{section:ILO:quadratic}

We will next apply Corollary \ref{cor:ILOlinear} to infer an inverse version for the concentration of the bilinear form $\sum_{1\le i,j \le n} a_{ij}(x_i+f_i)(x_j'+f_j')$ appearing in Theorem \ref{theorem:step1}.

\begin{theorem}\label{theorem:ILO:quadratic} Let $0 <\ep < 1, |\rho|<1$ and $B>0$ be given. Let $ \beta >0$ be an arbitrary real number that may depend on $n$. Assume that $\sum_{i,j} |a_{ij}|^2 =1$ and 
$$\sup_a \P_{\Bx,\Bx'}\left(\left|\sum_{1\le i,j\le n}a_{ij}(x_i+f_i)(x_j'+f_j')-a\right|\le \beta\right)=\gamma \ge n^{-B},$$ 
where $(x_i,x_i')$ are i.i.d copies of  $(\xi_1,\xi_2)$ from a given $(\mu,\rho)$-family.


Then, there exist an integer $k\neq 0, |k|=n^{O_{B,\ep}(1)}$, a set of $r = O(1)$ rows $\row_{i_1},\dots,\row_{i_r}$ of the array $A_n=(a_{ij})_{1\le i,j\le n}$, and set $I$ of size at least $n- 2n^{\ep}$ such that for each $i\in I$, there exist integers $k_{ii_1},\dots, k_{ii_r}$, all bounded by $n^{O_{B,\ep}(1)}$, such that the following holds.
\begin{equation}\label{eqn:ILOquadratic}
\P_\Bz\left(\left|\left\langle \Bz, k\row_i(A_n)+\sum_{j=1}^r k_{ii_j}\row_{i_j}(A_n)\right\rangle\right|\le \beta n^{O_{B,\ep}(1)}\right)\ge n^{-O_{B,\ep}(1)},
\end{equation}
where $\Bz=(z_1,\dots, z_n)$ and $z_i$ are i.i.d. copies of $\eta^{(1/2)} (\xi_2-\xi_2')$, where $\xi_2'$ is an i.i.d. copy of $\xi_2$ and $\eta^{(1/2)}$ is a modified-Bernoulli random variable of parameter $1/2$ independent of $\xi_2$ and $\xi_2'$.
\end{theorem}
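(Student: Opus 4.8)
The plan is to decouple the bilinear form into a product of two linear forms and then feed each linear form into Corollary~\ref{cor:ILOlinear}. First I would apply a decoupling inequality (Cauchy--Schwarz / tensorization, in the style of \cite{TVcomp} and \cite{Ng-sym}) to convert the hypothesis
$$\sup_a \P_{\Bx,\Bx'}\left(\left|\sum_{i,j} a_{ij}(x_i+f_i)(x_j'+f_j')-a\right|\le \beta\right)\ge n^{-B}$$
into a statement about the form with symmetrized variables. Concretely, introducing independent copies $\Bx''$ of $\Bx$ and the variables $z_i = \eta_i^{(1/2)}(x_i'' - x_i'{}^{\,\text{(copy)}})$ built out of $\xi_2$ (to match the statement's $z_i = \eta^{(1/2)}(\xi_2-\xi_2')$), standard decoupling produces, for a positive-density set of ``frozen'' values of $\Bx$, a bound of the form
$$\P_{\Bz}\left(\left|\sum_{i}\Big(\sum_j a_{ij}(x_i+f_i)\Big) z_j\right|\le \beta n^{O_{B,\ep}(1)}\right)\ge n^{-O_{B,\ep}(1)}.$$
This is the reduction from a bilinear to a linear concentration problem; the replacement $\beta \rightsquigarrow \beta n^{O(1)}$ and $\gamma \rightsquigarrow \gamma^{O(1)}$ is harmless since all exponents are $O_{B,\ep}(1)$.

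Next I would apply Corollary~\ref{cor:ILOlinear} (equivalently Corollary~\ref{cor:complex}, via $b_i = 0$ after absorbing the pair structure into the $z_i$, since $z_i$ is a genuine complex random variable with $\E|z_i|^2 \asymp 1$ and $\E z_i = 0$) to the linear form $\sum_j w_j z_j$ with coefficients $w_j = \sum_i a_{ij}(x_i+f_i)$, suitably renormalized to have $\ell^2$-norm one. The corollary yields a GAP $Q$ of rank and size $n^{O_{B,\ep}(1)}$, with \emph{rational} generators of controlled height, such that for all but $n^\ep$ indices $j$ the normalized coefficient $w_j$ is $O(\beta n^{O(1)})$-close to $Q$. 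Clearing the common denominator $k$ of the rational generators and the normalization, this says that for the good indices $j$, the integer combination $k\,w_j$ lies near a lattice, i.e. $k\sum_i a_{ij}(x_i+f_i) = $ (small) $+$ (integer point of a fixed low-rank box). Here the vector $(x_i+f_i)_i$ plays the role of the $\Bz$ in the target conclusion once we re-expose the roles of rows and columns: transposing the picture, for a positive-density choice of the $x_i$ this is exactly a near-orthogonality relation between a rational integer combination of rows of $A_n$ and the random vector $\Bz$. The $r = O(1)$ distinguished rows $\row_{i_1},\dots,\row_{i_r}$ arise as the ``exceptional'' coordinates of the GAP structure (those not captured by $Q$, or the generators' directions), and the integers $k_{ii_j}$ are the coordinates expressing $k w_j$ inside the box; all are $n^{O_{B,\ep}(1)}$ by the height bounds in Corollary~\ref{cor:ILOlinear}.

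The main obstacle I expect is the decoupling step in the presence of the correlated pair $(\xi_1,\xi_2)$ and the non-zero shifts $f_i, f_i'$. Unlike the i.i.d.\ Ginibre-type setting, here $x_i$ and $x_i'$ are dependent, so the naive decoupling that replaces $x_i'$ by a symmetrized independent copy must be justified using the fact that $|\rho|<1$ — this is precisely why the statement demands $z_i = \eta^{(1/2)}(\xi_2 - \xi_2')$ rather than something built from $\xi_1$: one freezes the $\xi_1$-marginals (the $x_i$) and exploits that, conditionally, $\xi_2$ still has non-degenerate conditional fluctuations (cf.\ Claim~\ref{claim:difference} in Appendix~\ref{section:ILOlinear:proof}). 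Handling the shifts $f_i'$ amounts to noting that $(x_j' + f_j') - (x_j'' + f_j') = x_j' - x_j''$, so the shifts cancel under symmetrization in the $j$-variable, while the $f_i$ in the $i$-variable are simply carried along inside $w_j$. A secondary technical point is bookkeeping: tracking how $r$, $|k|$, $|k_{ii_j}|$, and the loss in $\beta$ and $\gamma$ accumulate through decoupling and then through the rational approximation in Corollary~\ref{cor:ILOlinear}; but since every loss is a fixed power of $n$ with exponent $O_{B,\ep}(1)$, these combine cleanly and do not affect the shape of the conclusion.
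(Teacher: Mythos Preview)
Your high-level plan---decouple, then feed a linear form into the inverse Littlewood--Offord machinery---matches the paper's, but the proposal has a genuine gap at the point where the integers $k$ and $k_{ii_j}$ and the $r$ distinguished rows are produced. After decoupling and applying Corollary~\ref{cor:ILOlinear}, what you obtain is that, for each frozen realisation of the ``outer'' variables, the coefficients of the inner linear form (namely the quantities $\langle \row_i(A_U),\Bw_{\bar U}\rangle$ in the paper's notation) lie $\beta$-close to some GAP $P_{\Bw_{\bar U}}$ \emph{that depends on the frozen vector}. This by itself does not give you a single integer combination $k\row_i+\sum_j k_{ii_j}\row_{i_j}$ with the required anti-concentration against $\Bz$; clearing denominators of the generators of one such GAP, as you suggest, would produce coefficients that still vary with the frozen vector and would not yield \eqref{eqn:ILOquadratic}. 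Your identification of the $r$ rows as ``exceptional coordinates not captured by $Q$'' is also off: in the paper they are not exceptional but \emph{spanning}.

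The missing mechanism is the one occupying most of Section~\ref{section:ILO:quadratic}: using Lemma~\ref{theorem:fullrank} one may assume the $q_i(\Bw_{\bar U})$ span $P_{\Bw_{\bar U}}$; then a pigeonhole over the $n^{O_{B,\ep}(1)}$ possible choices of spanning index sets and of coefficient tuples locates $r=O(1)$ fixed indices $i_1,\dots,i_r$ and a fixed integer matrix $(c_{ij})$ that work simultaneously for a $\gamma/n^{O_{B,\ep}(1)}$-fraction of frozen vectors $\Bw_{\bar U}$. Cramer's rule (equation~\eqref{eqn:ILObilinear:det}) then gives, for each remaining row $\Br$ and each such $\Bw_{\bar U}$, an exact integer relation $k q(\Bw_{\bar U})+\sum_j k_j q_j(\Bw_{\bar U})=0$ with $k=\det(\Bv_1,\dots,\Bv_r)$ \emph{independent of} $\Br$ and $\Bw_{\bar U}$; one more pigeonhole fixes the $k_j$'s. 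Only then does the relation transfer to $|\langle k\Br+\sum_j k_j\Br_j,\Bw_{\bar U}\rangle|\le \beta n^{O_{B,\ep}(1)}$ for a polynomially large set of $\Bw_{\bar U}$, which is \eqref{eqn:ILObilinear:b}. (A secondary issue: your decoupling freezes all of $\Bx$ and averages over the correlated $\Bx'$, which runs into the dependence within each pair $(x_i,x_i')$; the paper sidesteps this by the index splitting $U,\bar U$ in Lemma~\ref{lemma:decoupling}, so that the frozen variables are genuinely independent of those being averaged. The factor $\eta^{(1/2)}$ in the final $\Bz$ then emerges from randomising over $U$ when one assembles the per-$U$ conclusions of Lemma~\ref{lemma:A_U} into Theorem~\ref{theorem:ILO:quadratic}.)
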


We remark that this result is an analogue of \cite[Theorem 1.8]{Ng-QLO} in which case we studied the concentration of the quadratic forms of type  $\sum_{ij}a_{ij} x_ix_j$. It seems plausible that  after an appropriate linear transform we can trap most of the entries $a_{ij}$ of $A_n$ into a GAP of small size and small rank (in the spirit of \cite{Ng}). However, we do not consider this matter here. Roughly speaking, in order to justify Theorem \ref{theorem:singularvalue}, we just need the conclusion of Theorem \ref{theorem:ILO:quadratic} for only one row.  

To prove \ref{theorem:ILO:quadratic}, we will follow the machinery from \cite{Ng-QLO} with some extra twists. As the first step, we free the dependencies between $\Bx$ and $\Bx'$. 

\subsection{Decoupling lemma} Let $U$ be an arbitrary subset of $\{1,\dots,n\}$ such that  both of $U$ and $\bar{U}$ are of size $\Theta(n)$.  Let $A_U$ be a matrix of size $n\times n$ defined as
\[
A_U(ij)= 
\begin{cases}
a_{ij} & \text{ if $i\in U$ and $j\in \bar{U}$ or $i\in \bar{U}$ and $j\in U$},\\
0 & \text{otherwise,}
\end{cases}
\]
where we denoted by $A_U(ij)$ the $ij$ entry of $A_U$. We prove the following lemma by a series applications of the Cauchy-Schwarz inequality.

\begin{lemma}\label{lemma:decoupling} Assume that 
$$\gamma=\sup_{a,b_i,b_i'} \P_{\Bx,\Bx'} \left(\left|\sum_{i,j}a_{ij}x_ix_j'+\sum_i b_ix_i+\sum_ib_i'x_i' -a\right|\le \beta\right)\ge n^{-B},$$
where $\Bx,\Bx'$ are defined as in Theorem \ref{theorem:ILO:quadratic}. Then,
\begin{equation}\label{eqn:ILObilinear:0}
\P_{\Bv,\Bw}\left(\left|\sum_{1\le i,j\le n} A_U(ij)v_iw_j \right|=O_B(\beta \sqrt{\log n})\right) =\Theta(\gamma^4),
\end{equation}
where $\Bv=(v_1,\dots,v_n)$, $\Bw=(w_1,\dots,w_n)$, and  $(v_i,w_i)$ are i.i.d copies of a vector $(\xi_1-\xi_1',\xi_2-\xi_2')$, where $(\xi_1',\xi_2')$ is an independent copy of $(\xi_1,\xi_2)$.
\end{lemma}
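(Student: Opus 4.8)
The plan is to decouple the bilinear form in two stages via the Cauchy--Schwarz inequality. First I would start from the hypothesis that with probability $\ge \gamma$ (over the joint randomness of $\Bx,\Bx'$, which is a valid product over the pairs $(x_i,x_i')$) the form $\sum_{i,j}a_{ij}x_ix_j' + \sum_i b_i x_i + \sum_i b_i' x_i'$ lands within $\beta$ of some value $a$. Fix a subset $U$ with $|U|,|\bar U| = \Theta(n)$ and split the index set. The key observation is that $\sum_{i,j} a_{ij} x_i x_j'$ decomposes according to whether $(i,j)$ lies in $U\times U$, $U\times\bar U$, $\bar U\times U$, or $\bar U\times\bar U$; the block $A_U$ collects precisely the ``cross'' blocks. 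Conditioning on the coordinates indexed by $\bar U$, the $U\times U$ block becomes a linear form in $\{x_i : i\in U\}$ and the $\bar U\times\bar U$ block is a constant, so both can be absorbed into the linear terms. This means the hypothesis already contains information about the concentration of the partially cross bilinear form once we condition on half the variables, at the cost of the probability bound now holding only on a $\Theta(\gamma)$-fraction of the ``bad'' half.

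Next I would symmetrize. Introduce an independent copy $(\tilde x_i, \tilde x_i')$ of $(x_i, x_i')$ for $i\in \bar U$ (keeping the $U$-coordinates fixed), and subtract the two instances of the event: on the product of the two $\Theta(\gamma)$-events the difference of the two forms lies within $2\beta$ of $0$. The difference kills the additive constant and the linear-in-$x_i'$ ($i\in\bar U$) terms reorganize, leaving (schematically) a bilinear form in $\{x_i : i\in U\}$ against $\{x_j' - \tilde x_j' : j\in\bar U\}$, plus a leftover linear term in the $U$-variables. Repeating the same conditioning-and-differencing trick a second time, now over the $U$-coordinates with a fresh independent copy, eliminates the remaining linear term and produces the pure cross form $\sum_{i,j} A_U(ij)\, v_i w_j$ where $v_i = x_i - \tilde x_i$ (supported effectively on $U$) and $w_j = x_j' - \tilde x_j'$ (supported on $\bar U$); on $A_U$, however, the zero pattern makes the full sum over $1\le i,j\le n$ identical to the sum over $U\times\bar U \cup \bar U\times U$, so we may write it with $(v_i,w_i)$ i.i.d.\ copies of $(\xi_1-\xi_1',\xi_2-\xi_2')$ across all $n$ coordinates. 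Each differencing step squares the probability (Cauchy--Schwarz / the inequality $\P(|Y-Y'|\le 2\beta)\ge \P(|Y-a|\le\beta)^2$ applied conditionally, then integrated), which accounts for the $\gamma^4$ in \eqref{eqn:ILObilinear:0}.

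The one genuine subtlety is the appearance of the $\sqrt{\log n}$ factor: after conditioning, the ``constants'' that get absorbed are themselves random and must be controlled uniformly. The standard fix is that the linear and constant pieces being absorbed have, conditionally, a sub-Gaussian (or at least polynomially concentrated after truncation) tail, so on an event of probability $1 - n^{-\omega(1)}$ they are bounded by $O(\beta\sqrt{\log n})$ times the relevant norm; intersecting this high-probability event with the $\Theta(\gamma^4)$ event — legitimate since $\gamma \ge n^{-B}$ dominates $n^{-\omega(1)}$ — upgrades the $2\beta$ radius to $O_B(\beta\sqrt{\log n})$ without changing the order of the probability. I expect this bookkeeping — keeping the four blocks straight, verifying that each absorbed term is genuinely a function of the already-conditioned variables, and tracking the uniformity in the suprema over $a, b_i, b_i'$ so that the two differencing steps compose cleanly — to be the main obstacle; the Cauchy--Schwarz manipulations themselves are routine. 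I would model the argument closely on the decoupling step of \cite{Ng-QLO}, adapting it to the bilinear (two independent families $v_i$, $w_i$) rather than quadratic setting, which if anything is slightly easier since no diagonal terms $v_i w_i$ survive.
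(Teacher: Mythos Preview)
Your two-step probabilistic decoupling via the elementary inequality $\P(|Y-Y'|\le 2\beta)\ge \P(|Y-a|\le\beta)^2$ is correct and does yield \eqref{eqn:ILObilinear:0}; in fact it gives the stronger conclusion with radius $4\beta$ rather than $O_B(\beta\sqrt{\log n})$. This is a genuinely different route from the paper's. The paper works Fourier-analytically: it bounds the concentration probability by a Gaussian-weighted integral of the characteristic function, applies Cauchy--Schwarz twice \emph{inside the integral} to decouple (each squaring introduces an independent copy, producing the differences $v_i,w_j$), and arrives at $\E_{\Bv,\Bw}\exp(-\tfrac{\pi}{2}|\sum A_U(ij)v_iw_j/\beta|^2)\gg\gamma^4$. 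The $\sqrt{\log n}$ then enters only when converting this expectation bound back to a probability: since $\gamma^4\ge n^{-4B}$, the contribution from $|\text{form}|>C_B\beta\sqrt{\log n}$ is negligible. Your direct approach sidesteps this conversion entirely, which is why no logarithmic loss appears.

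Two points in your write-up deserve correction. First, your claim that conditioning on the $\bar U$-coordinates makes the $U\times U$ block linear is false --- it remains bilinear in $(x_i,x_j')_{i,j\in U}$ --- but this is irrelevant to your actual argument, since that block cancels outright in the first differencing (it depends only on $U$-coordinates). Second, and more importantly, your explanation of the $\sqrt{\log n}$ factor is spurious: in your scheme the ``absorbed constants'' cancel \emph{exactly} under differencing, so there is nothing random to bound and no sub-Gaussian tail is needed. You should simply drop that paragraph; your argument delivers the lemma with $4\beta$ in place of $O_B(\beta\sqrt{\log n})$, which is perfectly adequate (indeed slightly sharper) for the downstream application.
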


An advantage of considering the sum $\sum_{1\le i,j\le n} A_U(ij)v_iw_j$ over the original form $\sum_{ij}a_{ij}x_ix_j'$ is that we can rewrite the former as $\sum_{i\in U} (\sum_{j\in \bar{U}}a_{ij}w_j)v_i + \sum_{i\in U} (\sum_{j\in \bar{U}}a_{ji}v_j)w_i$. Thus, if all $v_j,w_j, j\in \bar{U}$ are held fixed, Theorem \ref{theorem:ILOlinear:new} applied to \eqref{eqn:ILObilinear:0} allows us to extract useful information on $\sum_{j\in \bar{U}}a_{ij}w_j$ and $\sum_{j\in \bar{U}}a_{ji}v_j$. As the proof of Lemma \ref{lemma:decoupling} is standard, we postpone it until Appendix \ref{section:decoupling:proof}. 

We next apply Theorem \ref{theorem:ILOlinear:new} to obtain the following key structure for the entries of $A_U$.

\begin{lemma}\label{lemma:A_U} There exist a set $I_0(U)$ of size $O_{B,\ep}(1)$ and a set $I(U)$ of size at least $n-n^\ep$, and a nonzero integer $k(U)$ bounded by $n^{O_{B,\ep}(1)}$ such that for any $i\in I$, there are integers $k_{ii_0}(U), i_0\in I_0(U)$, all bounded by $n^{O_{B,\ep}(1)}$, such that 
\begin{equation}\label{eqn:A_U}
\P_\By \left(\left|\left\langle k(U)\Br_i(A_U)+ \sum_{i_0\in I_0} k_{ii_0}(U) \Br_{i_0}(A_U), \By \right\rangle \right| \le \beta n^{O_{B,\ep}(1)}\right) = n^{-O_{B,\ep}(1)},
\end{equation}
where $\By=(y_1,\dots,y_n)$ and $y_i$ are i.i.d copies of $\xi_2-\xi_2'$.
\end{lemma}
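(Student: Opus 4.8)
The plan is to feed the decoupling inequality of Lemma~\ref{lemma:decoupling} into the rational inverse Littlewood--Offord statement of Corollary~\ref{cor:ILOlinear} after conditioning on half of the randomness, and then to convert the additive structure of the resulting (random) coefficients into an integer relation among the rows of $A_U$. First I would invoke Lemma~\ref{lemma:decoupling}: since $\gamma\ge n^{-B}$ it gives $p:=\P_{\Bv,\Bw}\big(|\sum_{i,j}A_U(ij)v_iw_j|=O_B(\beta\sqrt{\log n})\big)=\Theta(\gamma^4)\ge n^{-O_B(1)}$, with $(v_i,w_i)$ i.i.d.\ copies of $(\xi_1-\xi_1',\xi_2-\xi_2')$. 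Because $A_U$ is supported on the off-block positions we may write $\sum_{i,j}A_U(ij)v_iw_j=\sum_{i\in U}(\alpha_iv_i+\beta_iw_i)$ with $\alpha_i:=\langle\row_i(A_U),\Bw\rangle$ and $\beta_i:=\langle\col_i(A_U),\Bv\rangle$ (for $i\in U$, $\row_i(A_U)$ and $\col_i(A_U)$ are supported on $\bar U$), which are linear forms in the ``outer'' variables $(v_j,w_j)_{j\in\bar U}$. Conditioning on these and averaging (Fubini plus Markov) produces a set of outer realizations of probability $\ge p/2$ on which the ``inner'' form $\sum_{i\in U}(\alpha_iv_i+\beta_iw_i)$ concentrates on a ball of radius $O_B(\beta\sqrt{\log n})$ with probability $\ge p/2$.

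Next is mass control. Put $S:=\sum_{i\in U}(|\alpha_i|^2+|\beta_i|^2)$; under the truncation of Condition~\ref{condition:bound} everything in sight is $\le n^{O_B(1)}$, so $S\le n^{O_B(1)}$ always. Split the concentration probability according to whether $S<m_0$ or $S\ge m_0$, with $m_0\asymp\beta^2\log n$; one of the two parts is $\ge p/2$. If the first part is, then $\P(S<m_0)\ge p/2$, so $|\langle\row_i(A_U),\Bw\rangle|<\sqrt{m_0}$ simultaneously for all $i\in U$ on a set of $\Bw$ of probability $\ge p/2$, and since $\Bw|_{\bar U}$ has the same law as $\By|_{\bar U}$ this already yields \eqref{eqn:A_U} for those $i$ with $k(U)=1$, $I_0(U)=\emptyset$. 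Otherwise a further Markov step keeps a positive-probability set of outer realizations on which $m_0\le S\le n^{O_B(1)}$ and the inner form still concentrates with probability $\ge n^{-O_B(1)}$; there the normalized form $\frac{1}{\sqrt{2S}}\sum_{i\in U}(\alpha_iv_i+\beta_iw_i)$ concentrates on a ball of radius $O_B(\beta\sqrt{\log n}/\sqrt S)=O_B(1)$ with probability $\ge n^{-O_B(1)}$, and $(v_i/\sqrt2,w_i/\sqrt2)$ is itself a $(\mu,\rho)$-family pair (a one-line moment check). Corollary~\ref{cor:ILOlinear} (with $n':=n^{\ep}$) then applies and yields rational data of bounded height together with a proper symmetric GAP of rank $O_{B,\ep}(1)$ and size $n^{O_{B,\ep}(1)}$ such that, for at least $|U|-n^{\ep}$ indices $i$, the value $\alpha_i=\langle\row_i(A_U),\Bw\rangle$ lies within $O_B(\beta\sqrt{\log n})$ of a common dilate $\tilde P$ of that GAP; the bounded height and rationality are exactly what force $\tilde P$ (up to one overall real scaling, which plays no role below) to belong to a family $\mathcal P$ of only $n^{O_{B,\ep}(1)}$ GAPs.

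Finally I would extract the integer relation. Since $\mathcal P$ and the rational data come from polynomial-size families, a pigeonhole over them keeps a set of outer realizations of probability $\ge n^{-O_{B,\ep}(1)}$ on which $\tilde P$ — with a fixed set of $r'=O_{B,\ep}(1)$ generators — is one and the same, and a further averaging produces $I(U)$ with $|I(U)|\ge|U|-O(n^{\ep})$ consisting of rows that are ``$\tilde P$-structured'' on a $\ge n^{-O_{B,\ep}(1)}$-fraction of these realizations; being structured equips $\row_i(A_U)$ with an integer coordinate vector in a box of size $n^{O_{B,\ep}(1)}$. Within one structured realization, take $I_0(U)$ to be $\le r'$ rows whose coordinate vectors form a maximal $\Q$-independent subset, let $k(U)\ne 0$ be the associated Cramer determinant (of size $n^{O_{B,\ep}(1)}$) and $k_{ii_0}(U)$ the associated integer multiples; then $k(U)\row_i(A_U)+\sum_{i_0}k_{ii_0}(U)\row_{i_0}(A_U)$ annihilates the GAP part (the overall scaling factors out), so its inner product with that realization is $O(\beta\,n^{O_{B,\ep}(1)})$. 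Pigeonholing once more over the polynomially many tuples $(k(U),I_0(U))$ and, for each fixed $i\in I(U)$, over the polynomially many $\{k_{ii_0}(U)\}$ to make them uniform across a positive-probability sub-family, and using $\Bw|_{\bar U}\stackrel{d}{=}\By|_{\bar U}$, gives \eqref{eqn:A_U} for $i\in I(U)$; the symmetric argument with the two blocks interchanged covers the rows indexed by $\bar U$, at the cost of enlarging $I_0(U)$ by $O_{B,\ep}(1)$ and replacing $k(U)$ by a product.

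The step I expect to be the main obstacle is precisely this passage from a \emph{single} favourable outer realization, where conditioning plus Corollary~\ref{cor:ILOlinear} give a deterministic additive relation, to a relation valid with the \emph{same} $k(U)$, $I_0(U)$ and (for each $i$) integer multiples on a \emph{positive-probability} set of outer realizations; only then does one recover the probabilistic anti-concentration \eqref{eqn:A_U} rather than a statement about one realization. The rationality and bounded height in Corollary~\ref{cor:ILOlinear} are what keep every intervening family polynomial in size so that the chain of pigeonhole arguments closes, and tracking how the $n^{O(1)}$ factors accumulate — together with the degenerate small-$S$ and small-$\|A_U\|_2$ regimes — is the routine but delicate part.
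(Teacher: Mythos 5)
Your overall strategy -- decoupling, applying the inverse Littlewood--Offord corollary to the inner products, and pigeonholing to extract a common integer relation that survives to a positive-probability set of outer realizations -- is the same as the paper's, and you correctly identify the crux of the matter as exactly this ``single realization $\to$ positive-probability set'' passage. However, the specific pigeonhole you use to make that passage has a genuine gap: you pigeonhole over the GAP $\tilde P$ itself, asserting that it belongs to a family $\mathcal P$ of only $n^{O_{B,\ep}(1)}$ GAPs. This count is wrong in the parameter regime that matters. Theorem~\ref{theorem:ILOlinear}(ii) gives steps $g_i=\beta'\frac{p_i}{p}$ with $p_i=O_{B,\ep}(\beta'^{-1}\sqrt{n'})$, where $\beta'$ is the (normalized) radius you feed in; tracing through your renormalization by $\sqrt{2S}$ shows that $\mathcal P$ has cardinality of order $\beta^{-O_{B,\ep}(1)}\cdot n^{O_{B,\ep}(1)}$. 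Since the lemma must apply with $\beta$ as small as $n^{-A}$ (this is how it is eventually used in Theorems~\ref{theorem:step1} and~\ref{theorem:singularvalue}), this is $n^{O_{A,B,\ep}(1)}$, and the pigeonhole degrades the probability and the integer bounds to $n^{-O_{A,B,\ep}(1)}$ and $n^{O_{A,B,\ep}(1)}$. That is fatal downstream: the GAP size entering the orthogonality/additive-structure conclusion of Theorem~\ref{theorem:step1} must be $n^{O_{B,\ep}(1)}$ with exponent \emph{independent} of $A$, because in the counting step (Theorem~\ref{theorem:step2}) the constant $C$ is fixed depending on $B,\ep$ but not $A$, and the bound~\eqref{eqn:step2:N'} on the number of structured vectors must be $n^{O_{B,\ep}(n)}$.

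The paper's proof sidesteps this by never fixing the GAP or its generators. It allows $g_j(\Bw_{\bar U})$ to vary with the outer realization and instead pigeonholes over (a) the spanning tuple of indices and (b) the integer coordinate vectors $(c_{ij})$ of the spanning elements $q_1,\dots,q_r$ relative to the varying generators. These coordinates are bounded by the GAP \emph{dimensions} $L_i$, hence by $|Q|=n^{O_{B,\ep}(1)}$, with no dependence on $\beta$. The integer identity~\eqref{eqn:ILObilinear:det} among the $q_i$'s then holds regardless of what the generators are. Your argument would go through if you deleted the pigeonhole over $\tilde P$ and pigeonholed directly on the spanning index set $I_0(U)$, the Cramer determinant $k(U)$, and the coordinate data (all bounded by dimensions, not by generator heights); as written, though, the claim about the size of $\mathcal P$ is a genuine error.
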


As the deduction of Theorem \ref{theorem:ILO:quadratic} from Lemma \ref{lemma:A_U} is quite straightforward (by gathering the structural information from \eqref{eqn:A_U} for each $U$ carefully), we refer the reader to  \cite[Section 4]{Ng-QLO} for a complete treatment.

For the rest of this section we prove Lemma \ref{lemma:A_U} using Lemma \ref{lemma:decoupling}. First of all, as $A_U=A_{\bar{U}}$, it is enough to verify \eqref{eqn:A_U} for any index $i$ from $U$. Also, it suffices to assume $\xi$ to have discrete distribution. The continuous case can be recovered by approximating the continuous distribution by a discrete one while holding $n$ fixed.  

We begin by applying Corollary \ref{cor:ILOlinear}.

\begin{lemma}\label{lemma:roworthogonal} 
Assume as in the conclusion of Lemma \ref{lemma:decoupling} where (without loss of generality) $\beta \sqrt{\log n}$ and $\Theta(\gamma^4)$ are replaced by $\beta$ and $\gamma$ respectively. Then, the following holds with probability at least $3\gamma/4$ with respect to $\Bv_{\bar{U}}$ and $\Bw_{\bar{U}}$. There exist a proper symmetric GAP $P_{\Bw_{\bar{U}}}\subset \C$ of rank $O_{B,\ep}(1)$ and size $n^{O_{B,\ep}(1)}$, and an index set $I_{\Bw_{\bar{U}}}\subset U$ of size $|U|-n^\ep$ such that $ \langle \Br_i(A_U),  \Bw_{\bar{U}} \rangle $ is $\beta$-close to $P_{\Bw_{\bar{U}}}$ for all $i\in I_{\Bw_{\bar{U}}}$.
\end{lemma}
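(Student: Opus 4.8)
The plan is to obtain the claim from a single application of Corollary \ref{cor:ILOlinear}, after rewriting the decoupled bilinear form as a linear form in the coordinates indexed by $U$. Unwinding the definition of $A_U$ and collecting terms according to whether the $U$-index occupies the first or the second slot, for any fixed $\Bv_{\bar{U}},\Bw_{\bar{U}}$ one has
$$\sum_{1\le i,j\le n}A_U(ij)\,v_iw_j=\sum_{i\in U}(b_iv_i+b_i'w_i),$$
where $b_i:=\langle\row_i(A_U),\Bw_{\bar{U}}\rangle=\sum_{j\in\bar{U}}a_{ij}w_j$ and $b_i':=\langle\col_i(A_U),\Bv_{\bar{U}}\rangle=\sum_{j\in\bar{U}}a_{ji}v_j$; this is a linear form in the independent pairs $\{(v_i,w_i):i\in U\}$. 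Starting from the hypothesis of the lemma, which (after absorbing the fourth power and the $\sqrt{\log n}$ coming from Lemma \ref{lemma:decoupling} into the constant $B$) reads $\P_{\Bv,\Bw}(|\sum_{i,j}A_U(ij)v_iw_j|\le\beta)=\gamma\ge n^{-B}$, and applying Fubini, the set of $(\Bv_{\bar{U}},\Bw_{\bar{U}})$ for which $\P_{\Bv_U,\Bw_U}(|\sum_{i\in U}(b_iv_i+b_i'w_i)|\le\beta)\ge\gamma/4$ has probability at least $3\gamma/4$; otherwise the total probability would be strictly less than $(3\gamma/4)+(\gamma/4)=\gamma$, a contradiction. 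Fix such a pair $(\Bv_{\bar{U}},\Bw_{\bar{U}})$.

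To apply the mixing inverse theorem I first record that the symmetrised pair $(\xi_1-\xi_1',\xi_2-\xi_2')$ has mean zero and variance $2$, and that the rescaled pair $\tfrac{1}{\sqrt{2}}(\xi_1-\xi_1',\xi_2-\xi_2')$ again belongs to the $(\mu,\rho)$-family: each of the four defining second-moment identities picks up exactly a factor $2$ from the two independent copies, which $\tfrac{1}{\sqrt{2}}$ cancels. Put $s:=(\sum_{i\in U}|b_i|^2+\sum_{i\in U}|b_i'|^2)^{1/2}$. If $s=0$ then all $b_i$ vanish and the conclusion holds with $P_{\Bw_{\bar{U}}}=\{0\}$, so assume $s>0$. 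Applying Corollary \ref{cor:ILOlinear} to $\sum_{i\in U}((b_i/s)v_i+(b_i'/s)w_i)$ with the substitutions $a_i\mapsto b_i/s$, $b_i\mapsto b_i'/s$, $\beta\mapsto\beta/(s\sqrt{2})$, the constant enlarged from $B$ to $B+1$ (legitimate since $\gamma/4\ge n^{-B-1}$ for large $n$), and cut-off $n':=n^{\ep}$, one obtains numbers $c_1,c_2,c_1',c_2'$ with $|c_1|,|c_2|,|c_1'|,|c_2'|\asymp 1$, $\Delta:=c_1c_2'-c_1'c_2$ with $|\Delta|\asymp1$, and a proper symmetric GAP $Q$ of rank $O_{B,\ep}(1)$ and size $n^{O_{B,\ep}(1)}$ such that $b_i/s$ is $O(\beta/s)$-close to $P_1:=\tfrac{c_2}{\Delta}Q+\tfrac{c_2'}{\Delta}Q$ for all $i$ in a subset of $U$ of size $|U|-n^{\ep}$ (since $|U|\asymp n$ and $\ep<1$, the admissible range $[|U|^{\ep},|U|]$ for the cut-off contains $n^{\ep}$).

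Finally I undo the normalisation: multiplying through by $s$, the inner products $b_i=\langle\row_i(A_U),\Bw_{\bar{U}}\rangle$ are $O(\beta)$-close to $P_{\Bw_{\bar{U}}}:=s\,P_1=\tfrac{sc_2}{\Delta}Q+\tfrac{sc_2'}{\Delta}Q$ for all such $i$, which is the required index set $I_{\Bw_{\bar{U}}}$. Dilation and the Minkowski sum preserve symmetry and keep the rank at $O_{B,\ep}(1)$ and the cardinality of the defining box at $|Q|^2=n^{O_{B,\ep}(1)}$; replacing $P_{\Bw_{\bar{U}}}$ by a proper symmetric GAP containing it (via the standard fact that a GAP of rank $r$ and size $N$ embeds into a proper GAP of rank $O(r)$ and size $N^{O(1)}$) gives the claimed structure, the implied constant in ``$O(\beta)$-close'' being absorbed into ``$\beta$-close'' under the same convention that let us replace $\beta\sqrt{\log n}$ by $\beta$. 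The one genuinely delicate point is the use of Corollary \ref{cor:ILOlinear}: one must check that the symmetrised-and-rescaled atom pair still lies in the $(\mu,\rho)$-family, so that the mixing inverse theorem applies, and then exploit the quantitative non-degeneracy $|\Delta|\asymp1$ to pass from structural control of the \emph{mixed} combinations $c_1a_i+c_2b_i$ and $c_1'a_i+c_2'b_i$ to control of the individual $b_i$, while tracking properness, rank and size through the a priori uncontrolled scalar $s$. The Fubini averaging and the degenerate case $s=0$ are routine.
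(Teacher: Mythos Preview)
Your proof is correct and follows essentially the same route as the paper: rewrite the decoupled bilinear form as a mixed linear form in $(v_i,w_i)_{i\in U}$, use the same Fubini/averaging argument to locate a set of good $(\Bv_{\bar U},\Bw_{\bar U})$ of probability at least $3\gamma/4$, and then invoke Corollary~\ref{cor:ILOlinear} after rescaling. The only cosmetic differences are that you apply the first bullet of Corollary~\ref{cor:ILOlinear} (structure of the $a_i$'s alone) whereas the paper cites the ``last conclusion'' (the combined GAP $P$), and that you spell out two points the paper leaves implicit---that the symmetrised rescaled pair $\tfrac{1}{\sqrt2}(\xi_1-\xi_1',\xi_2-\xi_2')$ remains in the $(\mu,\rho)$-family, and that the Minkowski sum $P_1$ needs to be re-embedded in a proper GAP---both of which are handled correctly.
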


\begin{proof}(of Lemma \ref{lemma:roworthogonal}) Write
\begin{align*}
\sum_{i\in U,j\in \bar{U}}a_{ij}v_iw_j+\sum_{i\in \bar{U},j\in U}a_{ij}v_iw_j&=\sum_{i\in U} (\sum_{j\in \bar{U}}a_{ij}w_j)v_i+  (\sum_{j\in \bar{U}}a_{ji}v_j)w_i\\
&= \sum_{i\in U} \langle \Br_i(A_U),\Bw_{\bar{U}} \rangle v_i + \sum_{i\in U} \langle \Br_i({A_U}^T), \Bv_{\bar{U}} \rangle w_i.  
\end{align*}

We say that a pair vector $(\Bv_{\bar{U}},\Bw_{\bar{U}})$ is {\it good} if
$$\P_{\Bv_U,\Bw_U}\left(\left| \sum_{i\in U} \langle \Br_i(A_U),\Bw_{\bar{U}} \rangle v_i + \sum_{i\in U} \langle \Br_i({A_U}^T), \Bv_{\bar{U}} \rangle w_i -a\right|\le \beta\right)\ge \gamma/4.$$
We call $(\Bv_{\bar{U}},\Bw_{\bar{U}})$ {\it bad} otherwise. 

Let $G$ denote the collection of good pairs. We are going to estimate the probability $p$ of a randomly chosen pair $(\Bv_{\bar{U}},\Bw_{\bar{U}})$ being bad by an averaging method.
\begin{align*}
\P_{\Bv_{\bar{U}},\Bw_{\bar{U}},\Bv_U,\Bw_U} \left(\left| \sum_{i\in U} \langle \Br_i(A_U),\Bw_{\bar{U}} \rangle v_i + \sum_{i\in U} \langle \Br_i({A_U}^T), \Bv_{\bar{U}} \rangle w_i-a\right|\le \beta\right) &=\gamma\\
p \gamma/4 + 1-p &\ge \gamma\\
(1-\gamma)/(1-\gamma/4) &\ge p.
\end{align*}
Thus, the probability of a randomly chosen $(\Bv_{\bar{U}},\Bw_{\bar{U}})$ belonging to $G$ is at least 
$$1-p \ge (3\gamma/4)/(1-\gamma/4) \ge 3\gamma/4.$$
Consider a good vector $(\Bv_{\bar{U}}, \Bw_{\bar{U}})\in G$. By definition, we have
$$\P_{\Bv_U,\Bw_U}\left(\left| \sum_{i\in U} \langle \Br_i(A_U),\Bw_{\bar{U}} \rangle v_i + \sum_{i\in U} \langle \Br_i({A_U}^T), \Bv_{\bar{U}} \rangle w_i -a\right|\le \beta\right)\ge \gamma/4.$$

Next, if $\langle \Br_i(A_U) , \Bw_{\bar{U}}\rangle =\mathbf{0}$ for all $i$, then the conclusion of the lemma holds trivially for $P_{\Bw_{\bar{U}}} :=\mathbf{0}$. Otherwise, we apply the last conclusion of Corollary \ref{cor:ILOlinear} to the sequence $\{\langle \Br_i(A_U), \Bw_{\bar{U}}\rangle, \langle \Br_i({A_U}^T), \Bv_{\bar{U}}\rangle , i\in U\}$ (after a rescaling). As a consequence, we obtain an index set $I_{\Bw_{\bar{U}}}\subset U$ of size $|U|-n^\ep$ and a proper symmetric GAP $P_{\Bw_{\bar{U}}}\subset \C$ of rank $O_{B,\ep}(1)$ and size $n^{O_{B,\ep}(1)}$, together with its elements $q_i(\Bw_{\bar{U}})$, such that $|\langle \Br_i(A_U) , \Bw_{\bar{U}} \rangle -q_i(\Bw_{\bar{U}})|\le \beta$ for all $i\in I_{\Bw_{\bar{U}}}$.
\end{proof}

\subsection{Property  of the $q_i(\Bw_{\bar{U}})$'s.}  We now work with the GAP elements $q_i(\Bw_{\bar{U}})$, where $\Bw_{\bar{U}}\in G$. Because these points occupy a large part of an integer box, we can infer a great deal of structural relation among them. To do this, we first pause to introduce a pleasant property of generalized arithmetic progressions.

Assume that $P=\{k_1g_1+\dots+k_rg_r | -K_i\le k_i \le K_i\}$ is a proper symmetric GAP, which contains a set $U=\{u_1,\dots. u_n\}$. We consider $P$ together with the map $\Phi: P \rightarrow \R^r$ which maps $k_1g_1+\dots+k_rg_r$ to $(k_1,\dots,k_r)$. Because $P$ is proper, this map is bijective. We know that $P$ contains $U$, but we do not know yet that $U$ is non-degenerate in $P$ in the sense that the set $\Phi(U)$ has full rank in $\R^{r}$. In the later case, we say $U$ {\it spans} $P$. The following lemma states that we can always assume this without loss of any additive structure.

\begin{lemma}\label{theorem:fullrank}
Assume that $U$ is a subset of a proper symmetric GAP $P$ of rank $r$, then there exists a proper symmetric GAP $Q$ that contains $U$ such that the followings hold.

\begin{itemize}
\item $\rank(Q)\le r$ and $|Q|\le O_r(1)|P|$.
\item $U$ spans $Q$, that is, $\phi(U)$ has full rank in $\R^{\rank(Q)}$.
\end{itemize}
\end{lemma}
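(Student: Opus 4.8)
The plan is to perform a greedy dimension reduction on $P$, repeatedly collapsing a redundant generating direction until the image $\Phi(U)$ spans the full ambient space. Concretely, suppose $U$ does not span $P$, so $\Phi(U)$ lies in a proper linear subspace $V\subsetneq \R^r$. Since $\Phi(U)$ is a finite set of lattice points in $\Z^r$ lying in $V$, it in fact lies in a rational subspace, so $V\cap\Z^r$ is a lattice of rank $r'<r$. The first step is to choose an integral basis $w_1,\dots,w_{r'}$ of this sublattice $V\cap\Z^r$, extend it (over $\Q$) to the standard lattice, and record the change-of-basis; because $U$ consists of $n$ points whose coordinates in the original box are bounded by the $K_i$, the new coordinates of the points of $U$ in the basis $w_1,\dots,w_{r'}$ are bounded by $O_r(1)\cdot\max_i K_i$. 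Pushing forward through $\Phi^{-1}$, this realizes $U$ inside a new proper symmetric GAP $Q_1=\{\sum_{j=1}^{r'}k_j h_j : |k_j|\le K_j'\}$, where the generators $h_j=\Phi^{-1}(w_j)$ are integer combinations of the original $g_i$ and the dimensions $K_j'$ are $O_r(1)\max_i K_i$; hence $\rank(Q_1)=r'<r$ and $|Q_1|\le O_r(1)|P|$. One must check $Q_1$ is proper: shrinking the dimensions and passing to a sublattice of the frequency lattice on which $\Phi$ was already injective keeps $\Phi$ injective, so properness is preserved (possibly after a harmless symmetrization that only costs another $O_r(1)$ factor in size).

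Iterating this construction at most $r$ times produces a proper symmetric GAP $Q$ of rank at most $r$ containing $U$ with $|Q|\le O_r(1)|P|$ and, by construction, $\Phi(U)$ full rank in $\R^{\rank(Q)}$; here all the accumulated constants depend only on $r$, as required. This is the entire argument.

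The step I expect to be the main obstacle is the careful bookkeeping that keeps the resulting GAP \emph{proper} and \emph{symmetric} while controlling its size by $O_r(1)|P|$ — in particular, verifying that restricting to a rank-$r'$ sublattice of the frequency lattice, then choosing a good integral basis and re-symmetrizing the box, does not blow up $|Q|$ beyond an $r$-dependent constant factor. The bound on the new box dimensions uses that $U$ is \emph{finite} (so it lies in a bounded, hence compact, region of the rational subspace) together with elementary estimates on how lattice bases of a rational subspace relate to the ambient coordinates; none of this is deep, but it is the place where one actually has to do work rather than cite a previous lemma.
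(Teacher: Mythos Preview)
Your overall strategy---pass to the rational subspace $V$ spanned by $\Phi(U)$, pick an integral basis of the sublattice $V\cap\Z^r$, and rewrite $U$ as a GAP of lower rank---is the right one, and indeed is the idea behind the proof the paper cites from \cite{Ng-QLO}. But there is a genuine gap in your size estimate.

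You bound each new coordinate by $O_r(1)\max_i K_i$ (which is correct for, say, a reduced basis of $V\cap\Z^r$), and then assert $|Q_1|\le O_r(1)|P|$. That implication fails when the original dimensions $K_i$ are highly unbalanced. Take $r=3$, $K_1=N$, $K_2=K_3=1$, so $|P|\asymp N$. If $r'=2$, your bound gives only $|Q_1|\le O(N^2)$, which is not $O_r(1)|P|$. The uniform bound $K_j'\le O_r(1)\max_i K_i$ on each new dimension simply does not control the product $\prod_j K_j'$ by $\prod_i K_i$.

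The repair is to avoid bounding the new dimensions individually and instead bound the \emph{cardinality} of the target set directly. One clean way: since $\Phi(U)\subset B\cap V\cap\Z^r$, and $B$ is a centrally symmetric convex body, the lattice points of $V\cap\Z^r$ inside $B$ form a symmetric set of size at most $|B|=|P|$; now apply the discrete John-type theorem (\cite[Lemma~3.36]{TVbook}, which the paper already invokes in its appendix) to enclose this set in a proper symmetric GAP of rank $\le r'$ and size $O_{r'}(|B\cap V\cap\Z^r|)\le O_r(|P|)$. Alternatively, one can reduce the rank one step at a time by a coordinate projection: if $\langle v,\Phi(u)\rangle=0$ for all $u\in U$ with $v$ primitive and $|v_r|$ maximal, then forgetting the $r$-th coordinate is injective on $v^\perp\cap\Z^r$ and sends $B$ to $\prod_{i<r}[-K_i,K_i]$, so the size does not increase at all. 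Either route closes the gap; your current argument does not.

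As a secondary remark, the properness issue you flag as the main obstacle is comparatively minor---once the size is under control, properness can be arranged by the standard embedding/regularization lemmas for GAPs (again in \cite{TVbook} or \cite{TV-John}) at the cost of only $O_r(1)$ in size. The real work is the volume bound, not the bookkeeping.
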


We refer the reader to \cite[Theorem 2.1]{Ng-QLO} for a short proof of this lemma.

{\bf Common generating indices.} By Lemma \ref{theorem:fullrank}, we may assume that the $q_i(\Bw_{\bar{U}})$ span $P_{\Bw_{\bar{U}}}$. We choose $s$ indices $i_{w_1},\dots,i_{w_s}$ from $I_{\Bw_{\bar{U}}}$  such that $q_{i_{y_j}}(\Bw_{\bar{U}})$ span $P_{\Bw_{\bar{U}}}$, where $s$ is the rank of $P_{\Bw_{\bar{U}}}$. Note that $s=O_{B,\ep}(1)$ for all $\Bw_{\bar{U}}\in G$. 

Consider the tuples $(i_{w_1},\dots,i_{w_s})$ for all $\Bw_{\bar{U}}\in G$. Because there are $\sum_{s} O_{B,\ep}(n^s) = n^{O_{B,\ep}(1)}$ possibilities these tuples can take, there exists a tuple, say $(1,\dots,r)$ (by rearranging the rows of $A_U$ if needed), such that $(i_{w_1},\dots,i_{w_s})=(1,\dots,r)$ for all $\Bw_{\bar{U}}\in G'$, a subset $G'$ of $G$ which satisfies 
\begin{equation}
\P_{\Bw_{\bar{U}}}(\Bw_{\bar{U}}\in G')\ge \P_{\Bw_{\bar{U}}}(\Bw_{\bar{U}}\in G)/n^{O_{B,\ep}(1)} =\gamma/n^{O_{B,\ep}(1)}.
\end{equation}

{\bf Common coefficient tuple}. For each $1\le i\le r$, we express $q_i(\Bw_{\bar{U}})$ in terms of the generators of $P_{\Bw_{\bar{U}}}$ for each $\Bw_{\bar{U}}\in G'$, 
$$q_i(\Bw_{\bar{U}}) = c_{i1}(\Bw_{\bar{U}})g_{1}(\Bw_{\bar{U}})+\dots + c_{ir}(\Bw_{\bar{U}})g_{r}(\Bw_{\bar{U}}),$$ 
where $c_{i1}(\Bw_{\bar{U}}),\dots c_{ir}(\Bw_{\bar{U}})$ are integers bounded by $n^{O_{B,\ep}(1)}$, and $g_{i}(\Bw_{\bar{U}})$ are the generators of $P_{\Bw_{\bar{U}}}$.

We will show that there are many $\Bw_{\bar{U}}$ that correspond to the same coefficients $c_{ij}$. 

Consider the collection of the coefficient-tuples $\Big(\big(c_{11}(\Bw_{\bar{U}}),\dots,c_{1r}(\Bw_{\bar{U}})\big);\dots; \big(c_{r1}(\Bw_{\bar{U}}),\dots c_{rr}(\Bw_{\bar{U}})\big)\Big)$ for all $\Bw_{\bar{U}}\in G'$. The number of possibilities these tuples can take is at most
$$(n^{O_{B,\ep}(1)})^{r^2} =n^{O_{B,\ep}(1)}.$$
There exists a coefficient-tuple, say  $\Big((c_{11},\dots,c_{1r}),\dots, (c_{r1},\dots c_{rr})\Big)$, such that  
$$\Big(\big(c_{11}(\Bw_{\bar{U}}),\dots,c_{1r}(\Bw_{\bar{U}})\big);\dots; \big(c_{r1}(\Bw_{\bar{U}}),\dots c_{rr}(\Bw_{\bar{U}})\big)\Big) =\Big((c_{11},\dots,c_{1r}),\dots, (c_{r1},\dots c_{rr})\Big)$$  
for all $\Bw_{\bar{U}}\in G''$, a subset of $G'$ which satisfies 
\begin{equation}
\P_{\Bw_{\bar{U}}}(\Bw_{\bar{U}}\in G'')\ge \P_{\Bw_{\bar{U}}}(\Bw_{\bar{U}}\in G')/n^{O_{B,\ep}(1)} \ge \gamma/n^{O_{B,\ep}(1)}.
\end{equation}

In summary, there exist $r$ tuples  $(c_{11},\dots,c_{1r}),\dots, (c_{r1},\dots c_{rr})$ (where we recall that $r=O(1)$ is the rank of the GAP) whose components are integers bounded by $n^{O_{B,\ep}(1)}$, such that the following holds for all $\Bw_{\bar{U}}\in G''$.

\begin{itemize}
\item $q_i(\Bw_{\bar{U}}) = c_{i1}g_{1}(\Bw_{\bar{U}})+\dots + c_{ir}g_{r}(\Bw_{\bar{U}})$, for $i=1,\dots,r$.
\item The vectors  $(c_{11},\dots,c_{1r}),\dots, (c_{r1},\dots c_{rr})$ span $\R^{\rank(P_{\Bw_{\bar{U}}})}$.
\end{itemize} 

Next, because $|I_{\Bw_{\bar{U}}}|\ge |U|-n^\ep$ for each $\Bw_{\bar{U}}\in G''$, by an averaging argument using Chebyshev's inequality, there exists a set $I\subset U$ of size $|U|-2n^\ep$ such that for each $i\in I$ we have 
\begin{equation}\label{eqn:shit}
\P_{\Bw_{\bar{U}}}(i\in I_{\Bw_{\bar{U}}}, \Bw_{\bar{U}}\in G'') \ge \P_{\Bw_{\bar{U}}}(\Bw_{\bar{U}}\in G'')/2.
\end{equation}  
From now on we fix an arbitrary row $\Br$ of index from $I$. We will focus on those $\Bw_{\bar{U}}\in G''$ where the index of $\Br$ belongs to $I_{\Bw_{\bar{U}}}$. 

{\bf Common coefficient tuple for each individual.} Because $q(\Bw_{\bar{U}}) \in P_{\Bw_{\bar{U}}}$ ($q(\Bw_{\bar{U}})$ is the element of $P_{\Bw_{\bar{U}}}$ that is $\beta$-close to $\langle \Br ,\Bw_{\bar{U}} \rangle$), we can write 
$$q(\Bw_{\bar{U}}) = c_{1}(\Bw_{\bar{U}})g_{1}(\Bw_{\bar{U}})+\dots +c_{r}(\Bw_{\bar{U}})g_{r}(\Bw_{\bar{U}})$$ 
where $c_{i}(\Bw_{\bar{U}})$ are integers bounded by $n^{O_{B,\ep}(1)}$.

For short, for each $i$ we denote by $\Bv_i$ the vector $(c_{i1},\dots,c_{ir})$, we will also denote by $\Bv_{\Br,\Bw_{\bar{U}}}$ the vector $(c_{1}(\Bw_{\bar{U}}),\dots c_{r}(\Bw_{\bar{U}}))$. 

Because $P_{\Bw_{\bar{U}}}$ is spanned by $q_1(\Bw_{\bar{U}}),\dots, q_{r}(\Bw_{\bar{U}})$, we have $k=\det(\mathbf{v}_1,\dots \mathbf{v}_r)\neq 0$, and by basic linear algebra
\begin{equation}\label{eqn:ILObilinear:det}
k q(\Bw_{\bar{U}}) + \det(\mathbf{v}_{\Br,\Bw_{\bar{U}}},\mathbf{v}_2,\dots,\mathbf{v}_r)q_{1}(\Bw_{\bar{U}}) +\dots + \det(\mathbf{v}_{\Br,\Bw_{\bar{U}}},\mathbf{v}_1,\dots,\mathbf{v}_{r-1})q_{r}(\Bw_{\bar{U}}) =0.
\end{equation}
It is crucial to note that $k$ is independent of the choice of $\Br$ and $\Bw_{\bar{U}}$. 

Next, because each coefficient of \eqref{eqn:ILObilinear:det} is bounded by $n^{O_{B,\ep}(1)}$, there exists a subset $G_{\Br}''$ of $G''$ such that all $\Bw_{\bar{U}}\in G_{\Br}''$ correspond to the same identity, and by \eqref{eqn:shit}
\begin{equation}
\P_{\Bw_{\bar{U}}}(\Bw_{\bar{U}}\in G_{\Br}'') \ge (\P_{\Bw_{\bar{U}}}(\Bw_{\bar{U}}\in G'')/2)/(n^{O_{B,\ep}(1)})^r = \gamma/n^{O_{B,\ep}(1)} = n^{-O_{B,\ep}(1)}.
\end{equation}

In other words, there exist integers $k_1,\dots,k_r$ depending on $\Br$, all bounded by $n^{O_{B,\ep}(1)}$, such that 
\begin{equation}\label{eqn:ILObilinear:q}
k q(\Bw_{\bar{U}}) + k_1 q_{1}(\Bw_{\bar{U}}) + \dots + k_r q_{r}(\Bw_{\bar{U}})=0
\end{equation}
for all $\Bw_{\bar{U}}\in G_{\Br}''$. 

\subsection{Passing back to $A_U$.} Because $q_i(\Bw_{\bar{U}})$ are $\beta$-close to $\langle \Br_i ,\Bw_{\bar{U}}\rangle $, it follows from \eqref{eqn:ILObilinear:q} that 
$$\Big| \langle k \Br,\Bw_{\bar{U}} \rangle+ \langle k_1  \Br_1 ,\Bw_{\bar{U}}\rangle + \dots +\langle  k_r \Br_{r} , \Bw_{\bar{U}}\rangle \Big| = \Big|\langle k\Br+k_1\Br_1+\dots+\Br_r, \Bw_{\bar{U}} \rangle \Big|\le n^{O_{B,\ep}(1)}\beta.$$
Furthermore, as $\P_{\Bw_{\bar{U}}}(\Bw_{\bar{U}}\in G_{\Br}'') =n^{-O_{B,\ep}(1)}$, we have
\begin{equation}\label{eqn:ILObilinear:b}
\P_{\Bw_{\bar{U}}}\Big(|\langle k\Br+k_1\Br_1+\dots+k_r\Br_r ,\Bw_{\bar{U}} \rangle |\le n^{O_{B,\ep}(1)}\beta\Big)=n^{-O_{B,\ep}(1)}.
\end{equation}

As \eqref{eqn:ILObilinear:b} holds for any row $\Br$ indexing from $I$, this completes the proof of Lemma \ref{lemma:A_U}.

\section{ Random matrix: the inverse step}\label{section:step1}

We now give a proof of Theorem \ref{theorem:step1}. We first apply Theorem \ref{theorem:ILO:quadratic} to $a_{ij}$ to obtain
$$\P_{\Bz} \left(\left|\left\langle \Bz,  k\row_i(A_{n-1})+\sum_{j}k_{ii_j}\row_{i_j}(A_{n-1}) \right\rangle\right|\le n^{-A+O_{B,\ep}(1)} \right) \ge n^{-O_{B,\ep}(1)},$$
where $A_{n-1}=(a_{ij})_{2\le i,j\le n}$.

For short, we denote by $\row_i'$ the vector $k\row_i(A_{n-1})+\sum_{j}k_{ii_j}\row_{i_j}(A_{n-1})$. Thus, for any $i\in I$,
\begin{equation}\label{eqn:step1:0}
\P_{\Bz} \left(|\langle \Bz, \row_i' \rangle|\le n^{-A+O_{B,\ep}(1)} \right) \ge n^{-O_{B,\ep}(1)}.
\end{equation}

Set 
$$K=n^{-A/2}.$$ 
We consider two cases.

{\bf Case 1.}({\it{non-degenerate case}}). There exists $i_0\in I$ such that $\|\row_{i_0}'\|_2 \ge K$. Because $\row_{i_0}'=k\row_{i_0}(A_{n-1})+ \sum_{j\in I_0} k_{i_0j} \row_j(A_{n-1}y)$, $\row_{i_0}'$ is orthogonal to $n-|I|-1=n-O_{B,\ep}(1)$ column vectors of $M_{n-1}$.

Set 
$$\Bv:=\row_{i_0}'/\|\row_{i_0}'\|_2.$$
Hence, $\langle \Bv ,\col_i(M_{n-1}) \rangle =0$ for at least $n-O_{B,\ep}(1)$ column vectors of $M_{n-1}$.

Also, it follows from \eqref{eqn:step1:0} that
\begin{equation}\label{eqn:step1:2}
\P_\Bz\left(|\langle \Bz, \Bv \rangle|\le n^{-A/2+O_{B,\ep}(1)}\right)\ge n^{-O_{B,\ep}(1)}.
\end{equation}

Next, Corollary \ref{cor:complex} applied to \eqref{eqn:step1:2} implies that $\Bv$ can be approximated by a vector $\Bu$ as follows.
\begin{itemize}
\item $|u_i-v_i|\le n^{-A/2+O_{B,\ep}(1)}$ for all $i$.
\item There exists a GAP of rank $O_{B,\ep}(1)$ and size $n^{O_{B,\ep}(1)}$ that contains at least $n-n^\ep$ components $u_i$.
\item All the components $u_i$, and all the generators of the GAP are rational complex numbers of the form $\frac{p}{q}+\sqrt{-1}\frac{p'}{q'}$, where $|p|,|q|,|p'|,|q'| \le n^{A/2+O_{B,\ep}(1)}$.
\end{itemize}

Note that, by the approximation above, we have $\|\Bu\|_2\asymp 1$ and $|\langle \Bu,\col_i(M_{n-1}) \rangle| \le n^{-A/2+O_{B,\ep}(1)}$ for at least $n-O_{B,\ep}(1)$ column vectors of $M_{n-1}$.

{\bf Case 2.}({\it{degenerate case}})  $\|\row_i'\|_2 \le K$ for all $i\in I$. Hence, with $I_0:=\{i_1,\dots, i_r\}$ 
\begin{equation}\label{eqn:step1:1}
\left\|k\row_i(A_{n-1})+ \sum_{j\in I_0} k_{ij} \row_j(A_{n-1})\right\|_2=\|\row_i'\|_2\le K.
\end{equation} 

Without loss of generality we can assume that $I$ and $I_0$ are disjoint. Next, because $\sum_j\|\col_j(A_{n-1})\|_2^2=1$, there exists an index $j_0$ such that $\|\col_{j_0}(A_{n-1})\|_2\ge n^{-1/2}$. Consider this column vector.

It follows from \eqref{eqn:step1:1} that for any $i\in I$,
$$\left|k\col_{j_0}(i)+\sum_{j\in I_0} k_{ij}\col_{j_0}(j)\right|\le K.$$
The above inequality means that the components $\col_{j_0}(i)$ of $\col_{j_0}(A_{n-1})$ belong to a GAP generated by $\col_{j_0}(j)/k, j\in I_0$, up to an error $K$. This suggests the following approximation.  

For each $j\notin I$, we approximate $\col_{j_0}(j)$ by a number $v_j$ of the form $(1/\lfloor 2K^{-1} \rfloor) \cdot \Z^2$ such that $|v_j-\col_{j_0}(j)|\le K$. We next set 
$$v_i:=\sum_{j\in I_0}k_{ij}v_j/k$$ 
for any $i\in I$. 
Thus, $v_i$ belongs to a GAP of rank $O_{B,\ep}(1)$ and size $n^{O_{B,\ep}(1)}$ for all $i\in I$.

With $\Bv=(v_1,\dots, v_{n-1})$, we have
$$\|\Bv -\col_{j_0}(A_{n-1})\|_2\le Kn^{O_{B,\ep}(1)}.$$ 
Furthermore, by Condition \ref{condition:bound}, and because $\langle \col_{j_0}(A_{n-1}),\row_i(M_{n-1}) \rangle =0$ for $i\neq j_0$, we infer that
$$|\langle \Bv,\row_i(M_{n-1}) \rangle| \le Kn^{O_{B,\ep}(1)}.$$ 

Note that $\|\Bv\|_2\gg n^{-1/2}$. Set $\Bu:=\lfloor 1/\|\Bv\|_2\rfloor \cdot \Bv$, we then obtain
\begin{itemize}
\item $|\langle \Bu,\row_i(M_{n-1})\rangle| \le n^{-A/2+O_{B,\ep}(1)}$ for $n-2$ rows of $M_{n-1}$.
\item There exists a GAP of rank $O_{B,\ep}(1)$ and size $n^{O_{B,\ep}(1)}$ that contains at least $n-2n^\ep$ components $u_i$.
\item All the components $u_i$, and all the generators of the GAP are rational complex numbers of the form $\frac{p}{q}+\sqrt{-1} \frac{p'}{q'}$, where $|p|,|q|,|p'|,|q'| \le n^{A/2+O_{B,\ep}(1)}$.
\end{itemize}

\section{Random matrix: the counting step}\label{section:step2}

We now give a proof of Theorem \ref{theorem:step2}. Without loss of generality, we assume $\ep$ to be sufficiently small. Our argument, which follows the ``divide and conquer'' strategy, is simple and purely combinatorial. We note that a similar but simpler treatment for symmetric matrices has appeared in \cite[Section 5]{Ng-sym}.

 For convenience, let us replace $M_{n-1}$ by $M_n$.  We will consider the case $|\langle \Bu,\row_i(M_{n})\rangle| \le n^{-A/2+O_{B,\ep}(1)}$ for $n-O_{B,\ep}(1)$ rows of $M_{n}$ only, the remaining case  $|\langle \Bu,\col_i(M_{n})\rangle| \le n^{-A/2+O_{B,\ep}(1)}$ can be treated identically. 

Let $\mathcal{N}$ be the number of such structural vectors $\Bu$. Because each GAP is determined by its generators and dimensions, the number of $Q$'s is bounded by 
$$\# \{Q, \mbox{ there exists } \Bu \in \mathcal{N} \mbox{ such that } \Bu\in Q \} = (n^{2A+O_{B,\ep}(1)})^{O_{B,\ep}(1)} (n^{O_{B,\ep}(1)})^{O_{B,\ep}(1)} = n^{O_{A,B,\ep}(1)}.$$ 

Next, for a given $Q$ of rank $O_{B,\ep}(1)$ and size $n^{O_{B,\ep}(1)}$, there are at most $n^{n-2n^\ep}|Q|^{n-2n^\ep} = n^{O_{B,\ep}(n)}$ ways to choose the $n-2n^\ep$ components $u_i$ that $Q$ contains. Because the remaining components belong to the set $\{\frac{p}{q}+i \frac{p'}{q'}, |p|,|q|,|p'|,|q'|\le n^{A/2+O_{B,\ep}(1)}\}$, there are at most $(n^{2A+O_{B,\ep}(1)})^{2n^\ep}= n^{O_{A,B,\ep}(n^\ep)}$ ways to choose them. 

Hence, we obtain the key bound 
\begin{equation}\label{eqn:step2:N'}
\mathcal{N}\le n^{O_{A,B,\ep}(1)}   n^{O_{B,\ep}(n)}  n^{O_{A,B,\ep}(n^\ep)} = n^{O_{B,\ep}(n)}.
\end{equation}

Set $\beta_0:=n^{-A/2+O_{B,\ep}(1)}$, the bound obtained from the conclusion of Theorem \ref{theorem:step1}. For a given vector $\Bu$, we define $\P_{\beta_0}(\Bu)$ as follows
$$\P_{\beta_0}(\Bu):=\P\Big(|\langle \Bu,\row_i(M_{n})\rangle| \le  \beta_0 \mbox{ for } n-O_{B,\ep}(1) \mbox{ rows of } M_{n-1}\Big).$$

For the sake of discussion, let us pretend for now that the rows of $X_{n}$ are independent. By definition, the vector $\Bu$ is orthogonal to almost every row of $M_{n}$. Thus, if $\Bu$ is fixed, the probability of this event is bounded by 
$$\P_{\beta_0}(\Bu) \le (\P_{\Bx}(|u_1x_1+\dots+u_nx_n|\le \beta_0))^{n-O(1)}:=\gamma^{n-O(1)},$$ 
where $x_1,\dots,x_n$ are i.i.d. copies of $\xi$. 

Now, if $\gamma$ is small, say $n^{-\Omega(1)}$, then $\P_{\beta_0}(\Bu)$ is $n^{-\Omega(n)}$. Thus the contribution of these $\P_{\beta_0}(\Bu)$ in the total sum  $\sum_{\Bu}\P_{\beta_0}(\Bu)$ is negligible, taking into account of the bound $n^{O(n)}$ of $\mathcal{N}$. 

Next, if $\gamma$ is comparably large, $\gamma=n^{-O(1)}$, then by Theorem \ref{theorem:ILOlinear}, most of the components $u_i$ are close to a new GAP of rank $O(1)$ and of size $O(\gamma^{-1}/\sqrt{n})$. This would then enable us to approximate $\Bu$ by a new vector $\Bu'$ in such a way that $|\langle \Bu',\row_i(M_n)\rangle|$ is still of order $O(\beta_0)$ and the components of $\Bu'$ are now from the new GAPs. The number $\mathcal{N'}$ of these $\Bu'$ can be bounded by $(\gamma^{-1}/n^\ep)^{n}$, while we recall that $\P_{\beta_0}(\Bu')$ is of order $\gamma^{-n}$. Thus, summing over $\Bu'$ we obtain the desired bound
$$\sum_{\Bu'} \P_{\beta_0}(\Bu') \le \#\{ \mbox{ new GAPs } \} (\gamma^{-1}/n^\ep)^{n} \gamma^{-n} = O(n^{-\ep n+O(1)}).$$

To our model $M_n=F_n+X_n$, we will mainly follow the heuristic above. Our strategy is to classify $\Bu$ into two classes: 
\begin{enumerate}[(i)]
\item $\mathcal{B}'$ contains those $\Bu$ for which $\P_{\beta_0}(\Bu)$ is very small, and thus $\sum_{\Bu\in \mathcal{B}'}\P_{\beta_0}(\Bu)$  is negligible;
\item the other class $\mathcal{B}$ contains of $\Bu$ of relatively large $\P_{\beta_0}(\Bu)$. To deal with those $\Bu$ of the second type, we will not control $\sum_{\Bu\in \mathcal{B}} \P_{\beta_0}(\Bu)$ directly but pass to a class of new vectors $\Bu'$ that are also almost orthogonal to many rows of $M_{n}$, while the probability $\sum_{\Bu'} \P_{\beta_0}(\Bu')$ is of order $O(n^{-\ep n})$.
\end{enumerate}

What makes our analysis harder is that the estimate $\P_{\beta_0}(\Bu) \le (\P_{\Bx}(|u_1x_1+\dots+u_nx_n|\le \beta_0))^{n-O(1)}$ is no-longer valid for our random matrix model.

\subsection{ Technical reductions and upper bounds for $\P_{\beta_0}(\Bu)$ }\label{subsection:observation} By paying a factor of $n^{O_{B,\ep}(1)}$ in probability, we may assume that $|\langle \Bu,\row_i(M_{n}) \rangle | \le  \beta_0$ for the first  $n-O_{B,\ep}(1)$ rows of $M_{n}$. Also, by paying another factor of $n^{n^\ep}$ in probability, we may assume that the first $n_0$ components $u_i$ of $\Bu$  \footnote{Roughly speaking, in later analysis we will be fixing the columns of $M_n$ that correspond to the unstructured components of $\Bu$. Thus the assumption that  the first $n_0$ components of $\Bu$ come from a GAP is slightly more difficult than other cases as it involves more dependencies and less structural components. However, there is no major difference among the treatments.} belong to a GAP $Q$, and $u_{n_0}\ge 1/2\sqrt{n-1}$ (recall that $\Bu \asymp 1$), where 
$$n_0:=n-2n^\ep.$$ 
We refer to the remaining $u_i$'s as exceptional components. Note that these extra factors do not affect our final bound $\exp(-\Omega(n))$.

For given $\beta>0$ and $i\le n_0$, we define
$$\gamma_{\beta}^{(i)}(\Bu):=\sup_a\P_{x_i,\dots,x_{n_0}}(|x_iu_i+\dots+x_{n_0}u_{n_0}-a|\le \beta),$$
where $x_i,\dots, x_{n_0}$ are i.i.d copies of $\xi$.

A crucial observation is that, by exposing the rows of $M_{n-1}$ one by one, and due to symmetry (i.e. $x_{ij}$ is independent from all other entries except $x_{ji}$), the probability $\P_{\beta}(\Bu)$ that $|\langle \Bu,\row_i(M_{n-1})\rangle|\le \beta$ for all $i\le n-O_{B,\ep}(1)$ can be bounded by 
\begin{eqnarray}\label{eqn:step2:Pu}
\P_{\beta}(\Bu)&\le& \prod_{1\le i\le n-O_{B,\ep}(1)}\sup_a\P_{x_i,\dots,x_{n-1}}(|x_iu_i+\dots+x_{n-1}u_{n-1}-a|\le \beta)\nonumber \\
&\le&  \prod_{1\le i\le n_0}\sup_a\P_{x_i,\dots,x_{n_0}}(|x_iu_i+\dots+x_{n_0}u_{n_0}-a|\le \beta) \nonumber \\
&=&\prod_{1\le i\le n_0}\gamma_{\beta}^{(i)}(\Bu).
\end{eqnarray}

Also, because $u_{n_0}\ge 1/2\sqrt{n-1}$, there exist absolute positive constants $c_1,c_2$ such that $c_2<1$ and for any $\beta<c_1/2 \sqrt{n-1}$ we have
\begin{eqnarray}\label{eqn:step2:upper}
\gamma_{\beta}^{(k)}(\Bu) &\le& \sup_a \P_{x_{n_0}}(|x_{n_0}u_{n_0}-a|\le \beta)\nonumber \\
&\le& 1-c_2.
\end{eqnarray}
Thus, 
$$\P_\beta(\Bu)\le (1-c_2)^{n_0}=(1-c_2)^{(1-o(1))n}.$$

\subsection{Classification} Next, let $C$ be a sufficiently large constant depending on $B$ and $\ep$ but not $A$. We classify $\Bu$ into two classes $\mathcal{B}$ and $\mathcal{B}'$, depending on whether $\P_{\beta_0}(\Bu)\ge n^{-Cn}$ or not. 

Because of \eqref{eqn:step2:N'} and with $C$ sufficiently large,
\begin{equation}\label{eqn:step2:B'}
\sum_{\Bu\in \mathcal{B}'}\P_{\beta_0}(\Bu)\le n^{O_{B,\ep}(n)}/n^{Cn} \le n^{-n/2}.
\end{equation}
For the rest of this section, we focus on $\Bu \in \mathcal{B}$.

\subsection{Approximation for  vectors of ``low complexity''} Let $\mathcal{B}_1$ be the collection of $\Bu\in \mathcal{B}$ satisfying the following property: for any $n'$ components $u_{i_1},\dots,u_{i_{n'}}$ among the $u_1,\dots, u_{n_0}$, we have 
\begin{equation}\label{eqn:step2:degenerate}
\sup_a\P_{x_{i_1},\dots,x_{i_{n'}}}\left(\left|u_{i_1}x_{i_1}+\dots+u_{i_{n'}}x_{i_{n'}}-a\right|\le n^{-B-4}\right)\ge (n')^{-1/2+o(1)}.
\end{equation}

Here we set 
$$n':=n^{1-\ep}.$$
For concision we set $\beta=n^{-B-4}$. It follows from Theorem \ref{theorem:ILOlinear} that, among any $u_{i_1},\dots,u_{i_{n'}}$, there are, say, at least $n'/2+1$ components that belong to a ball of radius $\beta$ (because our GAP now has only one element). A simple covering argument then implies that there is a ball of radius $2\beta$ that contains all but $n'-1$ components $u_i$.
 
Thus there exists a vector $\Bu'\in (2\beta)\cdot (\Z+ \sqrt{-1} \Z)$ satisfying the following conditions.
\begin{itemize}
\item $|u_i-u_i'|\le 4\beta$ for all $i$.
\item $u_i'$ takes the same value $u$ for at least $n_0-n'$ indices $i$.
\end{itemize}

In other words, $\Bu$ can be approximated by a vector of  ``low complexity''. Because of the approximation and Condition \ref{condition:bound}, whenever $|\langle \Bu, \row_i(M_{n-1})\rangle|\le \beta_0$, we have 
$$|\langle \Bu',\row_i(M_{n-1})\rangle|\le n(n^{B+1}+n^\alpha)(4\beta)+\beta_0:=\beta'.$$ 
It is clear from the bound on $\beta$ and $\beta_0$ that $\beta' \le c_1/2\sqrt{n-1}$, and thus by \eqref{eqn:step2:upper}, 
$$\P_{\beta'}(\Bu') \le (1-c_2)^{(1-o(1))n}.$$

Now we bound the number of $\Bu'$ obtained from the approximation. First, there are $O(n^{n-n_0+n'}) = O(n^{2n^{1-\ep}})$ ways to choose those $u_i'$ that take the same value $u$, and there are just $O(\beta^{-1})$ ways to choose $u$. The remaining components belong to the set $(2\beta)^{-1}\cdot (\Z + \sqrt{-1}\Z)$, and thus there are at most $O((\beta^{-1})^{n-n_0+n'})= O(n^{O_{A,B,\ep}(n^{1-\ep})})$ ways to choose them.

Hence we obtain the total bound
\begin{align*}
\P&(\exists \Bu \in \mathcal{B}_1 \mbox{ such that for all } i\le n-O_{B,\ep}(1), \langle \Bu, \Br_i(M_{n-1})\rangle \le \beta_0) \\
&\qquad\le \sum_{\Bu'}\P_{\beta'}(\Bu') \\
&\qquad\le O(n^{2n^{1-\ep}}) O(n^{O_{A,B,\ep}(n^{1-\ep})}) (1-c_2)^{(1-o(1))n}\\
&\qquad\le (1-c_2)^{(1-o(1))n}.
\end{align*}

\subsection{Approximation for vectors of ``high complexity''} Assume that $\Bu\in \mathcal{B}_2:=\mathcal{B}\backslash \mathcal{B}_1$. 
By exposing the rows of $M_{n-1}$ accordingly, and by paying an extra factor $\binom{n_0}{n'}=O(n^{n^{1-\ep}})$ in probability, we may assume that the components $u_{n_0-n'+1},\dots,u_{n_0}$ satisfy the property 
$$\sup_a\P_{x_{n_0-n'+1},\dots,x_{n_0}}\left(\left|u_{n_0-n'+1}x_{n_0-n'+1}+\dots+u_{n_0}x_{n_0}-a\right|\le n^{-B-4}\right)\le (n')^{-1/2+o(1)}$$
\begin{equation}\label{eqn:step2:non-degenerate}
\le n^{-1/2+\ep/2+o(1)}.
\end{equation}

{\bf Preparation.} Next, define a radius sequence $\beta_k, k\ge 0$ where $\beta_0=n^{-A/2+O_{B,\ep}(1)}$ is the bound obtained from the conclusion of Theorem \ref{theorem:step1}, and
$$\beta_{k+1}:= (n^{B+2}+n^{\alpha+1}+1)^2 \beta_k.$$

Recall from \eqref{eqn:step2:Pu} that 
$$\P_{\beta_k}(\Bu) \le \prod_{1 \le i \le n_0-n'} \gamma_{\beta_{k}}^{(i)}(\Bu)=:\pi_{\beta_k}(\Bu).$$

Roughly speaking, the reason we truncated the product here is that whenever $i\le n_0-n'$ and $\beta_k$ is small enough, the terms $\gamma_{\beta_k}^{(i)}(\Bu)$ are smaller than $(n')^{-1/2+o(1)}$, owing to \eqref{eqn:step2:non-degenerate}. This fact will allow us to gain some significant factors when applying Theorem \ref{theorem:ILOlinear}. 

Observe that if $|\langle \Bu,\row_i(M_n)\rangle|\le \beta_k$ and if $\Bu'$ is an approximation of $\Bu$ such that $|u_i-u_i'|\le \beta_k$ for all $i$, then
\begin{align}\label{eqn:u':discussion}
&\pi_{\beta_k}(\Bu)=\prod_{1 \le i\le n_0-n'} \sup_a \P_{x_i,\dots,x_{n_0}}\Big(|u_ix_i+\dots+u_{n_0}x_{n_0}-a|\le \beta_k\Big)\nonumber \\
&\le \prod_{1 \le i\le n_0-n'} \sup_a \P_{x_i,\dots,x_{n_0}}\Big(|u_i'x_i+\dots+u_{n_0}'x_{n_0}-a|\le (n(n^{B+1})+n^\alpha) \beta_k +\beta_k\Big) \nonumber \\
&=\prod_{1 \le i\le n_0-n'} \sup_a \P_{x_i,\dots,x_{n_0}}\Big(|u_i'x_i+\dots+u_{n_0}'x_{n_0}-a|\le (n^{B+2}+n^{\alpha+1}+1)\beta_k\Big) \nonumber \\
&\le \prod_{1 \le i\le n_0-n'} \sup_a \P_{x_i,\dots,x_{n_0}}\Big(|u_ix_i+\dots+u_{n_0}x_{n_0}-a|\le (n^{B+2}+n^{\alpha+1}+1)^2\beta_k\Big) \nonumber \\
&=\pi_{\beta_{k+1}}(\Bu).
\end{align}

Naturally, we hope that after the approximation $\P_{(n^{B+2}+n^{\alpha+1}1)\beta_k}(\Bu')$ does not increase much compared to the original $\P_{\beta_k}(\Bu)$, where we recall that $n^\alpha$ is the upper bound for the entries of $F_n$. That motivates us to consider a special radius $\beta_{k_0}$ with respect to $\Bu$ defined below.

Note that the bounded sequence $\pi_{\beta_k}(\Bu)$ increases with $k$, and recall that $\pi_{\beta_0}(\Bu)\ge n^{-Cn}$ for $\Bu\in \mathcal{B}$. Thus, by the pigeonhole principle, there exists $k_0:=k_0(\Bu)\le C\ep^{-1}$ such that 
\begin{equation}\label{eqn:step2:pigeon-hole}
\pi_{\beta_{k_0+1}}(\Bu) \le n^{\ep n} \pi_{\beta_{k_0}}(\Bu).
\end{equation}

It is crucial to note that, since $A$ was chosen to be sufficiently large compared to $O_{B,\ep}(1)$ and $C$, we have 
$$\beta_{k_0+1}\le n^{-B-4}.$$

Having mentioned the upper bound of $\gamma_{\beta_i}^{(i)}(\Bu)$, we now turn to its lower bound. Because of Condition \ref{condition:bound}, and $u_i\le 1$ for all $i$, and by the pigeonhole principle, the following trivial bound holds for any $\beta\ge \beta_0$ and $i\le n_0-n'$,
$$\gamma_{\beta}^{(i)}(\Bu) \ge \beta n^{-B-2} \ge \beta_0 n^{-B-2} = n^{-A/2+O_{B,\ep}(1)}.$$

{\bf Subclasses of $\Bu$ in terms of the sequence $(\gamma^{(i)}(\Bu))$.} Set 
$$I:=[n^{-A/2+O_{B,\ep}(1)}, n^{-1/2+\ep/2+o(1)}]:=[l_I,r_I].$$ 
We next divide it into $K=(A/2+O_{B,\ep}(1))\ep^{-1}$ sub-intervals $I_k=[l_I n^{k\ep},l_I n^{(k+1)\ep}]$. For short, we denote by $l_k$ the left endpoint of each $I_k$. Thus $l_k=n^{-A/2+O_{B,\ep}(1)+k\ep}$.

With all the necessary settings above, we now classify $\Bu$ based on the distribution of the $\gamma_{\beta_{k_0}}^{(i)}(\Bu), 1\le i \le n_0-n^{1-\ep}$ . 

For each $0\le k_0 \le C\ep^{-1}$ and each tuple $(m_0,\dots,m_K)$ satisfying $m_0+\dots+m_K=n_0-n'$, we let $\mathcal{B}_{k_0}^{(m_0,\dots,m_K)}$ denote the collection of those
$\Bu$ from $\mathcal{B}_2$ that satisfy the following conditions.
\begin{itemize}
\item $k_0(\Bu)= k_0$.
\item There are exactly $m_k$ terms of the sequence $(\gamma_{\beta_{k_0}}^{(i)}(\Bu))$ that belong to the interval $I_k$. In other words, if $m_0+\dots+m_{k-1}+1 \le i \le m_0+\dots+m_k$ then $\gamma_{\beta_{k_0}}^{(i)}(\Bu)\in I_k$.
\end{itemize}

{\bf The approximation.} Now we will use Theorem \ref{theorem:ILOlinear} to approximate $\Bu \in \mathcal{B}_{k_0}^{(m_0,\dots,m_K)} $ as follows. 

\begin{itemize}
\item {\it First step}. Consider each index $i$ in the range $1 \le i\le m_0$. Because $\gamma_{\beta_{k_0}}^{(1)}\in I_0$, we apply Theorem \ref{theorem:ILOlinear} to approximate $u_i$ by $u_i'$ such that $|u_i-u_i'|\le \beta_{k_0}$ and the $u_i'$ belong to a GAP $Q_0$ of rank $O_{B,\ep}(1)$ and size $O(l_0^{-1}/\sqrt{n'})=O(l_0^{-1}/n^{1/2-\ep})$ for all but $n^{1-2\ep}$ indices $i$. Furthermore, all $u_i'$ have the form $\beta_{k_0}\cdot (\frac{p}{q}+\sqrt{-1}\frac{p'}{q'})$, where $|p|,|q|,|p'|,|q'| =O(n \beta_{k_0}^{-1})=O(n^{A/2+O_{B,\ep}(1)})$.  
\item {\it $k$-th step, $1\le k\le K$}. We focus on $i$ from the range $n_0+\dots+ n_{k-1}+1\le i\le n_0+\dots+n_k$. Because $\gamma_{\beta_{k_0}}^{(n_0+.\dots+n_{k-1}+1)}\in I_k$, we apply Theorem \ref{theorem:ILOlinear} to approximate $u_i$ by $u_i'$ such that $|u_i-u_i'|\le \beta_{k_0}$ and the $u_i'$ belong to a GAP $Q_k$ of rank $O_{B,\ep}(1)$ and size $O(l_k^{-1}/n^{1/2-\ep})$ for all but $n^{1-2\ep}$ indices $i$. Furthermore, all $u_i'$ have the form $\beta_{k_0}\cdot (p/q + \sqrt{-1} p'/q')$, where $|p|,|q|,|p'|,|q'| =O(n \beta_{k_0}^{-1})=O(n^{A/2+O_{B,\ep}(1)})$.  
\item For the remaining components $u_i$, we just simply approximate them by the closest point in $\beta_{i_0}\cdot (\Z+\sqrt{-1}\Z)$. 
\end{itemize}

We have thus provided an approximation of $\Bu$ by $\Bu'$ satisfying the following properties. 
\begin{enumerate}[(i)]
\item $|u_i-u_i'|\le \beta_{k_0}$ for all $i$. 
\item $u_i'\in Q_k$ for all but $n^{1-2\ep}$ indices $i$ in the range $m_0+\dots+ m_{k-1}+1\le i\le m_0+\dots+m_k$.
\item All the $u_i'$, including the generators of $Q_k$, belong to the set $\beta_{k_0}\cdot \{p/q +\sqrt{-1} p'/q' , |p|,|q|,|p'|,|q'|\le n^{A/2+O_{B,\ep}(1)}\}$. 
\item $Q_k$ has rank $O_{B,\ep}(1)$ and size $|Q_k|=O(l_k^{-1}/n^{1/2-\ep})$.
\end{enumerate}

{\bf Property of $\Bu'$.} Let $\mathcal{B'}_{k_0}^{(m_1,\dots,m_K)}$ be the collection of all $\Bu'$ obtained from $\Bu\in \mathcal{B}_{k_0}^{(m_1,\dots,m_K)}$ as above. Observe that, as $|\langle \Bu, \row_i(M_{n})\rangle|\le \beta_{k_0}$ for all $i\le n-O_{B,\ep}(1)$, we have
\begin{equation}\label{eqn:step2:u'}
|\langle \Bu', \row_i(M_{n})\rangle| \le (n^{B+2}+n^{\alpha+1}+1) \beta_{k_0}.
\end{equation}
Hence, in order to justify Theorem \ref{theorem:step2} in the case $\Bu\in \mathcal{B}_2$, it suffices to show that the probability that \eqref{eqn:step2:u'} holds for all $i\le n-O_{B,\ep}(1)$, for some $\Bu'\in \mathcal{B'}_{k_0}^{(m_1,\dots,m_K)}$, is small. 

Consider a $\Bu'\in \mathcal{B'}_{k_0}^{(m_1,\dots,m_K)}$ and the  probability $\P_{(n^{B+2}+n^{\alpha+1}+1)\beta_{k_0}}(\Bu')$ that  \eqref{eqn:step2:u'} holds for all $i\le n-O_{B,\ep}(1)$. By the discussion about \eqref{eqn:u':discussion}, we have
$$\P_{(n^{B+2}+n^{\alpha+1}+1)\beta_{k_0}}(\Bu')\le \pi_{\beta_{k_0+1}}(\Bu) \le n^{\ep n}\pi_{\beta_{k_0}}(\Bu),$$
where in the second inequality we used \eqref{eqn:step2:pigeon-hole}.

We recall from the definition of $\mathcal{B}_{k_0}^{(m_1,\dots,m_K)}$ that 
\begin{align*}
\pi_{\beta_{k_0}}(\Bu) \le \prod_{k=1}^K l_{k+1}^{m_k} &=  n^{\ep(m_1+\dots+m_k)} \prod_{k=1}^K l_k^{m_k}\\
&\le n^{\ep n} \prod_{k=1}^K l_k^{m_k}.
\end{align*}
Hence,
\begin{equation}\label{eqn:step2:Pu'}
\P_{(n^{B+2}+n^{\alpha+1}+1)\beta_{k_0}}(\Bu') \le n^{2\ep n}  \prod_{k=1}^K l_k^{m_k}.
\end{equation}

{\bf The size of $\mathcal{B'}_{k_0}^{(m_1,\dots,m_K)}$.} In the next step of the argument, we bound the size of $\mathcal{B'}_{k_0}^{(m_1,\dots,m_K)}$. Because each $Q_k$ is determined by its $O_{B,\ep}(1)$ generators from the set  $\beta_{k_0}\cdot \{\frac{p}{q}+i\frac{p'}{q'}, |p|,|q|,|p'|,|q'|\le n^{A/2+O_{B,\ep}(1)}\}$, and its dimensions from the integers bounded by $n^{O_{B,\ep}(1)}$, there are $n^{O_{A,B,\ep}(1)}$ ways to choose each $Q_k$. So the total number of ways to choose $Q_1,\dots, Q_K$ is bounded by
$$(n^{O_{A,B,\ep}(1)})^{K}= n^{O_{A,B,\ep}(1)}.$$

Next, after locating $Q_k$, the number $\mathcal{N}_1$ of ways to choose $u_i'$ from each $Q_k$ is 
\begin{align*}
\mathcal{N}_1&\le \prod_{k=1}^K \binom{m_k}{n^{1-2\ep}} |Q_k|^{m_k-n^{1-2\ep}}\\ 
&\le 2^{m_1+\dots+m_K} \prod_{k=1}^K |Q_k|^{m_k}\\
&\le (O(1))^n \prod_{k=1}^Kl_k^{-m_k}/n^{(1/2-\ep)(m_1+\dots+m_k)}\\
&\le \prod_{k=1}^Kl_k^{-m_k}/n^{(1/2-\ep-o(1))n},
\end{align*}
where we used the bound $|Q_k|=O(l_k^{-1}/n^{1/2-\ep})$ for each $k$.

The remaining components $u_i'$ can take any value from the set $\beta_{k_0}\cdot \{\frac{p}{q}+i \frac{p'}{q'}, |p|,|q|,|p'|,|q'|\le n^{A/2+O_{B,\ep}(1)}\}$, so the number $\mathcal{N}_2$ of ways to choose them is bounded by
$$\mathcal{N}_2 \le (n^{A+O_{B,\ep}(1)})^{2n^\ep + Kn^{1-2\ep}} = n^{O_{A,B,\ep}(n^{1-2\ep})}.$$
Putting the bound for $\mathcal{N}_1$ and ${N}_2$ together, we obtain a bound $\mathcal{N}'$ for $|\mathcal{B'}_{k_0}^{(m_1,\dots,m_K)}|$,

\begin{equation}\label{eqn:step2:N}
\mathcal{N}'\le \prod_{k=1}^Kl_k^{-m_k}/n^{(1/2-\ep-o(1))n}.
\end{equation}

{\bf Closing the argument.} It follows from \eqref{eqn:step2:Pu'} and \eqref{eqn:step2:N} that
\begin{align*}
\sum_{\Bu'\in \mathcal{B'}_{k_0}^{(m_1,\dots,m_K)}}\P_{(n^{B+2}+n^{\alpha+1}+1)\beta_{k_0}}(\Bu') &\le  n^{2\ep n}  \prod_{k=1}^K l_k^{m_k}  \prod_{k=1}^Kl_k^{-m_k}/n^{(1/2-\ep-o(1))n}\\
&\le n^{-(1/2-3\ep -o(1))n}.
\end{align*}

Summing over the choices of $k_0$ and $(m_1,\dots,m_K)$ we obtain the bound 
$$ \sum_{k_0,m_1,\dots,m_K} \sum_{\Bu'\in \mathcal{B'}_{k_0}^{(m_1,\dots,m_K)}}\P_{(n^{B+2}+n^{\alpha+1}+1)\beta_{k_0}}(\Bu') \le n^{-(1/2-3\ep-o(1))n},$$
completing the treatment for incompressible vectors, and hence the proof of Theorem \ref{theorem:step2}.

\section{Proof of the elliptic law,  Theorems \ref{thm:real} and \ref{thm:complex}}\label{section:maintheorem:proof}

This section is devoted to the proof of Theorems \ref{thm:real} and \ref{thm:complex}.  We introduce the following notation.  Given a $n \times n$ matrix $A_n$, we let $\mu_{A_n}$ denote the empirical measure built from the eigenvalues of $A_n$ and $\nu_{A_n}$ denote the empirical measure built from the singular values of $A_n$.  That is,
$$ \mu_{A_n} := \frac{1}{n} \sum_{i \leq n} \delta_{\lambda_i(A_n)} $$
and 
$$ \nu_{A_n} := \frac{1}{n} \sum_{i \leq n} \delta_{\sigma_i(A_n)}, $$
where $\lambda_1(A_n), \ldots, \lambda_n(A_n) \in \mathbb{C}$ are the eigenvalues of $A_n$ and $\sigma_1(A_n) \geq \cdots \geq \sigma_n(A_n)$ are the singular values of $A_n$.  

In order to prove Theorems \ref{thm:real} and \ref{thm:complex}, we will show that, with probability one,
\begin{equation} \label{eq:muxf}
	\mu_{\frac{1}{\sqrt{n}}(X_n + F_n)} \longrightarrow \mu_\rho
\end{equation}
as $n \rightarrow \infty$, where $\mu_\rho$ is the uniform probability measure on the ellipsoid $\mathcal{E}_\rho$.  In particular, \eqref{eq:muxf} implies the almost sure convergence of the ESD of $\frac{1}{\sqrt{n}}(X_n+F_n)$ to the elliptic law with parameter $\rho$.  

To this end, let $\mathcal{P}(\mathbb{C})$ be the set of probability measures on $\mathbb{C}$ which integrate $\log |\cdot|$ in a neighborhood of infinity.  If $\mu\in \mathcal{P}(\mathbb{C})$, we define the logarithmic potential to be the function 
$$ U_\mu(z) := - \int_{\mathbb{C}} \log |z-\lambda|d\mu(\lambda). $$
We will make use of the following uniqueness property \cite[Lemma 4.1]{BC}: if $\mu, \nu \in \mathcal{P}(\mathbb{C})$ and $U_{\mu}(z) = U_{\nu}(z)$ for a.e. $z \in \mathbb{C}$, then $\mu = \nu$. 

We say a Borel function $f$ is uniformly integrable for a sequence of probability measures $\{\mu_n\}_{n \geq 1}$ if 
$$ \lim_{t \rightarrow \infty} \sup_{n \geq 1} \int_{\{|f|>t\}} |f| d\mu_n = 0. $$

For a complex $n \times n$ random matrix $A_n$, there is a connection between the measure $\mu_{A_n}$ and the family of measures $\{\nu_{A_n-zI}\}_{z \in \mathbb{C}}$.  In particular,
$$ U_{\mu_{A_n}}(z) = - \frac{1}{2n} \log \det  (A_n - zI)^\ast (A_n - zI) = - \int_{0}^\infty \log(s) d \nu_{A_n - zI}(s). $$
The work of Goldsheid and Khoruzhenko \cite{GK} is one of the first rigorous uses of the logarithmic potential to study random matrices.  We also refer the reader to the survey \cite{BC} for more details.  A key tool in the proof of Theorems \ref{thm:real} and \ref{thm:complex} is the following result from \cite{BC}. 

\begin{lemma}[Hermitization lemma, \cite{BC}] \label{lemma:hermitization}
Let $\{A_n\}_{n \geq 1}$ be a sequence of complex random matrices where $A_n$ is of size $n \times n$ for every $n\geq 1$.  Suppose that there exists a family of (non-random) probability measures $\{\nu_z\}_{z \in \mathbb{C}}$ such that for a.a. $z \in \mathbb{C}$, a.s.
\begin{enumerate}[(i)]
\item $\nu_{A_n - zI} \rightarrow \nu_z$ as $n \rightarrow \infty$
\item $\log$ is uniformly integrable for $\{\nu_{A_n-zI} \}_{n \geq 1}$.  
\end{enumerate}
Then there exists a probability measure $\mu \in \mathcal{P}(\mathbb{C})$ such that
\begin{enumerate}[(i)]
\item a.s. $\mu_{A_n} \rightarrow \mu$ as $n \rightarrow \infty$

\item for a.a. $z \in \mathbb{C}$,
$$ U_\mu(z) = - \int_{0}^\infty \log(s) d \nu_z(s). $$
\end{enumerate}
\end{lemma}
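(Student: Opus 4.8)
The plan is to run the logarithmic potential (Hermitization) method, the full details of which are carried out in \cite{BC}. Put
$$ U_\infty(z):=-\int_0^\infty \log(s)\,d\nu_z(s), $$
which by hypothesis (ii) and a truncation argument is finite for a.a.\ $z$; this is the candidate logarithmic potential of the limit. First I would convert the ``for a.a.\ $z$, a.s.''\ form of (i)--(ii) into ``a.s., for a.a.\ $z$'': for every compact $K\subset\C$, Tonelli's theorem gives $\E\,\mathrm{Leb}\{z\in K:\text{(i) or (ii) fails}\}=\int_K\P(\text{(i) or (ii) fails})\,dz=0$, and letting $K\uparrow\C$ shows that almost surely (i)--(ii) hold for Lebesgue-a.a.\ $z$. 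Fix a realisation in this event and such a good $z$. Since $\nu_{A_n-zI}\to\nu_z$ weakly and $\log$ is uniformly integrable for $\{\nu_{A_n-zI}\}$, splitting off the tails of $\log$ near $0$ and near $\infty$ and bounding them by the uniform integrability while handling the bounded middle part by weak convergence gives $\int_0^\infty\log(s)\,d\nu_{A_n-zI}(s)\to\int_0^\infty\log(s)\,d\nu_z(s)$; by the identity $U_{\mu_{A_n}}(z)=-\int_0^\infty\log(s)\,d\nu_{A_n-zI}(s)$ recalled above, this says $U_{\mu_{A_n}}(z)\to U_\infty(z)$. Hence, almost surely, $U_{\mu_{A_n}}\to U_\infty$ Lebesgue-a.e.\ on $\C$.

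Next I would upgrade this to weak convergence of $\mu_{A_n}$. Tightness of $\{\mu_{A_n}\}$ follows from Weyl's multiplicative inequality $\prod_{i\le k}|\lambda_i(A_n)|\le\prod_{i\le k}\sigma_i(A_n)$: this gives $\int\log_+|\lambda|\,d\mu_{A_n}(\lambda)\le\int_0^\infty\log_+(s)\,d\nu_{A_n}(s)$, whose right-hand side is bounded after invoking (i)--(ii) at a good $z$ of small modulus (using $|\sigma_i(A_n)-\sigma_i(A_n-zI)|\le|z|$), whence $\mu_{A_n}(\{|\lambda|>R\})\le C/\log R$. The same bound $\int\log_+|\lambda|\,d\mu_{A_n}(\lambda)\le C$ yields, via $\int_B|U_{\mu_{A_n}}|\le\int_\C\big(\int_B|\log|z-\lambda||\,dz\big)\,d\mu_{A_n}(\lambda)\le C_B$, uniform integrability of $\{U_{\mu_{A_n}}\}$ on every ball $B$, so the a.e.\ convergence upgrades to $L^1_{\mathrm{loc}}(\C)$ convergence by Vitali's theorem. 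Applying the distributional Laplacian (recall $\frac1{2\pi}\Delta U_{\mu_{A_n}}=-\mu_{A_n}$) then shows $\mu_{A_n}\to\mu:=-\frac1{2\pi}\Delta U_\infty$ in the sense of distributions, hence vaguely (all total masses equal $1$), hence weakly (by tightness); $\mu$ is a non-random probability measure, positive since an $L^1_{\mathrm{loc}}$ limit of superharmonic functions is superharmonic. Standard arguments with the distributional Laplacian and the asymptotics at infinity identify $U_\mu$ with $U_\infty$ a.e., so the uniqueness of the logarithmic potential \cite[Lemma 4.1]{BC} forces every subsequential limit to equal $\mu$; thus $\mu_{A_n}\to\mu$ a.s.\ and $U_\mu(z)=-\int_0^\infty\log(s)\,d\nu_z(s)$ for a.a.\ $z$, which establishes (i) and (ii).

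The hard part is exactly the passage from a.e.\ convergence of the potentials $U_{\mu_{A_n}}$ to weak convergence of the measures: a priori the potentials could oscillate locally and mass could escape to infinity, so one must simultaneously pin down tightness and the $L^1_{\mathrm{loc}}$ (not merely pointwise) mode of convergence. Both are governed by the quantitative structure of $\{\nu_{A_n-zI}\}$: the behaviour for large $|z|$ controls escape of mass, while the uniform integrability of $\log$ near $s=0$ controls the \emph{smallest} singular values of $A_n-zI$ and prevents $U_{\mu_{A_n}}$ from developing spurious singularities where $A_n-zI$ is nearly singular. (This is also why, in the application to $A_n=\frac1{\sqrt n}(X_n+F_n)$, hypothesis (ii) near $s=0$ is the delicate one: it amounts to a polynomial lower bound on the least singular value of the shifts $X_n+F_n-\sqrt n\,zI$, which is precisely Theorem \ref{theorem:singularvalue}.) Since the lemma is used here only as a black box, in the actual write-up I would simply cite \cite{BC}.
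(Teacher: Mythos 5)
The paper does not supply a proof of this lemma; it is stated and used as a black box with a citation to Bordenave--Chafai \cite{BC}, which is exactly what you propose to do at the end of your write-up. Your sketch of the argument behind that citation --- the identity $U_{\mu_{A_n}}(z)=-\int_0^\infty\log s\,d\nu_{A_n-zI}(s)$, the Tonelli swap from ``for a.a.~$z$, a.s.'' to ``a.s., for a.a.~$z$'', tightness of $\{\mu_{A_n}\}$ via Weyl's majorization, $L^1_{\mathrm{loc}}$ convergence of the potentials, the distributional Laplacian, and the uniqueness of the logarithmic potential --- is a faithful summary of the proof in \cite{BC}.
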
 

\begin{remark} \label{remark:sing}
Since the singular values (and eigenvalues) of $(A_n-zI)^\ast (A_n-zI)$ are just $\sigma_1^2(A_n -z I), \sigma_2^2(A_n -z I), \ldots, \sigma_n^2(A_n - zI)$, it follows that
$$ \nu_{(A_n-zI)^\ast (A_n - zI)}(-\infty, x) = \nu_{A_n - zI}(-\infty,\sqrt{x}) $$
for all $x \geq 0$.  As a consequence, Lemma \ref{lemma:hermitization} can be equivalently formulated with the family of measures $\{\nu_{(A_n-zI)^\ast (A_n - zI)}\}_{z \in \mathbb{C}}$ rather than $\{\nu_{A_n - zI}\}_{z \in \mathbb{C}}$.  We will take advantage of this fact below. 
\end{remark}

In conjunction with Remark \ref{remark:sing}, we define the matrix 
$$ H_n := \left( \frac{1}{\sqrt{n}} X_n - zI \right)^\ast \left( \frac{1}{\sqrt{n}} X_n - zI \right). $$

For our purposes, we will need to show that the limiting measure $\mu \in \mathcal{P}(\mathbb{C})$ in Lemma \ref{lemma:hermitization} is given by $\mu_\rho$.  Fix $-1 < \rho < 1$.  We say the family of measures $\{\nu_z\}_{z \in \mathbb{C}}$ determine the elliptic law with parameter $\rho$ by Lemma \ref{lemma:hermitization} if 
$$ U_{\mu_\rho}(z) = - \int_{0}^\infty \log(s) d \nu_z(s) $$
for all $z \in \mathbb{C}$.  The existence of this family of measures was verified and used in \cite{N}.  
  
The key tool we use to prove Theorems \ref{thm:real} and \ref{thm:complex} is the following comparison lemma.  

\begin{lemma} \label{lemma:compare}
Let $0 \leq \mu \leq 1$ and $-1 < \rho < 1$ be given.  Let $\{X_n\}_{n \geq 1}$ and $\{Y_n\}_{n \geq 1}$ be sequences of random matrices that satisfy condition {\bf C0} with atom variables $(\xi_1, \xi_2)$ and $(\eta_1, \eta_2)$, respectively.  Assume $(\xi_1, \xi_2)$ and $(\eta_1, \eta_2)$ are from the $(\mu,\rho)$-family.  Assume for a.a. $z \in \mathbb{C}$ that a.s.
$$ \nu_{\frac{1}{\sqrt{n}}Y_n -zI} \longrightarrow \nu_z $$
as $n \rightarrow \infty$ for a family of deterministic measures $\{\nu_z\}_{z \in \mathbb{C}}$.  Assume $\{F_n\}_{n\geq 1}$ is a sequence of deterministic matrices such that $\rank(F_n) = o(n)$ and $\sup_{n} \frac{1}{n^2} \|F_n\|_2^2 < \infty$.  Then a.s. 
$$ \mu_{\frac{1}{\sqrt{n}}(X_n+F_n)} - \mu_{\frac{1}{\sqrt{n}}Y_n} \longrightarrow 0 $$
as $n \rightarrow \infty$.  
\end{lemma}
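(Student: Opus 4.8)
The plan is to apply the Hermitization lemma (Lemma \ref{lemma:hermitization}) to both sequences and reduce the claim to comparing the singular value distributions of the two Hermitizations uniformly in $z$. More precisely, by Remark \ref{remark:sing} it suffices to show that for a.a.\ $z\in\C$, a.s.\ the empirical spectral distributions $\nu_{(\frac{1}{\sqrt n}(X_n+F_n)-zI)^\ast(\frac{1}{\sqrt n}(X_n+F_n)-zI)}$ and $\nu_{(\frac{1}{\sqrt n}Y_n-zI)^\ast(\frac{1}{\sqrt n}Y_n-zI)}$ have the same weak limit $\nu_z$, \emph{and} that $\log$ is uniformly integrable for the first family. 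Then Lemma \ref{lemma:hermitization} applied to $\{\frac{1}{\sqrt n}(X_n+F_n)\}$ produces a limiting measure $\mu$ with $U_\mu(z)=-\int_0^\infty\log s\,d\nu_z(s)$ for a.a.\ $z$, and applied to $\{\frac{1}{\sqrt n}Y_n\}$ it produces the same relation for $\mu_{\frac1{\sqrt n}Y_n}$; by the uniqueness property of the logarithmic potential quoted from \cite{BC}, the two limits coincide, which gives $\mu_{\frac1{\sqrt n}(X_n+F_n)}-\mu_{\frac1{\sqrt n}Y_n}\to0$ a.s.

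The first main ingredient is a \emph{convergence of the singular value distribution that is insensitive to the $(\mu,\rho)$-structure and to the low-rank, bounded-Hilbert--Schmidt perturbation $F_n$}. For the convergence $\nu_{\frac1{\sqrt n}X_n-zI}\to\nu_z$ one should argue that it depends only on the first- and second-moment data encoded in the $(\mu,\rho)$-family, so that $X_n$ and $Y_n$ yield the same limit; here I would invoke a Lindeberg-type / Stieltjes-transform universality argument for the Hermitized $2n\times 2n$ matrix
$$ \begin{pmatrix} 0 & \frac{1}{\sqrt n}X_n - zI \\ \frac{1}{\sqrt n}X_n^\ast - \bar z I & 0 \end{pmatrix}, $$
whose ESD is symmetric and built from $\pm\sigma_i(\frac1{\sqrt n}X_n-zI)$, and whose limiting spectral measure is governed by a fixed-point (self-consistent) equation determined entirely by $\E|\xi_1|^2=\E|\xi_2|^2=1$ and $\E\xi_1\xi_2=\rho$ — exactly the data shared by $(\xi_1,\xi_2)$ and $(\eta_1,\eta_2)$. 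Removing $F_n$ is then handled by the rank inequality for the Lévy distance / Kolmogorov distance between ESDs of matrices differing by a matrix of rank $o(n)$: $\|\nu_{A}-\nu_{A+E}\|_{\mathrm{Lévy}}\le \tfrac{1}{n}\rank(E)=o(1)$, applied to $A=\frac1{\sqrt n}X_n-zI$ and $E=\frac1{\sqrt n}F_n$.

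The second, and I expect the \emph{hardest}, ingredient is the uniform integrability of $\log$ near $0$ and near $\infty$ for $\{\nu_{\frac1{\sqrt n}(X_n+F_n)-zI}\}$. Integrability near $\infty$ is routine: the second-moment hypothesis plus $\sup_n\frac1{n^2}\|F_n\|_2^2<\infty$ control $\frac1n\tr H_n = \frac1n\|\frac1{\sqrt n}(X_n+F_n)-zI\|_2^2$, which bounds $\int s^2\,d\nu_{(\cdot)}(s)$ and hence makes $\log_+$ uniformly integrable (via a Borel--Cantelli / concentration argument to upgrade to almost sure). The delicate part is the bottom of the spectrum: one must rule out that too many singular values of $\frac1{\sqrt n}(X_n+F_n)-zI$ pile up near $0$. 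This is precisely where Theorem \ref{theorem:singularvalue} enters — it gives, for the \emph{single} smallest singular value, $\P(\sigma_n(X_n+F_n-\sqrt n\,zI)\le n^{-A})\le n^{-B}$, so that (after Borel--Cantelli) a.s.\ $\sigma_n\gg n^{-A}$ and the contribution of the smallest singular value to $-\frac1n\log\det$ is $o(1)$. For the intermediate small singular values $\sigma_{n-k}$ with $1\le k\le n^{1-\delta}$ one uses the standard negative-second-moment / Tao--Vu-type bound bootstrapped from the least-singular-value estimate (applied to sub-matrices, exactly as in \cite{TVcir,TVuniv}), giving $\sigma_{n-k}(\frac1{\sqrt n}(X_n+F_n)-zI)\gg k/n$ with high probability; combined with the already-established weak convergence $\nu_{\frac1{\sqrt n}(X_n+F_n)-zI}\to\nu_z$ controlling the bulk, this yields the uniform integrability of $\log_-$. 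Assembling these estimates — least singular value via Theorem \ref{theorem:singularvalue}, intermediate singular values via the negative moment bound, and the bulk via the universality step — is the technical heart of the argument, and one should be careful that all the ``a.s.\ for a.a.\ $z$'' quantifiers are handled by a Fubini/Borel--Cantelli argument over a countable dense set of $z$ together with continuity of $z\mapsto \nu_z$ and of $z\mapsto U_\mu(z)$ in $L^1_{\mathrm{loc}}$.
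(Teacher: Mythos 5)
Your proposal is correct and takes essentially the same route as the paper: it removes $F_n$ via the rank inequality for ESDs of Hermitized matrices (the paper cites Bai--Silverstein, Theorem A.44), matches the singular value distributions of $X_n$ and $Y_n$ by a Lindeberg/Stieltjes-transform argument using only the second-moment data of the $(\mu,\rho)$-family (the paper's Lemma~\ref{lemma:replacement}), establishes uniform integrability of $\log$ via Theorem~\ref{theorem:singularvalue} for the smallest singular value together with a negative-second-moment bound $\sigma_{n-i}\gg i/n$ for intermediate ones (the paper's Lemmas~\ref{lemma:uniform-integrability} and~\ref{lemma:small}), and then concludes via the Hermitization lemma and uniqueness of the logarithmic potential. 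The only cosmetic deviation is that you phrase the replacement step in terms of the $2n\times2n$ linearization, while the paper's Lemma~\ref{lemma:replacement} works directly with the resolvent of $R_n^\ast R_n$; both give the same result.
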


Lemma \ref{lemma:compare} is useful when we know the limit of $\mu_{\frac{1}{\sqrt{n}}Y_n}$.  For our purposes, we will take $\{Y_n\}_{n \geq 1}$ to be a sequence of matrices that satisfy condition {\bf C0} with jointly Gaussian entries.  In the real case, the limiting ESD of $\frac{1}{\sqrt{n}}Y_n$ was computed in \cite{N}.  

We divide the proof of Theorems \ref{thm:real} and \ref{thm:complex} into a number of lemmas organized below by sub-section.  
\begin{enumerate}
\item In order to apply Lemma \ref{lemma:hermitization}, we need to show that $\log$ is uniformly integrable for $\{\nu_{\frac{1}{\sqrt{n}}(X_n+F_n)-zI} \}_{n \geq 1}$.  We prove this statement in sub-section \ref{sec:uniform-integrability}.  The arguments in this section are based on \cite{BC,N,TVuniv}.  We will also require the use of Theorem \ref{theorem:singularvalue} to control the least singular value.  

\item In sub-section \ref{sec:replacement} we prove a replacement lemma using a moment matching argument.  The lemma will allow us to compute the limit of $\nu_{\frac{1}{\sqrt{n}}X_n - zI}$ by comparing the Stieltjes transform of this measure to the corresponding Stieltjes transform in the Gaussian case.  In order to prove this lemma, we will first need to bound the variance of the resolvent (sub-section \ref{sec:variance}) and apply a truncation argument (sub-section \ref{sec:truncation}).  

\item In sub-section \ref{sub:proof:real}, we prove Lemma \ref{lemma:compare}.  We then apply the results of \cite{N} and Lemma \ref{lemma:compare} to prove Theorem \ref{thm:real}. 

\item In sub-section \ref{sub:proof:complex}, we prove Theorem \ref{thm:complex}.
\end{enumerate}

\subsection{Uniform Integrability} \label{sec:uniform-integrability}

In this sub-section, we prove the following Lemma.  

\begin{lemma} \label{lemma:uniform-integrability}
Let $0 \leq \mu \leq 1$ and $-1 < \rho < 1$ be given.  Let $\{X_n\}_{n \geq 1}$ be a sequence of random matrices that satisfies condition {\bf C0} with atom variables $(\xi_1, \xi_2)$ from the $(\mu,\rho)$-family.  Assume $\{F_n\}_{n \geq 1}$ is a sequence of deterministic matrices such that $\rank(F_n)=o(n)$ and $\sup_{n} \frac{1}{n^2} \|F_n\|^2_2 < \infty$.  Then for a.a. $z \in \mathbb{C}$ a.s. $\log$ is uniformly integrable for $\{\nu_{\frac{1}{\sqrt{n}}(X_n+F_n) - zI} \}_{n \geq 1}$.  
\end{lemma}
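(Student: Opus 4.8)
The plan is to control the two tails of $\log$ against the measures $\nu_n:=\nu_{\frac1{\sqrt n}(X_n+F_n)-zI}$ separately: the behaviour of $\log s$ as $s\to\infty$, governed by a crude second-moment bound, and the behaviour of $|\log s|$ as $s\to0^+$, governed by lower bounds on the small singular values of $M_n:=\frac1{\sqrt n}(X_n+F_n)-zI$. Throughout I fix $z$ outside the $\mathbb{C}$-null set of points for which $\P(\det M_n=0)>0$ for some $n$ (for continuous atom laws there is nothing to exclude; in general this is a null set by Fubini, since for each realization $\det(\frac1{\sqrt n}(X_n+F_n)-zI)$ vanishes at no more than $n$ values of $z$). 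For such $z$ each $\nu_n$ is a purely atomic probability measure supported in $(0,\infty)$, so $\int_{\{|\log s|>t\}}|\log s|\,d\nu_n\to0$ as $t\to\infty$ for every individual $n$; consequently it will suffice to bound $\sup_{n>N_0}\int_{\{|\log s|>t\}}|\log s|\,d\nu_n$ for a suitably large threshold $N_0=N_0(\omega)$, the remaining finitely many $n$ being handled automatically.

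For the tail at infinity, I would note $\int_0^\infty s^2\,d\nu_n(s)=\frac1n\|M_n\|_2^2\le\frac3{n^2}\|X_n\|_2^2+\frac3{n^2}\|F_n\|_2^2+3|z|^2$, and that $\frac1{n^2}\|X_n\|_2^2=\frac1{n^2}\sum_{i,j}|x_{ij}|^2\to1$ a.s.\ (strong law of large numbers applied to the i.i.d.\ quantities $|x_{ij}|^2+|x_{ji}|^2$ for $i<j$, plus a negligible diagonal contribution), while $\frac1{n^2}\|F_n\|_2^2$ is bounded by hypothesis. Hence $\int s^2\,d\nu_n(s)=O(1)$ a.s., and since $\log s\le t e^{-2t}s^2$ for $s\ge e^t$, $t\ge1$, we get $\sup_n\int_{\{s>e^t\}}\log s\,d\nu_n(s)=O(te^{-2t})\to0$ as $t\to\infty$.

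The tail at $0$ is the substantive part. First, the assumption $\sup_n\frac1{n^2}\|F_n\|_2^2<\infty$ forces $\max_{i,j}|f_{ij}|\le\|F_n\|_2=O(n)$, so $\tilde F_n:=F_n-\sqrt n\,zI$ has all entries bounded by $n^2$ for $n$ large; thus Theorem~\ref{theorem:singularvalue}, applied with $B=2$ to $X_n+\tilde F_n$, combined with the Borel--Cantelli lemma, yields a constant $\gamma>0$ with $\sigma_n(M_n)=\frac1{\sqrt n}\sigma_n(X_n+\tilde F_n)\ge n^{-\gamma}$ for all large $n$, a.s.; since the $\sigma_i(M_n)$ are non-increasing, $\log(1/\sigma_i(M_n))\le\gamma\log n$ for \emph{every} $i$. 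Next I would establish, by adapting the distance-based counting argument of \cite{TVuniv} (see also \cite{BC}), constants $c,\delta>0$ with $\sigma_{n-j}(M_n)\ge cj/n$ for all $n^{1-\delta}\le j\le n-1$, a.s.\ for all large $n$. The one new wrinkle here is that $\row_j(X_n)$ is not independent of the span $H_j$ of the remaining rows, because $x_{ij}$ and $x_{ji}$ are correlated; but conditioning on $\{x_{ik}:i\ne j\}$ leaves $x_{j1},\dots,x_{jn}$ mutually independent with conditionally modified but still anti-concentrated laws (using $|\rho|<1$; cf.\ Section~\ref{section:ILO:linear}), while the deterministic shift from $\tilde F_n$ is absorbed by the $\sup_a$ in the small-ball estimates, so the standard bounds on $\operatorname{dist}(\row_j(M_n),H_j)$ and on the incompressibility of the bottom singular subspace go through. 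Granting these, re-index by $j=n-i$ and split at $j=n^{1-\delta}$:
\[
\int_{\{s<e^{-t}\}}\log\tfrac1s\,d\nu_n(s)\le\frac{\gamma\,n^{1-\delta}\log n}{n}+\frac1n\sum_{\substack{n^{1-\delta}\le j\le n-1\\ \sigma_{n-j}(M_n)<e^{-t}}}\log\frac{n}{cj}\le C\gamma\,n^{-\delta}\log n+\tfrac1c\,e^{-t}(1+t),
\]
where the last step uses that $cj/n\le\sigma_{n-j}(M_n)<e^{-t}$ forces $j<e^{-t}n/c$ and compares the sum with $\frac nc\int_0^{e^{-t}}\log(1/u)\,du=\frac nc\,e^{-t}(1+t)$. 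Combining with the tail-at-infinity estimate, choosing $N_0$ large (so $\gamma N_0^{-\delta}\log N_0$ is small, using that $n^{-\delta}\log n$ is eventually decreasing) and then $t$ large completes the argument.

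The hard part is the intermediate (counting) singular value bound $\sigma_{n-j}(M_n)\gg j/n$: proving it uniformly in $n$ needs the quantitative anti-concentration machinery for linear forms (in the spirit of Theorem~\ref{theorem:ILOlinear:new}) together with an incompressibility analysis of the bottom singular subspace, carried out row-by-row while keeping track of both the mild conditional dependence of the elliptic model and the deterministic perturbation $\tilde F_n$. Everything else — the second-moment bound at $\infty$, the Borel--Cantelli extraction of the polynomial lower bound on $\sigma_n$ from Theorem~\ref{theorem:singularvalue}, and the bookkeeping to pass from ``a.s.\ for all large $n$'' to the uniform-in-$n$ statement — is routine.
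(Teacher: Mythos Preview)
Your overall architecture matches the paper exactly: a second-moment bound controls the tail at $\infty$, Theorem~\ref{theorem:singularvalue} plus Borel--Cantelli controls the very bottom, and an intermediate bound $\sigma_{n-j}(M_n)\gg j/n$ for $n^{1-\delta}\le j\le n-1$ handles the middle range via distances to row-spans. The bookkeeping (splitting the sum, comparing with $\int_0^{e^{-t}}\log(1/u)\,du$, handling finitely many small~$n$ by excluding a null set of $z$) is fine.

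The gap is in your treatment of the dependence in the intermediate step. You propose to condition on $\{x_{ik}:i\neq j\}$ and claim the conditional laws of $x_{jk}$ remain anti-concentrated ``using $|\rho|<1$''. This is false in general for the $(\mu,\rho)$-family: the definition constrains only second moments, not conditional laws. For a concrete $(1,0)$-family example, take $\xi_2$ Rademacher, set $\xi_1=0$ when $\xi_2=1$ and $\xi_1=\pm\sqrt 2$ uniform when $\xi_2=-1$; then $\E\xi_1=0$, $\E\xi_1^2=1$, $\E[\xi_1\xi_2]=0$, yet $\xi_1$ is deterministic on the event $\{\xi_2=1\}$. After your conditioning, roughly half the coordinates of $\row_j$ carry no randomness, and since the subspace $H_j$ is measurable with respect to the same conditioning data, the projection onto $H_j^\perp$ can sit entirely on those dead coordinates; the distance bound then has nothing to work with. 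The reference to Section~\ref{section:ILO:linear} does not help: Theorem~\ref{theorem:ILOlinear:new} concerns the \emph{unconditional} joint law, not conditional marginals.

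The paper sidesteps this cleanly without conditioning. In the proof of Lemma~\ref{lemma:small}, after passing to the first $m=\lceil n-i/2\rceil$ rows, it \emph{deletes the $j$-th column}: let $A'_j$ be $A'$ with column $j$ removed, $Y_j$ its $j$-th row, and $H'_j$ the span of its other rows. Then $Y_j$ and $H'_j$ are genuinely independent, because the only entry correlated with $x_{jk}$ is $x_{kj}$, which lives in the removed column. Each entry of $Y_j$ now has its \emph{unconditional} law (unit variance, no appeal to $|\rho|<1$ needed), so the Talagrand-based distance bound (Lemma~\ref{lemma:dist}) applies directly; since $\dist(r_j,H_j)\ge\dist(Y_j,H'_j)$, the estimate transfers back. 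Replacing your conditioning paragraph with this column-deletion trick makes the argument go through.
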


The proof of Lemma \ref{lemma:uniform-integrability} is based on the arguments of \cite{BC,N,TVuniv}.  In order to prove Lemma \ref{lemma:uniform-integrability}, we will need the following bound for small singular values.  

\begin{lemma} \label{lemma:small}
There exists $c_0>0$ and $0 < \gamma < 1$ such that the following holds.  Let $\{X_n\}_{n \geq 1}$ be a sequence of random matrices that satisfies condition {\bf C0}.  Then a.s. for $n \gg 1$ and for all $n^{1-\gamma} \leq i \leq n-1$ and all deterministic $n \times n$ matrices $M$,
$$ \sigma_{n-i}(n^{-1/2} X_n + M) \geq c_0 \frac{i}{n}. $$
\end{lemma}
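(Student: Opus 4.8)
The plan is to obtain a uniform lower bound on the intermediate singular values $\sigma_{n-i}(n^{-1/2}X_n + M)$ by combining a negative second-moment identity with an a priori bound on the operator norm of $n^{-1/2}X_n + M$. First I would recall the elementary identity (see e.g. \cite{TVuniv}): if $A$ is an invertible $n \times n$ matrix with rows $R_1, \ldots, R_n$, then
$$ \sum_{j=1}^n \sigma_j(A)^{-2} = \sum_{j=1}^n \dist(R_j, H_j)^{-2}, $$
where $H_j$ is the subspace spanned by all rows except $R_j$. Applying this after deleting the $i$ largest singular values (or, more precisely, working with the restriction of $A^{-1}$ to an appropriate coordinate subspace, exactly as in \cite[Lemma A.4]{TVuniv}), one gets that if $\sigma_{n-i}(A)$ is small then there must be roughly $i$ rows $R_j$ whose distance to the span of the others is correspondingly small. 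Quantitatively, if $\sigma_{n-i}(A) \le c_0 i/n$ for some $i$ in the stated range, then at least $i/2$ of the distances $\dist(R_j, H_j)$ are $O(\sqrt{n}/i \cdot c_0 \cdot i/n) = O(c_0/\sqrt{n} \cdot n^{1/2}) $; the upshot, after being careful with the constants, is that $\dist(R_j, H_j) \lesssim c_0 \sqrt{n}/\sqrt{i}$ for at least, say, $i/4$ values of $j$.

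The next step is a distance lower bound: for a single row $R_j$ of $n^{-1/2}X_n + M$ and a fixed (or slowly varying) subspace $H$ of codimension $\ge m$, one has $\P(\dist(R_j, H) \le t \sqrt{m/n}) \le (C t)^{c m}$ for $t$ bounded away from $0$ — this is a standard consequence of the tensorization/small-ball estimates for projections of random vectors with independent-ish coordinates (here the rows have the weak dependence built into condition {\bf C0}, but within a single row the entries off the diagonal are genuinely independent, so the projection of $R_j$ onto the orthogonal complement of $H$ behaves like a sum of independent contributions and one can invoke Corollary \ref{cor:complex} or a direct Paley–Zygmund/Esseen argument). The subtlety is that $H_j$ depends on the other rows, which themselves are not independent of $R_j$ because of the symmetry constraint $x_{k\ell}$ paired with $x_{\ell k}$; this is handled by first conditioning on all entries except those in row $j$ and column $j$, which frees up enough independent randomness in $R_j$ (a linear number of coordinates) for the small-ball bound to apply with $m = \Omega(i)$.

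Putting these together, for fixed $i$ the event $\{\sigma_{n-i}(n^{-1/2}X_n + M) \le c_0 i/n\}$ forces $\ge i/4$ rows to each have an atypically small distance, an event of probability at most $\binom{n}{i/4}(C c_0)^{c i/4 \cdot \Omega(i)/i} = \binom{n}{i/4}(C c_0)^{c' i}$; choosing $c_0$ small enough that $(C c_0)^{c'} < 1/(2e)$ makes this $\le 2^{-\Omega(i \log(n/i))} \le 2^{-\Omega(n^{1-\gamma})}$ for $i \ge n^{1-\gamma}$, which is summable over $i$ and over $n$, so Borel–Cantelli gives the a.s. statement simultaneously for all $i$ in the range. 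Since all the bounds are uniform over the deterministic matrix $M$ (the conditioning argument never uses anything about $M$ beyond the fact that it is fixed when we expose the randomness), the claim holds for all deterministic $M$ at once. I expect the main obstacle to be the distance lower bound under the symmetric dependence structure — i.e., verifying that conditioning on the complement of row/column $j$ leaves a linear-dimensional block of genuinely independent randomness in $R_j$ and that the resulting projection still satisfies a small-ball estimate with the correct exponent; this is where the anti-concentration machinery of Section \ref{section:ILO:linear} (in particular Corollary \ref{cor:complex}) has to be brought in rather than a naive independent-entries argument.
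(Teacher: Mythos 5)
Your opening steps — the negative second moment identity $\sum_j \sigma_j(A')^{-2} = \sum_j \dist(r_j,H_j)^{-2}$ applied, after a rectangular reduction via row-deletion and eigenvalue interlacing, to the top $m\approx n-i/2$ rows of $\sqrt{n}(n^{-1/2}X_n+M)$, and the decoupling of row $j$ from $H_j$ by first removing the $j$-th column — are exactly what the paper does. But the probabilistic part of your argument has two gaps, and the conclusion does not close.

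First, the union bound over row subsets is too expensive. You propose to show that the bad event forces $\gtrsim i/4$ distances to be atypically small, pay $\binom{n}{i/4}$ for which rows those are, and beat it with a small-ball estimate giving $(Cc_0)^{c'i}$. For $i$ near the bottom of the range, $\binom{n}{i/4}\le (4en/i)^{i/4}=\exp\bigl(\Theta(i\log(n/i))\bigr)$, and $\log(n/i)\asymp \gamma\log n$ when $i\asymp n^{1-\gamma}$; so the combinatorial factor is $\exp(\Theta(\gamma i\log n))$, which overwhelms $\exp(-\Theta(i))$ for any fixed constant $c_0$. To salvage the estimate you would need $c_0\to 0$ with $n$, which is not allowed. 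The paper avoids this entirely: rather than controlling only a fraction of the distances and union-bounding over which ones, it shows that \emph{every} $\dist_j$ (over all $j$ and all $i$ in the range) exceeds $c\sqrt{i}$ with failure probability $\exp(-n^\varepsilon)$ per event, so the union bound is only over $O(n^2)$ pairs $(j,i)$ and is trivially summable. Once all distances are bounded below, the bound on $\sigma_{n-i}$ is a deterministic consequence of the identity — no pigeonhole, no subset union bound.

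Second, and related: the tool you invoke is the wrong one. You point to Corollary \ref{cor:complex} (an inverse Littlewood--Offord statement) as the source of a small-ball bound $\P(\dist(R_j,H)\le t\sqrt{m/n})\le (Ct)^{cm}$, but inverse Littlewood--Offord theorems conclude \emph{structure} from an assumed large small-ball probability; they do not produce a small-ball upper bound of that polynomial form, and such a bound is in fact false in general under condition {\bf C0} (e.g.\ Bernoulli entries) unless $t$ is bounded away from $0$. What the paper actually uses (Lemma \ref{lemma:dist}) is a \emph{concentration} inequality — Talagrand's — to show $\dist$ deviates from its median $\asymp \sqrt{n-\dim H}$ by more than $t$ only with probability $\exp(-t^2/n^{2\varepsilon})$; combined with truncation, this gives the needed $\exp(-n^\varepsilon)$-type tail. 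Concentration of measure around the median, not anti-concentration/Littlewood--Offord, is the relevant mechanism for intermediate singular values where the codimension is $\Omega(i)\gg 1$. Finally, your pigeonhole step (``the sum $\sum\dist_j^{-2}$ is large so many $\dist_j$ are small'') is not automatic either: a single very small $\dist_j$ can carry the entire sum, and the a priori lower bound $\dist_j\ge \sigma_n \ge n^{-A}$ is far too weak to rule this out for all $i$ in your range. This is another reason the paper's ``all distances are uniformly large'' strategy is the right one here.
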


\begin{proof}
Let $\sigma_1 \geq \sigma_2 \geq \cdots \geq \sigma_n$ denote the singular values of $A = \frac{1}{\sqrt{n}}X_n +M$.  It suffices to prove the lemma for $2 n^{1-\gamma} \leq i \leq n-1$ for some $0 < \gamma < 1$ to be chosen later.  Let $A'$ be the matrix formed from the first $m = \lceil n-i/2 \rceil $ rows of $\sqrt{n}A$.  Let $\sigma_1' \geq \cdots \geq \sigma_m'$ denote the singular values of $A'$.  From eigenvalue interlacing it follows that
$$ \frac{1}{\sqrt{n}} \sigma_{n-i}' \leq \sigma_{n-i}. $$
By \cite[Lemma A.4]{TVuniv}, 
$$ \sigma_1'^{-2} + \cdots + \sigma_m'^{-2} =  \dist_1^{-2} + \cdots + \dist_m^{-2} $$
where $\dist_i = \dist(r_i, H_i)$, $r_i$ is the $i$-th row of $A'$, and 
$$ H_i = \mathrm{Span}\{r_j : j = 1, \ldots m; j \neq i \}. $$
Since
$$ \sigma_{n-i}^{-2} \leq n \sigma_{n-i}'^{-2} $$
it follows that
\begin{equation} \label{eq:sni}
	\frac{i}{2n} \sigma_{n-i}^{-2} \leq \frac{i}{2} \sigma_{n-i}'^{-2} \leq \sum_{j=n-i}^m \sigma_{n-j}'^{-2} \leq \sum_{j=1}^m \dist_j^{-2}. 
\end{equation}

We now wish to estimate $\dist(r_j,H_j)$.  However, $r_j$ and $H_j$ are not independent.  To work around this problem, we define the matrix $A'_j$ to be the matrix $A'$ with the $j$-th column removed.  Let $Y_j$ be the $j$-th row of $A'_j$ and let 
$$ H'_j = \mathrm{Span}\{\mathbf{r}_k(A'_j) : k = 1, \ldots, m; k \neq j \}. $$
Note that $Y_j$ and $H'_j$ are independent for each $j=1,\ldots, m$.  

We also have
$$ \dist(r_j, H_j) = \inf_{v \in H_j} \| r_j - v \| \geq \inf_{v \in H'_j} \| Y_j - v\| = \dist(Y_j, H'_j) $$
where
$$ \dim(H'_j) \leq \dim(H_j) \leq n-1 - \frac{i}{2} \leq n-1 - (n-1)^{1-\gamma}.  $$

By Lemma \ref{lemma:dist} below and the union bound, we obtain
$$ \sum_{n=1}^\infty \P \left( \bigcup_{i=2n^{1-\gamma}}^n \bigcup_{j=1}^m \left\{ \dist_j \leq c_o \sqrt{i} \right\} \right) < \infty. $$
Thus, by the Borel-Cantelli lemma, for all $2n^{1-\gamma} \leq i \leq n-1$ and all $1 \leq j \leq m$ 
$$ \dist_j \geq c_0 \sqrt{i} \text{ a.s.} $$
The proof of Lemma \ref{lemma:small} is then complete by the above estimate and \eqref{eq:sni}.  
\end{proof}

\begin{lemma}[Distance of a random vector to a subspace] \label{lemma:dist}
Let $x$ and $y$ be complex-valued random variables with unit variance.  Then there exists $\gamma>0$ and $\varepsilon>0$ such that the following holds.  Let $(\xi_1, \xi_2, \ldots, \xi_n)$ be a random vector in $\mathbb{C}^n$ with independent entries.  Assume further that for each $1 \leq i \leq n$, $\xi_i$ is equal in distribution to either $x$ or $y$.  Then for all $n \gg 1$, any deterministic vector $v \in \mathbb{C}^n$ and any subspace $H$ of $\mathbb{C}^n$ with $1 \leq \dim(H) \leq n-n^{1-\gamma}$, we have 
$$ \P \left( \dist(R,H) \leq \frac{1}{2} \sqrt{n - \dim(H)} \right) \leq \exp(-n^\varepsilon) $$
where $R = (\xi_1, \xi_2, \ldots, \xi_n) + v$.  
\end{lemma}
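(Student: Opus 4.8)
The plan is to bound below $\dist(R,H)=\|PR\|$, where $P$ is the orthogonal projection of $\C^n$ onto $H^\perp$, so that $r:=\rank P=n-\dim H\ge n^{1-\gamma}$, and to prove that the event $\|PR\|^2<\tfrac14 r$ has probability $\exp(-n^{\Omega(1)})$. If the entries $\xi_i$ were uniformly bounded this would be immediate: $\xi\mapsto\|P(\xi+v)\|$ is convex and $1$-Lipschitz, a trace computation gives $\E\|P(\xi+v)\|^2=\|P(v+\E\xi)\|^2+\sum_iP_{ii}\,\E|\xi_i-\E\xi_i|^2\ge\tr P=r$, and Talagrand's convex concentration inequality would finish the job. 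The only hypothesis available is that $x,y$ have two finite moments, so the real content is to handle the occasional very large entry.

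First I would truncate. Fix a small constant $\kappa>0$ and put $K:=n^{\kappa}$. Writing $\xi_i=\mu_i+\zeta_i$ with $\mu_i=\E\xi_i\in\{\E x,\E y\}$ and $\E|\zeta_i|^2=1$, set $\zeta_i':=\zeta_i\mathbf{1}_{|\zeta_i|\le K}-\E[\zeta_i\mathbf{1}_{|\zeta_i|\le K}]$ and $\eta_i:=\zeta_i-\zeta_i'$; then $\zeta_i'$ is centred, bounded by $2K$, with $\E|\zeta_i'|^2=1-o(1)$, while $\eta_i$ is supported on the random set $E:=\{i:|\zeta_i|>K\}$ up to a deterministic shift of size $O(K^{-3})$. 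Absorbing the $\mu_i$ and those shifts into $v$ we get $R=w+\zeta'+\eta$ with $w$ deterministic, $\zeta'$ independent centred bounded, and $\eta\in S_E:=\mathrm{Span}\{e_i:i\in E\}$. Since the indicators $\mathbf{1}_{|\zeta_i|>K}$ are independent with $\P(|\zeta_i|>K)\le K^{-2}$, a binomial tail bound gives $\P(|E|\ge e)\le\exp(-\Omega(n^{1-2\kappa}))$ for $e:=10\,n^{1-2\kappa}$, which is $o(n^{1-\gamma})=o(r)$ provided $\gamma<2\kappa$. On the event $\{|E|\le e\}$ I would enlarge $H$ to $\tilde H:=H+S_E$: then $\eta\in\tilde H$, so
$$ \dist(R,H)\ \ge\ \dist(R,\tilde H)\ =\ \dist(w+\zeta',\tilde H)\ =\ \bigl\|\tilde P(w+\zeta')\bigr\|,\qquad \tilde P:=P_{\tilde H^\perp}, $$
where $\rank\tilde P\ge r-e=r(1-o(1))$ and $\tilde P$ vanishes on $S_E$, hence depends only on the coordinates in $E^c$, on which $\zeta'$ — now conditioned on $E$ — is still independent, centred up to a shift of $\ell^2$-size $O(K^{-3}\sqrt n)=o(\sqrt r)$ that we fold into $w$, bounded by $2K$, with conditional second moments $1-o(1)$. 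In other words the heavy tails, which carry all the probabilistic risk, have been pushed into the span of the few large coordinates and discarded, at the cost of only an $o(1)$ relative loss in the codimension.

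It then remains to run the bounded-entry argument conditionally on $E$. With $f:=\|\tilde P(w+\zeta')\|$ (convex and $1$-Lipschitz in $\zeta'$), the trace identity gives $\E[f^2\mid E]\ge(1-o(1))(\rank\tilde P-|E|)\ge r(1-o(1))$; Talagrand's inequality gives $\Var(f\mid E)=O(K^2)=o(r)$ and, for a conditional median $\mathbf{M}$ of $f$, $\P(|f-\mathbf{M}|\ge t\mid E)\le 4\exp(-t^2/16K^2)$, whence $\mathbf{M}\ge\sqrt{\E[f^2\mid E]-\Var(f\mid E)}-O(K)\ge\tfrac34\sqrt r$ for $n$ large. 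Therefore $\P(f\le\tfrac12\sqrt r\mid E)\le\P(f\le\mathbf{M}-\tfrac14\sqrt r\mid E)\le 4\exp(-r/256K^2)\le 4\exp(-\Omega(n^{1-\gamma-2\kappa}))$, and combining the two bad events,
$$ \P\bigl(\dist(R,H)\le\tfrac12\sqrt{n-\dim H}\,\bigr)\ \le\ \exp(-\Omega(n^{1-2\kappa}))+4\exp(-\Omega(n^{1-\gamma-2\kappa})). $$
Fixing, say, $\kappa=\tfrac14$ and $\gamma=\tfrac18$ (so that $\gamma<2\kappa$ and $\gamma+2\kappa<1$) and any $\varepsilon<\tfrac18$ gives the lemma for all $n\gg1$. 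The main obstacle throughout is precisely the two-moment hypothesis: it forbids truncation at a constant scale, and retaining the constant $\tfrac12$ in the conclusion forces $K=n^{\kappa}\to\infty$ polynomially so that every $(1-o(1))$ above is genuine; one then has to check that $r/K^2$ and the threshold for $|E|$ are still fixed positive powers of $n$, which is what constrains the admissible range of $\gamma$.
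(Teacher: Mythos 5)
Your argument is correct and follows essentially the same path as the paper's: truncate at a polynomial scale $n^{\kappa}$, absorb the few unbounded coordinates into an enlarged subspace so the projection no longer sees them, and apply Talagrand's convex-Lipschitz concentration together with a trace identity for the conditional second moment of $\|\tilde P(\cdot)\|$. The one organizational difference is that you condition on the random set $E$ of large coordinates and enlarge $H$ by the coordinate subspace $S_E$ (of dimension $|E|=o(r)$), while the paper conditions on a fixed set of $m=\lceil n-n^{1-\varepsilon}\rceil$ bounded indices (implicitly justified by relabeling, with the remaining entries conditioned out via $\mathcal F_m$) and enlarges $H$ by only two vectors, one of which is the $\mathcal F_m$-measurable vector carrying the unbounded coordinates. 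Both versions produce the same constraints on the exponents, and your explicit use of $E$ avoids the relabeling step. One small arithmetic slip worth noting: the deterministic component of $\eta_i$ that you fold into $w$ is $\E[\zeta_i\mathbf 1_{|\zeta_i|\le K}]=-\E[\zeta_i\mathbf 1_{|\zeta_i|>K}]$, which by Cauchy--Schwarz and the finite second moment is $o(K^{-1})$, not $O(K^{-3})$; the $O(K^{-3})$ estimate applies to the \emph{conditional} mean $\E[\zeta_i'\mid i\notin E]$ that appears later. This does not affect the argument, since both shifts are deterministic once $E$ is fixed and are absorbed into $w$, and $o(K^{-1}\sqrt n)=o(\sqrt r)$ still holds under $\gamma<2\kappa$.
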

\begin{proof}
Let $H'$ be the subspace spanned by $H$, $v$, and $\E[R]$.  Then $\dim(H') \leq \dim(H) + 2$ and $\dist(R,H) \geq \dist(R,H') = \dist(R',H')$ where $R' = R-\E[R]$.  Thus it suffices to prove the lemma when $v=0$ and $\E[x] = \E[y] = 0$.  

We now perform a truncation.  By Chebyshev's inequality, 
$$ \P(|\xi_i| > n^\varepsilon) \leq n^{-2\epsilon}. $$
Furthermore, by Hoeffding's inequality
$$ \P \left( \sum_{i=1}^n \indicator{|\xi_i|\leq n^\epsilon} < n - n^{1-\varepsilon} \right) \leq \exp(-n^{1-2\varepsilon}) $$
where we take $\varepsilon \in (0,1/3)$.  Therefore we will prove the lemma by conditioning on the event
$$ \Omega_m = \{|\xi_1| \leq n^\varepsilon, \ldots, |\xi_m| \leq n^\varepsilon \} $$
with $m = \lceil n-n^{1-\varepsilon} \rceil $.   

We now deal with the fact that on the event $\Omega_m$, the random vector $(\xi_1, \ldots, \xi_m)$ may have non-zero mean.  Let $\E_m$ denote the conditional expectation with respect to the event $\Omega_m$ and the $\sigma$-algebra $\mathcal{F}_m = \sigma(\xi_{m+1}, \ldots, \xi_n)$. Let $W$ be the subspace spanned by $H$, $u$, and $w$ where
$$ u = (0, \ldots, 0, \xi_{m+1}, \ldots, \xi_n), \qquad w = (\E_m[\xi_1], \ldots, \E_m[\xi_m], 0, \ldots, 0). $$
Clearly $W$ is $\mathcal{F}_m$-measurable.  Moreover, $\dim(W) \leq \dim (H) + 2$.  Define 
$$ Y = (\xi_1 - \E_m[\xi_1], \ldots, \xi_m -\E_m[\xi_m], 0, \ldots, 0) = R-u-w. $$
Then $\dist(R,H) \geq \dist(R,W) = \dist(Y,W)$.  By construction each entry of $Y$ has mean zero.  Since each entry of the original vector $R$ is equal in distribution to either $x$ or $y$, it follows that
$$ \sup_{1 \leq i \leq m} |\sigma_i^2 - 1| = o(1) $$
where $\sigma_i^2 = \E_m|Y_i|^2$.

By Talagrand's concentration inequality \cite{T}, 
\begin{equation} \label{eq:pmd}
	\P_m (|\dist(Y,W) - M_m| \geq t) \leq 4 \exp\left( -\frac{t^2}{16 n^{2\varepsilon}} \right) 
\end{equation}
where $M_m$ is the median of $\dist(Y,W)$ under $\Omega_m$.  Using \eqref{eq:pmd} one can verify that 
$$ M_m \geq \sqrt{\E_m \dist^2(Y,W) } - C n^{4\varepsilon} $$
for some positive constant $C$ (see for instance \cite[Lemma E.3]{TVhard}).  
Let $P$ denote the orthogonal projection onto $W^\perp$.  Then
\begin{align*}
	\E_m \dist^2(Y,W) = \sum_{k=1}^m \E_m[Y_k^2] P_{kk} &\geq c \left( \sum_{k=1}^n P_{kk} - \sum_{k=m+1}^n P_{kk} \right) \\
		&\geq c (n - \dim(H) - (n-m))
\end{align*}
for any $ 1/2 < c < 1$ and $n \gg_c 1$.  Thus
$$ M_m \geq c \sqrt{n - \dim(H)}$$
for $n$ sufficiently large.  Finally, we choose $0 < \gamma < \varepsilon/2$ and the proof of the lemma is complete by taking $t = (c-1/2) \sqrt{n - \dim(H)}$ in \eqref{eq:pmd}.  
\end{proof}

We now prove Lemma \ref{lemma:uniform-integrability}.
\begin{proof}[Proof of Lemma \ref{lemma:uniform-integrability}]
By Markov's inequality, it suffices to show that there exists $p>0$ such that for a.a. $z \in \mathbb{C}$ a.s.
$$ \limsup_{n \rightarrow \infty} \int s^{-p} d\nu_{\frac{1}{\sqrt{n}} X_n - zI} < \infty \qquad \text{ and }\qquad \limsup_{n \rightarrow \infty} \int s^{p} d\nu_{\frac{1}{\sqrt{n}} X_n - zI} < \infty. $$
Fix $z \in \mathbb{C}$.  Then
\begin{align*}
	\int s^p d\nu_{\frac{1}{\sqrt{n}} X_n + \frac{1}{\sqrt{n}} F_n - zI} &\leq 1 + \frac{1}{n} \tr \left( \frac{1}{\sqrt{n}} X_n + \frac{1}{\sqrt{n}}F_n -zI \right)^\ast \left( \frac{1}{\sqrt{n}} X_n + \frac{1}{\sqrt{n}} F_n -zI \right)
\end{align*}
for $p \leq 2$.  We expand out the right-hand side and consider three separate terms.  First, by the law of large numbers,
\begin{align*}
	\frac{1}{n} \tr \left( \frac{1}{\sqrt{n}} X_n -zI \right)^\ast \left( \frac{1}{\sqrt{n}} X_n  -zI \right)
	&\leq 1 + \frac{1}{n^2} \sum_{i,j=1}^n |x_{ij}|^2 - 2 \Re \left( \frac{z}{n^{3/2}} \sum_{k=1}^n x_{kk} \right) + |z|^2 \\
	& \longrightarrow 2 + |z|^2
\end{align*}
a.s. as $n \rightarrow \infty$.  Here, we first divide the sums into three parts in order to apply the law of large numbers.  The first when $i < j$, the second when $i > j$, and the third when $i=j$.  In this way the summands in each sum are i.i.d. random variables and the law of large numbers applies.  

Second,
$$ \left| \frac{1}{n} \tr \left( \frac{1}{\sqrt{n}}F_n^\ast \right) \left( \frac{1}{\sqrt{n}} X_n + \frac{1}{\sqrt{n}} F_n - zI \right) \right| \leq 1 + \frac{1+|z|^2}{n^2} \|F_n\|_2^2 + \left| \frac{1}{n^2} \tr (F_n^\ast X_n) \right|. $$
Since $\sup_{n} \frac{1}{n^2} \|F_n\|^2_2 < \infty$ by assumption, it suffices to show that $\limsup_{n \rightarrow \infty} \frac{1}{n^2} \tr (F_n^\ast X_n) < \infty$ a.s.  We apply the bounds
$$ \left| \frac{1}{n^2} \tr (F_n^\ast X_n) \right| \leq \frac{1}{n^2} \|F_n\|_2 \|X_n\|_2 \leq \frac{1}{n^2} \|F_n\|_2^2 + \frac{1}{n^2} \|X_n\|_2^2. $$
By the law of large numbers (again considering three separate terms), it follows that a.s.
$$ \limsup_{n \rightarrow \infty} \frac{1}{n^2} \tr (X_n^\ast X_n ) < \infty. $$

Similarly, for the third term, we have that a.s.
$$ \limsup_{n \rightarrow \infty} \left| \frac{1}{n} \tr \left( \frac{1}{\sqrt{n}} X_n + \frac{1}{\sqrt{n}} F_n - zI \right)^\ast  \left( \frac{1}{\sqrt{n}}F_n \right) \right| < \infty. $$

We simplify our notation for the remainder of the proof and write $\sigma_1 \geq \sigma_2 \geq \cdots \geq \sigma_n$ for the singular values of $\frac{1}{\sqrt{n}}(X_n+F_n) - zI$.  By Theorem \ref{theorem:singularvalue}, we have that for some $A>0$, 
$$ \sigma_n > n^{-A} \text{ a.s.} $$
Thus,
\begin{align*}
	\frac{1}{n}\sum_{i=1}^n \sigma_i^{-p} &\leq \frac{1}{n} \sum_{i=1}^{n-n^{1-\gamma}} \sigma_i^{-p} + \frac{1}{n} \sum_{i=n-n^{1-\gamma}}^n \sigma_i^{-p} \\
	& \leq \frac{1}{n} \sum_{i=n^{1-\gamma}}^{n-1} \sigma_{n-i}^{-p} + \frac{1}{n} n^{1-\gamma}n^{Ap} \\
	& \leq \frac{1}{c_0^p} \left[ \frac{1}{n} \sum_{i=1}^n \left( \frac{n}{i} \right)^p \right] + n^{Ap-\gamma}
\end{align*}
a.s. by Lemma \ref{lemma:small}.  The remaining sum is just the Riemann sum of the integral $\int_0^1 u^{-p}du$.  Therefore, we have that
$$ \frac{1}{n}\sum_{i=1}^n \sigma_i^{-p} < \infty \text{ a.s.} $$
for $p < \min\{1,\gamma/A\}$.  
\end{proof}

\subsection{Variance Bound} \label{sec:variance}

In this sub-section, we prove the following lemma.

\begin{lemma} \label{lemma:variance}
There exists a positive constant $C$ such that the following holds.  Let $\{X_n\}_{n \geq 1}$ be a sequence of random matrices that satisfies condition {\bf C0} with atom variables $(\xi_1, \xi_2)$.  Define
$$ R_n := \left( \frac{1}{\sqrt{n}} X_n - zI \right), \qquad H_n(\alpha) := \left( R_n^\ast R_n - \alpha I\right)^{-1}, $$
where $\alpha \in \mathbb{C}$ with $\Im(\alpha) \neq 0$.  Then
\begin{equation} \label{eq:mom4}
	\E \left| \frac{1}{n} \tr H_n(\alpha) - \E\left[ \frac{1}{n} \tr H_n(\alpha) \right] \right|^4 \leq C\frac{c^4_\alpha }{n^2 }
\end{equation}
uniformly for $z \in \mathbb{C}$ where 
$$ c_\alpha = \frac{1}{|\Im(\alpha)|} + \frac{|\alpha|}{|\Im(\alpha)|^2}. $$
Moreover, for every fixed $\alpha$, 
\begin{equation} \label{eq:asconv}
	\frac{1}{n} \tr H_n(\alpha) = \E \left[ \frac{1}{n} \tr H_n(\alpha) \right] + O(n^{-1/8}) \text{ a.s.}
\end{equation}
uniformly for $z \in \mathbb{C}$.
\end{lemma}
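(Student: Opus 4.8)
The plan is to run the standard martingale-difference argument (in the spirit of Bai--Silverstein and McDiarmid), taking care that the increments respect the symmetric pairing built into condition \textbf{C0}. Fix $\alpha$ with $\Im(\alpha)\neq 0$; every bound below will be uniform in $z\in\C$ because it uses only that $R_n^\ast R_n$ is Hermitian and positive semidefinite. First I would introduce the filtration $\mathcal{F}_k := \mathcal{G}_1 \vee \cdots \vee \mathcal{G}_k$ (with $\mathcal{F}_0$ trivial), where $\mathcal{G}_k$ is generated by the diagonal entry $x_{kk}$ together with the pairs $(x_{kj}, x_{jk})$ for $j > k$; by condition \textbf{C0}(i) the $\mathcal{G}_k$ are independent and $\mathcal{F}_n$ carries all the randomness of $X_n$. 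Writing $T_n := \frac1n\tr H_n(\alpha)$ and $\E_k := \E[\,\cdot\mid\mathcal{F}_k]$, this yields the telescoping identity $T_n - \E T_n = \sum_{k=1}^n \gamma_k$ with $\gamma_k := (\E_k - \E_{k-1}) T_n$, a complex-valued martingale difference sequence.

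The heart of the matter is the pointwise bound $|\gamma_k| \le C c_\alpha / n$ a.s. To get it I would let $R_n^{(k)}$ be $R_n$ with both its $k$-th row and its $k$-th column deleted, and set $H_n^{(k)}(\alpha) := ((R_n^{(k)})^\ast R_n^{(k)} - \alpha I)^{-1}$. Deleting \emph{both} is essential: since $\mathcal{G}_k$ involves row $k$ and column $k$ of $X_n$, the quantity $\tr H_n^{(k)}$ (which depends only on atoms not meeting index $k$) is measurable with respect to a $\sigma$-algebra independent of $\mathcal{G}_k$, so $(\E_k - \E_{k-1})\tr H_n^{(k)} = 0$ and hence $\gamma_k = \frac1n(\E_k - \E_{k-1})(\tr H_n - \tr H_n^{(k)})$. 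Now $(R_n^{(k)})^\ast R_n^{(k)}$ is obtained from $R_n^\ast R_n$ by first subtracting a rank-one positive semidefinite matrix (deleting row $k$ of $R_n$) and then passing to a principal minor (deleting column $k$), so by Cauchy interlacing the two eigenvalue counting functions differ by at most one at each step; integrating by parts,
$$\big|\tr H_n - \tr H_n^{(k)}\big| \le 2\int_0^\infty \frac{d\lambda}{|\lambda - \alpha|^2} \le \frac{2\pi}{|\Im(\alpha)|} \le 2\pi\, c_\alpha.$$
Since $|(\E_k - \E_{k-1})W| \le 2\|W\|_\infty$, this gives $|\gamma_k| \le 4\pi c_\alpha / n$ a.s., uniformly in $z$. (One could instead compute $\tr H_n - \tr H_n^{(k)}$ from the Schur-complement/Cramér formula for $(H_n)_{kk}$, which is where the second term of $c_\alpha$ naturally appears, but the interlacing estimate already suffices.)

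To conclude, I would apply Burkholder's inequality for martingale differences — or simply expand $\E|\sum_k\gamma_k|^4$ and use $\E[\gamma_k\mid\mathcal{F}_{k-1}]=0$ to discard all but the $O(n^2)$ ``paired'' summands — to obtain
$$\E\Big|\tfrac1n\tr H_n(\alpha) - \E\tfrac1n\tr H_n(\alpha)\Big|^4 \le C\,\E\Big(\sum_{k=1}^n |\gamma_k|^2\Big)^2 \le C\Big(n\cdot\tfrac{(4\pi c_\alpha)^2}{n^2}\Big)^2 = C'\,\frac{c_\alpha^4}{n^2},$$
which is \eqref{eq:mom4}. For \eqref{eq:asconv}, fix $\alpha$ so that $c_\alpha$ is a constant; \eqref{eq:mom4} and Markov's inequality give $\P(|\frac1n\tr H_n - \E\frac1n\tr H_n| > n^{-1/8}) = O(n^{-3/2})$, which is summable, so the Borel--Cantelli lemma delivers the a.s. statement, again uniformly in $z$. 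I expect the only genuinely delicate point to be verifying that the martingale increments are honestly independent in the presence of the symmetric coupling of \textbf{C0} — exactly what forces the simultaneous deletion of row and column $k$ in the definition of $H_n^{(k)}$; the remainder is routine resolvent and interlacing bookkeeping.
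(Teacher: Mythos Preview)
Your proof is correct and follows the same overall architecture as the paper: identical filtration (the $\sigma$-algebra generated by the first $k$ rows and columns of $X_n$), the same martingale-difference decomposition, Burkholder's inequality for the fourth moment, and Borel--Cantelli for \eqref{eq:asconv}. The only real difference is in how the increments are bounded: the paper \emph{zeros out} row and column $k$ (keeping an $n\times n$ matrix), invokes the resolvent identity, and uses that $R_n^\ast R_n - R_{n,k}^\ast R_{n,k}$ has rank at most $4$ together with $\|H_n R_n^\ast R_n\|\le 1+|\alpha|/|\Im\alpha|$ to get $|\tr H_n-\tr H_{n,k}|\le 8c_\alpha$; you instead \emph{delete} row and column $k$ and use Cauchy interlacing plus integration by parts to obtain the (sharper) bound $2\pi/|\Im\alpha|\le 2\pi c_\alpha$. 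Either route gives $|\gamma_k|=O(c_\alpha/n)$, and from there the two arguments coincide.
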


\begin{proof}
Let $\E_{\leq k}$ denote conditional expectation with respect to the $\sigma$-algebra generated by $\mathbf{r}_1(X_n), \ldots, \mathbf{r}_k(X_n), \mathbf{c}_1(X_n), \ldots, \mathbf{c}_k(X_n)$.  Define
$$ Y_k := \E_{\leq k} \frac{1}{n} \tr H_n(\alpha) $$
for $k=0,1,\ldots, n$.  Clearly $\{Y_k\}_{k=0}^n$ is a martingale.  Define the martingale difference sequence
$$ \alpha_k := Y_k - Y_{k-1} $$
for $k=1,2,\ldots, n$.  Then by construction
$$ \sum_{k=1}^n \alpha_k = \frac{1}{n} \tr H_n(\alpha) - \E \frac{1}{n} \tr H_n(\alpha). $$
We will bound the fourth moment of the sum, but first we obtain a bound on the individual summands.  Let $X_{n,k}$ denote the matrix $X_n$ with the $k$-th row and $k$-th column replaced by zeros.  Let
$$ R_{n,k} := \frac{1}{\sqrt{n}} X_{n,k} - zI, \qquad H_{n,k}(\alpha) := \left( R_{n,k}^\ast R_{n,k} - \alpha I \right)^{-1}. $$
It follows that
$$ \E_{\leq k} \frac{1}{n} \tr H_{n,k}(\alpha) = \E_{\leq k-1} \frac{1}{n} \tr H_{n,k}(\alpha) $$
and hence
$$ \alpha_k = \E_{\leq k} \left[ \frac{1}{n} \tr H_n(\alpha) - \frac{1}{n} \tr H_{n,k}(\alpha) \right] - \E_{\leq k-1} \left[ \frac{1}{n} \tr H_n(\alpha) - \frac{1}{n} \tr H_{n,k}(\alpha) \right]. $$

By the resolvent identity,
$$ |\tr H_n(\alpha) - \tr H_{n,k}(\alpha)| = | \tr [ H_n (R_n^\ast R_n - R_{n,k}^\ast R_{n,k}) H_{n,k} |. $$
Since $R_n^\ast R_n - R_{n,k}^\ast R_{n,k}$ is at most rank $4$, it follows that
$$ |\tr H_n(\alpha) - \tr H_{n,k}(\alpha)| \leq 4 \| H_n (R_n^\ast R_n - R_{n,k}^\ast R_{n,k}) H_{n,k} \|. $$
We then note that
\begin{align*}
	\|H_n(\alpha) R_n^\ast R_n \| &\leq \sup_{t \geq 0} \frac{t}{|t-\alpha|}  \leq 1 + \sup_{t \geq 0} \frac{|\alpha|}{|t-\alpha|}  \leq 1 + \frac{|\alpha|}{|\Im(\alpha)|}
\end{align*}
since the eigenvalues of $R_n^\ast R_n$ are non-negative.  Similarly,
$$ \|H_{n,k}(\alpha) R_{n,k}^\ast R_{n,k} \| \leq 1 + \frac{|\alpha|}{|\Im(\alpha)|}. $$
Since we always have the bound $\|H_n(\alpha)\| \leq |\Im(\alpha)|^{-1}$, it follows that
$$ |\tr H_n(\alpha) - \tr H_{n,k}(\alpha)| \leq 8 c_\alpha. $$  
Thus we conclude that
$$ |\alpha_k| \leq \frac{16 c_\alpha}{n}. $$

By the Burkholder inquality (see \cite[Lemma 2.12]{BS} for a complex martingale version of the Burkholder inequality), there exists an absolute constant $C>0$ such that
\begin{align*}
	\E\left| \sum_{k=1}^n \alpha_k \right|^4 & \leq C \E \left( \sum_{k=1}^n |\alpha_k|^2 \right)^2  \leq C \left( n \frac{ 16^2 c_\alpha^2 }{n^2} \right)^2 \leq 16^4 C \frac{c_\alpha^4}{n^2}.
\end{align*}
The proof of \eqref{eq:mom4} is complete.

To prove \eqref{eq:asconv}, we use Markov's inequality and \eqref{eq:mom4} to obtain
$$ \P \left( \left| \frac{1}{n} \tr H_n(\alpha) - \E\frac{1}{n} \tr H_n(\alpha) \right| > \varepsilon \right) \leq C \frac{c_\alpha^4}{n^2 \varepsilon^4}. $$
The result follows by taking $\varepsilon = n^{-1/8}$ and applying the Borel-Cantelli Lemma.
\end{proof}

\subsection{Truncation} \label{sec:truncation}

Given a sequence of random matrices $\{X_n\}_{n\geq 1}$ that satisfies condition {\bf C0}, we define the sequences $\{\hat{X}_n\}_{n \geq 1}$ and $\{\tilde{X}_n\}_{n \geq 1}$ where for each $n \geq 1$, $\hat{X}_n = (\hat{x}_{ij})_{1 \leq i,j \leq n}$ and $\tilde{X}_n = (\tilde{x}_{ij})_{1 \leq i,j \leq n}$ with
$$ \hat{x}_{ij} = \left\{
     		\begin{array}{lr}
     		  x_{ij} \indicator{|x_ij| \leq n^\delta} - \E[x_{ij} \indicator{|x_ij| \leq n^\delta}], &  i \neq j\\
     		  0, &  i = j
     		\end{array}
   		\right. $$
and
$$ \tilde{x}_{ij} = \left\{
     		\begin{array}{lr}
     		  \frac{\hat{x}_{ij}}{\sqrt{\E|\hat{x}_{ij}|^2}}, &  i \neq j\\
     		  0, &  i = j
     		\end{array}
   		\right. $$
for some $\delta > 0$, which we will choose later.  For each $n \geq 1$, define the matrices
$$ \hat{H}_n = \left( \frac{1}{\sqrt{n}} \hat{X}_n - zI \right)^\ast \left( \frac{1}{\sqrt{n}} \hat{X}_n - zI \right) $$
and
$$\tilde{H}_n = \left( \frac{1}{\sqrt{n}} \tilde{X}_n - zI \right)^\ast \left( \frac{1}{\sqrt{n}} \tilde{X}_n - zI \right). $$

We let $L(\mu, \nu)$ denote the Levy distance between the probability measures $\mu$ and $\nu$.  We prove the following truncation lemma.

\begin{lemma} \label{lemma:truncation}
Let $\{X_n\}_{n \geq 1}$ be a sequence of random matrices that satisfies condition {\bf C0}.  Then uniformly for any $|z| \leq M$, we have that
$$ L(\nu_{H_n}, \nu_{\tilde{H}_n}) = o(1) \text{ a.s. }$$
Moreover, 
\begin{equation} \label{eq:trunc:mom}
	\E[\Re(\tilde{x}_{ij})^k\Im(\tilde{x}_{ij})^l\Re(\tilde{x}_{ji})^m\Im(\tilde{x}_{ji})^p] = \E[\Re({x}_{ij})^k\Im({x}_{ij})^l\Re({x}_{ji})^m\Im({x}_{ji})^p] +o(1) 
\end{equation}
uniformly for $i \neq j$ and all non-negative integers $k,l,m,p$ such that $k+l+m+p \leq 2$.  
\end{lemma}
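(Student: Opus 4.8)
I would treat the two assertions separately. The Levy-distance estimate rests on a standard perturbation bound for empirical singular value distributions together with the strong law of large numbers; the plan is to use the inequality (see, e.g., \cite{BS}) that for any $n\times n$ complex matrices $A,B$,
$$ L^4\big(\nu_{A^\ast A}, \nu_{B^\ast B}\big) \le \frac{2}{n^2}\big(\tr(A^\ast A) + \tr(B^\ast B)\big)\,\tr\big((A-B)^\ast(A-B)\big), $$
applied with $A = \frac1{\sqrt n}X_n - zI$ and $B = \frac1{\sqrt n}\tilde X_n - zI$, so that $\nu_{A^\ast A} = \nu_{H_n}$ and $\nu_{B^\ast B} = \nu_{\tilde H_n}$. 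Writing $\tr(A^\ast A) = \frac1n\|X_n\|_2^2 - \frac2{\sqrt n}\Re(\bar z\,\tr X_n) + n|z|^2$ and applying the strong law to the i.i.d.\ sums involved (splitting into the ranges $i<j$, $i>j$, $i=j$) gives $\tr(A^\ast A) = O(n)$ a.s., uniformly for $|z|\le M$; since $|\tilde x_{ij}|\le C(|x_{ij}|+1)$ for all large $n$ and $\tilde X_n$ has zero diagonal, the same holds for $\tr(B^\ast B)$. As $\tr((A-B)^\ast(A-B)) = \frac1n\sum_{i,j}|x_{ij}-\tilde x_{ij}|^2$, it remains to prove that $\frac1{n^2}\sum_{i,j}|x_{ij}-\tilde x_{ij}|^2\to 0$ a.s.; this yields $L^4(\nu_{H_n},\nu_{\tilde H_n}) = O\big(\frac1{n^2}\sum_{i,j}|x_{ij}-\tilde x_{ij}|^2\big) = o(1)$ a.s., uniformly for $|z|\le M$.

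To control $\frac1{n^2}\sum_{i,j}|x_{ij}-\tilde x_{ij}|^2$, split $x_{ij}-\tilde x_{ij} = (x_{ij}-\hat x_{ij}) + (\hat x_{ij}-\tilde x_{ij})$. For $i\ne j$, $x_{ij}-\hat x_{ij} = x_{ij}\indicator{|x_{ij}|>n^\delta} + \E[x_{ij}\indicator{|x_{ij}|\le n^\delta}]$; the deterministic term has modulus $o(n^{-\delta})$ because $\E x_{ij}=0$ and $\E|x_{ij}|^2<\infty$, so it contributes $o(n^{-2\delta})$ to the average, while the diagonal contributes $\frac1{n^2}\sum_i|x_{ii}|^2\to 0$ a.s.\ by the strong law. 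The main term $\frac1{n^2}\sum_{i,j}|x_{ij}|^2\indicator{|x_{ij}|>n^\delta}$ I would control as follows: for each fixed $t>0$ the i.i.d.\ sums $\frac1{n^2}\sum_{i,j}|x_{ij}|^2\indicator{|x_{ij}|^2>t}$ converge a.s.\ to $\E[|x_{11}|^2\indicator{|x_{11}|^2>t}]$; since $n^{2\delta}\ge t$ eventually, $\limsup_n\frac1{n^2}\sum_{i,j}|x_{ij}|^2\indicator{|x_{ij}|>n^\delta}\le\E[|x_{11}|^2\indicator{|x_{11}|^2>t}]$ a.s.\ for every $t$, and letting $t\to\infty$ through the integers forces this $\limsup$ to $0$ on a single almost sure event. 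For the rescaling difference, $\E|\hat x_{ij}|^2 = \E[|x_{ij}|^2\indicator{|x_{ij}|\le n^\delta}] - \big|\E[x_{ij}\indicator{|x_{ij}|\le n^\delta}]\big|^2\to 1$, so $|\hat x_{ij}-\tilde x_{ij}| = o(1)|\hat x_{ij}|$ uniformly in $i,j$, and since $\frac1{n^2}\sum_{i,j}|\hat x_{ij}|^2 = O(1)$ a.s.\ this part is $o(1)$ a.s.\ as well. I expect this a.s.\ control of the truncated second moment, under only a finite-variance hypothesis, to be the one point requiring genuine care; the rest is routine bookkeeping.

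For \eqref{eq:trunc:mom}, fix $i\ne j$. The pair $(x_{ij},x_{ji})$ is distributed as $(\xi_1,\xi_2)$ or $(\xi_2,\xi_1)$, and by the defining relations of the $(\mu,\rho)$-family all mixed moments of total degree at most two are invariant under the swap $\xi_1\leftrightarrow\xi_2$, so it suffices to treat $(\xi_1,\xi_2)$, with a rate automatically uniform in $i,j$. With $\hat\xi_s := \xi_s\indicator{|\xi_s|\le n^\delta} - \E[\xi_s\indicator{|\xi_s|\le n^\delta}]$, I would expand a monomial $\Re(\hat\xi_1)^k\Im(\hat\xi_1)^l\Re(\hat\xi_2)^m\Im(\hat\xi_2)^p$ with $k+l+m+p\le 2$ and take expectations: the leading truncated term tends to the corresponding moment of $\xi_1,\xi_2$ by dominated convergence (the integrand is dominated by $|\xi_1|^{k+l}|\xi_2|^{m+p}$, integrable by Cauchy--Schwarz and the unit-variance hypothesis), while the cross terms involving the centering constants $\E[\xi_s\indicator{|\xi_s|\le n^\delta}] = o(1)$ vanish. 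Finally $\tilde\xi_s = \hat\xi_s/\sqrt{\E|\hat\xi_s|^2}$ with $\E|\hat\xi_s|^2\to 1$, so dividing by the appropriate powers of $\sqrt{\E|\hat\xi_1|^2}$ and $\sqrt{\E|\hat\xi_2|^2}$ leaves these limits unchanged, yielding \eqref{eq:trunc:mom} for all $k+l+m+p\le 2$.
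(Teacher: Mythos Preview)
Your proof is correct and follows the same route as the paper's: the Bai--Silverstein perturbation inequality for $L^4$, the law of large numbers with the $i<j$, $i>j$, $i=j$ split, and dominated convergence for the moment claim; the paper applies the perturbation bound in two steps ($H_n\to\hat H_n\to\tilde H_n$) whereas you do it in one and split the difference $x-\tilde x=(x-\hat x)+(\hat x-\tilde x)$ afterward, which is equivalent, and your treatment of the $n$-dependent truncation via a fixed threshold $t\to\infty$ is in fact more careful than the paper's one-line appeal to the law of large numbers. One small note: the lemma assumes only condition {\bf C0}, not the $(\mu,\rho)$-family, so you cannot invoke swap-invariance of the second moments---but this is inessential, since under {\bf C0} there are only the two distributions $(\xi_1,\xi_2)$ and $(\xi_2,\xi_1)$ in play, and your dominated-convergence argument applies verbatim to each, giving the required uniformity in $i\ne j$.
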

\begin{proof}
By \cite[Corollary A.42]{BS}, 
\begin{equation} \label{eq:l4h}
	L^4 (\nu_{H_n}, \nu_{\hat{H}_n}) \leq \frac{2}{n^3} \left[ \tr (H_n + \hat{H}_n) \tr \left( (X_n - \hat{X}_n)^\ast (X_n - \hat{X}_n) \right) \right]. 
\end{equation}

By the law of large numbers,
\begin{align*}
	\frac{1}{n} \tr H_n &= \frac{1}{n^2} \sum_{i,j=1}^n |x_{ij}|^2 - 2 \Re \left( \frac{z}{n^{3/2}} \sum_{k=1}^n x_{kk} \right) + |z|^2 \\
	&\qquad \longrightarrow 1 + |z|^2
\end{align*}
a.s. as $n \rightarrow \infty$.  Here, we first divide the sums into three parts in order to apply the law of large numbers.  The first when $i < j$, the second when $i > j$, and the third when $i=j$.  In this way the summands in each sum are i.i.d. and the law of large numbers applies.  

Similarly,
$$ \frac{1}{n} \tr \hat{H}_n \longrightarrow 1+|z|^2 $$
a.s. as $n \rightarrow \infty$.  

For the remaining terms, we note that
\begin{align*}
	\frac{1}{n^2} \tr \left( (X_n -\hat{X}_n)^\ast (X_n -\hat{X}_n) \right) &\leq \frac{2}{n^2} \sum_{1 \leq i < j \leq n} \left[|x_{ij}|^2 \indicator{|x_{ij} > n^\delta} + \E|x_{ij}|^2 \indicator{|x_{ij}| > n^\delta} \right] \\
	& \qquad + \frac{2}{n^2} \sum_{1 \leq j < i \leq n} \left[|x_{ij}|^2 \indicator{|x_{ij} > n^\delta} + \E|x_{ij}|^2 \indicator{|x_{ij}| > n^\delta} \right] \\
	& \qquad + \frac{1}{n^2} \sum_{i=1}^n  |x_{ii}|^2.
\end{align*}
By the law of large numbers, each sum on the right-hand side converges to zero a.s. as $n \rightarrow \infty$.  Combining these estimates into \eqref{eq:l4h}, yields
\begin{equation} \label{eq:trunc:lhn}
	L(\nu_{H_n}, \nu_{\tilde{H}_n}) = o(1) 
\end{equation}
a.s. as $n \rightarrow \infty$.

Again using \cite[Corollary A.42]{BS},
$$ L^4(\nu_{\hat{H}_n}, \nu_{\tilde{H}_n}) \leq \frac{2}{n^3} \tr(\hat{H}_n + \tilde{H}_n) \tr \left((\hat{X}_n - \tilde{X}_n)^\ast (\hat{X}_n - \tilde{X}_n) \right). $$
It then follows that 
\begin{equation} \label{eq:trunc:lhh}
	L(\nu_{\hat{H}_n}, \nu_{\tilde{H}_n}) = o(1) 
\end{equation}
a.s. since
$$ 1 - \sqrt{\E|\hat{x}_{ij}|^2} = o(1) $$
uniformly for all $i \neq j$ by the identical distribution assumption of condition {\bf C0}.  The result then follows from estimates \eqref{eq:trunc:lhn} and \eqref{eq:trunc:lhh}.  

\eqref{eq:trunc:mom} can be obtained from the dominated convergence theorem; the identical distribution portion of condition {\bf C0} gives uniform control for all $i \neq j$.  
\end{proof}

\subsection{Replacement} \label{sec:replacement}

In 1922, Lindeberg \cite{L} gave an elegant proof of the central limit theorem using a replacement method.  Recently, this technique has also been applied to study random matrices with exchangeable or independent entries (see, for example, \cite{Ch,TVlocal} and references therein).  In this sub-section, we prove a comparison lemma for sequences of random matrices that satisfy condition {\bf C0} using this method.  We begin with a definition.  

\begin{definition}[Moment matching] \label{def:moment-matching}
Let $(\xi_1, \xi_2)$ and $(\eta_1, \eta_2)$ be two random vectors in $\mathbb{C}^2$.  We say that $(\xi_1, \xi_2)$ and $(\eta_1, \eta_2)$ match to order $k$ if
$$ \E[\Re(\xi_1)^i\Im(\xi_1)^j \Re(\xi_2)^l\Im(\xi_2)^m] = \E[\Re(\eta_1)^i\Im(\eta_1)^j \Re(\eta_2)^l\Im(\eta_2)^m] $$
for all non-negative integers $i,j,l,m$ with $i + j + l + m \leq k$.  
\end{definition}

The goal of this sub-section is to prove the following lemma.  

\begin{lemma} \label{lemma:replacement}
Let $\{X_n\}_{n \geq 1}$ and $\{Y_n\}_{n \geq 1}$ be sequences of random matrices that satisfy condition {\bf C0} with with atom variables $(\xi_1, \xi_2)$ and $(\eta_1, \eta_2)$, respectively.  Assume the moments of $(\xi_1, \xi_2)$ and $(\eta_1, \eta_2)$ match to order $2$.  Then for a.a. $z \in \mathbb{C}$ a.s.
$$ \nu_{\frac{1}{\sqrt{n}} X_n -zI} - \nu_{\frac{1}{\sqrt{n}} Y_n - zI} \longrightarrow 0 $$
as $n \rightarrow \infty$.  
\end{lemma}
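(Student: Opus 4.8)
The plan is to use the Lindeberg swapping strategy at the level of the Stieltjes transform of $\nu_{H_n}$, where $H_n = (\frac{1}{\sqrt n}X_n - zI)^\ast(\frac{1}{\sqrt n}X_n - zI)$. By the truncation Lemma \ref{lemma:truncation} (applied to both $X_n$ and $Y_n$), we may assume that all entries are bounded by $n^\delta$ in absolute value for some small $\delta > 0$, at the cost of an $o(1)$ change in Levy distance; the moment-matching hypothesis survives truncation up to an $o(1)$ error by \eqref{eq:trunc:mom}. So it suffices to prove the statement for truncated ensembles. Next, fix $\alpha = u + \sqrt{-1}v$ with $v \neq 0$ and set $s_n(\alpha) := \frac{1}{n}\tr H_n(\alpha)$ where $H_n(\alpha) = (R_n^\ast R_n - \alpha I)^{-1}$, $R_n = \frac{1}{\sqrt n}X_n - zI$. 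By Lemma \ref{lemma:variance}, $s_n(\alpha) = \E s_n(\alpha) + O(n^{-1/8})$ a.s., and the analogous statement holds for $Y_n$; hence it is enough to show $\E s_n^{X}(\alpha) - \E s_n^{Y}(\alpha) \to 0$ for each fixed $\alpha$ with $\Im(\alpha) \neq 0$ (and then invoke the standard fact that pointwise convergence of Stieltjes transforms on a set with an accumulation point, together with tightness, forces weak convergence of the measures; here tightness of $\{\nu_{H_n}\}$ follows from the a.s. bound on $\frac{1}{n}\tr H_n$ computed in the proof of Lemma \ref{lemma:uniform-integrability}).

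The core of the argument is the replacement itself. Enumerate the $\binom{n}{2}$ off-diagonal symmetric pairs $(x_{ij}, x_{ji})$, $i < j$, in some fixed order, and form a sequence of hybrid matrices $W^{(0)} = X_n, W^{(1)}, \dots, W^{(N)} = Y_n$ ($N = \binom n 2$) where $W^{(t)}$ is obtained from $W^{(t-1)}$ by replacing one pair of $\xi$-entries by the corresponding pair of $\eta$-entries (diagonal entries play no role since they are i.i.d.\ with the same first two moments in both ensembles up to the $o(1)$ truncation error, and in any case a rank-$n$ perturbation by the diagonal changes $\nu_{H_n}$ negligibly — more simply, one can absorb the diagonal into the deterministic shift). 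Writing $f(M) := \frac{1}{n}\tr (R_M^\ast R_M - \alpha I)^{-1}$ with $R_M = \frac{1}{\sqrt n}M - zI$, we must bound $|\E f(W^{(t-1)}) - \E f(W^{(t)})|$. For a single swap at position $(i,j)$, $R_M^\ast R_M$ changes by a matrix of rank at most $4$ whose entries are $O(n^{-1/2+\delta})$ in the swapped coordinates; a fourth-order Taylor expansion of $f$ in the two real and two imaginary parts of $(x_{ij}, x_{ji})$, combined with the resolvent expansion and the a priori operator-norm bounds $\|H_n(\alpha)\| \le |\Im(\alpha)|^{-1}$, $\|H_n(\alpha)R_n^\ast R_n\| \le 1 + |\alpha|/|\Im(\alpha)|$ from the proof of Lemma \ref{lemma:variance}, yields: the zeroth-, first-, and second-order terms in this expansion depend on $(x_{ij}, x_{ji})$ only through moments of total order $\le 2$, hence cancel exactly between the two ensembles by the matching hypothesis (up to the $o(1)$ truncation correction); the third- and fourth-order remainder terms are $O_\alpha(n^{-3/2+3\delta})$ and $O_\alpha(n^{-2+4\delta})$ respectively, using boundedness of the truncated entries and the fact that each derivative of $f$ in an entry of $M$ costs a factor $O(n^{-1/2})$ while each extra resolvent factor is bounded. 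Summing over $N = O(n^2)$ swaps gives $|\E f(X_n) - \E f(Y_n)| = O_\alpha(n^{-1/2 + 3\delta}) + o(1) = o(1)$ provided $\delta < 1/6$, which we are free to choose; this closes the estimate for each fixed $\alpha$ and hence proves the lemma.

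The main obstacle is the bookkeeping in the fourth-order Taylor expansion: one must check that all terms of order $\le 2$ genuinely reduce to mixed moments of $(\Re x_{ij}, \Im x_{ij}, \Re x_{ji}, \Im x_{ji})$ of total degree $\le 2$ (so that Definition \ref{def:moment-matching} with $k=2$ applies), and that the third/fourth order remainders are controlled uniformly in $z \in \mathbb{C}$ — which they are, because the bounds on $\|H_n(\alpha)\|$ and $\|H_n(\alpha)R_n^\ast R_n\|$ from Lemma \ref{lemma:variance} are uniform in $z$. A secondary technical point is propagating the $o(1)$ truncation errors through all $O(n^2)$ swaps; since each swap now contributes an $o(n^{-2})$ moment-mismatch error (the $o(1)$ in \eqref{eq:trunc:mom} is in fact a fixed negative power of $n$ for the truncated variables, as one sees by Chebyshev), the accumulated error over $O(n^2)$ swaps is still $o(1)$, though one should state the truncation level $\delta$ carefully enough to make this quantitative. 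Finally, passing from pointwise convergence of $\E s_n(\alpha)$ to weak convergence $\nu_{H_n}^X - \nu_{H_n}^Y \to 0$, and thence to the statement about $\nu_{\frac{1}{\sqrt n}X_n - zI} - \nu_{\frac{1}{\sqrt n}Y_n - zI}$ via Remark \ref{remark:sing}, is routine and I would only sketch it.
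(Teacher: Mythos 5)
Your plan follows the paper's proof very closely: truncate with Lemma \ref{lemma:truncation}, reduce the a.s.\ statement to convergence of expectations with Lemma \ref{lemma:variance}, perform a Lindeberg swap over the $\binom{n}{2}$ off-diagonal pairs with a Taylor expansion of the Stieltjes transform $f(M)=\frac{1}{n}\tr(R_M^\ast R_M-\alpha I)^{-1}$, and pass back to weak convergence of $\nu_{H_n}$ via the variance bound and \cite[Theorem B.9]{BS}. The only structural difference is that the paper expands to second order with a third-order Lagrange remainder, whereas you propose a fourth-order expansion; this is harmless but unnecessary, since the third-order remainder is already small enough.

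There is, however, a concrete error in your derivative bookkeeping. You assert that ``each derivative of $f$ in an entry of $M$ costs a factor $O(n^{-1/2})$,'' and deduce a per-swap third-order remainder of $O_\alpha(n^{-3/2+3\delta})$. But $n^2\cdot n^{-3/2+3\delta}=n^{1/2+3\delta}$ diverges, contradicting the $O_\alpha(n^{-1/2+3\delta})$ you then state for the sum. What your heuristic misses is an extra factor of $n^{-1}$ that appears already at the first derivative: the perturbation $x_1V_1+\sqrt{-1}\,x_2V_1+x_3V_2+\sqrt{-1}\,x_4V_2$ has fixed rank, so cyclicity collapses the normalized trace to a single matrix entry rather than a sum of $n$ of them, e.g.\ $\partial_{\Re A_{ab}}\frac{1}{n}\tr G_n=-\frac{1}{n\sqrt n}\bigl[(G_n^2R_n^\ast)_{ba}+(R_nG_n^2)_{ab}\bigr]$; the operator-norm bounds $\|G_n\|\le|\Im\alpha|^{-1}$ and $\|G_nR_n^\ast\|=O_\alpha(1)$ from Lemma \ref{lemma:variance} then give $\partial^k f=O_\alpha(n^{-1-k/2})$ for $k\ge1$, not $O_\alpha(n^{-k/2})$. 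The paper records the cruder but sufficient uniform bound $\partial^k f=O_\alpha(n^{-1})$ in \eqref{eq:deriv_bounds}, which already carries this $n^{-1}$. With the corrected per-swap remainder $O_\alpha(n^{-5/2+3\delta})$ your sum $O_\alpha(n^{-1/2+3\delta})=o(1)$ for $\delta<1/6$ goes through; but as written, the stated per-swap estimate does not close the argument, so you must make the trace-collapse observation explicit. A minor further point: your parenthetical treatment of the diagonal is imprecise (a ``rank-$n$'' perturbation can move every eigenvalue, and the diagonal is random, not a deterministic shift); the paper simply zeroes the diagonal in the truncation step and controls the effect by the Levy-distance bound of \cite[Corollary A.42]{BS}, using that $\frac{1}{n^2}\sum_i|x_{ii}|^2\to0$ a.s.
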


We proceed using the Stieltjes transform.  For a $n \times n$ matrix $A$, we define the matrices
$$ R_n(A) = \frac{1}{\sqrt{n}}A-zI, \qquad G_n(A) = \left( R_n(A)^\ast R_n(A) - \alpha I \right)^{-1} $$
where $z, \alpha \in \mathbb{C}$ with $\Im(\alpha) > 0$.  Using the resolvent identity, we can compute
\begin{equation} \label{eq:deriv1}
	\frac{ \partial (G_n(A))_{ij}}{\partial \Re(A_{st})} = -\frac{1}{\sqrt{n}} \left[ (G_n(A) R_n(A)^\ast)_{is} (G_n(A))_{tj} + (G_n(A))_{it} (R_n(A) G_n(A))_{sj} \right] 
\end{equation}
and
\begin{equation} \label{eq:deriv2}
	\frac{ \partial (G_n(A))_{ij}}{\partial \Im(A_{st})} = -\frac{\sqrt{-1}}{\sqrt{n}} \left[ (G_n(A)R_n(A)^\ast)_{is} (G_n(A))_{tj} - (G_n(A))_{it} (R_n(A) G_n(A))_{sj} \right]. 
\end{equation}

Fix the indices $a \neq b$.  Let $V_1 = e_a e_b^\ast$ and $V_2 = e_b e_a^\ast$ where $e_1, \ldots, e_n$ is the standard basis in $\mathbb{C}^n$.  Let $x_1,x_2,x_3,x_4$ be real variables.  We define the function
$$ f(x_1,x_2,x_3,x_4) = \frac{1}{n} \tr G_n(A + x_1 V_1 + \sqrt{-1} x_2 V_1 + x_3 V_2 + \sqrt{-1} x_4 V_2). $$
Using the derivatives above, we write out the power series
\begin{equation} \label{eq:fxx}
	f(x_1,x_2,x_3,x_4) = f(0,0,0,0) + \sum_{k=1}^4 \frac{\partial f}{\partial x_k}(0,0,0,0) x_k + \sum_{i,j=1}^4 \frac{\partial^2 f}{\partial x_i \partial x_j}(0,0,0,0) x_i x_j + \varepsilon 
\end{equation}
where $|\varepsilon| \leq CM (|x_1|^3 + |x_2|^3 + |x_3|^3 + |x_4|^4)$ with $M$ defined by
$$ M = \sup_{1 \leq i,j,k \leq 4} \sup_{x_1, x_2, x_3, x_4} \left| \frac{\partial^3 f}{\partial x_i \partial x_j \partial x_k}(x_1, x_2, x_3, x_4) \right|. $$

We now obtain a bound for $M$ and the partial derivatives of $f$.  Note that the bounds we derive below hold uniformly for any matrix $A$.  We can write $R_n(A) = U \sqrt{R_n(A)^\ast R_n(A)}$ where $U$ is a partial isometry.  So
\begin{align*}
	\|R_n(A) G_n(A) \| &\leq \|U \sqrt{R_n(A) R_n(A)} (R_n^\ast R_n(A) - \alpha I)^{-1} \| \\
		& \leq \|\sqrt{R_n(A) R_n(A)} (R_n^\ast R_n(A) - \alpha I)^{-1} \| \\
		& \leq \sup_{t \geq 0} \left| \frac{\sqrt{t}}{t - \alpha} \right| \\
		& \leq \frac{1}{|\Im(\alpha)|} + \sup_{t \geq 0} \left| \frac{t}{t - \alpha} \right| \\
		& \leq 1 + \frac{ |\alpha|+1}{|\Im(\alpha)|}
\end{align*}
and similarly
$$ \| G_n(A) R_n(A)^\ast \| \leq 1 + \frac{ |\alpha|+1}{|\Im(\alpha)|}. $$

Thus, by \eqref{eq:deriv1}, \eqref{eq:deriv2}, and the bounds above, it follows that
\begin{equation} \label{eq:deriv_bounds}
	\frac{ \partial f}{\partial x_k} = O_{\alpha} \left( \frac{1}{n} \right), \qquad 	\frac{ \partial^2 f}{\partial x_k \partial x_i} = O_{\alpha} \left( \frac{1}{n} \right), \qquad \frac{ \partial^3 f}{\partial x_k \partial x_i \partial x_j} = O_{\alpha} \left( \frac{1}{n} \right) 
\end{equation}
uniformly for $1 \leq i,j,k \leq 4$, any $x_1, x_2, x_3, x_4 \in \mathbb{R}$, and any $A$.  

We are now ready to prove Lemma \ref{lemma:replacement}.  By Lemma \ref{lemma:truncation} and Remark \ref{remark:sing}, it suffices to show that a.s.
\begin{equation} \label{eq:rnxn}
	\nu_{R_n(\tilde{X}_n)^\ast R_n(\tilde{X}_n)} - \nu_{R_n(\tilde{Y}_n)^\ast R_n(\tilde{Y}_n)} \longrightarrow 0. 
\end{equation}
as $n \rightarrow \infty$.  So, without loss of generality, we assume $\xi_1,\xi_2, \eta_1,\eta_2$ have mean zero, unit variance, and are bounded almost surely in magnitude by $n^\delta$ for some $0 < \delta < \frac{1}{2}$.    

By \cite[Theorem B.9]{BS}, we can equivalently state \eqref{eq:rnxn} as
$$ \frac{1}{n} \tr G_n(\tilde{X}_n) - \frac{1}{n} \tr G_n(\tilde{Y}_n) \longrightarrow 0 $$
a.s. for each fixed $\alpha$ with $\Im(\alpha)>0$.  However by Lemma \ref{lemma:variance}, this reduces to showing that
\begin{equation} \label{eq:reducegxn}
	\E \frac{1}{n} \tr G_n(\tilde{X}_n) - \E\frac{1}{n} \tr G_n(\tilde{Y}_n) \longrightarrow 0. 
\end{equation}

We will verify \eqref{eq:reducegxn} by showing that for each fixed $\alpha$ with $\Im(\alpha)>0$,
\begin{equation} \label{eq:swap}
	\E f\left(\frac{\Re(\xi_1)}{\sqrt{n}}, \frac{\Im(\xi_1)}{\sqrt{n}}, \frac{\Re(\xi_2)}{\sqrt{n}}, \frac{\Im(\xi_2)}{\sqrt{n}} \right) = \E f\left( \frac{\Re(\eta_1)}{\sqrt{n}}, \frac{\Im(\eta_1)}{\sqrt{n}}, \frac{\Re(\eta_2)}{\sqrt{n}}, \frac{\Im(\eta_2)}{\sqrt{n}} \right) + o_{\alpha}(n^{-2}) 
\end{equation}
where we take $A$ to be any matrix independent of $(\xi_1, \xi_2)$ and $(\eta_1,\eta_2)$.  Indeed, by allowing $a$ and $b$ to range over all $O(n^2)$ indices, and by the triangle inequality, we obtain 
$$ \E \frac{1}{n} \tr G_n(\tilde{X}_n) = \E \frac{1}{n} \tr G_n(\tilde{Y}_n) + o_{\alpha}(1) $$
as desired.  

It suffices to verify \eqref{eq:swap} for the off-diagonal entries ($a \neq b$).  Indeed, all diagonal entries are assumed to be zero by our previous application of Lemma \ref{lemma:truncation}.  

Using \eqref{eq:fxx}, \eqref{eq:deriv_bounds}, and the independence assumption from condition {\bf C0}, we obtain 
$$ \E[f(x_1, x_2, x_2, x_4)] = \E[f(0,0,0,0)] + \sum_{i,j=1}^4 \E\frac{\partial^2 f}{\partial x_i \partial x_j}(0,0,0,0) \E[x_i x_j] + \E[\varepsilon] $$
where 
$$ x_1 = \frac{\Re(\xi_1)}{\sqrt{n}}, \qquad x_2 = \frac{\Im(\xi_1)}{\sqrt{n}}, \qquad x_3 = \frac{\Re(\xi_2)}{\sqrt{n}}, \qquad x_4 = \frac{\Im(\xi_2)}{\sqrt{n}}, \qquad $$
and
$$ \E|\varepsilon| = O_\alpha \left( \frac{n^\delta}{n^{2.5}} \right) $$
for some $0 < \delta < 1/2$ from Lemma \ref{lemma:truncation}.  

We repeat the same procedure for $(\eta_1,\eta_2)$ and obtain
$$ \E[f(y_1, y_2, y_2, y_4)] = \E[f(0,0,0,0)] + \sum_{i,j=1}^4 \E\frac{\partial^2 f}{\partial y_i \partial y_j}(0,0,0,0) \E[y_i y_j] + O_\alpha \left( \frac{n^\delta}{n^{2.5}} \right) $$
where
$$ y_1 = \frac{\Re(\eta_1)}{\sqrt{n}}, \qquad y_2 = \frac{\Im(\eta_1)}{\sqrt{n}}, \qquad y_3 = \frac{\Re(\eta_2)}{\sqrt{n}}, \qquad y_4 = \frac{\Im(\eta_2)}{\sqrt{n}}. $$

By \eqref{eq:trunc:mom}, we have that $\E[x_i x_j] = \E[y_i y_j] + o(n^{-1})$ uniformly for $1 \leq i,j \leq 4$.  Combining this with \eqref{eq:deriv_bounds} yields
$$ \E[f(y_1, y_2, y_2, y_4)] = \E[f(x_1, x_2, x_2, x_4)] + o_{\alpha}(n^{-2}) $$
and the proof of Lemma \ref{lemma:replacement} is complete.

\subsection{Proof of Lemma \ref{lemma:compare} and Theorem \ref{thm:real}} \label{sub:proof:real}

This sub-section is devoted to Lemma \ref{lemma:compare} and Theorem \ref{thm:real}.  The proof of Lemma \ref{lemma:compare} relies on Lemmas \ref{lemma:replacement}, \ref{lemma:uniform-integrability}, and \ref{lemma:hermitization}.

\begin{proof}[Proof of Lemma \ref{lemma:compare}]
Assume $\mu, \rho, \{X_n\}_{n \geq 1}, \{Y_n\}_{n \geq 1}, \{F_n\}_{n \geq 1}$ satisfy the assumptions in the statement of Lemma \ref{lemma:compare}.  By \cite[Theorem A.44]{BS}, it follows that for a.a. $z \in \mathbb{C}$ a.s.
$$ \nu_{\frac{1}{\sqrt{n}}(X_n + F_n)-zI} - \nu_{\frac{1}{\sqrt{n}}X_n-zI} \longrightarrow 0 $$
as $n \rightarrow \infty$.  Since both $(\xi_1,\xi_2)$ and $(\eta_1, \eta_2)$ are from the $(\mu,\rho)$-family, then $(\xi_1,\xi_2)$ and $(\eta_1,\eta_2)$ match to order $2$.  Thus by Lemma \ref{lemma:replacement}, for a.a. $z \in \mathbb{C}$ a.s.
$$ \nu_{\frac{1}{\sqrt{n}}X_n -zI} \longrightarrow \nu_z $$
as $n \rightarrow \infty$.  Therefore, for a.a. $z \in \mathbb{C}$ a.s.
$$ \nu_{\frac{1}{\sqrt{n}}(X_n + F_n)-zI} \longrightarrow \nu_z $$
as $n \rightarrow \infty$.  

Furthermore, by Lemma \ref{lemma:uniform-integrability}, for a.a. $z \in \mathbb{C}$ a.s. $\log$ is uniformly integrable for $\{ \nu_{\frac{1}{\sqrt{n}}(X_n+F_n) -zI} \}_{n \geq 1}$, $\{ \nu_{\frac{1}{\sqrt{n}}X_n -zI} \}_{n \geq 1}$, and $\{ \nu_{\frac{1}{\sqrt{n}}Y_n -zI} \}_{n \geq 1}$.  The result then follows by Lemma \ref{lemma:hermitization} and the uniqueness of the logarithmic potential \cite[Lemma 4.1]{BC}.  
\end{proof}

We can now prove Theorem \ref{thm:real}.   

\begin{proof}[Proof of Theorem \ref{thm:real}]
Let $\{X_n\}_{n \geq 1}$ be a sequence of real random matrices that satisfies condition {\bf C0} with atom variables $(\xi_1, \xi_2)$ and $\rho = \E[\xi_1 \xi_2]$ for some $-1 < \rho < 1$.  Let $\{Y_n\}_{n \geq 1}$ be the sequence of random matrices that satisfies condition {\bf C0} with atom variables $(\eta_1, \eta_2)$ where $\eta_1$ and $\eta_2$ are jointly Gaussian and $\E[\eta_1 \eta_2] = \rho$.  In \cite[Theorem 5.2]{N}, it is shown that 
$$ \E\nu_{\frac{1}{\sqrt{n}}Y_n - zI} \longrightarrow \nu_z $$
as $n \rightarrow \infty$ where the family $\{\nu_z\}_{z \in \mathbb{C}}$ determines the elliptic law with parameter $\rho$ by Lemma \ref{lemma:hermitization}.  In fact, using the variance bound in Lemma \ref{lemma:variance} and \cite[Theorem B.9]{BS} it can be shown that a.s.
$$ \nu_{\frac{1}{\sqrt{n}}Y_n - zI} \longrightarrow \nu_z $$
as $n \rightarrow \infty$.  By Lemma \ref{lemma:uniform-integrability}, for a.a. $z \in \mathbb{C}$ a.s. $\log$ is uniformly integrable for $\{ \nu_{\frac{1}{\sqrt{n}}Y_n - zI} \}_{n \geq 1}$ and hence by Lemma \ref{lemma:hermitization}, we conclude that
$$ \mu_{\frac{1}{\sqrt{n}}Y_n} \longrightarrow \mu_\rho $$
a.s. as $n \rightarrow \infty$.  

Since both $(\xi_1, \xi_2)$ and $(\eta_1, \eta_2)$ are from the $(1,\rho)$-family, the proof of the theorem is complete by an application of Lemma \ref{lemma:compare}.  
\end{proof}

\subsection{Proof of Theorem \ref{thm:complex}} \label{sub:proof:complex}

In the proof of Theorem \ref{thm:real} above, we relied on the previous results in \cite{N}, where the entries are assumed to be real.  In order to prove Theorem \ref{thm:complex}, we first need to study the complex Gaussian case.  

\begin{lemma} \label{lemma:complex}
Let $0 \leq \mu < 1$ and $-1 < \rho < 1$ be given.  Assume $\{X_n\}_{n \geq 1}$ is a sequence of complex matrices that satisfy condition {\bf C0} with atom variables $(\xi_1,\xi_2)$ from the $(\mu,\rho)$-family, where $ \Re(\xi_1), \Im(\xi_1), \Re(\xi_2), \Im(\xi_2) $ are jointly Gaussian.  Then for a.a. $z \in \mathbb{C}$ a.s. 
$$ \nu_{\frac{1}{\sqrt{n}}X_n - zI} \longrightarrow \nu_z $$
as $n \rightarrow \infty$ where $\{\nu_z\}_{z \in \mathbb{C}}$ determines the elliptic law with parameter $\rho$ by Lemma \ref{lemma:hermitization}.  
\end{lemma}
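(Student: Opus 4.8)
The plan is to compute the limiting singular value distribution of $\frac{1}{\sqrt{n}}X_n - zI$ in this Gaussian setting by reducing to the self-consistent system already analyzed in \cite{N}, showing that in the limit the atom variables enter only through $\E|\xi_i|^2 = 1$ and $\E[\xi_1\xi_2] = \rho$. First I would zero out the diagonal: since $\frac{1}{n^2}\sum_i |x_{ii}|^2 \to 0$ a.s. by the law of large numbers, arguing as in the proof of Lemma \ref{lemma:truncation} (via \cite[Corollary A.42]{BS}) we may replace $X_n$ by the matrix obtained from it by setting every diagonal entry to zero without affecting the limiting singular value distribution. We may thus assume from now on that $x_{ii} = 0$ for all $i$, so that the pairs $(x_{ij},x_{ji})$ with $i<j$ are i.i.d. copies of the jointly Gaussian pair $(\xi_1,\xi_2)$ from the $(\mu,\rho)$-family, for which one also computes $\E[\xi_i^2] = 2\mu - 1$ and $\E[\xi_1\bar{\xi}_2] = (2\mu-1)\rho$.

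Next I would pass to the Hermitization $\mathcal{H}_n(z) := \begin{pmatrix} 0 & \frac{1}{\sqrt{n}}X_n - zI \\ \frac{1}{\sqrt{n}}X_n^\ast - \bar{z}I & 0 \end{pmatrix}$, whose eigenvalues are $\pm\sigma_i(\frac{1}{\sqrt{n}}X_n - zI)$, so that by Remark \ref{remark:sing} it suffices to identify the limit $\tilde{\nu}_z$ of $\nu_{H_n}$. Fixing $w$ with $\Im(w) > 0$, writing $G := (\mathcal{H}_n(z) - wI)^{-1}$, and applying the complex Gaussian integration-by-parts formula entry by entry to the identity $I + wG = \mathcal{H}_n(z)G$, in the manner of the derivation of the self-consistent equation in \cite[Theorem 5.2]{N}, one obtains a system of equations, closed up to $o(1)$ errors, for the normalized traces of the $n \times n$ blocks of $\E G$. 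In this computation each integration by parts couples an entry of $X_n$ with one of its four Gaussian partners, weighted respectively by $\E|\xi_i|^2 = 1$, $\E[\xi_1\xi_2] = \rho$, $\E[\xi_i^2] = 2\mu - 1$, and $\E[\xi_1\bar{\xi}_2] = (2\mu-1)\rho$.

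The step I expect to be the main obstacle is checking that the terms carrying the factors $\E[\xi_i^2] = 2\mu - 1$ and $\E[\xi_1\bar{\xi}_2] = (2\mu-1)\rho$ contribute only $o(1)$, so that the surviving terms are exactly those weighted by $\E|\xi_i|^2 = 1$ and $\E[\xi_1\xi_2] = \rho$. This is the familiar fact that the pseudovariance $\E[x_{ij}^2]$ does not affect the limiting spectrum, as in the universality of the circular law: these couplings pair an entry of $X_n$ with the ``wrong'' partner, so the resulting sums run over off-diagonal entries of $G$ and are $O(n^{-1})$ in view of the variance bound of Lemma \ref{lemma:variance}, rather than accumulating along the diagonal. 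Granting this, the limiting fixed-point system depends on $(\xi_1,\xi_2)$ only through $\E|\xi_i|^2 = 1$ and $\E[\xi_1\xi_2] = \rho$, and hence coincides with the system of \cite[Theorem 5.2]{N}; its unique solution is the Stieltjes transform of $\tilde{\nu}_z$, where $\{\nu_z\}_{z \in \mathbb{C}}$ is the family determining the elliptic law with parameter $\rho$ by Lemma \ref{lemma:hermitization}. This yields $\E\nu_{\frac{1}{\sqrt{n}}X_n - zI} \to \nu_z$ for a.a. $z$.

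Finally, I would upgrade this to an almost sure statement, exactly as in the proof of Theorem \ref{thm:real}: by \eqref{eq:asconv} of Lemma \ref{lemma:variance}, $\frac{1}{n}\tr H_n(\alpha) = \E[\frac{1}{n}\tr H_n(\alpha)] + O(n^{-1/8})$ a.s. for each fixed $\alpha$ with $\Im(\alpha) > 0$, so applying this along a countable dense set of such $\alpha$, together with \cite[Theorem B.9]{BS}, gives $\nu_{\frac{1}{\sqrt{n}}X_n - zI} \to \nu_z$ a.s. for a.a. $z$, which is the assertion of the lemma.
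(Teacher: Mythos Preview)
Your approach is the paper's: Hermitize to a $2n\times 2n$ block matrix, apply Gaussian integration by parts to derive a self-consistent system for the block traces of the resolvent, observe that the terms weighted by $2\mu-1$ and $(2\mu-1)\rho$ drop out, and match the surviving system with that of \cite{N}. Two points need sharpening.

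First, the mechanism by which the $(2\mu-1)$ terms vanish is not a variance bound. In the paper's notation, writing the $2n\times 2n$ resolvent $R$ in $n\times n$ blocks $R_1,R_2,R_3,R_4$, these unwanted terms organize into expressions such as $\frac{1}{n^2}\sum_{i,j} R_{i,j}R_{i+n,j+n} = \frac{1}{n^2}\tr(R_1^{\mathrm T} R_4)$ and $\frac{1}{n^2}\tr(R_2^{\mathrm T} R_2)$, which are $O(n^{-1})$ deterministically because $\|R\|\le |\Im\alpha|^{-1}$ and $|\tr(AB)|\le n\|A\|\|B\|$. The variance bound is needed for a different purpose, namely to replace $\E\big[(\frac{1}{n}\sum_i R_{ii})(\frac{1}{n}\sum_j R_{j+n,j+n})\big]$ by $s_n^2$, and so on; note also that Lemma~\ref{lemma:variance} concerns the $n\times n$ resolvent $H_n(\alpha)$ and does not directly cover the off-diagonal block traces $t_n,u_n$, so a separate martingale argument for those is required (the paper proves this as a standalone lemma).

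Second, and more substantively, you cannot simply invoke ``its unique solution''. The limiting system is a coupled cubic in three unknowns, uniqueness is not automatic, and \cite{N} does not supply a uniqueness statement you can cite. The paper avoids this entirely: it writes down the \emph{same} approximate system for the real Gaussian ensemble, with block traces $(\hat s_n,\hat t_n,\hat u_n)$ already known from \cite{N} to converge, subtracts the two systems, and shows that for $\Im\alpha$ sufficiently large (depending on $|z|$ and $\rho$) the resulting inequalities force $|s_n-\hat s_n|\le c|s_n-\hat s_n|+o(1)$ with $c<1$, hence $s_n-\hat s_n=o(1)$. Vitali's convergence theorem then propagates this from large $\Im\alpha$ to all $\Im\alpha>0$. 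You should replace your appeal to uniqueness by this stability-and-comparison step.
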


Let us assume Lemma \ref{lemma:complex} for now and complete the proof of Theorem \ref{thm:complex}. 

\begin{proof}[Proof of Theorem \ref{thm:complex}]
Let $\{X_n\}_{n \geq 1}$ be a sequence of complex random matrices that satisfy condition {\bf C0} with atom variables $(\xi_1,\xi_2)$ from the $(\mu,\rho)$-family.  Let $\{Y_n\}_{n \geq 1}$ be the sequence of complex random matrices that satisfy condition {\bf C0} with atom variables $(\eta_1,\eta_2)$ from the $(\mu,\rho)$-family, where $ \Re(\eta_1),\Im(\eta_1), \Re(\eta_2), \Im(\eta_2) $ are jointly Gaussian.  By Lemma \ref{lemma:complex}, for a.a. $z \in \mathbb{C}$ a.s.
$$ \nu_{\frac{1}{\sqrt{n}} Y_n - zI} \longrightarrow v_z $$
as $n \rightarrow \infty$ where the family $\{v_z\}_{z \in \mathbb{C}}$ determines the elliptic law with parameter $\rho$ by Lemma \ref{lemma:hermitization}.  Moreover, $\log$ is uniformly integrable for $\{ \nu_{\frac{1}{\sqrt{n}} Y_n - zI} \}_{n \geq 1}$ by Lemma \ref{lemma:uniform-integrability}.  Therefore, by Lemma \ref{lemma:hermitization}, it follows that a.s.
$$ \mu_{\frac{1}{\sqrt{n}}Y_n} \longrightarrow \mu_{\rho} $$
as $n \rightarrow \infty$.  The proof of Theorem \ref{thm:complex} is now complete by Lemma \ref{lemma:compare}.
\end{proof}

All that remains is to prove Lemma \ref{lemma:complex}.  Let $\{X_n\}_{n \geq 1}$ be the sequence of random matrices defined in Lemma \ref{lemma:complex} with jointly Gaussian off-diagonal entries.  We follow \cite{N} and introduce the following notation.  

For $n \times n$ matrices $A$ and $B$, we define the $2n \times 2n$ bock matrices
$$ V := \begin{bmatrix} \frac{1}{\sqrt{n}} A & 0 \\ 0 & \frac{1}{\sqrt{n}} B^\ast \end{bmatrix}, \qquad J_z := \begin{bmatrix} 0 & zI \\ \bar{z} I & 0 \end{bmatrix} $$
and set
$$ V(z) := VJ_1 - J_z, $$
where
$$ J_1 := \begin{bmatrix} 0 & I \\  I & 0 \end{bmatrix}.  $$
We let $R$ denote the resolvent of $V(z)$.  That is,
$$ R := [V(z) - \alpha I]^{-1} $$
for $\alpha \in \mathbb{C}$.  

Using the resolvent identity, we can compute 
\begin{align*}
	\frac{ \partial R_{ab}}{ \partial \Re(A_{cd})} &= - \frac{1}{\sqrt{n}} R_{ac} R_{d+n,b}, \\
	\frac{ \partial R_{ab}}{ \partial \Im(A_{cd})} &= - \frac{\sqrt{-1}}{\sqrt{n}} R_{ac} R_{d+n,b}, \\
	\frac{ \partial R_{ab}}{ \partial \Re(B_{cd})} &= -\frac{1}{\sqrt{n}} R_{a,d+n}R_{cb}, \\
	\frac{ \partial R_{ab}}{ \partial \Im(B_{cd})} &= \frac{\sqrt{-1}}{\sqrt{n}} R_{a,d+n} R_{cb},
\end{align*}
for $1 \leq c, d \leq n$ and $1 \leq a,b \leq 2n$.  For the remainder of the paper, we will take $A=B=X_n$.  

We will make use of the multivariate Gaussian decoupling formula \cite{PS}.  That is, if $Y=\{\xi_i\}_{i=1}^p$ is a real random Gaussian vector such that
$$ \E[\xi_j] = 0, \qquad \E[\xi_j \xi_k] = C_{jk} $$
for $j,k = 1, 2, \ldots, p$ and if $\Phi:\mathbb{R}^p \rightarrow \mathbb{C}$ has bounded partial derivatives, then
$$ \E[\xi_j \Phi] = \sum_{k=1}^p C_{jk} \E[ (\nabla \Phi)_k]. $$

Using the partial derivatives above and the Gaussian decoupling formula, we obtain 
\begin{align} \label{eq:decomp_real}
	\E[ R_{ab} x_{cd}] &= -\frac{1}{\sqrt{n}} \E[R_{a,d+n}R_{cb}] - \frac{\rho}{\sqrt{n}} \E[ R_{ad} R_{c+n,b}] \\
	& \qquad + \frac{1-2\mu}{\sqrt{n}} \E[R_{a,c}R_{d+n,b}] + \frac{(1-2\mu)\rho}{\sqrt{n}} \E[R_{a,c+n}R_{db}] \nonumber 
\end{align}
and
\begin{align} \label{eq:decomp_complex}
	\E[R_{ab} \bar{x}_{cd} ] &= -\frac{1}{\sqrt{n}} \E[R_{ac}R_{d+n,b}] - \frac{\rho}{\sqrt{n}} \E[ R_{a,c+n} R_{db}] \\
	& \qquad + \frac{(1-2\mu)\rho}{\sqrt{n}} \E[R_{a,d} R_{c+n,b}] + \frac{1-2\mu}{\sqrt{n}} \E[R_{a,d+n} R_{cb}] \nonumber
\end{align}
for $1 \leq c,d \leq n$, $c \neq d$, and $1 \leq a,b \leq 2n$.  

Following \cite{N}, we define the functions 
$$ s_n := s_n(\alpha,z) = \frac{1}{2n} \E[\tr R] = \frac{1}{n} \sum_{i=1}^n \E[R_{ii}] = \frac{1}{n} \sum_{i=1}^n \E[R_{i+n,i+n}] $$
and
$$ t_n := t_n(\alpha,z) = \frac{1}{n} \sum_{i=1}^n \E[R_{i+n,i}], \qquad u_n := u_n(\alpha,z) = \frac{1}{n} \sum_{i=1}^n \E[R_{i,i+n}]. $$

We now fix $z, \alpha \in \mathbb{C}$ with $\Im(\alpha) > 0$.  In the definitions above, we deal with the expectation of the summands instead of the random elements.  In order to justify this, we need control of the variance, which we obtain in the following lemma.

\begin{lemma} \label{lemma:complex-variance}
\begin{align}
	\Var\left( \frac{1}{2n} \tr R \right) = O_{\alpha,z}\left( \frac{1}{n} \right), \label{eq:var1}\\
	\Var\left( \frac{1}{n} \sum_{i=1}^n R_{i+n,i} \right) = O_{\alpha,z}\left( \frac{1}{n} \right), \label{eq:var2} \\
	\Var\left( \frac{1}{n} \sum_{i=1}^n R_{i,i+n} \right) = O_{\alpha,z}\left( \frac{1}{n} \right). \label{eq:var3}
\end{align}
\end{lemma}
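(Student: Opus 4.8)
The plan is to prove Lemma \ref{lemma:complex-variance} by the standard martingale difference approach, exactly as in the proof of Lemma \ref{lemma:variance}, but now applied to the $2n \times 2n$ resolvent $R = [V(z) - \alpha I]^{-1}$.  First I would recall that $R$ depends on $X_n$ only through the block $V$, and that a rank-one modification of a row/column pair $(\mathbf{r}_k(X_n), \mathbf{c}_k(X_n))$ produces a bounded-rank (rank at most $O(1)$, in fact rank $\le 4$) perturbation of $V(z)$, hence of $V(z)^\ast V(z)$ and of the relevant resolvent quantities.  Concretely, let $\E_{\le k}$ denote conditional expectation with respect to the $\sigma$-algebra generated by $\mathbf{r}_1(X_n), \dots, \mathbf{r}_k(X_n), \mathbf{c}_1(X_n), \dots, \mathbf{c}_k(X_n)$, and for a target linear functional $L(R)$ (one of $\frac{1}{2n}\tr R$, $\frac{1}{n}\sum_i R_{i+n,i}$, $\frac{1}{n}\sum_i R_{i,i+n}$) set $Y_k := \E_{\le k} L(R)$ and $\alpha_k := Y_k - Y_{k-1}$.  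Then $\sum_{k=1}^n \alpha_k = L(R) - \E L(R)$, and the variance is $\sum_k \E|\alpha_k|^2$.

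Next I would bound each $|\alpha_k|$.  Let $V_k$ be $V$ with its $k$-th and $(k+n)$-th rows and columns zeroed out (equivalently, $X_n$ with its $k$-th row and column replaced by zeros in both diagonal blocks), and let $R^{(k)} := [V_k J_1 - J_z - \alpha I]^{-1}$.  Since $R^{(k)}$ is independent of $\mathbf{r}_k(X_n), \mathbf{c}_k(X_n)$, we have $\E_{\le k} L(R^{(k)}) = \E_{\le k-1} L(R^{(k)})$, so
\begin{equation*}
\alpha_k = \E_{\le k}\bigl[ L(R) - L(R^{(k)}) \bigr] - \E_{\le k-1}\bigl[ L(R) - L(R^{(k)}) \bigr].
\end{equation*}
The difference $V(z) - V_k(z)$ has rank $O(1)$ (at most $4$), so by the resolvent identity $R - R^{(k)} = -R\,(V(z) - V_k(z))\,R^{(k)}$ also has rank $O(1)$; combined with $\|R\|, \|R^{(k)}\| \le |\Im(\alpha)|^{-1}$ and the fact that $L$ is $1/n$ times a trace of a matrix with operator norm $O(1)$ (for $\tr R$) respectively a sum of $n$ entries each bounded by $\|R - R^{(k)}\|$, one gets $|L(R) - L(R^{(k)})| = O_{\alpha,z}(1/n)$, hence $|\alpha_k| = O_{\alpha,z}(1/n)$.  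For the off-diagonal functionals $\frac{1}{n}\sum_i R_{i+n,i}$ one has to be slightly careful: $|R_{i+n,i} - R^{(k)}_{i+n,i}|$ need not individually be $O(1/n)$, but summing over $i$ gives a trace-type quantity $\frac{1}{n}|\tr(P(R - R^{(k)}))|$ for a fixed permutation-type matrix $P$ of norm $1$, which is $O(1/n)$ by the rank bound; this is the one point requiring a line of care.  Then $\Var(L(R)) = \sum_{k=1}^n \E|\alpha_k|^2 \le n \cdot O_{\alpha,z}(1/n^2) = O_{\alpha,z}(1/n)$, which is the claim.

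The main obstacle I anticipate is purely bookkeeping rather than conceptual: verifying that $V(z) - V_k(z)$ really has bounded rank in the $2n \times 2n$ block setting (the zeroed row/column in each diagonal block of $V$, pushed through the multiplication by $J_1$ and the shift by $J_z$, still only affects $O(1)$ rows and columns of $V(z)$), and confirming the uniform operator-norm bounds $\|R\|, \|R^{(k)}\| \le |\Im(\alpha)|^{-1}$ hold since $V(z)$ is not Hermitian — one should check that $V(z) - \alpha I$ is invertible with $\|(V(z)-\alpha I)^{-1}\| \le |\Im(\alpha)|^{-1}$, which follows because $V(z)$ is similar (via conjugation by $\mathrm{diag}(I,I)$ after noting $V J_1$ and $J_z$ interact so that $V(z)$ has real spectrum — more carefully, one uses that $\Im(V(z)) = -\Im(\alpha) \cdot 0$... ) — actually the cleanest route is to observe, as in \cite{N}, that the relevant singular-value/eigenvalue structure gives $\|R\| \le |\Im(\alpha)|^{-1}$ directly from $V(z)$ being Hermitian when $A = B$; I would invoke that symmetry.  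Everything else is a direct transcription of the argument already carried out for Lemma \ref{lemma:variance}.
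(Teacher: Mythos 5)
Your proof is correct and takes essentially the same approach as the paper: a martingale difference decomposition with $\E_{\le k}$ conditioning on the first $k$ rows and columns of $X_n$, the resolvent identity together with the rank-at-most-$4$ perturbation $V - V_k$, and the Hermitian structure of $V(z) = VJ_1 - J_z$ (valid since $A = B = X_n$) to get $\|R\|, \|R_k\| \le |\Im(\alpha)|^{-1}$. One small stylistic difference: you bound $\|R - R_k\| \le \|R\| + \|R_k\|$ directly by the triangle inequality, whereas the paper decomposes $R - R_k = R(V_k(z) - V(z))R_k$ and bounds $\|R(VJ_1 - J_z)R_k\|$ and $\|R(V_kJ_1 - J_z)R_k\|$ separately via the spectral bound $\sup_{t \in \R}|t/(t-\alpha)| = O_\alpha(1)$; your shortcut is simpler and gives the same conclusion.
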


\begin{proof}
We begin by noting that 
$$ \frac{1}{n} \sum_{i=1}^n R_{i+n,i} = \frac{1}{n} \tr ( P_2 R P_1 ) $$
and 
$$ \frac{1}{n} \sum_{i=1}^n R_{i,i+n} = \frac{1}{n} \tr (P_1 R P_1^\ast) $$
where $P_1$ and $P_2$ are partial isometries.  Thus, it suffices to prove 
$$ \Var \left( \frac{1}{n} \tr (P R Q ) \right) = O_{\alpha,z} \left( \frac{1}{n} \right) $$
for arbitrary partial isometries $P$ and $Q$.  

Let $\E_{\leq k}$ denote conditional expectation with respect to the $\sigma$-algebra generated by the random vectors 
$$ \mathbf{r}_1(X_n), \ldots, \mathbf{r}_k(X_n), \mathbf{c}_1(X_n), \ldots, \mathbf{c}_k(X_n). $$
Define
$$ Y_k := \E_{\leq k} \frac{1}{n} \tr (PRQ) $$
for $k=0,1,\ldots, 2n$.  Clearly $\{Y_k\}_{k=0}^{2n}$ is a martingale.  Define the martingale difference sequence
$$ \alpha_k := Y_k - Y_{k-1} $$
for $k=1,2,\ldots, 2n$.  Then by construction
$$ \sum_{k=1}^{2n} \alpha_k = \frac{1}{n} \tr (PRQ) - \E \frac{1}{n} \tr (PRQ). $$
Thus we need to show that
$$ \E \left| \sum_{k=1}^{2n} \alpha_k \right|^2 = O_{\alpha,z} \left( \frac{1}{n} \right). $$

Again we introduce the notation $X_{n,k}$ to denote the matrix $X_n$ with the $k$-th row and $k$-th column replaced by zeros.  Let 
$$ V_{k} := \begin{bmatrix} \frac{1}{\sqrt{n}} X_{n,k} & 0 \\ 0 & \frac{1}{\sqrt{n}} X_{n,k}^\ast \end{bmatrix} $$
and define
$$ R_{k} := \left[ V_{k}J_1-J_z - \alpha I \right]^{-1}. $$
Since 
$$ \E_{\leq k} \frac{1}{n} \tr (PR_{k}Q) = \E_{\leq k-1} \frac{1}{n} \tr (PR_{k}Q) $$
it follows that
$$ \alpha_k = \E_{\leq k} \left[ \frac{1}{n} \tr (PRQ) - \frac{1}{n} \tr (PR_{k}Q) \right] - \E_{\leq k-1} \left[ \frac{1}{n} (PRQ) - \frac{1}{n} \tr PR_{k}Q) \right]. $$

Because $V_{k} - V$ is at most rank $4$, we have that
\begin{align*}
	|\tr (PRQ) - \tr(PR_{k}Q)| \leq 4 \| R( (V_{k}J_1 - J_z) - (VJ_1 - J_z)) R_{k} \|.  
\end{align*}
We now claim that
\begin{equation} \label{eq:rvkj}
	\| R( (V_{k}J_1 - J_z) - (VJ_1 - J_z)) R_{k} \|  = O_{\alpha,z}(1) 
\end{equation}
uniformly in $k$.  Indeed, 
$$ \|R (VJ_1 - J_z)R_{k}\| \leq \frac{1}{|\Im(\alpha)|} \|R (VJ_1 - J_z)\| \leq \frac{1}{|\Im(\alpha)|} \sup_{t \in \mathbb{R}} \frac{|t|}{|t-\alpha|} = O_{\alpha,z}(1) $$
since $VJ_1-J_z$ is Hermitian.  Similarly, 
$$ \|R (V_{k}J_1 - J_z)R_{k}\| = O_{\alpha,z}(1) $$
and \eqref{eq:rvkj} follows.  Thus
$$ \alpha_k = O_{\alpha,z}\left(\frac{1}{n} \right) $$
uniformly in $k$.  

By the Burkholder inequality (see \cite[Lemma 2.12]{BS} for a complex martingale version of the Burkholder inequality), there exists an absolute constant $C>0$ such that
\begin{align*}
	\E\left| \sum_{k=1}^{2n} \alpha_k \right|^2 & \leq C \E \sum_{k=1}^{2n} |\alpha_k|^2 = O_{\alpha,z} \left( \frac{1}{n} \right).
\end{align*}  

\end{proof}

We are now ready to prove Lemma \ref{lemma:complex}.

\begin{proof}[Proof of Lemma \ref{lemma:complex}]
Fix $z,\alpha \in \mathbb{C}$ with $\Im(\alpha)>0$.  By the resolvent identity,
$$ 1 + \alpha s_n = \frac{1}{2n} \E \tr (RVJ_1) - \frac{\bar{z}}{2} u_n - \frac{z}{2} t_n. $$
We decompose,
$$ \frac{1}{2n} \E \tr (RVJ_1) = \frac{1}{2}A_1 + \frac{1}{2} A_2 $$
where 
$$ A_1 = \frac{1}{n} \E \sum_{i=1}^n (RVJ_1)_{ii} \qquad \text{and} \qquad A_2 = \frac{1}{n} \E \sum_{i=1}^n (RVJ_1)_{i+n,i+n}. $$
By \eqref{eq:decomp_complex} and Lemma \ref{lemma:complex-variance}, we have
\begin{align*}
	A_1 &= \frac{1}{n^{3/2}} \E \sum_{i,j=1}^n R_{i,j+n} \bar{x}_{ij} \\
		&= - \frac{1}{n^2} \E \sum_{i,j=1}^n [ R_{ii} R_{j+n,j+n} + \rho R_{i,i+n} R_{j,j+n} - (1-2\mu)\rho R_{i,j} R_{i+n,j+n} \\
	& \qquad\qquad\qquad - (1-2\mu) R_{i,j+n} R_{i,j+n} ] +o_{\alpha,z}(1) \\
	& = -s_n^2 - \rho u_n^2 + o_{\alpha,z}(1).
\end{align*}
For the second line, we used that the diagonal entries $i = j$ give total contribution $O_{\alpha,z}(n^{-1/2})$ which we write as the $o_{\alpha,z}(1)$ term.  For the third line, we used that if 
$$ R = \begin{bmatrix} R_1 & R_2 \\ R_3 & R_4 \end{bmatrix} $$
where $R_1, R_2, R_3, R_4$ are $n \times n$ matrices, then 
$$ \left | \frac{1}{n^2} \sum_{i,j=1}^n R_{i,j} R_{i+n,j+n}\right| = \left| \frac{1}{n^2} \tr (R_1^\mathrm{T} R_4) \right| \leq \frac{1}{n} \|R\|^2 \leq \frac{1}{n|\Im(\alpha)|^2} $$
and
$$ \left| \frac{1}{n^2} \sum_{i,j=1}^n R_{i,j+n} R_{i,j+n} \right| = \left| \frac{1}{n^2} \tr (R_2^\mathrm{T} R_2) \right| \leq \frac{1}{n |\Im(\alpha)|^2}. $$

Similarly (using \eqref{eq:decomp_real} and Lemma \ref{lemma:complex-variance}),
$$ A_2 = -s_n^2 - \rho t_n^2 + o_{\alpha,z}(1). $$
Thus
\begin{equation} \label{eq:1as}
	1+\alpha s_n = -s_n^2 - \frac{\rho}{2} u_n^2 - \frac{\rho}{2} t_n^2 - \frac{\bar{z}}{2} u_n - \frac{z}{2} t_n + o_{\alpha,z}(1). 
\end{equation}

We now obtain an equation for $t_n$.  Again by the resolvent identity
\begin{align*}
	\alpha t_n &= \frac{1}{n^{3/2}} \E \sum_{i,j=1}^n R_{i+n,j+n} \bar{x}_{ij} - \bar{z} \frac{1}{n} \E \sum_{i=1}^n R_{i+n,i+n} = A_3 - \bar{z} s_n,
\end{align*}
where
$$ A_3 = \frac{1}{n^{3/2}} \E \sum_{i,j=1}^n R_{i+n,j+n} \bar{x}_{ij}. $$
We repeat almost exactly the same procedure as above (using \eqref{eq:decomp_complex} and Lemma \ref{lemma:complex-variance}) and obtain
\begin{align*}
	A_3 &= -\frac{1}{n^2} \E \sum_{i,j=1}^n [  R_{i+n,i} R_{j+n,j+n} + \rho R_{i+n,i+n} R_{j,j+n} \\
	& \qquad \qquad \qquad - (1-2\mu)\rho R_{i+n,j} R_{i+n,j+n} - (1-2\mu) R_{i+n,j+n} R_{i,j+n}] + o_{\alpha,z}(1) \\
	&= -t_n s_n - \rho s_n u_n + o_{\alpha,z}(1).
\end{align*}
Again we used the fact the the diagonal entries give contribution $O_{\alpha,z}(n^{-1/2})$ and control the remaining error terms by writing each as a trace of products of $R_1, R_2, R_3, R_4$.  Thus we conclude
\begin{align*}
	\alpha t_n = -t_n s_n - \rho s_n u_n - \bar{z} s_n + o_{\alpha,z}(1).	
\end{align*}
Similarly, we obtain an equation for $u_n$:
$$ \alpha u_n = -u_n s_n - \rho s_n t_n - z s_n + o_{\alpha,z}(1). $$

Combining the above equations for $t_n$ and $u_n$ with \eqref{eq:1as}, we arrive at the following system of three equations:
\begin{align}
	1+\alpha s_n &= -s_n^2 - \frac{\rho}{2} u_n^2 - \frac{\rho}{2} t_n^2 - \frac{\bar{z}}{2} u_n - \frac{z}{2} t_n + o_{\alpha,z}(1) \nonumber \\
	\alpha t_n &=-t_n s_n - \rho s_n u_n - \bar{z} s_n + o_{\alpha,z}(1) \label{eq:3com} \\
	\alpha u_n &= -u_n s_n - \rho s_n t_n - z s_n + o_{\alpha,z}(1). \nonumber
\end{align}
We note that the system of equations above does not depend on $\mu$.  In addition to the case above, we also consider the case when $\mu=1$.  This corresponds to the real Gaussian case studied in \cite{N}.  Repeating the same calculations as above, we obtain the following system of equations in the real Gaussian case:
\begin{align}
	1+\alpha \hat{s}_n &= -\hat{s}_n^2 - \frac{\rho}{2} \hat{u}_n^2 - \frac{\rho}{2} \hat{t}_n^2 - \frac{\bar{z}}{2} \hat{u}_n - \frac{z}{2} \hat{t}_n + o_{\alpha,z}(1) \nonumber \\
	\alpha \hat{t}_n &=-\hat{t}_n \hat{s}_n - \rho \hat{s}_n \hat{u}_n - \bar{z} \hat{s}_n + o_{\alpha,z}(1) \label{eq:3real}\\
	\alpha \hat{u}_n &= -\hat{u}_n \hat{s}_n - \rho \hat{s}_n \hat{t}_n - z \hat{s}_n + o_{\alpha,z}(1) \nonumber.
\end{align}
One can also check that this system matches (5.4), (5.5), and (5.6) from \cite{N}.  In \cite{N}, it is shown that for every $\alpha,z \in \mathbb{C}$ with $\Im(\alpha)>0$,
$$ \lim_{n \rightarrow \infty} \hat{s}_n = s_{0} $$
where $s_0 = s_0(\alpha,z)$ is given by
$$ s_0 = \frac{1}{2} \int_{\mathbb{R}} \frac{d\nu_z(x)}{x-\alpha} - \frac{1}{2}\int_{\mathbb{R}} \frac{d\nu_z(x)}{x+\alpha} $$
and the family $\{\nu_z\}_{z \in \mathbb{C}}$ determines the elliptic law with parameter $\rho$ by Lemma \ref{lemma:hermitization}.  

By Lemma \ref{lemma:complex-variance} and \cite[Lemma B.9]{BS}, it suffices to show that for every $\alpha,z \in \mathbb{C}$ with $\Im(\alpha)>0$, 
\begin{equation} \label{eq:limns0}
	\lim_{n \rightarrow \infty} s_{n} = s_0 
\end{equation}
in order to complete the proof of Lemma \ref{lemma:complex}.  

Since $\|R\| \leq \frac{1}{\Im(\alpha)}$, it follows that $|s_n|,|t_n|, |u_n| \leq \frac{1}{\Im(\alpha)}$.  Similarly,
$|\hat{s}_n|, |\hat{t}_n|, |\hat{u}_n| \leq \frac{1}{\Im(\alpha)}$.  So by Vitali's convergence theorem, it suffices to show that for any fixed $z \in \mathbb{C}$, \eqref{eq:limns0} holds for any $\alpha \in \mathbb{C}$ with $\Im(\alpha)$ sufficiently large. 

Taking $\Im(\alpha)> 1$, we subtract the last two equations of \eqref{eq:3com} and \eqref{eq:3real} to obtain
\begin{align*}
	|t_n - \hat{t}_n| &\leq \frac{1 }{\Im(\alpha)^2 - 1} |u_n - \hat{u}_n| + \frac{1+\rho + \Im(\alpha)|z|}{\Im(\alpha)^2-1} |s_n - \hat{s}_n| +o_{\alpha,z}(1) \\
	|u_n - \hat{u}_n| &\leq \frac{1 }{\Im(\alpha)^2 - 1} |t_n - \hat{t}_n| + \frac{1+\rho + \Im(\alpha)|z|}{\Im(\alpha)^2-1} |s_n - \hat{s}_n| + o_{\alpha,z}(1).
\end{align*}
Taking $\Im(\alpha)$ sufficiently large (in terms of $\rho$ and $|z|$) we can write (say)
\begin{align*}
	|t_n - \hat{t}_n| &\leq \frac{1 }{100} |u_n - \hat{u}_n| + \frac{1}{100} |s_n - \hat{s}_n| +o_{\alpha,z}(1) \\
	|u_n - \hat{u}_n| &\leq \frac{1 }{100} |t_n - \hat{t}_n| + \frac{1}{100} |s_n - \hat{s}_n| + o_{\alpha,z}(1)
\end{align*}
and hence
\begin{align*}
	|t_n - \hat{t}_n| &\leq \frac{2}{99} |s_n - \hat{s}_n| + o_{\alpha,z}(1) \\
	|u_n - \hat{u}_n| &\leq \frac{2}{99} |s_n - \hat{s}_n| + o_{\alpha,z}(1).
\end{align*}

We now subtract the first equations of \eqref{eq:3com} and \eqref{eq:3real} and apply the bounds above to obtain
$$ |s_n - \hat{s}_n| \leq \frac{8}{99} |s_n - \hat{s}_n| + o_{\alpha,z}(1) $$
for $\Im(\alpha) > \max\{99,|z|\}$.  

This implies that for any $\alpha,z \in \mathbb{C}$ fixed with $\Im(\alpha)$ sufficiently large,
$$ s_n = \hat{s}_n + o(1) $$
and the proof of Lemma \ref{lemma:complex} is complete.
\end{proof}

\subsection*{Acknowledgements}
The authors would like to thank B.~Khoruzhenko for pointing out reference \cite{GK}, and for providing many valuable comments and suggestions.  The authors are grateful to the anonymous referees for many valuable suggestions and corrections.

\appendix

\section{Proof of Theorem \ref{theorem:ILOlinear:new}}\label{section:ILOlinear:proof}

Our first step is to obtain the following. 

\begin{claim}[upper bound for small ball probability]\label{lemma:smallball} We have 
\begin{align*}
&\sup_a\P\left(\left|\sum_i (a_ix_i+ b_ix_i')-a\right|\le r\right) \\
&\le \exp(\pi r^2)\int_{\C}\exp\left(-\sum_{i=1}^n \E_{\xi_1,\xi_2,\xi_1',\xi_2'}\|\Re (2(\xi_1-\xi_1')a_i t + 2(\xi_2-\xi_2') b_it)  \|_{\R/\Z}^2 - \pi |t|^2\right) dt,
\end{align*}
where $\|z\|_{\R/\Z}$ is the distance from a real number $z$ to its nearest integer. 
\end{claim}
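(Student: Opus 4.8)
The plan is to run the classical Esseen-type Fourier argument on $\C\cong\R^2$, exactly in the spirit of the linear Littlewood--Offord estimates of \cite{NgV}. Write $S:=\sum_{i=1}^n(a_ix_i+b_ix_i')$, viewed as an $\R^2$-valued random variable; recall that the pairs $(x_i,x_i')$ are independent \emph{across} $i$, even though $x_i$ and $x_i'$ may be strongly correlated within a pair. First I would dominate the indicator of the disk by a Gaussian: the function $h(w):=e^{\pi r^2}e^{-\pi|w-a|^2}$ satisfies $h(w)\ge \indicator{|w-a|\le r}$ for every $w\in\C$, since $|w-a|\le r$ forces $e^{-\pi|w-a|^2}\ge e^{-\pi r^2}$. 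Hence
\[
\P(|S-a|\le r)\le \E h(S)=e^{\pi r^2}\,\E e^{-\pi|S-a|^2}.
\]

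Next, by the self-duality of the Gaussian on $\R^2$, writing $\Re(tw)$ for the (non-degenerate) real bilinear pairing of $t,w\in\C$, one has $e^{-\pi|w|^2}=\int_{\C}e^{-\pi|t|^2}e^{2\pi\sqrt{-1}\,\Re(tw)}\,dt$. Substituting $w=S-a$, applying Fubini, and then taking absolute values inside the integral gives
\[
\E e^{-\pi|S-a|^2}=\int_{\C}e^{-\pi|t|^2}e^{-2\pi\sqrt{-1}\,\Re(ta)}\,\E e^{2\pi\sqrt{-1}\,\Re(tS)}\,dt
\le \int_{\C}e^{-\pi|t|^2}\bigl|\E e^{2\pi\sqrt{-1}\,\Re(tS)}\bigr|\,dt,
\]
and since the right-hand side no longer depends on $a$, the $\sup_a$ costs nothing. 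It then remains to prove, for each fixed $t$, the pointwise bound
\[
\bigl|\E e^{2\pi\sqrt{-1}\,\Re(tS)}\bigr|\le \exp\Bigl(-\sum_{i=1}^n\E\bigl\|\Re\bigl(2(\xi_1-\xi_1')a_it+2(\xi_2-\xi_2')b_it\bigr)\bigr\|_{\R/\Z}^2\Bigr).
\]

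To get this I would factor over the independent pairs, $\bigl|\E e^{2\pi\sqrt{-1}\,\Re(tS)}\bigr|=\prod_{i=1}^n\bigl|\E e^{2\pi\sqrt{-1}\,\Re(t(a_ix_i+b_ix_i'))}\bigr|$, then symmetrize each factor: introducing an independent copy $(\tilde x_i,\tilde x_i')$ of $(x_i,x_i')$ (so that $(x_i-\tilde x_i,x_i'-\tilde x_i')\overset{d}{=}(\xi_1-\xi_1',\xi_2-\xi_2')$ with $(\xi_1',\xi_2')$ an independent copy of $(\xi_1,\xi_2)$), we get $\bigl|\E e^{2\pi\sqrt{-1}\,\Re(t(a_ix_i+b_ix_i'))}\bigr|^2=\E\cos(2\pi v_i)$ with $v_i:=\Re\bigl(t(a_i(x_i-\tilde x_i)+b_i(x_i'-\tilde x_i'))\bigr)$. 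Now invoke the elementary inequality $\cos(2\pi v)\le 1-2\|2v\|_{\R/\Z}^2$, valid for all real $v$; it follows from $\cos(2\pi v)=1-2\sin^2(\pi v)$ together with $|\sin(\pi v)|\ge\|2v\|_{\R/\Z}$, which is a one-line check on $[0,\tfrac12]$. Combined with $1-x\le e^{-x}$ (no convexity is needed, since expectation passes exactly through the affine map $x\mapsto 1-2x$), this yields $\E\cos(2\pi v_i)\le \exp(-2\E\|2v_i\|_{\R/\Z}^2)$, hence $\bigl|\E e^{2\pi\sqrt{-1}\,\Re(t(a_ix_i+b_ix_i'))}\bigr|\le \exp(-\E\|2v_i\|_{\R/\Z}^2)$; since $2v_i=\Re\bigl(2ta_i(x_i-\tilde x_i)+2tb_i(x_i'-\tilde x_i')\bigr)$ and $(x_i-\tilde x_i,x_i'-\tilde x_i')\overset{d}{=}(\xi_1-\xi_1',\xi_2-\xi_2')$, taking the product over $i$ gives precisely the claimed pointwise bound, and assembling the three displays proves the claim.

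The content here is entirely standard; the only care required is bookkeeping the $\C\cong\R^2$ Fourier normalization so that the constants come out exactly as $e^{\pi r^2}$, $e^{-\pi|t|^2}$, and the factor $2$ inside the $\R/\Z$-norm, plus verifying $|\sin(\pi v)|\ge\|2v\|_{\R/\Z}$. There is no genuine obstacle: the feature special to the present setting --- that $x_i$ and $x_i'$ need not be independent --- never enters, because the argument only ever uses independence between distinct pairs $(x_i,x_i')$.
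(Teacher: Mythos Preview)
Your proof is correct and follows essentially the same Esseen--type Fourier argument as the paper: Gaussian domination of the indicator, Fourier inversion on $\C\cong\R^2$, factorization over independent pairs, symmetrization, and the pointwise bound $\cos(2\pi v)\le 1-2\|2v\|_{\R/\Z}^2\le \exp(-2\|2v\|_{\R/\Z}^2)$ (the paper reaches the same place via $|z|\le (|z|^2+1)/2$ rather than by squaring and taking a square root, but the two are equivalent).
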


\begin{proof}(of Claim \ref{lemma:smallball}) First of all, we have
\begin{align*}
\P\left(\sum_{i=1}^n (a_ix_i+b_ix_i') \in B(a,r)\right) &= \P\left(\left|\sum_{i=1}^n a_ix_i+b_ix_i' -a\right|^2 \le r^2\right)\\
&= \P\left(\exp(-\pi |\sum_{i=1}^n a_ix_i+b_ix_i'-a|^2) \ge \exp(-\pi r^2)\right)\\
&\le \exp(\pi r^2)\E \exp\left(-\pi\left|\sum_{i=1}^n a_ix_i+b_ix_i'-a\right|^2\right).
\end{align*}
Note that for any $z\in \R^2$, $\exp(-\pi|z|^2) = \int_{\C}e(zt) \exp(-\pi |t|^2) dt$, where $e(u):= \exp(2\pi \sqrt{-1} \Re(u))$. Thus,
$$\P\left(\sum_{i=1}^n a_ix_i+b_ix_i'\in B(a,r)\right)\le \exp(\pi r^2) \int_{\C}\E e\left(\left(\sum_{i=1}^n a_ix_i+b_ix_i'\right)t\right)e(-at)\exp(-\pi |t|^2)\\ dt.$$
Next, because of independence we have $|\E e((\sum_{i=1}^n a_ix_i+b_ix_i')t)| = \prod_{i=1}^n |\E e( x_i a_it +  x_i' b_it )|$, and so
\begin{align*}
|\E e( x_i a_it + x_i' b_i t)| &\le |\E e( x_ia_it + x_i' b_i t)|^2/2+1/2\\
&= \E_{\xi_1,\xi_2,\xi_1',\xi_2'} e\Big( (\xi_1-\xi_1') a_it + (\xi_2-\xi_2') b_i t\Big)/2+1/2\\
&= \E_{\xi_1,\xi_2, \xi_1',\xi_2'} \cos \Big(2\pi \Re\big( (\xi_1-\xi_1') a_it + (\xi_2-\xi_2') b_i t\big)\Big)/2+1/2\\
&\le \exp\Big(-\E_{\xi_1,\xi_2,\xi_1',\xi_2'}\|\Re\big(2(\xi_1-\xi_2) a_it+ 2(\xi_1'-\xi_2') b_i t\big)\|_{\R/\Z}^2\Big),
\end{align*}
where the random vector $(\xi_1',\xi_2')$ is an identical independent copy of $(\xi_1,\xi_2)$, and in the last inequality we estimated crudely $|\cos \pi z|\le  1- \sin^2 (\pi z)/2  \le 1 -2\|z\|_{\R/\Z}^2  < \exp( -\|z\|_{\R/\Z}^2)$.
\end{proof}

Observe that, as $(\xi_1,\xi_2)$ belongs to a given $(\mu,\rho)$-family, so does the pair $(\omega_1,\omega_2):=((\xi_1-\xi_1')/2,(\xi_2-\xi_2')/2)$. Intuitively, for $\E|\psi_1|^2=\E |\psi_2|^2=1$ and $|\rho| =|\E [\psi_1\psi_2]|<1$, these two random variables are essentially not multiples of each other. We summarize this useful fact as a claim below.

\begin{claim}\label{claim:difference}
Assume that $(\omega_1,\omega_2)$ belongs to a given $(\mu,\rho)$-family. Then there exist positive numbers $\alpha, \delta,c_0,C_0$ and two Lebesgue-measurable sets $R_1$ and $R_2$ in the set $\{(x,y)\in \C^2, c_0<|x|,|y| <C_0\}$ such that $\P((\omega_1,\omega_2)\in R_1),\P((\omega_1,\omega_2)\in R_2) \ge \delta$ and $|a/b-c/d|>\alpha$ for any $(a,b)\in R_1$ and $(c,d)\in R_2$. 
\end{claim}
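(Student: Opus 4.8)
The plan is to deduce the claim from the single fact that $\omega_1$ and $\omega_2$ are genuinely two--dimensional, and then to realise $R_1,R_2$ as preimages, under the map $(a,b)\mapsto a/b$, of two well--separated balls in the ratio plane. So the object to understand is the distribution of the random ratio $\theta:=\omega_1/\omega_2$, restricted to the event that both coordinates are bounded away from $0$ and $\infty$. First I would record the relevant second moments: directly from the definition of the $(\mu,\rho)$--family, $\E|\omega_1|^2=\E|\omega_2|^2=1$, and, using $\E[\Re(\omega_i)\Im(\omega_j)]=0$,
$$ \E[\omega_1\overline{\omega_2}]=\E[\Re\omega_1\Re\omega_2]+\E[\Im\omega_1\Im\omega_2]=\mu\rho-(1-\mu)\rho=(2\mu-1)\rho . $$
Hence $|\E[\omega_1\overline{\omega_2}]|=|2\mu-1|\,|\rho|\le|\rho|<1=\|\omega_1\|_{L^2}\|\omega_2\|_{L^2}$, so the equality case of Cauchy--Schwarz fails and $\omega_1,\omega_2$ are linearly independent in $L^2$; in particular no $\lambda\in\C$ satisfies $\omega_1=\lambda\omega_2$ almost surely. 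This is the quantitative form of ``$\omega_1,\omega_2$ are not multiples of each other''.

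Next I would produce a bounded annulus carrying positive mass. Since $\E|\omega_i|^2=1>0$ we have $\P(\omega_i\neq0)>0$; moreover $\{\omega_1=0\}\cup\{\omega_2=0\}$ cannot be almost all of the sample space, because in the situation where the claim is invoked $(\omega_1,\omega_2)=\tfrac12(\xi_1-\xi_1',\xi_2-\xi_2')$, and $\{\xi_1=\xi_1'\}\cup\{\xi_2=\xi_2'\}$ having full measure would force $\xi_1$ or $\xi_2$ to be a.s. constant, contradicting unit variance. Thus $\P(\omega_1\neq0,\ \omega_2\neq0)>0$, and letting $c_0\downarrow0$, $C_0\uparrow\infty$ gives $\P(E)>0$ for $E=E_{c_0,C_0}:=\{c_0<|\omega_1|,|\omega_2|<C_0\}$ and some fixed $c_0<C_0$. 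On $E$ the ratio $\theta$ lies in the compact annulus $\{c_0/C_0\le|\theta|\le C_0/c_0\}\subset\C\setminus\{0\}$; let $\nu=\nu_{c_0,C_0}$ be the law of $(\omega_1,\omega_2)$ restricted to $E$ and pushed forward by $(a,b)\mapsto a/b$, a sub--probability measure of total mass $\P(E)>0$ on that compact annulus.

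I then claim $\nu$ is not a single point mass, after possibly enlarging the annulus. If it were a point mass for every admissible $(c_0,C_0)$, then by monotonicity of the events $E_{c_0,C_0}$ these atoms all coincide with a common value $\lambda_0\neq0$, so $\omega_1=\lambda_0\omega_2$ on $\{\omega_1\neq0,\omega_2\neq0\}$; invoking once more that $(\omega_1,\omega_2)$ is a symmetrized difference (so that its mass does not hide on the coordinate axes in the obstructing way), this upgrades to $\omega_1=\lambda_0\omega_2$ a.s., contradicting the first paragraph. Hence for some $c_0,C_0$ the measure $\nu$ is not a point mass: pick $z_1\neq z_2$ in its support and $r$ with $3r<|z_1-z_2|$, so that $B_j:=\overline{B}(z_j,r)$ satisfy $\nu(B_j)>0$ and $\dist(B_1,B_2)\ge|z_1-z_2|-2r>0$. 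Set
$$ R_j:=\{(a,b)\in\C^2:\ c_0<|a|,|b|<C_0,\ a/b\in B_j\}\qquad(j=1,2), $$
$\delta:=\min_j\nu(B_j)>0$, and $\alpha:=\tfrac12\dist(B_1,B_2)$. Then $R_1,R_2\subset\{c_0<|x|,|y|<C_0\}$, $\P((\omega_1,\omega_2)\in R_j)=\nu(B_j)\ge\delta$, and every $(a,b)\in R_1$, $(c,d)\in R_2$ satisfy $|a/b-c/d|\ge 2\alpha>\alpha$, which is precisely the claim.

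The hard part is the passage in the third paragraph: converting the qualitative statement ``$\omega_1$ is not a scalar multiple of $\omega_2$ in $L^2$'' into the quantitative statement that the ratio measure genuinely spreads over a bounded annulus with uniform constants. In particular one must rule out the degenerate configuration in which all the off--axis mass of $(\omega_1,\omega_2)$ concentrates on a single ray $\{\omega_1=\lambda_0\omega_2\}$ while additional mass sits on $\{\omega_1=0\}$ or $\{\omega_2=0\}$ — and it is exactly here that one exploits that $(\omega_1,\omega_2)$ is the symmetrized difference of a $(\mu,\rho)$--family pair rather than an arbitrary such pair, since for an arbitrary pair (e.g. with $\rho=0$ and $\xi_1,\xi_2$ supported on disjoint events) this degeneracy genuinely occurs. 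Everything else is a routine computation with second moments and a standard measure--theoretic extraction of two separated positive--mass balls.
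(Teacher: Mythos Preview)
Your route is genuinely different from the paper's. The paper truncates each coordinate so that $\E|\psi_i|^2\approx 1$ and $|\E[\psi_1\psi_2]|\approx|\rho|$ (the product $\psi_1\psi_2$, not the Hermitian inner product), then pigeonholes the ratio $\psi_1/\psi_2$ over a grid of $k^2$ small squares: if all but nine adjacent squares carry mass $<\delta$, then on most of the space $\psi_1\approx c\psi_2$ for some fixed $c$ with $|c|\ge 1$, and this forces $|\E[\psi_1\psi_2]|$ to be close to $1$ rather than to $|\rho|$. You instead compute $\E[\omega_1\overline{\omega_2}]=(2\mu-1)\rho$, note strict inequality in Cauchy--Schwarz, and try to show that the ratio law $\nu$ has at least two points in its support. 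Your observation that the claim as stated fails for arbitrary $(\mu,\rho)$--family pairs (e.g.\ $\rho=0$ with $\xi_1,\xi_2$ supported on disjoint events) and that only the symmetrised case is used is correct and worth noting.

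The gap is exactly where you flag it, and the mechanism you propose for it is wrong. You want to upgrade ``$\omega_1=\lambda_0\omega_2$ on $\{\omega_1\ne 0,\omega_2\ne 0\}$'' to ``$\omega_1=\lambda_0\omega_2$ a.s.'' by saying that for symmetrised differences ``mass does not hide on the coordinate axes.'' But it does: take $\xi_1,\xi_2$ independent Rademacher, so $\omega_i=(\xi_i-\xi_i')/2$; then $\P(\omega_1=0,\ \omega_2\ne 0)=\tfrac14>0$. Hence the a.s.\ upgrade is false, and your $L^2$ contradiction never fires. What must actually be ruled out is that the off--axis ratio is a single Dirac mass while axis mass soaks up the $L^2$ discrepancy $\E|\omega_1-\lambda_0\omega_2|^2>0$; the bare inequality $|\E[\omega_1\overline{\omega_2}]|<1$ is perfectly compatible with that configuration. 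The paper sidesteps this by working with $\E[\psi_1\psi_2]$ and choosing the truncation so that each $\E|\psi_i|^2$ stays near its untruncated value---the axis mass is then already absorbed into the $\ep_0$--error, and the contradiction is a second--moment identity rather than an almost--sure equality. If you want to rescue your approach you need a \emph{direct} argument that the off--axis ratio of a symmetrised $(\mu,\rho)$--pair cannot be a single atom; nothing in your sketch supplies one.
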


\begin{proof}(of Claim \ref{claim:difference}) Let $\ep_0$ be a sufficiently small positive constant to be chosen. There exist positive numbers $c_0,C_0$ depending on $\omega_1,\omega_2$ and on $\ep_0$ such that the truncated random variables $\psi_1:=\omega_1 \mathbf{1}_{c_0<|\omega_1|<C_0}, \psi_2:= \omega_2 \mathbf{1}_{c_0<|\omega_2|<C_0}$ satisfy the following 
\begin{enumerate}
\item $1-\ep_0 \le \E |\psi_1|^2, \E |\psi_2|^2\le 1+\ep_0$, \label{item:ep0}
\item $|\rho| -\ep_0 \le |\E [\psi_1 \psi_2]| \le |\rho| +\ep_0.$ \label{item:rhoep0}
\end{enumerate} 

Observe that it suffices to justify the claim for the truncated pair $(\psi_1,\psi_2)$. Set $k$ to be a sufficiently large integer. We divide the square $Q:= \{z\in \C, |\Im(z)|,|\Re(z)| \le C_0/c_0\}$ into $k^2$ closed smaller squares $Q_1,\dots,Q_{k^2}$ of size $2C_0/kc_0$ each, and then divide the region $R:=\{(x,y)\in \C^2, c_0 <|x|,|y| < C_0\}$ into $k^2$ closed regions $R_i, i=1,\dots, k^2$ depending on whether $x/y$ belongs to $Q_i$ or not. Note that if $(x,y)\in R$ then the complex number $x/y$ has absolute value bounded from above and below by $C_0/c_0$ and $c_0/C_0$ respectively, and so $x/y \in Q$. 

We next claim that for sufficiently small $\delta>0$ (depending on $c_0,C_0,k$), there are squares $Q_{i_0},Q_{j_0}$ that are not adjacent (i.e. sharing a common edge) and that $\P(\psi_1/\psi_2\in Q_{i_0})\ge \delta$ and $\P(\psi_1/\psi_2\in Q_{j_0}) \ge \delta$. Indeed, assuming otherwise, $\P(\psi_1/\psi_2\in Q_{i})<\delta$  holds for all but at most 9 adjacent squares. The larger square $Q'$ formed by these adjacent ones has size at most $6C_0/kc_0$ which satisfies
$$\P(\psi_1/\psi_2\in Q')\ge 1 - (k^2-9)\delta.$$  

We now concentrate on the event $\psi_1/\psi_2\in Q'$. Because of the definition, there exists a number $c$ such that if $x/y \in Q'$ then the difference $|x/y-c|$ can be bounded crudely by $6C_0/kc_0$. Without loss of generality, we assume that $|c|\ge 1$. (Otherwise we consider the ratio $\psi_2/\psi_1$ instead). Clearly, 
$$|\E [\psi_1 \psi_2]| \ge |\E[\psi_1 \psi_2 \mathbf{1}_{\psi_1/\psi_2\in Q'}]|-|\E[\psi_1 \psi_2 (1-\mathbf{1}_{\psi_1/\psi_2\in Q'}]|.$$
The expectation of the second term can be bounded crudely from above by $C_0^2(k^2-9)\delta$, while the expectation of the first term can be bounded from below by $(|c|- 6C_0/kc_0)\E |\psi_1|^2-C_0^2(k^2-9)\delta$, which is at least $(1-6C_0/kc_0)(1-\ep_0) - C_0^2(k^2-9)\delta$ because $|c|\ge 1$ and $\E |\psi_1|^2 \ge 1-\ep_0$ from item \ref{item:ep0} above. Finally, by choosing $k$ to be large enough (depending on $\ep_0,c_0,C_0$) and then $\delta$ to be small enough (depending on  $\ep_0,C_0$ and $k$), we obtain a lower bound $1-2\ep_0$ for $|\E [\psi_1 \psi_2]|$. This is impossible as from item \ref{item:rhoep0} we have $|\E [\psi_1 \psi_2]| \le  |\rho| +\ep_0 < 1-2\ep_0$.

In summary, we have obtained two closed sub-regions $R_{i_0},R_{j_0}$ of $R$ such that the corresponding squares $Q_{i_0}$ and $Q_{j_0}$ are not adjacent and that both $\P(\psi_1/\psi_2\in Q_{i_0})$ and $\P(\psi_1/\psi_2\in Q_{j_0})$ are greater than $\delta$. By definition, as $Q_{i_0}$ and $Q_{j_0}$ are not adjacent, we have $|a/b-c/d| \ge 2C_0/kc_0$ as long as $(a,b)\in R_{i_0}$ and $(c,d)\in R_{j_0}$, completing the proof.  
\end{proof}

We now apply Claim \ref{lemma:smallball} and \ref{claim:difference} to prove Theorem \ref{theorem:ILOlinear:new}. Our method here follows \cite{NgV} with non-trivial modifications.
  
\begin{proof}(of Theorem \ref{theorem:ILOlinear:new}) 
For short, set $a_i':=\beta^{-1}a_i, b_i':=\beta^{-1}b_i$. Also, we will denote by $z$ and $z'$ the random variables $2(\xi_1-\xi_2)$ and $2(\xi_1'-\xi_2')$ respectively, where $(\xi_1',\xi_2')$ is an identical independent copy of $(\xi_1,\xi_2)$. By definition, we have
$$\gamma=\sup_a\P\left(\left|\sum_i (a_ix_i+ b_ix_i)-a\right|\le \beta\right) =\sup_a\P\left(\left|\sum_i (a_i'x_i+ b_i'x_i')-a\right|\le 1\right) =n^{-O(1)}.$$
Set $M:= 2A \log n$ where $A$ is large enough. From Claim \ref{lemma:smallball} and the fact that $\gamma\ge n^{-O(1)}$ we easily obtain
\begin{align}\label{eqn:continuous:integral}
\frac{\gamma}{2} &\le \int_{|t|\le M} \exp\left(-\sum_{i=1}^n\E_{\xi_1,\xi_2,\xi_1',\xi_2'}\left\|\Re\Big(2(\xi_1-\xi_1') a_i't +2(\xi_2-\xi_2')b_i't)\Big) \right\|_{\R/\Z}^2-\pi |t|^2\right)dt \nonumber\\
&=\int_{|t|\le M} \exp\left(-\sum_{i=1}^n\E_{z,z'}\left\|\Re\Big(z a_i't  +z'b_i't \Big)\right\|_{\R/\Z}^2-\pi |t|^2\right) dt.
\end{align}

{\bf Large level sets}. For each integer $0\le m \le M$ we define the level set
$$S_m:= \left \{t \in \C: \sum_{i=1}^n \E_{z,z'} \left\|\Re\Big(z a_i't + z' b_i't \Big) \right\|_{\R/\Z}^2 + |t|^2 \le m  \right \}.$$
Then it follows from \eqref{eqn:continuous:integral} that $\sum_{m\le M} \mu(S_m) \exp(-\frac{m}{2}+1)\ge \gamma$, where $\mu(\cdot)$ denotes the Lebesgue measure of a measurable set. Hence there exists $m\le M$ such that $\mu(S_m) \ge \gamma\exp(\frac{m}{4}-2)$.

Next, since $S_m\subset B(0,\sqrt{m})$, by the pigeon-hole principle there exists an absolute constant $c$ and a ball $B(x_0,\frac{1}{2})\subset B(0,\sqrt{m})$ such that
$$\mu \left(B\left(x_0,\frac{1}{2}\right)\cap S_m \right) \ge c\mu(S_m)m^{-1} \ge c\gamma\exp\left(\frac{m}{4}-2\right)m^{-1}.$$

Consider $t_1,t_2\in B(x_0,1/2)\cap S_m$. By the Cauchy-Schwarz inequality (note that $ \E_{z,z'} \|\Re(z a_i' t  + z' b_i't)\|_{\R/\Z}^2$ satisfies the triangle inequality in $t$) we have
\begin{align*}
&\sum_{i=1}^n  \E_{z,z'} \left\|\Re\Big(z a_i'(t_1-t_2)  + z' b_i'(t_1-t_2) \Big)\right\|_{\R/\Z}^2\\
&\le 2\left(\sum_{i=1}^n  \E_{z,z'} \left\|\Re\Big(z a_i't_1  + z' b_i't_1 \Big)\right\|_{\R/\Z}^2+ \sum_{i=1}^n  \E_{z,z'} \left\|\Re\Big(z a_i't_2  + z' b_i't_2 \Big)\right\|_{\R/\Z}^2\right) \le 4m.
\end{align*}

Since $t_1-t_2 \in B(0,1)$ and $\mu(B(x_0,\frac{1}{2})\cap S_m - B(x_0,\frac{1}{2})\cap S_m) \ge \mu(B(x_0,\frac{1}{2})\cap S_m)$, if we put
$$T:=\left\{ t\in B(0,1) : \sum_{i=1}^n  \E_{z,z'} \left\|\Re\Big(za_i't  + z' b_i't \Big) \right\|_{\R/\Z}^2 \le 4m \right\},$$
then
$$\mu(T)\ge c\gamma \exp\left(\frac{m}{4}-2\right)m^{-1}.$$

{\bf Discretization}. Choose $N$ to be a sufficiently large prime (depending on the set $T$). Define the discrete box
$$B_0:=\left\{k_1/N + \sqrt{-1}k_2/N: k_1,k_2\in \Z, -N\le k_1,k_2 \le N\right\}.$$

We consider all the shifted boxes $z+B_0$, where $(\Re (z), \Im (z)) \in [0,1/N]^2$. By the pigeon-hole principle, there exists $z_0$ such that the size of the discrete set $(z_0+B_0) \cap T$ is at least the expectation, $|(z_0+B_0)\cap T| \ge N^2 \mu(T)$ (to see this, we first consider the case when $T$ is a box itself).

Let us fix some $t_0\in (z_0+B_0)\cap T$. Then for any $t \in (z_0+B_0)\cap T$ we have
\begin{align*}
&\sum_{i=1}^n \E_{z,z'} \left\|\Re\Big(za_i'(t-t_0)+ z' b_i'(t-t_0) \Big) \right\|_{\R/\Z}^2 \\
&\le 2 \sum_{i=1}^n  \E_{z,z'} \left\|\Re\Big(za_i't+ z' b_i't \Big) \right\|_{\R/\Z}^2 \\
&+2 \sum_{i=1}^n \E_{z,z'} \left\|\Re\Big(za_i't_0+ z' b_i't_0 \Big) \right\|_{\R/\Z}^2  \le 16m.
\end{align*}

Notice that $t_0-t \in B_1:=B_0-B_0= \{k_1/N + \sqrt{-1}k_2/N: k_1,k_2\in \Z, -2N \le k_1,k_2 \le 2N\}$. Thus there exists a subset $S$ of size at least $cN^2\gamma \exp(\frac{m}{4}-2)m^{-1}$ of $B_1$ such that the following holds for any $s\in S$
$$\sum_{i=1}^n  \E_{z,z'} \left\|\Re\Big(za_i's + z' b_i's \Big) \right\|_{\R/\Z}^2 \le 16m.$$

{\bf Double counting and separation}. By definition of $S$, we have
\begin{align*}
\E_{z,z'} \sum_{s\in S} \sum_{i=1}^n \left\|\Re\Big(za_i's + z' b_i's \Big) \right\|_{\R/\Z}^2  &\le 16m|S|.
\end{align*}

Notice that, for $z=2(\xi_1-\xi_1')$ and $z'=2(\xi_2-\xi_2')$, $(z/4,z'/4)$ belongs to the $(\mu,\rho)$-family. By Claim \ref{claim:difference}, there exist $(c_1,c_2)\in \mathcal{R}_1$ and $(c_1',c_2')\in \mathcal{R}_2$ such that 
\begin{align*}
\sum_{s\in S} \sum_{i=1}^n \left\|\Re\Big((4c_1a_i'+4c_2 b_i')s \Big) \right\|_{\R/\Z}^2  &\le 16\delta^{-1}m|S|
\end{align*}
and 
\begin{align*}
\sum_{s\in S} \sum_{i=1}^n \left\|\Re\Big((4c_1'a_i'+4c_2' b_i')s \Big) \right\|_{\R/\Z}^2  &\le 16\delta^{-1}m|S|.
\end{align*}

From now on, for brevity, we denote by $v_i$ the complex number $4c_1 a_i'+4c_2b_i'$ for $1\le i\le n$, and by $v_{n+i}$ the complex number $4c_1'a_i'+ 4c_2'b_i'$ for $1\le i\le n$. We then have
$$\sum_{s\in S} \sum_{i=1}^{2n} \|\Re(v_is)\|_{\R/\Z}^2  \le 32\delta^{-1}m|S|.$$

{\bf Switching to $\R^2$.} Next, for convenience, we view each $v_i$ as the vector $(\Re (v_i),\Im (v_i))$ and each $s\in S$ as the vector $(\Re (s), -\Im (s))$ of $\R^2$. So we can write $\Re(v_is)$ as $\langle v_i,s\rangle$, and thus obtain the new estimate in $\R^2$,
$$\sum_{s\in S} \sum_{i=1}^{2n} \|\langle v_i,s \rangle \|_{\R/\Z}^2  \le 32 \delta^{-1}m|S|.$$

Let $n'$ be any number between $n^{\ep}$ and $2n$. We say that an index $1\le i\le 2n$ is {\it bad}  if
$$\sum_{s\in S} \|\langle v_i,s \rangle \|^2_{\R/\Z} \ge \frac{32\delta^{-1}m|S|}{n'}.$$
Then the number of bad indices is at most $n'$. Let $V$ be the set of remaining $v_i$'s. Thus $V$ contains at least $2n-n'$ elements (counting multiplicities).
In the rest of the proof, we are going to show that the set $V$ is close to a GAP.

{\bf Dual sets.} Consider an arbitrary good index $i$, we have 
$$\sum_{s\in S} \|\langle s, v_i \rangle \|^2_{\R/\Z} \le 32\delta^{-1}m|S|/n'.$$

Set $k:=\left\lfloor \sqrt{\frac{n'}{2048\pi^2 \delta^{-1}m}}\right\rfloor$ and let $V_k:=k(V\cup \{0\})$, the Minkowski sum of $k$ copies of $V\cup \{0\}$. By the Cauchy-Schwarz inequality, for any $v\in V_k$ we have
$$\sum_{s\in S} 2\pi^2 \|\langle s,v \rangle \|^2_{\R/\Z} \le \frac{|S|}{2},$$
which implies
$$\sum_{s\in S}\cos(2\pi \langle s,v\rangle) \ge \frac{|S|}{2}.$$

Observe that for any $x\in C(0,\frac{1}{512})$ (the ball of radius $1/512$ in the $\|\cdot\|_{\infty}$ norm) and any $s\in S\subset C(0,2)$ we always have $\cos(2\pi \langle s,x\rangle)\ge 1/2$ and $\sin(2\pi \langle s,x\rangle) \le 1/12$. Thus for any $x\in C(0,\frac{1}{512})$,
$$\sum_{s\in S}\cos\left(2\pi \langle s,(v+x)\rangle\right) \ge \frac{|S|}{4}-\frac{|S|}{12} = \frac{|S|}{6}.$$

On the other hand one can easily check that
\begin{align*}
\int_{x\in [0,N]^2} \left(\sum_{s\in S} \cos(2\pi \langle s, x\rangle)\right)^2 dx &\le \sum_{s_1,s_2\in S}\int_{x\in [0,N]^2} \exp \left( 2\pi \sqrt{-1}\langle s_1-s_2, x\rangle \right) dx\\
&\ll |S|N^2.
\end{align*}
Hence we deduce the following
$$\mu_{x\in [0,N]^2}\left (\Big(\sum_{s\in S} \cos(2\pi \langle s, x\rangle )\Big)^2 \ge (\frac{|S|}{6})^2 \right) \ll \frac{|S|N^2}{(|S|/6)^2} \ll \frac{N^2}{|S|}.$$

Now using the fact that $S$ has large size, $|S|\gg N^2 \gamma\exp(\frac{m}{4}-2)m^{-1}$, and $N$ was chosen to be large enough so that $V_k+C(0,\frac{1}{512}) \subset [0,N]^2$, we have
$$\mu \left(V_k+C\left(0,\frac{1}{512}\right)\right) \ll \gamma^{-1} \exp\left(-\frac{m}{4}+2\right)m.$$

Thus, we have obtained the following 
\begin{align}\label{eqn:sizeofkV'}
\mu \left( k(V\cup \{0\})+C\left(0,\frac{1}{512}\right) \right) &\ll \gamma^{-1} \exp\left(-\frac{m}{4}+2\right) m.
\end{align}



Let $D:=2048\times 16 \times \delta^{-1}=\Theta(\delta^{-1})$. We approximate each vector $v$ of $V$ by a closest vector in $(\frac{\Z}{Dk})^2$,
$$\left\|v-\frac{u}{Dk}\right\|_2 \le \frac{\sqrt{2}}{Dk}, \mbox{ with } u\in\Z^2.$$
Let $U$ be the union of the collection of all such $u$ with $\{0\}$. Since $\sum_{v\in V}\|v\|^2_2 = O(\beta^{-2})$, we have
\begin{equation}\label{eqn:magnitudeV''}
\sum_{u\in U}\|u\|^2_2=O_{\delta^{-1}}(k^2\beta^{-2}).
\end{equation}

It follows from \eqref{eqn:sizeofkV'} that
\begin{align}\label{eqn:revised'}
|k(U+ C_0(0,1))|&=O\left(\gamma^{-1}(Dk)^2 \exp(-\frac{m}{4}+2)m\right)\nonumber \\
&= O\left(\gamma^{-1}k^2 \exp(-\frac{m}{4}+2)m\right),
\end{align}
where $C_0(0,r)$ is the discrete cube $C_0(0,r)=\{(x,y)\in \Z^2, |x|,|y|\le r\}$.

We next pause to recall some useful results from \cite{NgV} and \cite{TV-John}. For any integer $t\ge 1$, we say that a symmetric GAP $Q=\{k_1g_1+\dots+k_rg_r,  k_i\in \Z, |k_i|\le N_i \}$ is {\it $t$-proper} if the GAP $tQ=\{k_1g_1+\dots+k_rg_r,  k_i\in \Z, |k_i|\le tN_i \}$ is proper.

\begin{lemma}\cite[Theorem 1.21, also Theorem 1.17]{TV-John}\label{lemma:TV}
Let $A > 0$ be a constant. Assume that $X$ is a subset of integers such that $|lX| \le l^A |X|$ for some number $l\ge 2$. Then $lX$ is contained in a symmetric 2-proper GAP $Q$ of rank $r = O_A(1)$, and of cardinality $O_A(|lX|)$.
\end{lemma}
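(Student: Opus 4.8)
The plan is to reduce the iterated‑sumset hypothesis to an ordinary small‑doubling hypothesis by a pigeonholing argument, then to quote Freiman's theorem, and finally to transfer the resulting progression back to $lX$ and regularize it. \textbf{Step 1 (reducing $l$).} Consider the chain $|X|\le|2X|\le\cdots\le|lX|\le l^{A}|X|$ and examine the dyadic increments $r_i:=|2^{i+1}X|/|2^{i}X|$ for $2^{i+1}\le l$. Their product telescopes to $|2^{\lfloor\log_2 l\rfloor}X|/|X|\le l^{A}$, and since there are $\asymp\log_2 l$ of them, some $r_i$ is at most $(l^{A})^{1/\log_2 l}=2^{A}=O_A(1)$. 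Hence the set $S:=2^{i}X$ has $|S+S|=|2^{i+1}X|\le 2^{A}|S|$, i.e.\ doubling constant $O_A(1)$, for some $k:=2^{i}\le l$.

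\textbf{Step 2 (Freiman plus transfer).} By Freiman's theorem in quantitative form — via the Plünnecke–Ruzsa inequalities together with a geometry‑of‑numbers argument (or Ruzsa's proof) — a set $S$ of doubling constant $K=O_A(1)$ lies inside a proper GAP $P$ of rank $r=O_A(1)$ and cardinality $|P|=O_A(|S|)$. After translating we may assume $0\in X$, so $jX\subseteq j'X$ whenever $j\le j'$; in particular $|S|=|kX|\le|lX|$. Writing $l=qk+s$ with $0\le s<k$ gives $lX\subseteq(q+1)kX=(q+1)S\subseteq(q+1)P$, and since $P$ has rank $O_A(1)$ the Minkowski sum $(q+1)P$ is again a GAP of rank $O_A(1)$ whose cardinality is $(q+1)^{O_A(1)}|P|$. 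One then enlarges this to a symmetric $2$‑proper GAP at the cost of an $O_A(1)$ multiplicative factor using the standard GAP‑regularization lemma (cf.\ Lemma~\ref{theorem:fullrank}), and undoes the translation $X\mapsto X-x_0$ by adjoining one more generator, producing the desired $Q$.

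\textbf{The main obstacle.} The delicate point is obtaining the cardinality bound $O_A(|lX|)$ rather than $l^{O_A(1)}|lX|$: if the doubling scale $k$ is much smaller than $l$, then $lX$ is a high‑fold sumset of $S$, the crude estimate $|(q+1)S|\le K^{q}|S|$ degrades like $l^{\Omega(l)}$, and even $(q+1)^{O_A(1)}$ contributes unwanted powers of $l$. The real resolution, which is the content of \cite{TV-John}, is to argue instead by induction on the Freiman dimension $d$ of $X$: one first shows $d=O_A(1)$ by exhibiting a $d$‑dimensional simplex of size $\binom{l+d}{d}$ inside $lX$ and comparing with $l^{A}|X|$, and when $lX$ is ``flat'' in some rational direction one projects it to a set of strictly smaller dimension, peeling off one generator of $Q$ at a time so that no power of $l$ is lost along the way. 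Carrying this induction out while keeping every implied constant dependent only on $A$ is where essentially all the work lies.
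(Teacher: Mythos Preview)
The paper does not supply a proof of this lemma at all: it is quoted verbatim as \cite[Theorem~1.21]{TV-John} and used as a black box. So there is no ``paper's own proof'' to compare against, and your proposal should be judged on its own merits.

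As a standalone argument, your Steps~1--2 do not establish the lemma, and you say so yourself. The dyadic pigeonhole in Step~1 only locates a scale $k=2^{i}\le l$ at which $S=kX$ has doubling $\le 2^{A}$; Freiman then puts $S$ in a GAP $P$ of rank $r=O_{A}(1)$ and size $O_{A}(|S|)$. But to cover $lX$ you pass to $(q+1)P$ with $q\asymp l/k$, and the only bound you have is $|(q+1)P|\le (q+1)^{r}|P|=(l/k)^{O_{A}(1)}O_{A}(|lX|)$. Since $k$ may be as small as $2$, the factor $(l/k)^{O_{A}(1)}$ can be $l^{O_{A}(1)}$, and the conclusion $|Q|=O_{A}(|lX|)$ is lost. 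Nothing in your write-up closes this gap; the paragraph headed ``The main obstacle'' simply acknowledges it and defers to \cite{TV-John}.

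Your sketch of the ``real resolution'' (bound the Freiman dimension by $O_{A}(1)$ via a simplex count, then peel off directions one at a time) is a fair summary of the Tao--Vu argument, but it is only a sketch: the induction step---showing that when $lX$ is flat in some direction the projection still satisfies an iterated-sumset bound with the \emph{same} exponent $A$, and that the fibres contribute at most a bounded-rank GAP---is exactly the content of the cited theorem and is not reproduced here. In short, your proposal is not a proof but an informed commentary that correctly identifies why the naive Freiman-plus-dilation route fails; the paper is no more self-contained on this point, and both ultimately rely on \cite{TV-John}.
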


\begin{lemma}\cite[Lemma A.2]{NgV}\label{lemma:dividing} 
Assume that $0 \in X$ and that $P=\{\sum_{i=1}^r
x_ia_i: |x_i|\le N_i\}$ is a symmetric 2-proper GAP that contains $kX$. Then $X\subset \{\sum_{i=1}^r x_ia_i: |x_i|\le
2N_i/k\}$.
\end{lemma}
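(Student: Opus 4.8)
The plan is to establish the inclusion one element at a time, exploiting the hypothesis $0\in X$ to produce many small multiples of a given element inside $P$. First I would fix an arbitrary $x\in X$. Since $0\in X$, for each integer $0\le m\le k$ the element $mx=\underbrace{x+\dots+x}_{m}+\underbrace{0+\dots+0}_{k-m}$ lies in $kX$, hence in $P$; in particular $x\in P$ itself, so $x=\sum_{i=1}^r e_ia_i$ for integers $e_i$ with $|e_i|\le N_i$, and by $2$-properness this is the unique representation of $x$ whose coefficients are bounded by $2N_i$.

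Next I would run a doubling induction on $j$: the claim is that $|e_i|\le N_i/2^j$ for every $i$ whenever $2^j\le k$. The case $j=0$ is the bound $|e_i|\le N_i$ just obtained. For the inductive step, suppose $|e_i|\le N_i/2^{j-1}$ and $2^j\le k$ (which forces $2^{j-1}\le k$, so the hypothesis applies). Multiplying the representation of $x$ by $2^j$ gives $2^jx=\sum_i(2^je_i)a_i$, and by the inductive hypothesis $|2^je_i|\le 2N_i$, so this representation lies inside the box on which $P$ is proper. On the other hand $2^jx\in kX\subseteq P$ (here the bound $2^j\le k$ is used), so $2^jx=\sum_i c_ia_i$ with $|c_i|\le N_i\le 2N_i$; comparing the two representations and invoking $2$-properness forces $2^je_i=c_i$, whence $|2^je_i|\le N_i$, i.e.\ $|e_i|\le N_i/2^j$.

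Finally I would choose $j=\lfloor\log_2 k\rfloor$, so that $2^j\le k<2^{j+1}$ and hence $|e_i|\le N_i/2^j<2N_i/k$ for all $i$; since the $e_i$ are integers this gives $|e_i|\le 2N_i/k$. As $x\in X$ was arbitrary, this yields $X\subseteq\{\sum_{i=1}^r x_ia_i:|x_i|\le 2N_i/k\}$, as desired.

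The step requiring the most care is the verification that every auxiliary point fed to the $2$-properness hypothesis actually lies in the admissible range: one must check $|2^je_i|\le 2N_i$ at each stage (which is exactly the content of the previous induction level) and that $mx\in kX$ is only guaranteed for $0\le m\le k$, so the induction cannot be pushed past $2^j=k$. There is also a harmless convention point — ``$2$-proper'' can be read as injectivity of $(x_1,\dots,x_r)\mapsto\sum_i x_ia_i$ on the box $\{|x_i|\le 2N_i\}$, or as an equivalent non-degeneracy/sumset condition — but either reading supplies precisely the comparison of the two representations of $2^jx$ that the argument needs.
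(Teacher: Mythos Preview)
Your argument is correct. Note that the paper itself does not supply a proof of this lemma; it simply quotes it as \cite[Lemma A.2]{NgV} and uses it as a black box, so there is no in-paper proof to compare against directly.

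For what it is worth, the original argument in \cite{NgV} runs on the same idea but increments by $1$ rather than doubling: one shows by induction on $m$ that $|me_i|\le N_i$ for every $1\le m\le k$, since if $|me_i|\le N_i$ and $m+1\le k$ then $(m+1)x\in kX\subseteq P$ has a representation with all coefficients bounded by $N_i$, while the ``obvious'' representation $(m+1)x=\sum_i(m+1)e_ia_i$ has $|(m+1)e_i|\le |me_i|+|e_i|\le 2N_i$, and $2$-properness identifies the two. Taking $m=k$ then gives the slightly sharper $|e_i|\le N_i/k$. Your doubling route trades that extra factor of $2$ for only $O(\log k)$ steps; either approach proves the stated bound. (A tiny stylistic remark: the clause ``since the $e_i$ are integers'' in your last step is not needed, as $|e_i|<2N_i/k$ already implies $|e_i|\le 2N_i/k$.)
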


Thus by \eqref{eqn:revised'} and Lemma \ref{lemma:TV}, there exists a 2-proper symmetric GAP $R=\{\sum_{i=1}^r x_i g_i: |x_i|\le M_i \}$ that contains $k(U+C_0(0,1))$ and
\begin{equation}\label{eqn:extra:0}
r=O(1) \mbox{ and } |R|= O(\gamma^{-1} k^2 \exp(-\frac{m}{4}+2)m).
\end{equation}

Furthermore, by Lemma \ref{lemma:dividing}
$$U+C_0(0,1)\subset P:=\left\{\sum_{i=1}^r x_i g_i: |x_i|\le 2M_i/k \right\}.$$

We remark that as $k(U+C_0(0,1))$ is a dense copy of $R$, there exist $m_1,m_2=O(1)$ such that the dilated GAP $m_1\cdot R$ can be contained in the set $m_2k(U+C_0(0,1))$ (see for instance \cite[Lemma B.3]{Tao}). Using \eqref{eqn:magnitudeV''}, we conclude that all the generators $g_i$ of $P$ are bounded,
$$\|g_i\|_2 =O(k \beta^{-1}).$$
Next, since $C_0(0,1)\subset P$, the rank $r$ of $P$ (and $R$) is at least $2$. We consider the following two cases.

{\bf Case 1}: $r\ge 3$. Recall that $|P|=O(\gamma^{-1}{k}^{(2-r)} \exp(-\frac{m}{4}+2)m)= O(\gamma^{-1}/\sqrt{n'})$. Let
$$Q:=\frac{\beta}{Dk}\cdot P.$$

It is clear that $Q$ satisfies all of the conditions of Theorem  \ref{theorem:ILOlinear:new}. (Note that, in this case, we obtain a stronger approximation; almost all elements of $V$ are $O(\frac{\beta \sqrt{\log n'}}{\sqrt{n'}})$-close to $Q$.)

{\bf Case 2}: $r=2$. Because the unit vectors $e_1=(1,0), e_2=(0,1)$ belong to $P=\{\sum_{i=1}^2 x_ig_i:|x_i|\le  2M_i/k \}\subset \Z^2$, the set of generators $g_1,g_2$ forms a basis with unit determinant in $\R^2$. In this case we will be making use of $R$. Note that by definition $C_0(0,k)\subset R$.

Let $r_0$ be the smallest positive number such that 
\begin{equation}\label{eqn:extra:1}
|6R \cap C_0(0,r_0 k)| \ge 50|R|/k.
\end{equation}

By definition, 
\begin{equation}\label{eqn:extra:2}
|6R \cap C_0(0,r_0 k/2)| < 50|R|/k.
\end{equation}
Now let $p\in P$ be an arbitrary element of $P$, then one has 
$$\|p\|_\infty \le 2r_0 k.$$
Indeed, assume otherwise, then the sets $jp + (6R\cap C_0(0,r_0k)), -k\le j \le k$ are disjointly sitting inside $2R+6R=8R$; thus 
$$2k |6R\cap C_0(0,r_0k)| \le |8R| <100 |R|,$$
where we used the fact that $R$ has rank 2 in the last estimate, a contradiction against \eqref{eqn:extra:1}.  

For later use, we record a useful fact as follows.

\begin{fact}\label{fact:extra:1} For any $z_0\in \R^2$ we have
$$|3R\cap (z_0+C_0(0,r_0k/4))|\le 50|R|/k.$$
\end{fact}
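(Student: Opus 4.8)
The plan is to prove Fact~\ref{fact:extra:1} by a short difference-set argument, exploiting the minimality in the definition of $r_0$ that is recorded in \eqref{eqn:extra:2}. Write $A := 3R \cap (z_0 + C_0(0, r_0 k/4))$; we want $|A| \le 50|R|/k$, and we may clearly assume $A \neq \emptyset$, so fix some $a_0 \in A$. The key observation I will establish is the inclusion
$A - a_0 \subseteq 6R \cap C_0(0, r_0 k/2)$,
from which $|A| = |A - a_0| \le |6R \cap C_0(0, r_0 k/2)| < 50|R|/k$ by \eqref{eqn:extra:2}, which is exactly the claim.

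To verify the inclusion, recall that $R = \{x_1 g_1 + x_2 g_2 : |x_i| \le M_i\}$ is symmetric of rank $2$ and that $jR$ is understood with the same generators and with dimensions multiplied by $j$. Hence $3R - 3R = 3R + 3R \subseteq 6R$: indeed each element of $3R - 3R$ has the form $(x_1 - x_1')g_1 + (x_2 - x_2')g_2$ with $|x_i|, |x_i'| \le 3M_i$, so $|x_i - x_i'| \le 6M_i$. Consequently, for any $a, b \in A$ we have $a - b \in 3R - 3R \subseteq 6R$. On the other hand, $a - b$ lies in $C_0(0, r_0 k/4) - C_0(0, r_0 k/4) \subseteq C_0(0, r_0 k/2)$, since each coordinate of the difference of two integer vectors of sup-norm at most $r_0 k/4$ has absolute value at most $r_0 k/2$. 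Combining these two facts gives $a - b \in 6R \cap C_0(0, r_0 k/2)$ for all $a, b \in A$; taking $b = a_0$ yields the desired inclusion. (One can equally avoid the base point and argue from $|A| \le |A - A|$ together with $A - A \subseteq 6R \cap C_0(0, r_0 k/2)$.)

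There is essentially no real obstacle here: the argument is purely combinatorial and uses only the rank-$2$ GAP structure of $R$, the symmetry $-R = R$, and the defining inequality \eqref{eqn:extra:2}. The only points needing a line of care are the bookkeeping $3R - 3R \subseteq 6R$ (which tracks dimensions only, so properness is irrelevant) and the trivial box arithmetic $C_0(0,s) - C_0(0,s) \subseteq C_0(0,2s)$. Note the fact is then deployed in Case~2 exactly as \eqref{eqn:extra:2} was used to pin down $r_0$: it guarantees that no translate of a $C_0(0, r_0 k/4)$-box can capture more than a $50|R|/k$ fraction of $3R$, which is the covering input required for the subsequent rank-reduction step.
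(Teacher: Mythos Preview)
Your argument is correct and is essentially the same as the paper's: the paper applies the elementary bound $|X|\le|X-X|$ to $X=3R\cap(z_0+C_0(0,r_0k/4))$, observes $X-X\subseteq 6R\cap C_0(0,r_0k/2)$, and invokes \eqref{eqn:extra:2}. Your base-point version $|A|=|A-a_0|$ is just a minor rephrasing of this (as you yourself note in your parenthetical remark).
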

\begin{proof}(of Fact \ref{fact:extra:1})
By the elementary bound $|X|\le |X-X|$ and by \eqref{eqn:extra:2},
\begin{align*}
|3R\cap (z_0+C_0(0,r_0k/4))|&\le |[3R\cap (z_0+C_0(0,r_0k/4))] -[3R\cap (z_0+C_0(0,r_0k/4)]| \\
&\le |6R \cap C_0(0,r_0k/2)| \\
&\le 50|R|/k.
\end{align*}
\end{proof}

We next consider two subcases.

 {\bf Subcase 1}. Suppose $r_0<10$. Then as $P\subset C_0(0,2r_0k)\subset C_0(0,20k)$, it is easy to find a GAP $S$ of size $O(1)$ which is $k$-close to $P$ (and hence $k$-close to $U$ because $U\subset P$).
 
 {\bf Subcase 2.} Suppose $r_0\ge 10$. Note that $P \subset R \cap C_0(0,2r_0k)$. With room to spare, let $Z$ be the intersection of $2R \cap C_0(0,(2r_0+1)k)$ with the lattice $\Gamma:=\{(ki,kj), i,j\in \Z\}$. Note that $Z$ in non-empty. We next state some nice properties about $Z$.

\begin{claim}\label{claim:extra:1} We have
\begin{enumerate}
\item $P$ (and hence $U$) are $O(k)$-close to $Z$. \label{item:claim1}
\item There is a GAP $S$ of small rank and size $O(|Z|)$ that contains $Z$. \label{item:claim2}
\item The size of $Z$ is $O(\gamma^{-1}/\sqrt{n'})$. \label{item:claim3}
\end{enumerate}
\end{claim}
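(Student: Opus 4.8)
The plan is to deduce all three items from the fact, just recorded, that in Subcase~2 the generators $g_1,g_2$ of $P$ (and of $R$) form a unimodular integer basis, so that $(y_1,y_2)\mapsto y_1g_1+y_2g_2$ is a bijection $\Z^2\to\Z^2$. The first step is the elementary identity
$$ cR\cap k\Z^2 \;=\; k\cdot Q_c, \qquad Q_c:=\{\,y_1g_1+y_2g_2 : |y_i|\le cM_i/k\,\} \qquad (c>0), $$
valid because $\sum x_ig_i\in k\Z^2$ forces $k\mid x_i$. Taking $c=2$ (so $Q_2=P$) and intersecting with the box in the definition of $Z$ gives the convenient description $Z=k\cdot W$, where $W:=P\cap C_0(0,2r_0+1)$; moreover, since $k\to\infty$, we may assume $k\ge2$, hence $P\subseteq R$.

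For item~\ref{item:claim1} I would argue by rounding. If $p\in P\subseteq\Z^2$, then $\|p\|_\infty\le2r_0k$ (established above), so rounding each coordinate of $p$ to the nearest multiple of $k$ yields $z_p\in k\Z^2$ with $\|p-z_p\|_\infty\le k/2$; then $z_p-p\in C_0(0,k)\subseteq R$ and $p\in P\subseteq R$, so $z_p\in R+R=2R$, while $\|z_p\|_\infty\le(2r_0+1)k$, hence $z_p\in Z$. Thus $P$, and therefore $U$ (as $U\subseteq P$), is $(k/2)$-close to $Z$.

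For item~\ref{item:claim3} I would first reduce to a single ``central box'' by a covering argument: since $Z\subseteq3R\cap C_0(0,3r_0k)$, cover $C_0(0,3r_0k)$ by $O(1)$ translates $z_0+C_0(0,r_0k/4)$ with centres $z_0\in k\Z^2$, and run the difference-set estimate from the proof of Fact~\ref{fact:extra:1} after the scaling $v\mapsto v/k$; this shows each translate meets $Z$ in at most $|K\cap k\Z^2|$ points, where $K:=6R\cap C_0(0,r_0k/2)$, so $|Z|=O(|K\cap k\Z^2|)$. The crucial point is that $K$ has large inradius: since $C_0(0,k)\subseteq R$, the convex hull $\widetilde K$ of $K$ contains $[-6k,6k]^2$ (using $6k\le r_0k/2$), and a symmetric planar convex body of inradius $\ge k$ meets the sublattice $k\Z^2$ in only $O(\operatorname{area}(\widetilde K)/k^2)=O(|\widetilde K\cap\Z^2|/k^2)=O(|K|/k^2)$ points. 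Combined with the minimality of $r_0$, i.e. $|K|<50|R|/k$ from \eqref{eqn:extra:2}, this gives $|Z|=O(|R|/k^3)$, which is $O(\gamma^{-1}/k)$ by \eqref{eqn:extra:0}. I expect this step to be the main obstacle: one must carefully avoid the lossy bound $|\,\cdot\cap k\Z^2|\le|\,\cdot|$ exactly where the sublattice must be exploited, and it is only the interplay of the inradius bound (from $C_0(0,k)\subseteq R$) with the threshold defining $r_0$ (both \eqref{eqn:extra:1} and \eqref{eqn:extra:2}) that produces the correct power $k^{-3}$ rather than $k^{-1}$ or $k^{-2}$.

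For item~\ref{item:claim2}, write $W=\widetilde W\cap\Z^2$ with $\widetilde W:=\operatorname{conv}(P)\cap[-(2r_0+1),2r_0+1]^2$, a symmetric convex polygon. Doubling a symmetric planar convex body multiplies its lattice-point count by $O(1)$ (Minkowski's second theorem), so $|W+W|\le|(2\widetilde W)\cap\Z^2|=O(|\widetilde W\cap\Z^2|)=O(|W|)$; by the standard sumset estimates this yields $|l(W\cup\{0\})|=O_l(|W|)$ for every fixed $l$, and Lemma~\ref{lemma:TV} (applied with $l=2$) places $2(W\cup\{0\})$, hence $W$ itself, inside a $2$-proper symmetric GAP $Q$ of rank $O(1)$ and size $O(|W|)$. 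Then $S:=k\cdot Q$ has rank $O(1)$, size $|Q|=O(|W|)=O(|Z|)$, and contains $Z=kW$. Together with items~\ref{item:claim1} and~\ref{item:claim3}, this completes Subcase~2 and hence the proof of the claim.
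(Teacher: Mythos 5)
Your proposal reaches the correct conclusions, and the observation $Z = kW$ is a clean way to organize things, but for items (2) and (3) you take routes that differ from the paper's. For item (1) your rounding argument is essentially the paper's, which shows $P + C_0(0,k) \subset 2R \cap C_0(0,(2r_0+1)k)$ and then picks a point of $\Gamma$ inside each $p + C_0(0,k)$. For item (2) the paper simply applies the discrete John-type result \cite[Lemma~3.36]{TVbook} to $Z = \Gamma \cap \mathrm{conv}\bigl(2R \cap C_0(0,(2r_0+1)k)\bigr)$ and is done; you instead prove a sumset doubling bound $|W+W| = O(|W|)$ and then feed it into Lemma~\ref{lemma:TV}. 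That is legitimate, but note that the lattice-doubling fact ``$|2B\cap\Z^2| = O(|B\cap\Z^2|)$ for symmetric convex planar $B$'' is not literally Minkowski's second theorem, and its cleanest proof in $\R^2$ is essentially the same discrete John's theorem you are trying to route around, so the detour does not actually save that ingredient. For item (3) the paper packs the disjoint boxes $z + C_0(0,k/4)$, $z \in Z$, into $3R \cap C_0(0,3r_0k)$ (disjointness from the $k$-separation of $\Gamma$, containment from $C_0(0,k)\subset R$) and bounds the container by $O(1)$ applications of Fact~\ref{fact:extra:1}, giving $|Z|\cdot\Theta(k^2) = O(|R|/k)$ in one step; your version --- covering by $O(1)$ boxes, the difference-set bound $|Z \cap (z_0 + C_0(0, r_0k/4))| \le |K \cap k\Z^2|$, and then converting $|K \cap k\Z^2|$ to $O(|K|/k^2)$ through an inradius/area/van~der~Corput chain --- is correct but heavier, and the ``crucial point'' you flag (using $C_0(0,k)\subset R$ to gain the missing factor $k^{-2}$) is exactly what the paper's packing argument exploits more directly. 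One small slip: $6k \le r_0k/2$ needs $r_0 \ge 12$, whereas the subcase only guarantees $r_0 \ge 10$; use $[-5k,5k]^2$ instead.
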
 
\begin{proof}(of Claim \ref{claim:extra:1})
For part \ref{item:claim1}, note that
 $$P+C_0(0,k) \subset  [R \cap C_0(0,2r_0k)] + C_0(0,k) \subset 2R \cap C_0(0,(2r_0+1)k).$$
Thus $(P+C_0(0,k)) \cap \Gamma \subset Z$. In other words, for every element $p\in P$, there exists $z\in Z$ such that $\|z-p\|_2 \le \sqrt{2}k$.
 
For \ref{item:claim2}, because the generators $g_1,g_2$ of $Q$ form a basis with unit determinant in $\R^2$, we can view $Z$ as the intersection between $\Gamma$ and the symmetric convex body in $\R^2$ that corresponds to $\mathrm{conv}(2R\cap C_0(0,(2r_0+1)k))$. Thus $Z$ can be contained in a GAP $S$ of rank 2 and size $|S|=O(|Z|)$ by \cite[Lemma 3.36]{TVbook}.
 
 For \ref{item:claim3}, note that the sets $z+(R\cap C_0(0,k/4)), z\in Z$ are disjointly lying inside $3R\cap C_0(0,3r_0k)$, and so 
$$|Z|\le  \frac{|3R \cap C_0(0,3r_0k)|}{|R\cap C_0(0,k/4)|} = \frac{|3R \cap C_0(0,3r_0k)|}{|C_0(0,k/4)|} \le \frac{O(|R|/k )}{k^2} =O(\gamma^{-1}\exp(-\frac{m}{4}+2)m/k) = O(\gamma^{-1}/\sqrt{n'}),$$ 
where in the third estimate we decomposed $C_0(0,3r_0k)$ into disjoint copies of $C_0(0,r_0k/4)$ and applied Fact \ref{fact:extra:1}, and we used the bound in \eqref{eqn:extra:0} for $|R|$ in the second to last estimate.
\end{proof}
 
Therefore, in both subcases $P$ is $O(k)$-close a GAP $S$ of small rank and size $O(1+\gamma^{-1}/\sqrt{n'})$. To obtain $Q$ as concluded in Theorem \ref{theorem:ILOlinear:new} we just set
$$Q:=\frac{\beta}{Dk}\cdot S.$$   
\end{proof}

\section{Proof of Lemma \ref{lemma:decoupling}}\label{section:decoupling:proof} 

Set $a_{ij}':=a_{ij}/\beta$. By definition, 
$$\gamma=\sup_{a,b_i,b_i'} \P_{\Bx,\Bx'} \Big(|\sum_{i,j}a_{ij}'x_ix_j'+\sum_i b_ix_i+\sum_i b_i'x_i'-a|\le 1\Big) \ge n^{-B}.$$

By Markov's inequality we have
\begin{align*}
&\P_{\Bx,\Bx'}\big(|\sum_{i,j}a_{ij}'x_ix_j'+\sum_i b_ix_i+\sum_i b_i'x_i'-a|\le 1\big) \\
&= \P\big(\exp(-\frac{\pi}{2}|\sum_{i,j}a_{ij}'x_ix_j'+\sum_i b_ix_i+ \sum_i b_i'x_i'-a|^2 \ge \exp(-\frac{\pi}{2} )\big) \\
& \le \exp(\frac{\pi}{2} ) \E_{\Bx,\Bx'} \exp\big(-\frac{\pi}{2}|\sum_{i,j}a_{ij}'x_ix_j'+\sum_i b_ix_i+\sum_ib_i'x_i'-a|^2\big)  \\
&\le \exp(\frac{\pi}{2}) \int_{\C} \big|\E_{\Bx,\Bx'} e[(\sum_{i,j}a_{ij}'x_ix_j'+\sum_i b_ix_i+\sum_ib_i'x_i')\cdot t]\big| \exp(-\frac{\pi}{2} |t|^2)dt\\
&\le \exp(\frac{\pi}{2})(\sqrt{2\pi})^2 \int_{\C} \big|\E_{\Bx,\Bx'} e[(\sum_{i,j}a_{ij}'x_ix_j'+\sum_i b_ix_i+\sum_ib_i'x_i'))\cdot t]\big| \exp(-\frac{\pi}{2} |t|^2)/(\sqrt{2\pi})^2 dt
\end{align*}
where in the fourth equation we used the identity $\exp(-\frac{\pi}{2} |x|^2) =\int_{\C} e(xt) \exp(-\frac{\pi}{2} |t|^2) dt$.

Consider $\Bx=(x_1,\dots,x_n)$ as $(\Bx_U,\Bx_{\bar{U}})$ and $\Bx'=(x_1',\dots,x_n')$ as $(\Bx_U',\Bx_{\bar{U}}')$, where $\Bx_U,\Bx_U'$ and $\Bx_{\bar{U}},\Bx_{\bar{U}}'$ are the vectors corresponding to $i\in U$ and  $i\notin U$ respectively. After a series applications of the identity $\int_\C \exp(-\frac{\pi}{2} |t|^2)/(\sqrt{2\pi})^2 dt=1$ and the Cauchy-Schwarz inequality, we obtain
\begin{align}\label{eqn:bilinear:series}  
&\quad\Bigg[\int_{\C} \Big|\E_{\Bx,\Bx'} e\big((\sum_{i,j}a_{ij}'x_ix_j'+\sum_ib_ix_i+ \sum_i b_i'x_i')\cdot t\big) \Big| \exp(-\frac{\pi}{2} |t|^2)/(\sqrt{2\pi})^2 dt\Bigg]^4 \nonumber \\
&\le \Bigg[\int_{\C}  \Big|\E_{\Bx,\Bx'} e\big((\sum_{i,j}a_{ij}'x_ix_j'+\sum_ib_ix_i+\sum_ib_i'x_i'))\cdot t\big)\Big|^2 \exp(-\frac{\pi}{2} |t|^2)/(\sqrt{2\pi})^2 dt \Bigg]^2  \nonumber\\
&\le \Bigg[\int_{\C} \E_{\Bx_U,\Bx_U'}\Big|\E_{\Bx_{\bar{U}},\Bx_{\bar{U}}'}e\big((\sum_{i,j} a_{ij}'x_ix_j'+\sum_ib_ix_i+\sum_ib_i'x_i'))\cdot t\big)\Big|^2 \exp(-\frac{\pi}{2} |t|^2)/(\sqrt{2\pi})^2 dt\Bigg]^2  \nonumber\\ 
&=\Bigg[ \int_{\C} \E_{\Bx_U,\Bx_U'}\E_{\Bx_{\bar{U}},\Bx_{\bar{U}}',\By_{\bar{U}},\By_{\bar{U}}'} e\Big(\big(\sum_{i\in U,j\in \bar{U}}a_{ij}'x_i(x_j'-y_j')+\sum_{i\in \bar{U},j\in U}a_{ij}'(x_i-y_i)x_j'+\sum_{j\in \bar{U}} b_j(x_j-y_j)  \nonumber\\
&+\sum_{j\in \bar{U}} b_j'(x_j'-y_j')+\sum_{i\in \bar{U},j\in \bar{U}}a_{ij}'(x_ix_j'-y_iy_j')\big)\cdot t\Big) \exp(-\frac{\pi}{2} |t|^2)/(\sqrt{2\pi})^2  dt\Bigg]^2  \nonumber\\
&\le \int_{\C} \E_{\Bx_{\bar{U}},\Bx_{\bar{U}}',\By_{\bar{U}},\By_{\bar{U}}'}\Big|\E_{\Bx_{U},\Bx_{U}'} e\Big(\big(\sum_{i\in U,j\in \bar{U}}a_{ij}'x_i(x_j'-y_j')+\sum_{i\in \bar{U},j\in U}a_{ij}'(x_i-y_i)x_j')+\sum_{j\in \bar{U}} b_j(x_j-y_j)  \nonumber\\ 
&+\sum_{j\in \bar{U}} b_j'(x_j'-y_j')+\sum_{i\in \bar{U},j\in \bar{U}}a_{ij}'(x_ix_j'-y_iy_j')\big)\cdot t\Big)\exp(-\frac{\pi}{2} |t|^2)/(\sqrt{2\pi})^2  dt\Big|^2  \nonumber\\
&=\int_{\C} \E_{\Bx_U,\By_U,\Bx_{\bar{U}},\By_{\bar{U}},\Bx_U',\By_U',\Bx_{\bar{U}}',\By_{\bar{U}}'} e \Big(\big(\sum_{i\in U, j\in \bar{U}}a_{ij}'(x_i-y_i)(x_j'-y_j')  \nonumber\\
&+ \sum_{i\in \bar{U}, j\in U}a_{ij}'(x_i-y_i)(x_j'-y_j') \big)\cdot t\Big) \exp(-\frac{\pi}{2} |t|^2)/(\sqrt{2\pi})^2 dt  \nonumber\\
&=\int_{\C}\E_{\Bv,\Bw}e\Big((\sum_{i\in U,j\in \bar{U}} a_{ij}'v_iw_j+ \sum_{i\in \bar{U},j\in U}  a_{ij}' v_iw_j)\cdot t\Big)\exp(-\frac{\pi}{2} |t|^2)/(\sqrt{2\pi})^2 dt \nonumber \\
&=(1/\sqrt{2\pi})^2 \E_{\Bv,\Bw}\exp(-\frac{\pi}{2}|\sum_{i\in U,j\in \bar{U}} a_{ij}'v_iw_j+ \sum_{i\in \bar{U},j\in U}  a_{ij}' v_iw_j|^2),
\end{align}
where $(\By_U,\By_U')$ and $(\By_{\bar{U}},\By_{\bar{U}}')$ are independent identical copies of $(\Bx_U,\Bx_U')$ and $(\Bx_{\bar{U}},\Bx_{\bar{U}}')$ respectively, and $\Bv:=\Bx-\By,\Bw:=\Bx'-\By'$.

Thus 
\begin{align*}
\gamma^4 &= \Big(\P_{\Bx,\Bx'}(|\sum_{i,j}a_{ij}'x_ix_j'+\sum_ib_ix_i+\sum_i b_i'x_i' -a|\le 1)\Big)^4 \\
&\le \exp(4\pi)(2\pi)^{4}  \Bigg(\int_{\C} \Big|\E_{\Bx,\Bx'} e\big[(\sum_{i,j}a_{ij}'x_ix_j'+\sum_i b_ix_i+\sum_ib_i'x_i'))\cdot t\big]\Big| \exp(-\frac{\pi}{2} |t|^2)/(\sqrt{2\pi})^2 dt\Bigg)^4\\
&\le \exp(4\pi)(2\pi)^3 \E_{\Bv,\Bw}\exp(-\frac{\pi}{2}|\sum_{i\in U,j\in \bar{U}} a_{ij}'v_iw_j+ \sum_{i\in \bar{U},j\in U}  a_{ij}' v_iw_j|^2).
\end{align*}

Because $\gamma\ge n^{-B}$, the inequality above implies that 
$$\P_{\Bv,\Bw}\Big(|\sum_{i\in U,j\in \bar{U}} a_{ij}'v_iw_j+ \sum_{i\in \bar{U},j\in U}  a_{ij}' v_iw_j|=O_{B}(\sqrt{\log n})\Big) \ge \frac{1}{2} \gamma^4/((2\pi)^{3}\exp(4\pi)).$$

Scaling back to $a_{ij}$, we thus obtain
$$\P_{\Bv,\Bw}\Big(|\sum_{i\in U,j\in \bar{U}} a_{ij}v_iw_j+ \sum_{i\in \bar{U},j\in U}  a_{ij} v_iw_j|=O_{B}(\beta \sqrt{\log n})\Big) \ge \frac{1}{2}\gamma^4/((2\pi)^{3}\exp(4\pi)),$$
completing the proof.

\end{document}